\definecolor{my-linkcolor}{rgb}{0.75,0,0}
\definecolor{my-citecolor}{rgb}{0.1,0.57,0}
\definecolor{my-urlcolor}{rgb}{0,0,0.75}
\title[Rationality for vertex operator algebras]{On rationality for $C_2$-cofinite vertex operator algebras}
 \author{Robert McRae}
\date{}
 \address{Yau Mathematical Sciences Center, Tsinghua University, Beijing 100084, China}
  \email{rhmcrae@tsinghua.edu.cn}
 \subjclass{Primary 17B69, 18M15, 18M20, 81R10}
\newtheorem{mthm}{Main Theorem}
\newtheorem{thm}{Theorem}[section]
\newtheorem{cor}[thm]{Corollary}
\newtheorem{lem}[thm]{Lemma}
\newtheorem{prop}[thm]{Proposition}
\theoremstyle{definition}\newtheorem{defi}[thm]{Definition}
\theoremstyle{definition}\newtheorem{rem}[thm]{Remark}
\theoremstyle{definition}\newtheorem{exam}[thm]{Example}
\theoremstyle{definition}\newtheorem{assum}[thm]{Assumption}
\newcommand{\cE}{\mathcal{E}}
\newcommand{\cY}{\mathcal{Y}}
\newcommand{\cU}{\mathcal{U}}
\newcommand{\cA}{\mathcal{A}}
\newcommand{\cR}{\mathcal{R}}
\newcommand{\cC}{\mathcal{C}}
\newcommand{\cW}{\mathcal{W}}
\newcommand{\til}{\widetilde}
\newcommand{\CC}{\mathbb{C}}
\newcommand{\ZZ}{\mathbb{Z}}
\newcommand{\NN}{\mathbb{N}}
\newcommand{\RR}{\mathbb{R}}
\newcommand{\QQ}{\mathbb{Q}}
\newcommand{\Id}{\mathrm{Id}}
\newcommand{\tens}{\boxtimes}
\newcommand{\vac}{\mathbf{1}}
 \DeclareMathOperator{\im}{Im}
 \DeclareMathOperator{\tr}{Tr}
 \let\ker\relax
 \let\hom\relax
 \DeclareMathOperator{\ker}{Ker}
 \DeclareMathOperator{\hom}{Hom}
 \DeclareMathOperator{\Endo}{End}
\begin{document}
\bibliographystyle{alpha}

\numberwithin{equation}{section}

 \begin{abstract}
  Let $V$ be an $\NN$-graded, simple, self-contragredient, $C_2$-cofinite vertex operator algebra. We show that if the $S$-transformation of the character of $V$ is a linear combination of characters of $V$-modules, then the category $\cC$ of grading-restricted generalized $V$-modules is a rigid tensor category. We further show, without any assumption on the character of $V$ but assuming that $\cC$ is rigid, that $\cC$ is a factorizable finite ribbon category, that is, a not-necessarily-semisimple modular tensor category. As a consequence, we show that if the Zhu algebra of  $V$ is semisimple, then $\cC$ is semisimple and thus $V$ is rational. The proofs of these theorems use techniques and results from tensor categories together with the method of Moore-Seiberg and Huang for deriving identities of two-point genus-one correlation functions associated to $V$.
 
  We give two main applications. First, we prove the conjecture of Kac-Wakimoto and Arakawa that $C_2$-cofinite affine $W$-algebras obtained via quantum Drinfeld-Sokolov reduction of admissible-level affine vertex algebras are strongly rational. The proof uses the recent result of Arakawa and van Ekeren that such $W$-algebras have semisimple (Ramond twisted) Zhu algebras. Second, we use our rigidity results to reduce the ``coset rationality problem'' to the problem of $C_2$-cofiniteness for the coset. That is, given a vertex operator algebra inclusion $U\otimes V\hookrightarrow A$ with $A$, $U$ strongly rational and $U$, $V$ a pair of mutual commutant subalgebras in $A$, we show that $V$ is also strongly rational provided it is $C_2$-cofinite.
 \end{abstract}

\maketitle

\tableofcontents

\allowdisplaybreaks

\section{Introduction}

One of the highlights in the theory of vertex operator algebras is Huang's proof, culminating in \cite{Hu-rig-mod}, that the module category of a ``strongly rational'' vertex operator algebra is a semisimple modular tensor category. While vertex operator algebras are expected to provide a mathematically rigorous construction of two-dimensional chiral conformal quantum field theories, modular tensor categories have well-known deep connections to low-dimensional topology and physics, including topological quantum field theories, knot invariants, and topological quantum computing. Thus one can view the modular tensor category of modules for a strongly rational vertex operator algebra $V$ as containing the topological information of a conformal field theory associated to $V$.

To apply Huang's modularity theorem, one needs examples of strongly rational vertex operator algebras. Although many examples are known, it is difficult in general to show that a given vertex operator algebra is strongly rational. A vertex operator algebra $V$ is strongly rational if it is simple, positive-energy (also known as CFT-type), self-contragredient as a module for itself, $C_2$-cofinite, and rational. The first three conditions are relatively minor, while the last two are rather deep: The technical $C_2$-cofiniteness condition, introduced by Zhu \cite{Zh} to prove convergence of conformal-field-theoretic genus-one correlation functions associated to $V$, guarantees finiteness of the category of $V$-modules. Rationality, in the presence of the other conditions, is equivalent to semisimplicity of the category of $V$-modules.

The definition of strongly rational vertex operator algebra, combined with Huang's modularity theorem, suggests two questions. Let $V$ be a simple positive-energy (or more generally $\NN$-graded) self-contragredient $C_2$-cofinite vertex operator algebra; following \cite{CG}, we will call such $V$ \textit{strongly finite}. First, are there checkable criteria that guarantee the category of $V$-modules is semisimple, and thus $V$ is rational? And if on the other hand the $V$-module category is not semisimple, is there still something we can say about its structure? 

This paper will address both questions: we will show that a strongly finite vertex operator algebra is rational if its Zhu algebra \cite{Zh} is semisimple, and we will show that if the module category for a strongly finite vertex operator algebra is a rigid tensor category, then it is a factorizable finite ribbon category, also known as a non-semisimple modular tensor category. As a first application, we will obtain rationality for all $C_2$-cofinite affine $W$-algebras obtained via quantum Drinfeld-Sokolov reduction of affine vertex operator algebras at admissible levels; the Zhu algebras of these $W$-algebras were recently proved semisimple by Arakawa and van Ekeren \cite{AvE}. As a second application, we will show that the coset, or commutant, of a strongly rational subalgebra of a strongly rational vertex operator algebra is also strongly rational, provided the coset is $C_2$-cofinite. We  will discuss these results in more detail after first reviewing previous work on strongly rational and strongly finite vertex operator algebras.

\subsection{Background and previous work}

The earliest examples of strongly rational vertex operator algebras are those associated to affine Lie algebras at positive integer levels \cite{FZ}, Virasoro minimal model vertex operator algebras \cite{Wan, DMZ}, lattice vertex operator algebras \cite{Do-lattice, DLM-reg}, and the Frenkel-Lepowsky-Meurman moonshine module \cite{Do-moonshine}. The rationality proofs for these vertex operator algebras relied heavily on the specific structure of these algebras and their representations, for example of representations for affine and Virasoro Lie algebras at the relevant levels and central charges. 

More recent rationality results often use semisimplicity of the Zhu algebra \cite{Zh} to rule out non-split extensions of simple modules with the same lowest conformal weight, but then also require the detailed classification and structure of simple modules to rule out non-split extensions in general and thus prove semisimplicity of the module category. 
Vertex operator algebras which have been proved to be strongly rational in this way include the fixed-point subalgebra $V_L^+$ of a lattice vertex operator algebra $V_L$ for the involution lifted from the $-1$ isometry of the lattice $L$ \cite{DJL, Ab-VL+}, and many affine $W$-algebras at admissible levels \cite{Ar-BP, Ar-rat, AvE, Fa}. As discussed below, our results in this paper show that semisimplicity of the Zhu algebra by itself is enough to prove that such vertex operator algebras are rational, and a detailed understanding of simple modules is not necessary. As a consequence, we will show that all $C_2$-cofinite affine $W$-algebras at admissible levels are rational.

Beyond specific examples, there are general rationality results for vertex operator algebras that contain strongly rational subalgebras. Early work in this direction includes \cite{DLM-simple-current}, which showed (under some conditions) that a vertex operator algebra is rational if it contains a rational fixed-point subalgebra for a finite abelian automorphism group, and \cite{DGH}, which showed that a vertex operator algebra is rational if it contains a tensor product of simple central-charge-$\frac{1}{2}$ Virasoro vertex operator algebras. More recent work in \cite{KO, HKL, McR2}, using the theory of commutative algebras in braided tensor categories, shows that a simple positive-energy vertex operator algebra $A$ containing a strongly rational subalgebra $V$ is strongly rational if it has non-zero categorical dimension in the modular tensor category of $V$-modules. This holds in particular if $V$ is the fixed-point subalgebra for any finite automorphism group of $A$ \cite{McR} or if $V$ is the tensor product of two strongly rational subalgebras \cite{CKM2}. Recently, these results have been used to prove rationality for several classes of affine $W$-algebras \cite{CL-coset, CL-sln-trialities, CL-osym-trialities}.

A more difficult long-standing question is, when do subalgebras of strongly rational vertex operator algebras inherit strong rationality? It is widely expected that a subalgebra $V$ of a strongly rational vertex operator algebra $A$ is strongly rational if $A$ is a finite direct sum of simple $V$-modules (see for example the comment following \cite[Conjecture 1.2]{AvEM}), but this conjecture seems wide open. 
The best general result so far is Carnahan and Miyamoto's solution \cite{CM} of the ``orbifold rationality problem'' for finite solvable automorphism groups: $V$ is strongly rational if it is the fixed-point (or ``orbifold'') subalgebra of $A$ for a finite solvable automorphism group. A key step in their proof uses modular invariance properties of characters of $V$-modules to show that the tensor category of $V$-modules is rigid. We will extend these methods to prove the main theorems of this paper, and one application will be a contribution to the ``coset rationality problem,'' in which the subalgebra $V$ of the strongly rational vertex operator algebra $A$ is a tensor product of two vertex subalgebras.

%

Now we discuss results and conjectures for non-rational strongly finite vertex operator algebras. For $V$ a positive-energy $C_2$-cofinite vertex operator algebra, Huang showed \cite{Hu-C2} that the category $\cC$ of (grading-restricted generalized) $V$-modules admits the vertex algebraic braided tensor category structure of Huang-Lepowsky-Zhang \cite{HLZ1}-\cite{HLZ8}. Huang then conjectured \cite{Hu-conj} that if $V$ is in addition self-contragredient, then $\cC$ should be a rigid tensor category, and that the braiding on $\cC$ should be nondegenerate in some sense, similar to the definition of (semisimple) modular tensor category. This conjecture, that $\cC$ should be a not-necessarily-semisimple version of a modular tensor category, has also appeared in several more recent works, for example \cite{CG, GR}. 

There are actually several notions of non-semisimple modular tensor category (the first formulated by Lyubashenko \cite{Ly} for the purpose of constructing non-semisimple topological quantum field theories), but Shimizu \cite{Sh} has recently shown that all these definitions are equivalent. We shall here use the terminology of \cite{ENO} and call non-semisimple modular tensor categories \textit{factorizable finite ribbon categories}, since the categories we consider may indeed be semisimple. However, the definition we will use derives from M\"{u}ger \cite{Mu}: a finite ribbon category $\cC$ is factorizable if and only if the only transparent objects in $\cC$ are finite direct sums of the unit object. (An object $W$ in $\cC$ is transparent if the double braiding $\cR_{W,X}^2\in\Endo_\cC(W\tens X)$ is the identity for all objects $X$.)


Thus the conjecture of \cite{Hu-conj, CG, GR} may be stated as follows: If $V$ is a strongly finite vertex operator algebra, then the category $\cC$ of grading-restricted generalized $V$-modules should be a factorizable finite ribbon category. In \cite{TW, GN}, this conjecture was proved for the triplet $W$-algebras $\cW(p)$ for integers $p\geq 2$. These are subalgebras of rank-one lattice vertex operator algebras with modified conformal vectors, and they were shown to be strongly finite but non-rational in \cite{AM}. Since the first version of this paper was completed, Main Theorem \ref{mthm:factorizability} below has been used to prove the factorizability conjecture for all finite cyclic orbifolds of $\cW(p)$ \cite{CMY-typical-singlet} and for the even subalgebras of the symplectic fermion vertex operator superalgebras \cite{McR-Deligne}.

\subsection{Main results and applications}

 The factorizability conjecture of \cite{Hu-conj, CG, GR} for the tensor category $\cC$ of modules for a strongly finite vertex operator algebra includes the assertion that the tensor category $\cC$ should be rigid, with duals given by the contragredient modules of \cite{FHL}. Our first main theorem is that if $\cC$ is already known to be rigid, then $\cC$ is indeed factorizable (see Theorem \ref{thm:factorizability} below):
\begin{mthm}\label{mthm:factorizability}
 Let $V$ be an $\NN$-graded simple self-contragredient $C_2$-cofinite vertex operator algebra and let $\cC$ be the braided tensor category of grading-restricted generalized $V$-modules. If $\cC$ is rigid, then $\cC$ is a factorizable finite ribbon category.
\end{mthm}


Since the first version of this paper was completed, this theorem has been applied to the finite cyclic orbifolds of the triplet algebras $\cW(p)$ for integers $p\geq 2$ \cite{CMY-typical-singlet}. The automorphism group of $\cW(p)$ contains $\ZZ/m\ZZ$ for all $m\in\ZZ_+$, and the $\ZZ/m\ZZ$-fixed-point subalgebra $\cW(p)^{\ZZ/m\ZZ}$ is strongly finite \cite{ALM}. It is then shown in \cite{CMY-typical-singlet} that the tensor category of $\cW(p)^{\ZZ/m\ZZ}$-modules is rigid, so Main Theorem \ref{mthm:factorizability} implies that this tensor category is factorizable. Another well-known class of non-rational $C_2$-cofinite vertex operator algebras is the even subalgebras of symplectic fermion vertex operator superalgebras \cite{Ab}, which are certain extensions of tensor powers of $\cW(2)$. It has now been proved in \cite{McR-Deligne}, using the vertex operator algebra extension theory of \cite{CKM1}, that the symplectic fermion algebras have rigid module categories, and thus Main Theorem \ref{mthm:factorizability} applies to these examples as well.

Our second main theorem gives a criterion for the module category $\cC$ of a strongly finite vertex operator algebra $V$ to be rigid, in terms of the characters of $V$-modules. The \textit{character} of a $V$-module $W$ is the graded trace of the vertex operator $Y_W$, and it is a genus-one correlation function in the conformal field theory associated to $V$:
\begin{equation*}
\mathrm{ch}_W(v;\tau) = \tr_W Y_W(\cU(1)v, 1)q_\tau^{L(0)-c/24},
\end{equation*}
where $v\in V$, $q_\tau=e^{2\pi i\tau}$ for $\tau$ in the upper half plane $\mathbb{H}$, $\cU(1)$ is a certain linear operator on $V$ associated to local coordinate changes at punctures on a torus, $L(0)$ is the Virasoro algebra zero-mode, and $c$ is the Virasoro central charge of $V$. Zhu showed \cite{Zh} that if $V$ is $C_2$-cofinite, then $\mathrm{ch}_W$ converges absolutely to a function $V\times\mathbb{H}\rightarrow\CC$ linear on $V$ and holomorphic on $\mathbb{H}$, and that if $V$ is in addition rational, then the characters of irreducible $V$-modules form a basis for a representation of $SL(2,\ZZ)$. In particular, the $S$-transformation of $\mathrm{ch}_W$, given by
\begin{equation*}
 S(\mathrm{ch}_W)(v;\tau)=\mathrm{ch}_W\bigg(\left(-\frac{1}{\tau}\right)^{L(0)}v;-\frac{1}{\tau}\bigg),
\end{equation*}
is a linear combination of characters. More recently, Miyamoto showed \cite{Mi2} that if $V$ is $C_2$-cofinite but not necessarily rational, then $S(\mathrm{ch}_W)$ is in general a linear combination of both traces and pseudo-trace functions on indecomposable $V$-modules. 

It is natural to conjecture (see for instance \cite[Section 3.1.1]{CG}) that a strongly finite vertex operator algebra $V$ is rational precisely when no pseudo-traces appear in the $S$-transformations of characters of $V$-modules. In any case, if we assume no pseudo-traces in the $S$-transformation of the character of $V$ itself, then we can use the two-point genus-one correlation function methods of \cite{MS, Hu-Verlinde, CM} to prove rigidity for the category of $V$-modules (see Corollary \ref{cor:rigidity} below):
\begin{mthm}\label{mthm:rigidity}
 Let $V$ be an $\NN$-graded simple self-contragredient $C_2$-cofinite vertex operator algebra. If $S(\mathrm{ch}_V)$ is a linear combination of characters of $V$-modules, then the tensor category of grading-restricted generalized $V$-modules is rigid.
\end{mthm}

Zhu's modular invariance proof for characters in \cite{Zh} uses an associative algebra now called the Zhu algebra. For any $\NN$-graded vertex operator algebra $V$, the irreducible representations of this associative algebra $A(V)$ are in one-to-one correspondence with those of $V$. Zhu also showed that $A(V)$ is a finite-dimensional semisimple algebra if $V$ is rational; if $V$ is $C_2$-cofinite but not necessarily rational, then $A(V)$ is at least finite dimensional \cite{DLM}. Modular invariance of characters for rational $C_2$-cofinite vertex operator algebras follows directly from the semisimplicity of $A(V)$ and does not require the full strength of the rationality assumption, so we can combine Main Theorems \ref{mthm:factorizability} and \ref{mthm:rigidity} to conclude that if $V$ is a strongly finite vertex operator algebra and $A(V)$ is semisimple, then $V$-modules form a factorizable finite ribbon category. We can then address whether the category of $V$-modules is semisimple: in Subsection \ref{subsec:rat}, we show that semisimplicity of $A(V)$ and properties of projective covers in factorizable ribbon categories imply that irreducible $V$-modules are projective. Thus we get our third main theorem (see Corollary \ref{cor:rationality} below):
\begin{mthm}\label{mthm:rationality}
 Let $V$ be an $\NN$-graded simple self-contragredient $C_2$-cofinite vertex operator algebra. If the Zhu algebra $A(V)$ is semisimple, then $V$ is rational. In particular, the category of grading-restricted generalized $V$-modules is a semisimple modular tensor category, and if $V$ has positive energy, then $V$ is strongly rational.
\end{mthm}

A nice feature of Main Theorem \ref{mthm:rationality} is that it gives purely internal criteria for $V$ to be rational, even though the definition of rational vertex operator algebra involves $V$-modules. That is, one can in principal show that $V$ is $C_2$-cofinite and compute $A(V)$ without any knowledge about $V$-modules other than $V$ itself. In practice, it may be difficult to compute $A(V)$ without knowing about $V$-modules, but there is a rich class of $C_2$-cofinite vertex operator algebras whose Zhu algebras were recently shown to be semisimple:

Given a simple Lie algebra $\mathfrak{g}$, a level $k\in\CC$, and a nilpotent element $f\in\mathfrak{g}$, the simple affine $W$-algebra $\cW_k(\mathfrak{g},f)$ is the simple quotient of the quantum Drinfeld-Sokolov reduction, relative to $f$, of the universal affine vertex operator algebra $V^k(\mathfrak{g})$ at level $k$ \cite{FF, KRW}. The $W$-algebra $\cW_k(\mathfrak{g},f)$ is called exceptional \cite{KW, Ar-C2, AvE} if $k\in\QQ$ is an admissible level for $\mathfrak{g}$ and $f$ is contained in a certain nilpotent orbit associated to the denominator of $k$. Kac-Wakimoto \cite{KW} and Arakawa \cite{Ar-C2} conjectured that all exceptional $W$-algebras are rational. Indeed, Arakawa showed that all exceptional $W$-algebras are $C_2$-cofinite \cite{Ar-C2} and that exceptional $W$-algebras associated to principal nilpotent elements are rational \cite{Ar-rat}. Recently, Arakawa-van Ekeren showed \cite{AvE} that all exceptional $W$-algebras have semisimple Ramond twisted Zhu algebras (for $\NN$-graded vertex operator algebras, these are the ordinary Zhu algebras). So now Main Theorem \ref{mthm:rationality} immediately proves the Kac-Wakimoto-Arakawa rationality conjecture for all $\NN$-graded self-contragredient exceptional $W$-algebras.

Whether $\cW_k(\mathfrak{g},f)$ is $\NN$-graded and self-contragredient depends on its conformal vector, which is not unique. The standard conformal vector in $\cW_k(\mathfrak{g},f)$ is obtained from the Dynkin grading $\mathfrak{g}=\bigoplus_{j\in\frac{1}{2}\ZZ} \mathfrak{g}_j$, where $\lbrace e, h, f\rbrace$ is an $\mathfrak{sl}_2$-triple containing $f$ and $\mathfrak{g}_j$ is the $h$-eigenspace of $\mathfrak{g}$ with eigenvalue $2j$. It turns out that $\cW_k(\mathfrak{g},f)$ is always self-contragredient with respect to the Dynkin conformal vector, but it is $\frac{1}{2}\NN$-graded when the Dynkin grading of $\mathfrak{g}$ is not integral. Nevertheless, we show that $\cW_k(\mathfrak{g},f)$ is still rational as long as it is $\NN$-graded with respect to a possibly different conformal vector, and existence of a suitable conformal vector amounts to a purely Lie algebraic condition on the nilpotent element centralizer $\mathfrak{g}^f$ (see Theorem \ref{thm:W_alg_rat} below). We check this condition for all nilpotent orbits which give rise to exceptional $W$-algebras, so we conclude (see Theorem \ref{thm:ex_W_alg_rat} below):
\begin{mthm}\label{mthm:W_alg_rat}
 All exceptional affine $W$-algebras $\cW_k(\mathfrak{g},f)$ are strongly rational with respect to the Dynkin conformal vector, possibly as $\frac{1}{2}\NN$-graded vertex operator algebras.
\end{mthm}

The most-studied nilpotent elements in a simple Lie algebra are probably those in the principal, subregular, and minimal nilpotent orbits. Thus Main Theorem \ref{mthm:W_alg_rat} finishes the proof that all exceptional $W$-algebras associated to these three nilpotent orbits are strongly rational; many cases (including all principal $W$-algebras) were already known thanks to \cite{Wan, Ar-BP, Ar-rat, AvE, CL-coset, CL-sln-trialities, CL-osym-trialities, Fa}.

The proof of Main Theorem \ref{mthm:W_alg_rat} for $\frac{1}{2}\NN$-graded $W$-algebras requires a generalization of Main Theorem \ref{mthm:rationality} where, instead of assuming that $A(V)$ is semisimple, we assume that $V$ is the fixed-point subalgebra of a finite-order automorphism of a larger vertex operator algebra that has a semisimple Zhu algebra. This more general result recovers, in slightly stronger form, Carnahan and Miyamoto's theorem \cite{CM} that the fixed-point subalgebra of a strongly rational vertex operator algebra under a finite-order automorphism is strongly rational. This indicates that our results in this paper should be useful for addressing the long-standing problem of when subalgebras of rational vertex operator algebras inherit rationality. 

For example, one well-known subalgebra rationality problem is the ``coset rationality problem,'' which may be stated as follows: If $U\otimes V\hookrightarrow A$ is a vertex operator algebra inclusion with $A$, $U$ strongly rational and $U$, $V$ a pair of mutual commutant subalgebras in $A$, is the coset subalgebra $V$ also strongly rational? It is still not known in general whether $V$ is $C_2$-cofinite in this setting, but in Theorem \ref{thm:coset_rat} below, we  use Main Theorem \ref{mthm:rigidity} to show that if $V$ is in fact $C_2$-cofinite, then $V$ is also strongly rational:
\begin{mthm}\label{mthm:coset}
 Let $A$ be a strongly rational vertex operator algebra and let $U\subseteq A$ be a vertex subalgebra which is a strongly rational vertex operator algebra with respect to a conformal vector $\omega_U\in U\cap A_{(2)}$ such that $L(1)\omega_U=0$. Assume also that:
 \begin{itemize}
  \item The subalgebra $U$ is its own double commutant: $U=C_A(C_A(U))$.
  \item The commutant, or coset, $V=C_A(U)$ is $C_2$-cofinite.
 \end{itemize}
Then $V$ is also strongly rational.
\end{mthm}

We do not use Main Theorem \ref{mthm:rationality} to prove this result because precise relations between the Zhu algebras of $A$, $U$, and $V$ are not clear. However, there is an alternate method that was used to prove orbifold rationality in \cite{CM} and was generalized to prove rationality for more general subalgebras in \cite{McR2}. One first induces from the category of $U\otimes V$-modules to the category of $A$-modules to prove that $U\otimes V$ is projective as a module for itself. Then since a rigid tensor category with projective unit object is semisimple, $U\otimes V$, and then $V$ also, will be rational if the category of $U\otimes V$-modules is rigid. In fact, it is sufficient to show that the category of $V$-modules is rigid, and this follows from Main Theorem \ref{mthm:rigidity} once we show in Theorem \ref{thm:KM_app}, using the multivariable trace functions of \cite{KM} and the rationality of $A$ and $U$, that $S(\mathrm{ch}_V)$ is a linear combination of characters.

An interesting class of $C_2$-cofinite coset vertex operator algebras is certain principal affine $W$-algebras associated to simple Lie algebras $\mathfrak{g}$ in simply-laced types \cite{ACL}. Thus Main Theorem \ref{mthm:coset} provides a third rationality proof for these $W$-algebras, besides the original proof in \cite{Ar-rat} and Main Theorem \ref{mthm:W_alg_rat}.

\subsection{Methods}

Now we discuss the proofs of Main Theorems \ref{mthm:factorizability} through \ref{mthm:rationality} in reverse order. For Main Theorem \ref{mthm:rationality}, the first question is how semisimplicity of the Zhu algebra affects the representation theory of a strongly finite vertex operator algebra $V$. If $A(V)$ is semisimple, then non-split extensions between irreducible $V$-modules with the same lowest conformal weight are impossible, but a semisimple Zhu algebra does not directly rule out non-split extensions when the difference between lowest conformal weights is a non-zero integer. Thus we need a different approach to show that all $V$-modules are semisimple.

Since every irreducible $V$-module $W$ has a projective cover $p_W: P_W\twoheadrightarrow W$ by \cite{Hu-C2}, we can show that the category of $V$-modules is semisimple by proving $p_W$ is an isomorphism for all irreducible $W$. In fact, in Subsection \ref{subsec:rat}, we use semisimplicity of $A(V)$ to show that $p_W$ restricts to an isomorphism on lowest conformal weight spaces, and it then follows that $p_W$ is an isomorphism provided $P_W$ contains $W$ as a submodule (and not just as a quotient). But by \cite{ENO}, this is indeed a property of projective covers in factorizable finite ribbon categories. So Main Theorem \ref{mthm:rationality} reduces to Main Theorems \ref{mthm:factorizability} and \ref{mthm:rigidity}.

To prove Main Theorems \ref{mthm:factorizability} and \ref{mthm:rigidity}, we need to establish (under the relevant hypotheses on $V$ and its module category $\cC$), that $\cC$ is rigid and has nondegenerate braiding. To prove that $\cC$ is rigid, we need to show in particular that for any $V$-module $W$ in $\cC$, there are suitably-compatible evaluation and coevaluation maps
\begin{equation*}
 e_W: W'\tens W\rightarrow V,\qquad i_W: V\rightarrow W\tens W',
\end{equation*}
where $W'$ is the contragredient module of $W$ defined in \cite{FHL}. Since $V$ is self-contragredient, the evaluation can be obtained from the vertex operator $Y_W$ using symmetries of intertwining operators from \cite{FHL} and the intertwining operator universal property (see for example \cite{HLZ3}) that defines vertex algebraic tensor products. We can similarly obtain a homomorphism $\til{e}_W: W\tens W'\rightarrow V$. However, since $\cC$ is not assumed semisimple, it is not so easy to get maps into vertex algebraic tensor products, which means we do not have a coevaluation \textit{a priori}. In fact, the \textit{a posteriori} formula for the coevaluation shown in Remark \ref{rem:coevaluation} is rather complicated and not obviously a $V$-module homomorphism. Thus we are forced to use somewhat indirect methods to prove rigidity.

Given a $V$-module $W$ with evaluation maps $e_W: W'\tens W\rightarrow V$ and $\til{e}_W: W\tens W'\rightarrow V$, there is a natural map $(W\tens W')\tens(W\tens W')\rightarrow V$ given by evaluating both the inner and outer pairs of $W$ and $W'$. This map induces a homomorphism $\Phi_W: W\tens W'\rightarrow(W\tens W')'$, and a straightforward tensor-categorical argument shows that $W$ is rigid if and only if $\Phi_W$ is an isomorphism. Moreover, $\Phi_W$ is an isomorphism if and only if it is injective, since $W\tens W'$ and $(W\tens W')'$ are isomorphic as graded vector spaces. We then use an argument similar to the proof of \cite[Theorem 4.7]{McR} to show that $\Phi_W$ is injective if it is ``surjective enough,'' that is, $\im\Phi_W$ should contain the image of the contragredient map $(\til{e}_W)': V\cong V'\rightarrow(W\tens W')'$. In this case, the coevaluation $i_W$ is obtained by composing $(\til{e}_W)'$ with $\Phi_W^{-1}$.

To prove that $\im\,(\til{e}_W)'\subseteq\im\Phi_W$, we shift from tensor-categorical to vertex-algebraic methods. Specifically, as in the rigidity proofs of \cite{Hu-Verlinde, Hu-rig-mod, CM}, we use the method of Moore-Seiberg \cite{MS} and Huang \cite{Hu-Verlinde} to derive identities of two-point genus-one correlation functions associated to a $C_2$-cofinite vertex operator algebra $V$. These functions are (multivalued analytic extensions of) trace functions of the form
\begin{equation*}
 \tr_X \cY_1(\cU(q_{z_1})w_1, q_{z_1})\cY_2(\cU(q_{z_2})w_2,q_{z_2})q_\tau^{L(0)-c/24},
\end{equation*}
where $\cY_1$, $\cY_2$ are intertwining operators involving $V$-modules $W_1$, $W_2$, and $X$ with $w_1\in W_1$, $w_2\in W_2$; where $z_1$, $z_2$ are punctures on the torus $\CC/(\ZZ+\ZZ\tau)$ and $\cU(q_{z_1})$, $\cU(q_{z_2})$ are certain linear automorphisms of $W_1$, $W_2$ related to local coordinates at the punctures; and where $q_z=e^{2\pi i z}$ for any $z\in\CC$. For Main Theorem \ref{mthm:factorizability}, we also need pseudo-traces of products of intertwining operators \cite{Mi2, Arike, AN, Fi}. The convergence of two-point (pseudo-)trace functions in suitable regions was proved in \cite{Hu-mod_inv, Fi}, and they extend to multivalued analytic functions with singularities possible only where $z_1-z_2\in\ZZ+\ZZ\tau$. 

The Moore-Seiberg-Huang method is based on expanding two-point genus-one functions as series about the singularities $z_1-z_2=m+n\tau$, $m,n\in\ZZ$, and then using the $S$-transformation $\tau\mapsto-\frac{1}{\tau}$ to relate these series expansions and derive identities of genus-one functions. In particular, to prove the rigidity assertion of Main Theorem \ref{mthm:rigidity}, one uses the pentagon and triangle identities for tensor categories and the cyclic symmetry of traces to identify a genus-one function that involves $\til{e}_W$ with another that involves $\Phi_W$ for any $V$-module $W$ (see Theorem \ref{thm:MSH_reln_1}). Using this identity, we then realize vectors in $\mathrm{Im}\,(\til{e}_W)'$ as the images under $\Phi_W$ of rather complicated vectors defined in terms of intertwining operators involving $W$ and modules $X$ appearing in the $S$-transformation of the character of $V$ (see Remark \ref{rem:coevaluation}). As discussed above, this proves Main Theorem \ref{mthm:rigidity}.

The assumption in Main Theorem \ref{mthm:rigidity} that $S(\mathrm{ch}_V)$ involves only characters of $V$-modules, and no pseudo-traces, probably should not be necessary. That is, the module category of a strongly finite vertex operator algebra should be rigid without this condition, as is conjectured in \cite{Hu-conj} and known for the triplet vertex operator algebras and related examples. But removing this condition may require a better understanding of two-point pseudo-trace functions than is currently available: A key step in proving the identities for Main Theorem \ref{mthm:rigidity} uses cyclic symmetry to switch the order of two intertwining operators in a trace, and it is not clear if this step remains valid in the pseudo-trace generality. See Remark \ref{rem:pt_obstruction} below for a discussion of this issue.

Now to prove Main Theorem \ref{mthm:factorizability}, we need to show that the braiding on the category $\cC$ of $V$-modules is nondegenerate if $\cC$ is rigid. In particular, the only rigid simple $V$-module $W$ such that $\cR_{W,X}^2=\Id_{W\tens X}$ for all $V$-modules $X$ should be $V$ itself. Showing this uses the fact that the series expansions of two-point genus-one functions about the singularities $z_1-z_2=\pm 1$ contain information about the double braiding on $\cC$. More precisely, we prove (in Theorem \ref{thm:S_ch_W}) a logarithmic conformal field theory generalization of a well-known formula from rational conformal field theory that relates the $S$-transformation $S(\mathrm{ch}_W)$ for any $V$-module $W$ to $S(\mathrm{ch}_V)$ (which may involve pseudo-traces) and the open Hopf link endomorphisms $h_{W,X}\in\mathrm{End}_V(X)$ for $V$-modules $X$ appearing in $S(\mathrm{ch}_V)$. See \eqref{eqn:open_Hopf_link} for a graphical definition of $h_{W,X}$. Once this formula is proved, it is easy to see that if $\cR_{W,X}^2=\Id_{W\tens X}$ for all $X$ appearing in $S(\mathrm{ch}_V)$, then $\mathrm{ch}_W$ is a multiple of $\mathrm{ch}_V$. As characters of distinct irreducible $V$-modules are linearly independent, this proves Main Theorem \ref{mthm:factorizability}.

\subsection{Outline}

We now summarize the remaining contents of this paper. Section \ref{sec:prelim} recalls definitions and notation from the theories of tensor categories, vertex operator algebras, and vertex algebraic tensor categories. To prepare for $W$-algebras in Subsection \ref{subsec:W-algebras}, Subsections \ref{subsec:VOAs} and \ref{subsec:vrtx_tens_cat} also contain results on the effects (or not) that different conformal vectors in a vertex operator algebra have on that vertex operator algebra's representation category. In Section \ref{sec:contragredient}, we assume $V$ is a self-contragredient vertex operator algebra and $\cC$ is a braided tensor category of $V$-modules closed under contragredients, and we derive a criterion for $V$-modules in $\cC$ to be rigid, with duals given by contragredients. This is the most heavily tensor-categorical section of the paper, containing substantial graphical calculus computation.

Section \ref{sec:expansions} reviews convergence and series expansion results for (pseudo-)traces of compositions of intertwining operators among modules for a $C_2$-cofinite vertex operator algebra $V$, which are genus-one correlation functions in the conformal field theory associated to $V$. Most of these results have appeared before, especially in \cite{Zh, Mi2, Hu-mod_inv, Hu-Verlinde, Fi, CM}, but we give detailed proofs for some series expansion results, to emphasize which single-valued branches of multivalued analytic functions are equal on intersections of convergence regions. 

We prove Main Theorems \ref{mthm:factorizability} through \ref{mthm:rationality} in Subsections \ref{subsec:factorizability} through \ref{subsec:rat}, respectively. Actually, we prove somewhat more general theorems in Subsections \ref{subsec:rig} and \ref{subsec:rat}: we assume that $V$ is the fixed-point subalgebra under a finite-order automorphism of a larger $C_2$-cofinite vertex operator algebra $A$ that has a semisimple Zhu algebra. This generalization is needed to prove rationality for $\frac{1}{2}\NN$-graded affine $W$-algebras in Subsection \ref{subsec:W-algebras}, and it also recovers Carnahan and Miyamoto's solution \cite{CM} to the orbifold rationality problem for finite cyclic automorphism groups.

Section \ref{sec:applications} applies Main Theorems \ref{mthm:rigidity} and \ref{mthm:rationality} to prove rationality for $C_2$-cofinite affine $W$-algebras and coset vertex operator algebras. In Subsection \ref{subsec:W-algebras}, we prove the conjecture of Kac-Wakimoto and Arakawa that $C_2$-cofinite affine $W$-algebras obtained via quantum Drinfeld-Sokolov reduction of admissible-level affine vertex operator algebras are strongly rational. For $\frac{1}{2}\NN$-graded $W$-algebras, the proof requires case-by-case checking of a Lie algebraic condition on certain nilpotent orbits, which is carried out in Appendix \ref{app:nilpotent_check}. In Subsection \ref{subsec:cosets}, we reduce the coset rationality problem to the problem of $C_2$-cofiniteness for the coset.

Finally, we note for the interested reader that an expanded version of this paper is available at \texttt{arXiv:2108.01898v2}. The expanded version contains a somewhat different exposition of Main Theorem \ref{mthm:rationality} as well as detailed proofs of a few elementary results from differential equations and Lie theory that are used in this paper.

\medskip

\noindent\textbf{Acknowledgments.} I would like to thank Thomas Creutzig for discussions on $W$-algebras and Yi-Zhi Huang for discussions on differential equations and genus-one correlation functions, and I would like to thank Florencia Orosz-Hunziker, Corina Calinescu, and Christopher Sadowski for opportunities to present preliminary versions of this work. I would also like to thank Toshiyuki Abe, Tomoyuki Arakawa, Thomas Creutzig, Bin Gui, Yi-Zhi Huang, and Jinwei Yang for comments on the literature. Finally, I would like to thank the referees for constructive comments and suggestions.

\section{Preliminaries}\label{sec:prelim}

In this section, we review definitions and notation from the theories of braided tensor categories, vertex operator algebras, and vertex tensor categories.

\subsection{Braided tensor categories}

In this paper, we use (\textit{$\CC$-linear}) \textit{tensor category} to mean a ($\CC$-linear) abelian category $\cC$ with a monoidal category structure such that the tensor product bifunctor $\tens$ is bilinear on morphisms; this is less restrictive than the definition of tensor category in \cite{EGNO}. We use $l$ and $r$ to denote the left and right unit isomorphisms in $\cC$,  and $\cA$ will denote the associativity isomorphisms. A tensor category $\cC$ is \textit{braided} if it has natural braiding isomorphisms $\cR$ that satisfy the hexagon axioms. In this subsection, $\vac$ will denote the unit object of a tensor category, though in later sections the unit object will be a vertex operator algebra denoted $V$ (and $\vac$ will be the vacuum vector in $V$).

If $\cC$ is a tensor category, then we say that an object $W$ in $\cC$ is \textit{rigid} if it has a \textit{dual} $(W^*, e_W, i_W)$ where the \textit{evaluation} $e_W: W^*\tens W\rightarrow\vac$ and \textit{coevaluation} $i_W: \vac\rightarrow W\tens W^*$ are such that both compositions
\begin{equation*}
 W\xrightarrow{l_W^{-1}} \vac\tens W\xrightarrow{i_W\tens\Id_W} (W\tens W^*)\tens W\xrightarrow{\cA_{W,W^*,W}^{-1}} W\tens(W^*\tens W)\xrightarrow{\Id_W\tens e_W} W\tens\vac\xrightarrow{r_W} W
\end{equation*}
and
\begin{align*}
 W^*\xrightarrow{r_{W^*}^{-1}}  W^*\tens\vac & \xrightarrow{\Id_{W^*}\tens i_W} W^*\tens (W\tens W^*)\nonumber\\
 &\xrightarrow{\cA_{W^*,W,W^*}} (W^*\tens W)\tens W^*\xrightarrow{e_W\tens\Id_{W^*}} \vac\tens W^*\xrightarrow{l_{W^*}} W^*
\end{align*}
are identities. We call the tensor category $\cC$ \textit{rigid} if every object in $\cC$ is rigid. Note that what we have here called a dual is more properly termed a \textit{left dual}, and in general a rigid object in a tensor category should have a similarly defined right dual as well.
However, in this paper, we will only work with braided tensor categories, in which left duals are also right duals. Thus in this paper, we make no distinction between left and right duals. In a rigid tensor category, duals define a contravariant functor from $\cC$ to itself, with the dual $f^*: W_2^*\rightarrow W_1^*$ of a morphism $f: W_1\rightarrow W_2$ defined to be the composition
\begin{align*}
 W_2^*\xrightarrow{r_{W_2^*}^{-1}} W_2^*\tens\vac\xrightarrow{\Id_{W_2^*}\tens i_{W_1}} & W_2^*\tens(W_1\tens W_1^*)\xrightarrow{\Id_{W_2^*}\tens(f\tens\Id_{W_1^*})} W_2^*\tens(W_2\tens W_1^*)\nonumber\\
 & \xrightarrow{\cA_{W_2^*,W_2,W_1^*}} (W_2^*\tens W_2)\tens W_1^*\xrightarrow{e_{W_2}\tens\Id_{W_1}^*} \vac\tens W_1^*\xrightarrow{l_{W_1^*}} W_1^*.
\end{align*}
We will, however, use a different characterization of dual morphisms in Section \ref{sec:contragredient} below.

In a braided tensor category $\cC$, the \textit{monodromy} is the natural double braiding isomorphism, which we denote $\cR^2$ by slight abuse of notation, that is, $\cR_{W_1,W_2}^2$ is the composition
\begin{equation*}
 W_1\tens W_2\xrightarrow{\cR_{W_1,W_2}} W_2\tens W_1\xrightarrow{\cR_{W_2,W_1}} W_1\tens W_2
\end{equation*}
for objects $W_1$ and $W_2$ in $\cC$. We then define a \textit{twist} to be a natural isomorphism $\theta:\Id_\cC\rightarrow\Id_\cC$ such that $\theta_\vac=\Id_\vac$ and the \textit{balancing equation} holds:
\begin{equation*}
 \theta_{W_1\tens W_2}=\cR^2_{W_1,W_2}\circ(\theta_{W_1}\tens\theta_{W_2})
\end{equation*}
for objects $W_1$ and $W_2$ in $\cC$. If $\cC$ is a rigid braided tensor category with twist $\theta$, we say that $\cC$ is a \textit{ribbon category} if $\theta_{W^*}=\theta_W^*$ for all objects $W$ in $\cC$.

Now following \cite{EGNO}, we define the notion of finite tensor category as follows:
\begin{defi}
 A $\CC$-linear tensor category $\cC$ is \textit{finite} if the following conditions hold:
 \begin{enumerate}
  \item The category $\cC$ is finite as a $\CC$-linear abelian category. This means:
  \begin{enumerate}
   \item All morphism spaces in $\cC$ are finite dimensional as $\CC$-vector spaces.
   \item Every object of $\cC$ has finite length.
   \item There are finitely many equivalence classes of simple objects in $\cC$.
   \item Every simple object in $\cC$ has a projective cover.
  \end{enumerate}
\item The tensor category $\cC$ is rigid, and moreover $\Endo_\cC \vac=\CC\,\Id_\vac$.
 \end{enumerate}
 \end{defi}
 
 A finite braided ribbon category is \textit{modular} if it is semisimple and its braiding is nondegenerate. There are several equivalent ways to define nondegeneracy, and Shimizu has recently shown \cite{Sh} that these formulations, suitably interpreted, remain equivalent when the semisimplicity assumption is dropped. We shall use the terminology of \cite{ENO} and call a finite braided ribbon category that satisfies any of these nondegeneracy conditions \textit{factorizable}; for our purposes, the most convenient definition of nondegeneracy is the following:
 \begin{defi}
A \textit{factorizable finite ribbon category} is a finite braided ribbon category $\cC$ with trivial M\"{u}ger center. That is, if an object $W$ in $\cC$ satisfies $\cR_{W,X}^2=\Id_{W\tens X}$ for all objects $X$ in $\cC$, then $W\cong\vac^{\oplus n}$ for some $n\in\NN$.

A \textit{modular tensor category} is a semisimple factorizable finite ribbon category.
\end{defi}

\begin{rem}
 Factorizable finite ribbon categories could also be called non-semisimple modular tensor categories, but we avoid this terminology because most of the factorizable finite ribbon categories we shall consider will turn out to be semisimple (though they will not be so \textit{a priori}).
\end{rem}

\subsection{Vertex operator algebras and modules}\label{subsec:VOAs}

We use the definition of vertex operator algebra $(V,Y_V,\vac,\omega)$ as well as standard vertex algebraic notation from \cite{FLM, LL}, except that in general we allow $V$ to be $\frac{1}{2}\ZZ$-graded by conformal weights: $V=\bigoplus_{n\in\frac{1}{2}\ZZ} V_{(n)}$. Note, however, that our main focus will be on $\NN$-graded vertex operator algebras. For $v\in V_{(n)}$, we write $\mathrm{wt}\,v=n$. The linear map 
\begin{align*}
 Y_V: V\otimes V & \rightarrow V((x))\nonumber\\
 u\otimes v & \mapsto Y_V(u,x)v=\sum_{n\in\ZZ} u_n v\,x^{-n-1}
\end{align*}
is the \textit{vertex operator map} of $V$, $\vac\in V_{(0)}$ is the \textit{vacuum vector} of $V$, and $\omega\in V_{(2)}$ is the \textit{conformal vector}. The modes of the vertex operator
\begin{equation*}
 Y(\omega, x)=\sum_{n\in\ZZ} L(n)\,x^{-n-2}
\end{equation*}
span a representation of the Virasoro Lie algebra on $V$ with \textit{central charge} $c\in\CC$, that is,
\begin{equation*}
 [L(m),L(n)]=(m-n)L(m+n)+\frac{m^3-m}{12}\delta_{m+n,0} c
\end{equation*}
for $m,n\in\ZZ$. For $n\in\frac{1}{2}\ZZ$, $V_{(n)}$ is the $L(0)$-eigenspace of eigenvalue $n$: $L(0)v=(\mathrm{wt}\,v)v$ for homogeneous $v\in V$. We recall the notion of grading-restricted generalized $V$-module:
\begin{defi}
 Let $V$ be a vertex operator algebra. A \textit{grading-restricted generalized $V$-module} $(W,Y_W)$ is a graded vector space $W=\bigoplus_{h\in\CC} W_{[h]}$ equipped with a vertex operator
 \begin{align*}
  Y_W: V & \rightarrow (\Endo W)[[x,x^{-1}]]\nonumber\\
   v & \mapsto Y_W(v,x)=\sum_{n\in\ZZ} v_n\,x^{-n-1}
 \end{align*}
which satisfies the following axioms:
\begin{enumerate}
 \item The \textit{grading restriction conditions}: For any $h\in\CC$, $W_{[h+n]}=0$ for $n\in\ZZ$ sufficiently negative, and moreover $\dim W_{[h]}<\infty$.
 \item \textit{Lower truncation}: For all $v\in V$ and $w\in W$, $v_n w=0$ for $n\in\ZZ$ sufficiently positive, that is, $Y_W(v,x)w\in W((x))$.
 \item The \textit{vacuum property}: $Y_W(\vac,x)=\Id_W$.
 \item The \textit{Jacobi identity}: For $v_1,v_2\in V$,
 \begin{align*}
  x_0^{-1}\delta\left(\frac{x_1-x_2}{x_0}\right)Y_W(v_1,x_1)Y_W(v_2,x_2) & -x_0^{-1}\delta\left(\frac{-x_2+x_1}{x_0}\right)Y_W(v_2,x_2)Y_W(v_1,x_1)\nonumber\\
  & = x_2^{-1}\delta\left(\frac{x_1-x_0}{x_2}\right)Y_W(Y_V(v_1,x_0)v_2,x_2).
 \end{align*}
 
 \item The \textit{$L(0)$-grading condition}: For all $h\in\CC$, $W_{[h]}$ is the generalized $L(0)$-eigenspace with generalized eigenvalue $h$.
\item The \textit{$L(-1)$-derivative property}: For $v\in V$,
\begin{equation*}
 \dfrac{d}{dx}Y_W(v,x)=Y_W(L(-1)v,x).
\end{equation*}
\end{enumerate}
\end{defi}

We shall frequently abbreviate the term ``grading-restricted generalized $V$-module'' as ``$V$-module,'' though note that some definitions require $L(0)$ to act semisimply on a $V$-module. In fact, $L(0)$ necessarily acts semisimply on irreducible $V$-modules, so we shall sometimes speak of ``irreducible grading-restricted $V$-modules.''

Given a grading-restricted generalized $V$-module $W$, the \textit{contragredient} $V$-module is constructed on the graded dual $W'=\bigoplus_{h\in\CC} W_{[h]}^*$ as in \cite{FHL}:
\begin{equation}\label{eqn:contra_vrtx_op}
 \left\langle Y_{W'}(v,x)w', w\right\rangle=\big\langle w', Y_W(e^{xL(1)}(-x^{-2})^{L(0)} v,x^{-1})w\big\rangle
\end{equation}
for $v\in V$, $w'\in W'$, and $w\in W$.

A \textit{weak $V$-module} $W$ satisfies all axioms of a grading-restricted generalized $V$-module except for all properties related to the grading. That is, a weak $V$-module need not be graded by generalized $L(0)$-eigenvalues. A weak $V$-module $W$ is \textit{$\NN$-gradable}, or \textit{admissible}, if it admits an $\NN$-grading $W=\bigoplus_{n\in\NN} W(n)$ such that
\begin{equation*}
v_n W(m)\subseteq W(m+\mathrm{wt}\,v-n-1)
\end{equation*}
for all homogeneous $v\in V$, $n\in\ZZ$, and $m\in\NN$ (such an $\NN$-grading need not be unique). All grading-restricted generalized $V$-modules are $\NN$-gradable since a suitable $\NN$-grading can be constructed out of the conformal weight $\CC$-grading. For $\frac{1}{2}\ZZ$-graded vertex operator algebras, the notion of $\frac{1}{2}\NN$-gradable weak $V$-module is defined similarly.

In Subsection \ref{subsec:W-algebras}, we will study affine $W$-algebras which are $\ZZ$- or $\frac{1}{2}\ZZ$-graded with respect to more than one conformal vector. Thus suppose $\omega$ and $\til{\omega}$ are two conformal vectors in a vertex operator algebra $V$ whose zero-modes $L(0)$ and $\til{L}(0)$, respectively, both give $V$ (possibly different) $\frac{1}{2}\ZZ$-gradings. Since both conformal vectors satisfy the $L(-1)$-derivative property of a vertex operator algebra, we have
\begin{equation*}
 \mathrm{Res}_x\,Y_V(\omega,x)=L(-1)=\mathrm{Res}_x\,Y_V(\til{\omega},x)
\end{equation*}
with $L(-1)v=v_{-2}\vac$ for $v\in V$ (see for example \cite[Proposition 3.1.18]{LL}). But we also want
\begin{equation*}
 \mathrm{Res}_x\,Y_W(\omega,x)=\mathrm{Res}_x\,Y_W(\til{\omega},x)
\end{equation*}
on any weak $V$-module $W$. In fact the $L(-1)$-derivative property for $W$,
\begin{equation*}
 Y_W(v_{-2}\vac,x)=\frac{d}{dx}Y_W(v,x),
\end{equation*}
implies that indeed $\mathrm{Res}_x\,Y_W(\til{\omega}-\omega,x)=0$ if $\til{\omega}=\omega+v_{-2}\vac$ for some $v\in V$. Thus we will always assume this relation between different conformal vectors in a vertex operator algebra.

We will also want generalized $L(0)$- and $\til{L}(0)$-eigenvalue gradings of weak $V$-modules to be compatible, and for this it will be sufficient to assume
\begin{equation*}
 [L(0),\til{L}(0)]=0
\end{equation*}
on any weak $V$-module. Given $\til{\omega}=\omega+v_{-2}\vac$ for some $v\in V$, the Jacobi identity implies that 
\begin{equation*}
 [L(0),\til{L}(0)]=[v_0,L(0)]=(v_0\omega)_1.
\end{equation*}
So it is enough to assume $v_0\omega=0$. Under this assumption, we have:
\begin{lem}\label{lem:diff_omega_gradings}
 Let $V$ be a $\frac{1}{2}\ZZ$-graded vertex operator algebra with respect to either of two conformal vectors $\omega$ and $\til{\omega}=\omega+v_{-2}\vac$, where $v\in V$ satisfies $v_0\omega=0$. If $W$ is a grading-restricted generalized $V$-module with respect to $\omega$, then $W$ is the direct sum of simultaneous generalized $L(0)$- and $\til{L}(0)$-eigenspaces, where $L(0) =\mathrm{Res}_x\,xY_W(\omega,x)$ and $\til{L}(0)=\mathrm{Res}_x\,xY_W(\til{\omega},x)$.
\end{lem}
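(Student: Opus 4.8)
The plan is to show that $W$ decomposes as a direct sum of $\til{L}(0)$-stable subspaces given by the generalized $L(0)$-eigenspaces, and that $\til{L}(0)$ acts on each such subspace with a generalized-eigenspace decomposition of its own; since these two decompositions commute, the simultaneous generalized eigenspaces will do the job. The key inputs are: (i) $\til{L}(0)=L(0)+\sum_{j}(v_{-2}\vac)_{-j+1}x^{j-1}$-type relations, more precisely the explicit formula for the zero-mode of $Y_W(v_{-2}\vac,x)$ coming from the $L(-1)$-derivative property; and (ii) the already-established commutator $[L(0),\til{L}(0)]=(v_0\omega)_1=0$ under the hypothesis $v_0\omega=0$.

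First I would write $\til{L}(0)=L(0)+N$, where $N=\mathrm{Res}_x\,xY_W(v_{-2}\vac,x)=\mathrm{Res}_x\,x\frac{d}{dx}Y_W(v,x)$. An integration-by-parts computation in the formal variable $x$ gives $N=\mathrm{Res}_x\,x\frac{d}{dx}Y_W(v,x)=-\mathrm{Res}_x\,Y_W(v,x)=-v_0$, so in fact $\til{L}(0)=L(0)-v_0$. (Here I use $\mathrm{Res}_x\,\frac{d}{dx}(xY_W(v,x))=0$.) Since $[L(0),\til{L}(0)]=0$ by hypothesis, we get $[L(0),v_0]=0$, so $v_0$ preserves each generalized $L(0)$-eigenspace $W_{[h]}$. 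Because $W$ is grading-restricted with respect to $\omega$, each $W_{[h]}$ is finite dimensional and $W=\bigoplus_{h}W_{[h]}$.

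Next I would analyze $\til{L}(0)=L(0)-v_0$ on the finite-dimensional space $W_{[h]}$. On $W_{[h]}$, the operator $L(0)$ acts as the scalar-plus-nilpotent $h\,\Id+(L(0)-h)$, so $\til{L}(0)|_{W_{[h]}}$ is a finite-dimensional operator and hence has a generalized-eigenspace decomposition $W_{[h]}=\bigoplus_{\til{h}}W_{[h]}^{[\til{h}]}$, where $W_{[h]}^{[\til{h}]}$ is the generalized $\til{L}(0)$-eigenspace with eigenvalue $\til{h}$ inside $W_{[h]}$. Because $[L(0),\til{L}(0)]=0$, these spaces are $L(0)$-stable, so they are exactly the simultaneous generalized eigenspaces. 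Taking the direct sum over all $h$ and $\til{h}$ gives the desired decomposition of $W$. I would also note, to make contact with the lemma's claim that $\til{\omega}$ gives a $\tfrac12\ZZ$-grading, that the set of $\til{L}(0)$-eigenvalues on $W$ lies in a single coset of $\tfrac12\ZZ$ shifted appropriately — but strictly speaking the lemma as stated only asks for the simultaneous generalized-eigenspace decomposition, so this is a remark rather than a required step.

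The main obstacle is the bookkeeping in step two: one must be careful that $v_0$ really does preserve the finite-dimensional blocks $W_{[h]}$ and that no convergence/lower-truncation issue interferes — but lower truncation gives $v_0\in\Endo W$ as a well-defined operator, and the commutator identity $[L(0),v_0]=(v_0\omega)_1=0$ (already derived in the excerpt via the Jacobi identity) is exactly what pins $v_0$ inside each $W_{[h]}$. Once that is in hand, everything else is linear algebra on finite-dimensional spaces. A secondary point worth spelling out carefully is the identity $N=-v_0$; one should double-check the sign using $Y_W(L(-1)v,x)=\frac{d}{dx}Y_W(v,x)$ applied with $L(-1)v=v_{-2}\vac$, together with $\mathrm{Res}_x\,x\,\partial_x f(x)=-\mathrm{Res}_x\,f(x)$ for $f(x)\in W((x))$.
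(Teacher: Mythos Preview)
Your proof is correct and follows essentially the same approach as the paper: use $[L(0),\til{L}(0)]=0$ to conclude that each finite-dimensional generalized $L(0)$-eigenspace $W_{[h]}$ is $\til{L}(0)$-stable, then apply linear algebra on finite-dimensional spaces. Your explicit computation of $\til{L}(0)=L(0)-v_0$ is correct but unnecessary for the argument, since the commutator identity $[L(0),\til{L}(0)]=(v_0\omega)_1=0$ is already established in the text preceding the lemma and can be used directly.
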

\begin{proof}
 By hypothesis, $W=\bigoplus_{h\in\CC} W_{[h]}$ where $W_{[h]}$ is the finite-dimensional generalized $L(0)$-eigenspace with generalized eigenvalue $h$. Then because our assumptions on $\til{\omega}$ imply that $\til{L}(0)$ commutes with $L(0)$, each $W_{[h]}$ is $\til{L}(0)$-stable. Thus because $W_{[h]}$ is finite dimensional, it decomposes as the direct sum of generalized $\til{L}(0)$-eigenspaces.
\end{proof}

Lemma \ref{lem:diff_omega_gradings} will show that the graded dual vector space of a grading-restricted generalized $V$-module is independent of the conformal vector. Thus in the setting of the lemma, let $W$ be a grading-restricted generalized $V$-module with respect to both conformal vectors $\omega$ and $\til{\omega}$, with generalized $L(0)$-eigenspace decomposition $W=\bigoplus_{h\in\CC} W_{[h]}$ and generalized $\til{L}(0)$-eigenspace decomposition $W=\bigoplus_{k\in\CC} W_{\lbrace k\rbrace}$. Then both graded duals $\bigoplus_{h\in\CC} W_{[h]}^*$ and $\bigoplus_{k\in\CC} W_{\lbrace k\rbrace}^*$ embed into the full dual vector space $W^*$ in the obvious way, and we have:
\begin{prop}\label{prop:graded_duals_same}
 In the setting of Lemma \ref{lem:diff_omega_gradings}, let $W$ be a grading-restricted generalized $V$-module with respect to both conformal vectors $\omega$ and $\til{\omega}$. Then the graded duals of $W$ with respect to the generalized $L(0)$- and $\til{L}(0)$-eigenvalue gradings embed as the same vector subspace of $W^*$.
\end{prop}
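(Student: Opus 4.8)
The plan is to refine both gradings to the single grading of $W$ by \emph{simultaneous} generalized eigenspaces of $L(0)$ and $\til L(0)$, and then to observe that taking the dual of a \emph{finite}-dimensional graded piece commutes with its further decomposition, so that both graded duals are realized as the same subspace of $W^*$.

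First I would apply Lemma \ref{lem:diff_omega_gradings}: since $[L(0),\til L(0)]=0$ on $W$, it gives
\begin{equation*}
 W=\bigoplus_{h,\til h\in\CC} W_{[h],(\til h)},\qquad W_{[h],(\til h)}:=W_{[h]}\cap W_{(\til h)},
\end{equation*}
where $W_{[h],(\til h)}$ is the simultaneous generalized eigenspace with eigenvalue $h$ for $L(0)$ and $\til h$ for $\til L(0)$. Each $W_{[h],(\til h)}$ is finite dimensional, being contained in the finite-dimensional space $W_{[h]}$; thus for fixed $h$ only finitely many $\til h$ contribute, and --- because $W$ is grading-restricted with respect to $\til\omega$ as well --- for fixed $\til h$ only finitely many $h$ contribute. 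I would then note that for each fixed $h$ the finite decomposition $W_{[h]}=\bigoplus_{\til h} W_{[h],(\til h)}$ dualizes to $W_{[h]}^*=\bigoplus_{\til h}(W_{[h],(\til h)})^*$, so summing over $h$,
\begin{equation*}
 \bigoplus_{h\in\CC}W_{[h]}^*=\bigoplus_{h,\til h\in\CC}(W_{[h],(\til h)})^*
\end{equation*}
as subspaces of $W^*$, where $(W_{[h],(\til h)})^*$ is embedded in $W^*$ by extending a functional by zero on all other simultaneous eigenspaces. By the symmetric argument, using grading restriction with respect to $\til\omega$ to see that $W_{(\til h)}=\bigoplus_h W_{[h],(\til h)}$ is a finite decomposition, the $\til L(0)$-graded dual $\bigoplus_{\til h\in\CC} W_{(\til h)}^*$ equals the same subspace $\bigoplus_{h,\til h\in\CC}(W_{[h],(\til h)})^*$ of $W^*$, with the very same embedding of each $(W_{[h],(\til h)})^*$.

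The only thing needing care --- and it is not a real obstacle --- is the bookkeeping: one must check that a functional on $W_{[h],(\til h)}$ is sent to the same element of $W^*$ whether its image is formed inside $\bigoplus_h W_{[h]}^*$ or inside $\bigoplus_{\til h} W_{(\til h)}^*$. This is immediate because both embeddings factor through the common refinement $W=\bigoplus_{h,\til h} W_{[h],(\til h)}$ by extension-by-zero. The argument is otherwise pure linear algebra, with the two finite-dimensionality hypotheses ($\dim W_{[h]}<\infty$ and $\dim W_{(\til h)}<\infty$) used exactly to pass dualization through the two direct-sum decompositions; this is precisely why the proposition, unlike Lemma \ref{lem:diff_omega_gradings}, assumes $W$ is grading-restricted with respect to both conformal vectors.
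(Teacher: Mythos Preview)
Your proof is correct and follows essentially the same approach as the paper: both use Lemma~\ref{lem:diff_omega_gradings} to refine to the simultaneous generalized eigenspace decomposition $W=\bigoplus_{h,\til h} W_{[h]}\cap W_{(\til h)}$, then use finite-dimensionality of $W_{[h]}$ and $W_{(\til h)}$ to identify both graded duals with $\bigoplus_{h,\til h}(W_{[h]}\cap W_{(\til h)})^*$ inside $W^*$. The paper phrases this as a pair of containments $\bigoplus_h W_{[h]}^*\subseteq\bigoplus_{\til h} W_{(\til h)}^*$ and conversely, while you identify both with the common refinement directly, but the argument is the same.
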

\begin{proof}
 By Lemma \ref{lem:diff_omega_gradings}, $W=\bigoplus_{h,k\in\CC} W_{[h]}\cap W_{\lbrace k\rbrace}$, so for any $h\in\CC$, $W_{[h]}=\bigoplus_{k\in\CC} W_{[h]}\cap W_{\lbrace k\rbrace}$. Since $\dim W_{[h]}<\infty$, the set of $k$ such that $W_{[h]}\cap W_{\lbrace k\rbrace}\neq 0$ is finite, so as subspaces of $W^*$,
 \begin{equation*}
  W_{[h]}^* =\bigoplus_{k\in\CC} (W_{[h]}\cap W_{\lbrace k\rbrace})^*\subseteq\bigoplus_{k\in\CC} W_{\lbrace k\rbrace}^*.
 \end{equation*}
So $\bigoplus_{h\in\CC} W_{[h]}^*\subseteq\bigoplus_{k\in\CC} W_{\lbrace k\rbrace}^*$ as subspaces of $W^*$, and $\bigoplus_{k\in\CC} W_{\lbrace k\rbrace}^*\subseteq\bigoplus_{h\in\CC} W_{[h]}^*$  similarly.
\end{proof}

Given a $\frac{1}{2}\ZZ$-graded vertex operator algebra $(V,Y_V,\vac,\omega)$, there is a well-known construction of a conformal vector $\til{\omega}$ satisfying the assumptions of Lemma \ref{lem:diff_omega_gradings}: we take $v$ such that
\begin{equation}\label{eqn:new_conf_cond}
 L(n)v=\delta_{n,0} v,\quad v_n v=k\delta_{n,1}\vac
\end{equation}
for all $n\geq 0$ and for some $k\in\CC$. In particular, $v$ is a Virasoro primary vector of conformal weight $1$ (for the representation of the Virasoro algebra on $V$ induced by $\omega$), and moreover $v$ generates a Heisenberg vertex subalgebra of $V$ of level $k$. The condition $v_0\omega=0$ holds by skew-symmetry for vertex operator algebras (see for example \cite[Proposition 3.1.19]{LL}):
\begin{align*}
 v_0\omega=\sum_{i\geq 0} (-1)^{i+1}\frac{L(-1)^i}{i!}\omega_i v =-L(-1)v+L(-1)L(0)v=0.
\end{align*}
Now $\til{\omega}=\omega+v_{-2}\vac$ has vertex operator $Y_V(\til{\omega},x)=\sum_{n\in\ZZ} \til{L}(n)\,x^{-n-2}$ where
\begin{equation*}
 \til{L}(n)=L(n)-(n+1)v_n
\end{equation*}
for $n\in\ZZ$, and a straightforward computation shows that the operators $\til{L}(n)$ satisfy the Virasoro algebra commutation relations with central charge $c-12k$ (where $c$ is the central charge of $V$ with respect to $\omega$). In particular, as long as $V$ is still suitably $\frac{1}{2}\ZZ$-graded by $\til{L}(0)$-eigenvalues, $\til{\omega}$ is another conformal vector in $V$.

We now discuss the Zhu algebra of a vertex operator algebra. For $V$ an $\NN$-graded vertex operator algebra, Zhu showed \cite{Zh} that irreducible $\NN$-gradable weak $V$-modules are in one-to-one correspondence with irreducible modules for a certain associative algebra $A(V)$. As a vector space, $A(V)=V/O(V)$ where
\begin{equation*}
 O(V)=\mathrm{span}\left\lbrace \mathrm{Res}_x\,x^{-2} Y_V\big((1+x)^{L(0)}u,x\big)v\,\,\big\vert\,\,u,v\in V\right\rbrace,
\end{equation*}
and the associative product on $A(V)$ is induced from the product
\begin{equation*}
 u * v = \mathrm{Res}_x\,x^{-1}Y_V\big((1+x)^{L(0)} u,x\big)v.
\end{equation*}
on $V$. The unit of $A(V)$ is $\vac+O(V)$. If $W=\bigoplus_{n\in\NN} W(n)$ is an $\NN$-gradable weak $V$-module, then \cite[Theorem 2.1.2]{Zh} shows that the \textit{top level} $W(0)$ is an $A(V)$-module with action
\begin{equation*}
 (v+O(V))\cdot w= o(v)w
\end{equation*}
for $v\in V$ and $w\in W(0)$, where $o(v)=\mathrm{Res}_x\,x^{-1}Y_W(x^{L(0)}v,x)$ is the degree-preserving component of $Y_W(v,x)$. If $v$ is homogeneous, then $o(v)=v_{\mathrm{wt}\,v-1}$. Since an indecomposable grading-restricted generalized $V$-module has a conformal weight grading of the form $W=\bigoplus_{n\in\NN} W_{[h_W+n]}$ for a unique \textit{conformal dimension} $h_W\in\CC$, the lowest conformal weight space $W_{[h_W]}$ is a finite-dimensional $A(V)$-module. If $W$ is an irreducible $V$-module, then $W_{[h_W]}$ is an irreducible $A(V)$-module.

The Zhu algebra $A(V)$ can be defined in the same way when $V$ is a $\frac{1}{2}\NN$-graded vertex operator algebra, but now there is one-to-one correspondence between irreducible $A(V)$-modules and irreducible $\NN$-gradable weak Ramond twisted $V$-modules \cite{DSK}. These are twisted $V$-modules for the automorphism of $V$ that acts as the identity on the $\NN$-graded vertex operator subalgebra and as $-1$ on the $(\NN+\frac{1}{2})$-graded subspace.

Because the definition of the Zhu algebra of $V$ depends on $L(0)$, and thus also on the choice of conformal vector in $V$, we will sometimes denote it by $A(V,\omega)$ to emphasize this dependence. However:
\begin{prop}\label{prop:Zhu_algebra_iso}
 Let $V$ be a vertex operator algebra which is $\frac{1}{2}\NN$-graded with respect to two conformal vectors $\omega$ and $\til{\omega}=\omega+v_{-2}\vac$, where $v\in V$ satisfies \eqref{eqn:new_conf_cond}. Then the Zhu algebras $A(V,\omega)$ and $A(V,\til{\omega})$ are isomorphic as unital associative algebras.
\end{prop}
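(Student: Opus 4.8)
The plan is to exhibit the isomorphism concretely, as a linear automorphism of $V$ assembled from the modes of $v$ --- essentially Li's $\Delta$-operator, specialized to the data that defines the Zhu algebra. Recall from the discussion preceding the proposition that $\til L(n)=L(n)-(n+1)v_n$, so that $\til L(-1)=L(-1)$ (hence $O(V,\til\omega)$ and the product $*_{\til\omega}$ are built from the same residue formalism as for $\omega$) and $\til L(0)=L(0)-v_0$, where $[L(0),v_0]=0$ because $v_0\omega=0$. Since $V=\bigoplus_n V_{(n)}$ with each $V_{(n)}$ finite dimensional, and since $L(0)$ and $\til L(0)$ act semisimply on $V$ --- being the grading operators of the two vertex operator algebra structures --- the operator $v_0=L(0)-\til L(0)$ acts semisimply on $V$ with eigenvalues in $\tfrac12\ZZ$; for $u$ homogeneous of $L(0)$-weight $a$ and $v_0$-eigenvalue $j$, its $\til\omega$-weight is $a-j$.

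Following Li, I would work with the operator
\begin{equation*}
 \Delta(z)=z^{v_0}\exp\Big(\sum_{n\geq 1}\frac{v_n}{-n}(-z)^{-n}\Big)
\end{equation*}
on $V$, which is well-defined because on each homogeneous $u$ lower truncation kills all but finitely many $v_nu$, while $z^{v_0}$ acts on each $v_0$-eigenspace by a scalar power of $z$. Li's basic identity expresses that $\Delta(z)$ intertwines $Y_V(\cdot,x)$ with its conjugate-and-shift, interchanging the conformal structures attached to $\omega$ and $\til\omega$; restricting this identity to the setting of Zhu's construction --- where the formal variable sits inside $(1+x)^{L(0)}$ and residues are taken at $x=0$ --- I expect it to yield a linear automorphism $\Delta$ of $V$ (a specialization of $\Delta(z)$, built from the modes $v_n$ with $n\geq 0$, and fixing $\vac$ since $v_n\vac=0$ for $n\geq 0$) together with the key property that conjugation by $\Delta$ turns the operator $(1+x)^{L(0)}$ into $(1+x)^{\til L(0)}$, up to the scalar coming from the $z^{v_0}$-factor.

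Granting this, I would then verify that $u+O(V,\omega)\mapsto\Delta u+O(V,\til\omega)$ is the desired unital algebra isomorphism $A(V,\omega)\to A(V,\til\omega)$ in three steps: (i) $\Delta\big(O(V,\omega)\big)=O(V,\til\omega)$; (ii) $\Delta(u*_\omega w)\equiv\Delta u*_{\til\omega}\Delta w\pmod{O(V,\til\omega)}$ for all $u,w\in V$; (iii) $\Delta\vac=\vac$. Step (iii) is immediate, and (i), (ii) come from the above intertwining property by commuting $\Delta$ past the vertex operators inside the defining residues and matching the $(1+x)^{L(0)}$-factors with the $(1+x)^{\til L(0)}$-factors, using $\til{\mathrm{wt}}\,u=\mathrm{wt}\,u-j$ on $v_0$-eigenvectors. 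Since $\Delta$ is a linear automorphism of $V$ that by (i) carries $O(V,\omega)$ onto $O(V,\til\omega)$, it descends to a bijection on the quotients, and with (ii)--(iii) this gives the isomorphism.

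I expect the main obstacle to be the formal-variable bookkeeping in (i) and (ii): one must commute the two-sided exponential $\Delta(z)$ through the formal vertex operators, reconcile the resulting expansions, and track how the evaluation point of the $z^{v_0}$-factor interacts with the $(1+x)$-argument --- and, because the $\tfrac12\NN$-graded (Ramond-twisted) case allows half-integral $v_0$-eigenvalues, one must make sure no branch ambiguities creep in, arranging the intermediate identities so that only integer powers of $z$, or the unambiguous action of $z^{v_0}$ on a fixed eigenspace, ever appear. One should also check that Li's identities, sometimes stated under the hypothesis that $v_0$ has integral spectrum on the modules in question, apply here; they do, since the Zhu algebra sees only $V$ itself, on which $v_0=L(0)-\til L(0)$ has spectrum governed by the two given gradings of $V$.
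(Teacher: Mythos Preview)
Your proposal is correct and takes essentially the same approach as the paper: the paper does not give its own proof but simply cites \cite[Section 5]{Ar-rat} and states that the isomorphism $A(V,\til\omega)\to A(V,\omega)$ is induced by Li's $\Delta$-operator $\Delta(v,1)=\exp\big(\sum_{n\geq 1}\frac{(-1)^{n+1}}{n}v_n\big)$, which is exactly your $\Delta(z)$ specialized at $z=1$ (where the $z^{v_0}$-factor disappears, so your branch-ambiguity worry is moot). The only cosmetic difference is the direction of the isomorphism, which is immaterial.
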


See \cite[Section 5]{Ar-rat} for a proof of this proposition. The isomorphism $A(V,\til{\omega})\rightarrow A(V,\omega)$ is induced by Li's $\Delta$-operator
\begin{equation*}
\Delta(v,1) =\exp\bigg(\sum_{n=1}^\infty\frac{(-1)^{n+1}}{n}v_n\bigg)
\end{equation*}
introduced in \cite[Section 2]{Li-spectral-flow}.

A $\frac{1}{2}\ZZ$-graded vertex operator algebra $V$ is \textit{strongly rational} if it satisfies the following:
\begin{enumerate}
 \item $V$ has \textit{positive energy}, or \textit{CFT type}: $V_{(n)}=0$ for all $n<0$ and $V_{(0)}=\CC\vac$.
 \item $V$ is \textit{self-contragredient}: As $V$-modules, $V\cong V'$; equivalently, there is a nondegenerate invariant bilinear form $V\times V\rightarrow\CC$.
 \item $V$ is \textit{$C_2$-cofinite}: $\dim V/C_2(V)<\infty$, where
 \begin{equation*}
  C_2(V)=\mathrm{span}\left\lbrace u_{-2} v\,\vert\,u,v\in V\right\rbrace.
 \end{equation*}
 \item $V$ is \textit{rational}: Every $\frac{1}{2}\NN$-gradable weak $V$-module is isomorphic to a direct sum of irreducible grading-restricted $V$-modules.
\end{enumerate}
The first two conditions imply that $V$ is a simple vertex operator algebra: Any proper ideal $I\subsetneq V$ must intersect the generating subspace $V_{(0)}=\CC\vac$ trivially. From the characterization of invariant bilinear forms on $V$ in \cite[Theorem 3.1]{Li}, this means that $I$ is contained in the radical of any such bilinear form. But $V$ has a nondegenerate invariant bilinear form since $V$ is self-contragredient, so $I=0$ and $V$ is simple. 

If we drop rationality from the definition of strongly rational vertex operator algebra, we get vertex operator algebras which are called ``strongly finite'' in \cite{CG}. Since we will sometimes need to relax the positive energy condition, in this paper, we will say that a vertex operator algebra is \textit{strongly finite} if it is $\NN$-graded, simple, self-contragredient, and $C_2$-cofinite.

Zhu showed in \cite[Theorem 2.2.3]{Zh} that if $V$ is rational, then $A(V)$ is a finite-dimensional semisimple associative algebra. If $V$ is $C_2$-cofinite, then $A(V)$ is finite dimensional \cite[Proposition 3.6]{DLM} but not necessarily semisimple. Our main result in this paper will be a partial converse of the first statement: If $V$ is strongly finite and $A(V)$ is semisimple, then $V$ is rational. Thus we now review some properties of $\NN$-graded $C_2$-cofinite vertex operator algebras. The following spanning set result is \cite[Lemma 2.4]{Mi2}, and the $W=V$, $w=\mathbf{1}$ case is \cite[Proposition 8]{GN}:
\begin{lem}\label{lem:Miy_spanning_set}
 Let $V$ be an $\NN$-graded $C_2$-cofinite vertex operator algebra and let $T\subseteq V$ be a finite-dimensional graded subspace such that $V=T+C_2(V)$. Then for any weak $V$-module $W$ and any $w\in W$, the submodule generated by $w$ has the following spanning set:
 \begin{equation*}
  \langle w\rangle =\mathrm{span}\big\lbrace v^{(1)}_{n_1}\cdots v^{(k)}_{n_k} w\,\,\big\vert\,\, v^{(1)},\ldots, v^{(k)}\in T,\, n_1<\ldots < n_k\big\rbrace.
 \end{equation*}
In particular,
\begin{equation*}
 V =\mathrm{span}\big\lbrace v^{(1)}_{-n_1}\cdots v^{(k)}_{-n_k}\vac\,\,\big\vert\,\,v^{(1)},\ldots,v^{(k)}\in T,\,n_1>\cdots>n_k>0\big\rbrace.
\end{equation*}
\end{lem}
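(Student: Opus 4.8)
The submodule $\langle w\rangle$ is spanned by all iterated products $a^{(1)}_{m_1}\cdots a^{(\ell)}_{m_\ell}w$ with $a^{(i)}\in V$ and $m_i\in\ZZ$, so it suffices to rewrite each such monomial as a linear combination of the asserted spanning vectors. The two engines of the rewriting are the consequences of the Jacobi identity: the commutator formula $[a_m,b_n]=\sum_{i\geq 0}\binom{m}{i}(a_ib)_{m+n-i}$ and the iterate (associativity) formula
\begin{equation*}
 (a_pb)_q=\sum_{i\geq 0}(-1)^i\binom{p}{i}\big(a_{p-i}b_{q+i}-(-1)^pb_{p+q-i}a_i\big).
\end{equation*}
A structural point used everywhere: although these sums are a priori infinite, lower truncation in the weak module $W$ together with the fact that $a_ib=0$ for $i$ sufficiently positive in $V$ makes every sum finite once it is applied to a vector, so all the manipulations below make sense for an arbitrary weak module.

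\textbf{Reduction into $T$.} Since $\mathrm{wt}(u_{-2}v)=\mathrm{wt}(u)+\mathrm{wt}(v)+1\geq 1$, we have $C_2(V)_{(0)}=0$, hence $V_{(0)}\subseteq T$; so any $a^{(i)}$ not lying in $T$ may be taken homogeneous of positive weight. If $a^{(i)}\notin T$, write $a^{(i)}=t+\sum_j u^{(j)}_{-2}z^{(j)}$ with $t\in T$ and, by the grading, $\mathrm{wt}(u^{(j)}),\mathrm{wt}(z^{(j)})<\mathrm{wt}(a^{(i)})$, and use the iterate formula with $p=-2$ to replace each mode $(u^{(j)}_{-2}z^{(j)})_{m_i}$ by a finite sum of two-mode expressions in $u^{(j)}$ and $z^{(j)}$. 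This strictly decreases (in the multiset order) the multiset of weights of vertex algebra elements not already in $T$, so after finitely many steps every monomial has all its vertex algebra elements in $T$.

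\textbf{Ordering the modes.} Starting from $v^{(1)}_{n_1}\cdots v^{(k)}_{n_k}w$ with all $v^{(i)}\in T$, we make the index sequence strictly increasing. If $n_i>n_{i+1}$, the commutator formula swaps $v^{(i)}_{n_i}$ and $v^{(i+1)}_{n_{i+1}}$ modulo a finite sum of monomials in which these two modes are replaced by a single mode $(v^{(i)}_jv^{(i+1)})_{n_i+n_{i+1}-j}$, $j\geq 0$: the swapped monomial has the same weight-sum $\sum_i\mathrm{wt}(v^{(i)})$ but one fewer inversion, while each correction term has strictly smaller weight-sum (as $\mathrm{wt}(v^{(i)}_jv^{(i+1)})<\mathrm{wt}(v^{(i)})+\mathrm{wt}(v^{(i+1)})$ for $j\geq0$) and is then reduced back into $T$ as above. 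If $n_i=n_{i+1}=n$, we read off from the iterate formula applied to the normal-ordered product $v^{(i)}_{-1}v^{(i+1)}$ the unique coefficient of $(v^{(i)}_{-1}v^{(i+1)})_{2n+1}$ (respectively, of $(v^{(i+1)}_{-1}v^{(i)})_{2n+1}$ together with a commutator, when $n\geq0$) whose two modes carry the coinciding index $n$; solving for $v^{(i)}_nv^{(i+1)}_n$ writes it as the single mode $(v^{(i)}_{-1}v^{(i+1)})_{2n+1}$ (one fewer mode, reduced back into $T$), plus finitely many two-mode terms all of which have unequal indices, plus, when $n\geq0$, single modes of vertex algebra elements of strictly smaller weight. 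Organizing all of these moves as a single well-founded induction — lexicographically on the weight-sum, then the number of modes, then a suitable combinatorial statistic of the index sequence — produces the asserted strictly-increasing spanning set. The ``in particular'' statement is then the case $W=V$, $w=\vac$: since $v_n\vac=0$ for $n\geq0$, any surviving spanning monomial $v^{(1)}_{n_1}\cdots v^{(k)}_{n_k}\vac$ has $n_k<0$, hence all $n_i<0$ by strict monotonicity, and writing $n_i=-N_i$ gives $N_1>\cdots>N_k>0$.

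\textbf{Main obstacle.} The genuinely delicate step is eliminating pairs of equal adjacent indices: a commutator is useless there (indeed impossible when the two vectors of $T$ coincide), so one must isolate precisely the right identity — here a single coefficient of a mode of a normal-ordered product — that trades $v_nv'_n$ for a single mode plus terms with strictly unequal indices. The real work is then choosing the induction measure so that this identity, the commutator reordering, and the reduction into $T$ all simultaneously decrease it; once the correct lexicographic statistic is fixed, the remaining verifications are routine applications of the Jacobi identity and lower truncation.
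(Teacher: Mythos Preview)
The paper does not prove this lemma; it attributes the result to \cite[Lemma~2.4]{Mi2} (and \cite[Proposition~8]{GN} for the $W=V$ case) without reproducing the argument. Your outline follows the same strategy as those references.

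Your reduction into $T$ and your commutator swap are correct, with clean well-founded measures. The gap is exactly where you flag it: the equal-index elimination. You assert there is ``a suitable combinatorial statistic of the index sequence'' that all three moves decrease, but you never name one, and the obvious candidates fail. Your rewriting of $a_n b_n$ replaces the local pair $(n,n)$ by pairs $(m,m')$ with $m+m'=2n$ and $m\neq m'$; inserted into a longer monomial these can create \emph{new} inversions or equalities with the neighbouring indices $n_{i-1},n_{i+2}$. Concretely, in $c_0\,a_1\,b_1\,d_2\,w$ the $i=0$ term of your first sum produces the index sequence $(0,-1,3,2)$: one equality has become two inversions, so ``inversions plus equalities'' has gone up. No purely local permutation statistic survives this.

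What actually closes the gap is to pass first to the quotient by monomials of strictly smaller weight-sum or fewer modes; your own commutator computation shows the modes commute there, so a monomial is determined by its multiset of indices. In that quotient the equal-index move preserves $\sum_i n_i$ while strictly increasing $\sum_i n_i^2$, since $m^2+m'^2-2n^2=\tfrac12(m-m')^2>0$. Termination then follows because any multiset whose largest index exceeds the uniform lower-truncation bound $N_0$ for the finite-dimensional $T$ acting on the fixed vector $w$ already represents zero in the quotient (choose the representative with that largest index rightmost, acting directly on $w$), so all surviving indices lie in a bounded window and $\sum_i n_i^2$ is bounded above. This pair of observations---the quadratic statistic together with the truncation bound in the associated-graded picture---is precisely the content your sketch defers; with it supplied, the argument is complete.
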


\begin{rem}\label{rem:span_set_general}
 The proof of Lemma \ref{lem:Miy_spanning_set} given in \cite{Mi2} remains valid when $V$ is a $\frac{1}{2}\NN$-graded $C_2$-cofinite vertex operator algebra, and also when the conformal weight spaces of $V$ are \textit{a priori} possibly infinite dimensional. Then the $W=V$ case of Lemma \ref{lem:Miy_spanning_set} shows that the conformal weight spaces of a $\frac{1}{2}\NN$-graded $C_2$-cofinite vertex operator algebra are actually always finite dimensional.
\end{rem}

Now as defined in \cite{CM}, $V$ is \textit{$C_2^0$-cofinite} if $\dim V/C_2^0(V)<\infty$, where 
\begin{equation*}
 C_2^0(V)=\mathrm{span}\bigg\lbrace u_{-2} v\,\,\bigg\vert\,\,u\in\bigoplus_{n>0} V_{(n)}, v\in V\bigg\rbrace.
\end{equation*}
Although $C_2^0$-cofiniteness might appear to be stronger than $C_2$-cofiniteness in general, Lemma \ref{lem:Miy_spanning_set} shows they are equivalent at least for $\NN$-graded vertex operator algebras:
\begin{lem}\label{lem:C20_cofin}
 If $V$ is an $\NN$-graded $C_2$-cofinite vertex operator algebra, then $V$ is $C_2^0$-cofinite.
\end{lem}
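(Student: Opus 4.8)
The plan is to reduce the statement, via the conformal weight grading, to showing that the weight-$n$ components of $C_2(V)$ and $C_2^0(V)$ coincide for all $n\ge 2$. First observe that both $C_2(V)=\bigoplus_n C_2(V)_{(n)}$ and $C_2^0(V)=\bigoplus_n C_2^0(V)_{(n)}$ are graded subspaces of $V$, since for homogeneous $u,v$ the vector $u_{-2}v$ has conformal weight $\mathrm{wt}\,u+\mathrm{wt}\,v+1$; in particular $C_2^0(V)_{(n)}=0$ for $n\le 1$, because in the definition of $C_2^0(V)$ one has $\mathrm{wt}\,u\ge 1$. Since $V$ is $C_2$-cofinite and each $V_{(n)}$ is finite dimensional (see Remark \ref{rem:span_set_general}), we have $V_{(n)}=C_2(V)_{(n)}$ for all $n$ greater than some $N\in\NN$. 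Hence, once we establish $C_2(V)_{(n)}=C_2^0(V)_{(n)}$ for $n\ge 2$, it follows that $V_{(n)}=C_2^0(V)_{(n)}$ for all $n>\max(N,1)$, so that $V/C_2^0(V)\cong\bigoplus_{n\le\max(N,1)}V_{(n)}/C_2^0(V)_{(n)}$ is finite dimensional.

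To prove $C_2(V)_{(n)}=C_2^0(V)_{(n)}$ for $n\ge 2$, note that $C_2(V)_{(n)}$ is spanned by the vectors $u_{-2}v$ with $u,v$ homogeneous and $\mathrm{wt}\,u+\mathrm{wt}\,v=n-1$. If $\mathrm{wt}\,u\ge 1$ then $u_{-2}v\in C_2^0(V)$ by definition, so the only generators requiring attention are those with $u\in V_{(0)}$, in which case $\mathrm{wt}\,v=n-1\ge 1$. Thus it suffices to prove: if $u\in V_{(0)}$ and $v$ is homogeneous with $\mathrm{wt}\,v\ge 1$, then $u_{-2}v\in C_2^0(V)$.

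For this I would first record the observation that, for homogeneous $w$ of positive conformal weight, $L(-1)w=w_{-2}\vac\in C_2^0(V)$ (take the second vector in the definition of $C_2^0(V)$ to be $\vac$); since $L(-1)w$ again has positive weight, iterating shows $L(-1)^j w\in C_2^0(V)$ for every $j\ge 1$. Now combine this with skew-symmetry $Y_V(u,x)v=e^{xL(-1)}Y_V(v,-x)u$; extracting the coefficient of $x$ gives the finite sum
\begin{equation*}
 u_{-2}v=-v_{-2}u+\sum_{m\ge -1}\frac{(-1)^{m+1}}{(m+2)!}\,L(-1)^{m+2}(v_m u).
\end{equation*}
The term $v_{-2}u$ lies in $C_2^0(V)$ since $\mathrm{wt}\,v\ge 1$. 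For $m\ge -1$, the vector $v_m u$ is homogeneous of weight $\mathrm{wt}\,v-m-1$: if this weight is positive, then $L(-1)^{m+2}(v_m u)\in C_2^0(V)$ by the observation above (note $m+2\ge 1$); and if $v_m u\in V_{(0)}$, then necessarily $m\ge 0$, so $L(-1)(v_m u)$ has positive weight and $L(-1)^{m+2}(v_m u)=L(-1)^{m+1}\big(L(-1)(v_m u)\big)\in C_2^0(V)$. Hence every term lies in $C_2^0(V)$, so $u_{-2}v\in C_2^0(V)$, as required.

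I do not anticipate a serious obstacle; the one point needing care is the bookkeeping of conformal weights in the skew-symmetry expansion, so as to confirm that each term either already has a positive-weight vector in the first slot of a mode (hence lies in $C_2^0(V)$ at once) or is acted on by enough powers of $L(-1)$ — and this works precisely because $V$ is $\NN$-graded, so weight-zero vectors $v_m u$ occur only for $m\ge 0$. One could alternatively run a similar reduction through the spanning set of Lemma \ref{lem:Miy_spanning_set}, using $[L(-1),u_m]=-m u_{m-1}$ to rewrite a spanning vector $v^{(1)}_{-n_1}\cdots v^{(k)}_{-n_k}\vac$ whose outermost factor has positive weight and $n_1\ge 2$ as a combination of vectors $L(-1)^a\big(v^{(1)}_{-2}(\cdots)\big)\in C_2^0(V)$; but outermost factors of weight zero would still have to be handled by the skew-symmetry computation above, so the direct argument seems preferable.
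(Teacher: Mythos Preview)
Your proof is correct and takes a genuinely different route from the paper. The paper invokes the spanning set of Lemma~\ref{lem:Miy_spanning_set} to write $V=\til{T}+\sum_{n\geq 3}\mathrm{span}\{u_{-n}v\}$ with $\til{T}$ finite dimensional, and then uses the $L(-1)$-derivative identity $(L(-1)u)_{-n+1}=(n-1)u_{-n}$ iteratively to rewrite each $u_{-n}v$ (for $n\geq 3$) as $(L(-1)^{n-2}u)_{-2}v\in C_2^0(V)$, since $L(-1)^{n-2}u$ has positive weight. Your argument instead establishes the sharper statement $C_2(V)_{(n)}=C_2^0(V)_{(n)}$ for all $n\geq 2$ directly from skew-symmetry, bypassing the spanning set lemma entirely (except for the citation to Remark~\ref{rem:span_set_general} for finite-dimensional weight spaces, which is in any case part of the standard definition of vertex operator algebra). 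Your approach is more elementary and yields slightly more information; the paper's is shorter given that the spanning set lemma is already in hand. You in fact anticipated the paper's method in your closing paragraph, though note that the paper's version avoids the outermost-weight-zero issue you flag: it works with $u_{-n}$ for $n\geq 3$ rather than $n\geq 2$, so $L(-1)^{n-2}u$ always has weight at least $1$ regardless of $\mathrm{wt}\,u$.
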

\begin{proof}
 Let $T\subseteq V$ be as in Lemma \ref{lem:Miy_spanning_set}, so that Lemma \ref{lem:Miy_spanning_set} shows that
 \begin{equation*}
  V=\til{T}+\sum_{n\geq 3}\mathrm{span}\lbrace u_{-n} v\,\,\vert\,\, u\in T,\,v\in V\rbrace,
 \end{equation*}
where
\begin{equation*}
 \til{T}=\mathrm{span}\big\lbrace v^{(1)}_{-n_1}\cdots v^{(k)}_{-n_k}\vac\,\,\big\vert\,\,v^{(1)},\cdots,v^{(k)}\in T,\,3> n_1>\cdots >n_k>0\big\rbrace
\end{equation*}
is finite dimensional. Now since $u_{-n}=\frac{1}{n-1}(L(-1)u)_{-n+1}$ by the $L(-1)$-derivative property and since $V$ is $\NN$-graded,
\begin{equation*}
 \mathrm{span}\lbrace u_{-n} v\,\,\vert\,\, u\in T,\,v\in V\rbrace\subseteq C_2^0(V)
\end{equation*}
for $n\geq 3$. It follows that $V$ is $C_2^0$-cofinite.
\end{proof}

As another consequence of Lemma \ref{lem:Miy_spanning_set}, as well as results from \cite{DLM, Mi2, Hu-C2}, we now show that if $V$ is $C_2$-cofinite and $\NN$-graded (with respect to some conformal vector), then the grading-restricted generalized $V$-module category depends only on $V$ as a vertex algebra, and not on the choice of conformal vector in $V$:
\begin{prop}\label{prop:C2-cofin-mod-cat}
 If $V$ is an $\NN$-graded $C_2$-cofinite vertex operator algebra, then the category of grading-restricted generalized $V$-modules equals the category of finitely-generated weak $V$-modules.
\end{prop}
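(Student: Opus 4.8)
The statement is an equality of categories. Since any homomorphism of weak $V$-modules commutes with $L(0)=\omega_1$ and hence preserves generalized $L(0)$-eigenspaces, a weak-module homomorphism between two grading-restricted generalized modules is automatically a homomorphism in the subcategory, so it is enough to show that the two classes of \emph{objects} coincide. I would argue the two inclusions separately.

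One inclusion is that a grading-restricted generalized $V$-module $W$ is finitely generated. It is a weak module by definition, and it is finitely generated because, by the finiteness results for grading-restricted generalized modules over a $C_2$-cofinite vertex operator algebra (\cite{DLM, Hu-C2}; cf.\ also \cite{GN}), $W$ is $C_1$-cofinite, and since its conformal weights are bounded below in each coset of $\CC/\ZZ$, any lift to $W$ of a finite homogeneous basis of $W/C_1(W)$ generates $W$ by the standard lower-truncation argument.

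For the other inclusion, let $W$ be a finitely generated weak $V$-module. Using Lemma \ref{lem:Miy_spanning_set} (together with \cite{DLM, Mi2}) one first shows that $W$ is $\NN$-gradable: fixing a finite graded $T\subseteq V$ with $V=T+C_2(V)$ and finitely many generators $w_1,\dots,w_s$, Lemma \ref{lem:Miy_spanning_set} expresses $W$ as the span of the vectors $v^{(1)}_{n_1}\cdots v^{(k)}_{n_k}\,w_i$ with $v^{(j)}\in T$ and $n_1<\cdots<n_k$, and the degree $\sum_j(\mathrm{wt}\,v^{(j)}-n_j-1)$ of such a vector (measured from $w_i$) organizes $W$ into an $\NN$-grading $W=\bigoplus_{n}W(n)$ compatible with the mode action. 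The same spanning set shows each $W(n)$ is finite dimensional: the rightmost index $n_k$ is bounded above by lower truncation, so the strict inequalities $n_1<\cdots<n_k$ together with the boundedness of the $\mathrm{wt}\,v^{(j)}$ force, for a fixed target degree $n$, both the length $k$ and all of the $n_j$ into a finite range. Since $\omega\in V_{(2)}$, the operator $L(0)=\omega_1$ preserves each finite-dimensional $W(n)$, so $L(0)$ acts locally finitely on $W$, and the generalized $L(0)$-eigenspace decomposition $W=\bigoplus_h W_{[h]}$ makes $W$ a grading-restricted generalized $V$-module, the remaining grading-restriction bounds following from $[L(0),v_m]=(\mathrm{wt}\,v-m-1)v_m$ and the nonnegativity of the $W(n)$-degrees; the other module axioms are inherited from the weak-module structure.

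I expect the delicate point to be the first part of the last paragraph---producing the $\NN$-grading with finite-dimensional homogeneous subspaces starting from a merely weak, finitely generated module---since this is where $C_2$-cofiniteness enters essentially, through the ordered spanning set of Lemma \ref{lem:Miy_spanning_set} and the combinatorial estimate it supports (one can make this step cleaner by realizing $W$ as a quotient of a universal $\NN$-gradable weak module on generators in degree $0$, to which the counting applies directly, and then noting that the class of grading-restricted generalized modules is closed under subobjects and quotients via local finiteness of $L(0)$). The passage to generalized $L(0)$-eigenspaces and the identification of morphisms are then routine.
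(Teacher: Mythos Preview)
Your overall strategy is sound, but there is a genuine gap in the second inclusion as written. You assert that the degree function on spanning monomials ``organizes $W$ into an $\NN$-grading $W=\bigoplus_n W(n)$'', but the spanning set of Lemma~\ref{lem:Miy_spanning_set} only exhibits $W$ as a \emph{sum} $\sum_n W(n)$; nothing rules out $W(n)\cap W(m)\neq 0$ for $n\neq m$, since the same element of $W$ may be represented by monomials of different degrees. Without the direct-sum decomposition you cannot yet say that $L(0)$ preserves a finite-dimensional $W(n)$, and local finiteness of $L(0)$ is precisely the point at issue. The repair is to use the \emph{filtration} $W_{\le n}=\sum_{m\le n}W(m)$ instead: this is $L(0)$-stable because $[L(0),v_m]=(\mathrm{wt}\,v-m-1)v_m$ preserves degree and $L(0)w=\omega_1 w$ is a degree-$0$ monomial, and Lemma~\ref{lem:Miy_spanning_set} together with your counting makes each $W_{\le n}$ finite-dimensional (straightening into ordered form preserves total degree), so $L(0)$ is locally finite on $W$. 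Your proposed alternative via a universal $\NN$-gradable weak module on bare generators is not quite well-posed: without imposing an $A(V)$-module structure on the degree-$0$ piece, that piece is already infinite-dimensional (it contains all $\omega_1^k w$), and the counting argument does not apply.

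The paper bypasses this entirely by citing \cite[Theorem~2.7(3)]{Mi2}, which directly gives a finite generating set of generalized $L(0)$-eigenvectors for any finitely generated weak module over an $\NN$-graded $C_2$-cofinite vertex operator algebra; once the generators are homogeneous, Lemma~\ref{lem:Miy_spanning_set} yields the grading-restriction conditions immediately. For the first inclusion the paper also takes a different route---finite length via \cite[Corollary~3.16]{Hu-C2} rather than your $C_1$-cofiniteness argument---but both are valid.
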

\begin{proof}
Since $V$ is $C_2$-cofinite, $A(V)$ is finite dimensional by \cite[Proposition 3.6]{DLM} (see also \cite[Theorem 2.5]{Mi2} and \cite[Proposition 2.14]{Hu-C2}). Then \cite[Corollary 3.16]{Hu-C2} shows that any grading-restricted generalized $V$-module has finite length and thus is finitely generated.

Conversely, if $W$ is a finitely generated weak $V$-module, then because $V$ is $\NN$-graded and $C_2$-cofinite, \cite[Theorem 2.7(3)]{Mi2} implies that $W$ has a finite generating set consisting of generalized $L(0)$-eigenvectors. Then the spanning set of Lemma \ref{lem:Miy_spanning_set} shows that the submodule of $W$ generated by each of the finitely many homogeneous generators is a generalized $V$-module with a lower bound on the conformal weights (see for example the second assertion in \cite[Lemma 2.4]{Mi2}) and such that each conformal weight space is finite dimensional. Thus $W$ is a grading-restricted generalized $V$-module since it is a finite sum of such submodules.
\end{proof}

\subsection{Vertex tensor categories}\label{subsec:vrtx_tens_cat}

Huang showed in \cite{Hu-C2} that the category of grading-restricted generalized modules for a positive-energy $C_2$-cofinite vertex operator algebra is a finite abelian category and is also a braided tensor category as constructed in \cite{HLZ1}-\cite{HLZ8}. Then in \cite[Lemma 3.3]{CM}, Carnahan and Miyamoto checked that the results of \cite{Hu-C2} also apply to $\NN$-graded $C_2^0$-cofinite vertex operator algebras. So by Lemma \ref{lem:C20_cofin}, the full module category of an $\NN$-graded $C_2$-cofinite vertex operator algebra is a vertex algebraic braided tensor category. In this subsection, we review this braided tensor category structure.

Tensor products of modules for a vertex operator algebra $V$ are defined in terms of (possibly logarithmic) intertwining operators:
\begin{defi}
 Let $W_1$, $W_2$, and $W_3$ be grading-restricted generalized $V$-modules. An \textit{intertwining operator} of type $\binom{W_3}{W_1\,W_2}$ is a linear map
 \begin{align*}
  \cY: W_1\otimes W_2 & \rightarrow W_3[\log x]\lbrace x\rbrace\nonumber\\
  w_1\otimes w_2 & \mapsto \cY(w_1,x)w_2=\sum_{h\in\CC}\sum_{k\in\NN} (w_1)_{h,k}w_2\,x^{-h-1}(\log x)^k
 \end{align*}
satisfying the following properties:
\begin{enumerate}
 \item \textit{Lower truncation}: For any $w_1\in W_1$, $w_2\in W_2$, and $h\in\CC$, $(w_1)_{h+n,k} w_2=0$ for $n\in\NN$ sufficiently large, independently of $k$.
 \item The \textit{Jacobi identity}: For $v\in V$ and $w_1\in W_1$,
 \begin{align*}
  x_0^{-1}\delta\left(\frac{x_1-x_2}{x_0}\right)Y_{W_3}(v,x_1)\cY(w_1,x_2) & -x_0^{-1}\delta\left(\frac{-x_2+x_1}{x_0}\right)\cY(w_1,x_2)Y_{W_2}(v,x_1)\nonumber\\
  & = x_2^{-1}\delta\left(\frac{x_1-x_0}{x_2}\right)\cY(Y_{W_1}(v,x_0)w_1,x_2).
 \end{align*}
 
 \item The \textit{$L(-1)$-derivative property}: For $w_1\in W_1$,
 \begin{equation*}
  \dfrac{d}{dx}\cY(w_1,x)=\cY(L(-1)w_1,x).
 \end{equation*}
\end{enumerate}
\end{defi}

For any $V$-module $W$, the vertex operator $Y_W$ is an intertwining operator of type $\binom{W}{V\,W}$. If $f_i: W_i\rightarrow X_i$ are $V$-module homomorphisms for $i=1,2,3$ and $\cY$ is an intertwining operator of type $\binom{W_3}{X_1\,X_2}$, then $f_3\circ\cY\circ(f_1\otimes f_2)$ is an intertwining operator of type $\binom{X_3}{W_1\,W_2}$. We can also obtain new intertwining operators from old ones using the skew-symmetry and adjoint constructions of \cite{FHL, HL-old-tens-1, HLZ2}: Let $\cY$ be an intertwining operator of type $\binom{W_3}{W_1\,W_2}$ and fix $r\in\ZZ$. We then define an intertwining operator $\Omega_r(\cY)$ of type $\binom{W_3}{W_2\,W_1}$ by
\begin{equation}\label{eqn:Omega_intw_op_def}
 \Omega_r(\cY)(w_2,x)w_1=e^{xL(-1)}\cY(w_1,e^{(2r+1)\pi i} x)w_2
\end{equation}
for $w_1\in W_1$, $w_2\in W_2$. We can also define $A_r(\cY)$ of type $\binom{W_2'}{W_1\,W_3'}$ by
\begin{align}\label{eqn:Adjoint_intw_op_def}
 \left\langle A_r(\cY)(w_1,x)w_3', w_2\right\rangle =\big\langle w_3', \cY(e^{xL(1)} e^{(2r+1)\pi i L(0)}x^{-2 L(0)} w_1,x^{-1})w_2 \big\rangle
\end{align}
for $w_1\in W_1$, $w_2'\in W_2'$, and $w_3'\in W_3'$. For a $V$-module $W$, $A_r(Y_W)$ and $\Omega_r(Y_W)$ are independent of $r$, so we will denote $\Omega_r(Y_W)$ simply by $\Omega(Y_W)$ (and $A_r(Y_W)=Y_{W'}$).

\begin{defi}\label{def:tens_prod}
 Let $\cC$ be a category of grading-restricted generalized $V$-modules and let $W_1$ and $W_2$ be modules in $\cC$. A \textit{tensor product} of $W_1$ and $W_2$ in $\cC$ is a pair $(W_1\tens W_2,\cY_{W_1,W_2})$, with $W_1\tens W_2$ a $V$-module in $\cC$ and $\cY_{W_1,W_2}$ an intertwining operator of type $\binom{W_1\tens W_2}{W_1\,W_2}$, that satisfies the following universal property: If $W_3$ is a $V$-module in $\cC$ and $\cY$ is an intertwining operator of type $\binom{W_3}{W_1\,W_2}$, then there is a unique $V$-module homomorphism $f: W_1\tens W_2\rightarrow W_3$ such that $\cY=f\circ\cY_{W_1,W_2}$.
\end{defi}

\begin{rem}
 Given an arbitrary category $\cC$ of $V$-modules, it is not typically clear that tensor products of modules in $\cC$ exist.
\end{rem}

\begin{rem}
 From a geometric point of view, it is more natural to consider \textit{$P(z)$-tensor products}, defined in \cite{HLZ3} in terms of \textit{$P(z)$-intertwining maps}, which are intertwining operators with the formal variable $x$ suitably specialized to a non-zero complex number $z$. This is because the geometric formulation of conformal field theory introduced by Segal \cite{Se} and Vafa \cite{Va} requires a tensor product functor associated to each point in the moduli space of spheres with two positively-oriented punctures, one negatively-oriented puncture, and local coordinates at the punctures. In particular, $P(z)$ is the sphere with positively-oriented punctures at $0$ and $z$, negatively-oriented puncture at $\infty$, and local coordinates $w\mapsto w$, $w\mapsto w-z$, $w\mapsto 1/w$ at the punctures.
 
 A \textit{vertex tensor category} \cite{HL-vrtx-tens} is a category of $V$-modules with a tensor product functor for each conformal equivalence class of spheres with punctures and local coordinates, together with suitable natural isomorphisms that satisfy suitable coherence properties. As shown in \cite{HLZ8}, vertex tensor category structure on a category $\cC$ of $V$-modules induces a braided tensor category structure that can be completely described using the $P(z)$-tensor products for $z\in\CC^\times$. Moreover, all $P(z)$-tensor products are naturally isomorphic to each other via parallel transport isomorphisms. In particular, we may take the tensor product functor for the braided tensor category $\cC$ to be the $P(1)$-tensor product, and then tensor product modules satisfy the universal property of Definition \ref{def:tens_prod}. We describe this braided tensor category structure in more detail below.
\end{rem}

Fix a category $\cC$ of grading-restricted generalized $V$-modules that contains $V$ itself and is closed under tensor products, that is, every pair of modules $W_1$ and $W_2$ in $\cC$ has a tensor product $(W_1\tens W_2,\cY_{W_1,W_2})$ in $\cC$. It is then easy to show that tensor products define a functor $\boxtimes:\cC\times\cC\rightarrow\cC$, that $V$ is a unit object in $\cC$, and that the tensor product is commutative:
\begin{itemize}
 \item Given morphisms $f_1: W_1\rightarrow X_1$ and $f_2: W_2\rightarrow X_2$ in $\cC$, the tensor product 
 \begin{equation*}
  f_1\tens f_2: W_1\tens W_2\rightarrow X_1\tens X_2
 \end{equation*}
 is the unique morphism induced by the intertwining operator $\cY_{X_1,X_2}\circ(f_1\otimes f_2)$ of type $\binom{X_1\tens X_2}{W_1\,W_2}$ and the universal property of $(W_1\tens W_2,\cY_{W_1,W_2})$. That is,
 \begin{equation*}
  (f_1\tens f_2)\left(\cY_{W_1,W_2}(w_1,x)w_2\right)=\cY_{X_1,X_2}(f_1(w_1),x)f_2(w_2)
 \end{equation*}
for $w_1\in W_1$, $w_2\in W_2$.

\item Given a $V$-module $(W,Y_W)$ in $\cC$, the left unit isomorphism $l_W: V\tens W\rightarrow W$ is induced by the intertwining operator $Y_W$ and the universal property of $(V\tens W, \cY_{V,W})$:
\begin{equation*}
 l_W\left(\cY_{V,W}(v,x)w\right)=Y_W(v,x)w
\end{equation*}
for $v\in V$, $w\in W$. Similarly, the right unit isomorphism $r_W: W\tens V\rightarrow W$ is induced by the intertwining operator $\Omega(Y_W)$:
\begin{equation*}
 r_W\left(\cY_{W,V}(w,x)v\right) =\Omega(Y_W)(w,x)v=e^{xL(-1)}Y_W(v,-x)w
\end{equation*}
for $v\in V$, $w\in W$.

\item Given modules $W_1$ and $W_2$ in $\cC$, the braiding isomorphism 
\begin{equation*}
 \cR_{W_1,W_2}: W_1\tens W_2\rightarrow W_2\tens W_1
\end{equation*}
is induced by the intertwining operator $\Omega_0(\cY_{W_2,W_1})$ of type $\binom{W_2\tens W_1}{W_1\,W_2}$ and the universal property of $(W_1\tens W_2,\cY_{W_1,W_2})$:
\begin{equation*}
 \cR_{W_1,W_2}\left(\cY_{W_1,W_2}(w_1,x)w_2\right)=\Omega_0(\cY_{W_2,W_1})(w_1,x)w_2 =e^{xL(-1)}\cY_{W_2,W_1}(w_2,e^{\pi i} x)w_1
\end{equation*}
for $w_1\in W_1$, $w_2\in W_2$. The inverse braiding isomorphism is characterized by 
\begin{equation*}
 \cR_{W_1,W_2}^{-1}\left(\cY_{W_2,W_1}(w_2,x)w_1\right)=e^{xL(-1)}\cY_{W_1,W_2}(w_1,e^{-\pi i}x)w_2,
\end{equation*}
and the monodromy isomorphism by
\begin{equation*}
 \cR^2_{W_1,W_2}\left(\cY_{W_1,W_2}(w_1,x)w_2\right)=\cY_{W_1,W_2}(w_1,e^{2\pi i}x)w_2,
\end{equation*}
for $w_1\in W_1$, $w_2\in W_2$.

\item The category $\cC$ also has a twist given by $\theta_W=e^{2\pi i L(0)}$ for $W$ in $\cC$. The twist satisfies the balancing equation by the $L(0)$-conjugation property for intertwining operators \cite[Proposition 3.36(b)]{HLZ2}. It also satisfies $\theta_V=\Id_V$ as long as $V$ is $\ZZ$-graded.
\end{itemize}
Associativity isomorphisms in $\cC$ are much more difficult to construct and do not follow simply from the universal property of tensor products. Their description, assuming they exist, requires some analytic preparation.

Given a $V$-module $W=\bigoplus_{h\in\CC} W_{[h]}$, the \textit{algebraic completion} is $\overline{W}=\prod_{h\in\CC} W_{[h]}$; it is the vector space dual of the graded dual $W'$, since each conformal weight space $W_{[h]}$ is finite dimensional. For $h\in\CC$, $\pi_h: \overline{W}\rightarrow W_{[h]}$ denotes the canonical projection. Then if $\cY$ is a $V$-module intertwining operator of type $\binom{W_3}{W_1\,W_2}$, the substitution $x\mapsto z$ in $\cY$ for any $z\in\CC^\times$ induces a linear map
\begin{align*}
 I_\cY: W_1\otimes W_2 & \rightarrow \overline{W_3}\nonumber\\
 w_1\otimes w_2 & \mapsto \cY(w_1,z)w_2,
\end{align*}
called a $P(z)$-intertwining map (see for instance \cite{HLZ3}); since the intertwining operator $\cY$ may involve non-integral powers of $x$, as well as powers of $\log x$, we need to choose a branch of $\log z$ to make the substitution $x\mapsto z$ precise. In general, to emphasize that we are substituting formal variables using some branch of logarithm $\ell(z)$, we use the notation $\cY(w_1,e^{\ell(z)})w_2$, that is,
\begin{equation*}
 \cY(w_1,e^{\ell(z)})w_2 = \cY(w_1,x)w_2\big\vert_{x^h=e^{h\ell(z)},\,\log x=\ell(z)}.
\end{equation*}
We will most frequently use the principal branch of logarithm  on $\CC\setminus(-\infty,0]$, which we will denote by $\log z$, that is,
\begin{equation}\label{eqn:log_branch}
 \log z=\ln\vert z\vert+i\arg z
\end{equation}
where $-\pi<\arg z\leq\pi$.

Now existence of associativity isomorphisms for tensor products in a category $\cC$ of $V$-modules requires the convergence of products and iterates of intertwining operators. Given $V$-modules $W_1$, $W_2$, $W_3$, $W_4$, $M$ and intertwining operators $\cY_1$, $\cY_2$ of types $\binom{W_4}{W_1\,M}$ and $\binom{M}{W_2\,W_3}$, respectively, we say that the product of $\cY_1$ and $\cY_2$ converges if the series
\begin{equation*}
 \sum_{h\in\CC} \left\langle w_4',\cY_1(w_1,z_1)\pi_h(\cY_2(w_2,z_2)w_3)\right\rangle
\end{equation*}
converges absolutely for all $w_1\in W_1$, $w_2\in W_2$, $w_3\in W_3$, $w_4'\in W_4'$ and for all $z_1,z_2\in\CC^\times$ such that $\vert z_1\vert>\vert z_2\vert>0$, where the substitutions $x_1\mapsto z_1$, $x_2\mapsto z_2$ are accomplished using any choices for branch of logarithm. If the product of $\cY_1$ and $\cY_2$ converges, then it defines an element
\begin{equation*}
 \cY_1(w_1,z_1)\cY_2(w_2,z_2)w_3=\sum_{h\in\CC}\left\langle\cdot,\cY_1(w_1,z_1)\pi_h(\cY_2(w_2,z_2)w_3)\right\rangle\in (W_4')^*=\overline{W_4}.
\end{equation*}
Moreover, \cite[Proposition 7.20]{HLZ5} shows that for any branches of logarithm $\ell_1$ and $\ell_2$ defined on simply-connected domains of $\CC^\times$, the equality
\begin{align*}
 \langle & w_4',\cY_1(w_1,  e^{\ell_1(z_1)}) \cY_2(w_2,e^{\ell_2(z_2)})w_3\rangle\nonumber\\
 &=\sum_{h_1,h_2\in\CC}\sum_{k_1,k_2\in\NN} \left\langle w_4',(w_1)_{h_1,k_1}\left[(w_2)_{h_2,k_2} w_3\right]\right\rangle e^{(-h_1-1)\ell_1(z_1)}(\log z_1)^{k_1} e^{(-h_2-1)\ell_2(z_2)}(\log z_2)^{k_2}
\end{align*}
holds, so that in particular the product of $\cY_1$ and $\cY_2$ defines a multivalued analytic function on the region $\vert z_1\vert>\vert z_2\vert>0$.

Similarly, suppose $\cY^1$ and $\cY^2$ are intertwining operators of types $\binom{W_4}{M\,W_3}$ and $\binom{M}{W_1\,W_2}$, respectively. Then we say that the iterate of $\cY^1$ and $\cY^2$ converges if the series
\begin{equation*}
 \cY^1(\cY^2(w_1,z_0)w_2,z_2)w_3 =\sum_{h\in\CC}\left\langle\cdot,\cY^1(\pi_h(\cY^2(w_1,z_0)w_2),z_2)w_3\right\rangle\in(W_4')^*=\overline{W_4}
\end{equation*}
converges absolutely for $z_0,z_2\in\CC^\times$ such that $\vert z_2\vert>\vert z_0\vert>0$. As for products, if the iterate of $\cY^1$ and $\cY^2$ converges, then it defines a multivalued analytic function on the region $\vert z_2\vert>\vert z_0\vert>0$.

Now we can describe the associativity isomorphisms in a braided tensor category $\cC$ of $V$-modules. The basic assumption needed for their existence is convergence of products and iterates of intertwining operators among $V$-modules in $\cC$; for the remaining conditions, see the assumptions in \cite{HLZ6, HLZ8}.
\begin{itemize}
 \item Fix $r_1,r_2\in\RR$ such that $r_1>r_2>r_1-r_2>0$. Then for modules $W_1$, $W_2$, and $W_3$ in $\cC$, the associativity isomorphism
 \begin{equation*}
  \cA_{W_1,W_2,W_3}: W_1\tens(W_2\tens W_3)\rightarrow (W_1\tens W_2)\tens W_3
 \end{equation*}
is characterized by the relation
\begin{align*}
 \overline{\cA_{W_1,W_2,W_3}} & \left(\cY_{W_1,W_2\tens W_3}(w_1,e^{\ln r_1})\cY_{W_2,W_3}(w_2,e^{\ln r_2})w_3\right)\nonumber\\
 &=\cY_{W_1\tens W_2,W_3}(\cY_{W_1,W_2}(w_1,e^{\ln(r_1-r_2)})w_2,e^{\ln r_2})w_3
\end{align*}
for $w_1\in W_1$, $w_2\in W_2$, $w_3\in W_3$, where $\overline{\cA_{W_1,W_2,W_3}}$ denotes the obvious extension of $\cA_{W_1,W_2,W_3}$ to algebraic completions.
\end{itemize}
\begin{rem}
 The associativity isomorphisms are independent of the choice of $r_1,r_2\in\RR$ because the subset of $(r_1,r_2)\in(\RR^\times)^2$ defined by the condition $r_1>r_2>r_1-r_2>0$ is simply connected (see \cite[Proposition 3.32]{CKM1}).
\end{rem}

The general assumptions on the category $\cC$ of $V$-modules that were used in \cite{HLZ1}-\cite{HLZ8}  to construct braided tensor category structure are rather extensive. Nevertheless, by \cite{Hu-C2}, they hold when $V$ is $\NN$-graded $C_2$-cofinite and $\cC$ is the full category of grading-restricted generalized $V$-modules.
 Thus in this case $\cC$ is a braided tensor category with a twist, as described above. 
 
 Since we will sometimes need to consider $V$ as an $\NN$-graded vertex operator algebra with respect to two different conformal vectors $\omega$ and $\til{\omega}$, we need to consider whether the braided tensor category $\cC$ depends on the conformal vector. The operator $L(0)$ appears in the definition of the category $\cC$ itself and in the definition of graded dual vector spaces and algebraic completions, while $L(-1)$ appears in the $L(-1)$-derivative property for intertwining operators and the braiding isomorphisms. However, if $V$ is $C_2$-cofinite and $\NN$-graded with respect to both conformal vectors, Proposition \ref{prop:C2-cofin-mod-cat} shows that the category $\cC$ of grading-restricted generalized $V$-modules is independent of the conformal vector. Moreover, as noted in the previous subsection, the operator $L(-1)$ on $V$-modules in $\cC$ is also independent of the conformal vector provided $\til{\omega}=\omega+v_{-2}\vac$ for some $v\in V$. Finally, Lemma \ref{lem:diff_omega_gradings} and Proposition \ref{prop:graded_duals_same} show that algebraic completions and graded duals of $V$-modules in $\cC$ do not depend on which $L(0)$ we use, provided that $v_0\omega=0$. Thus we have:
 \begin{prop}\label{prop:ten_cat_ind_of_omega}
  Let $V$ be a $C_2$-cofinite vertex operator algebra which is $\NN$-graded with respect to either of two conformal vectors $\omega$ and $\til{\omega}=\omega+v_{-2}\vac$, where $v\in V$ satisfies $v_0\omega=0$. Then the category $\cC$ of grading-restricted generalized $V$-modules and the braided tensor category structure on $\cC$ do not depend on whether $\omega$ or $\til{\omega}$ is used as the conformal vector of $V$.
 \end{prop}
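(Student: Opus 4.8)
The plan is to run through the data of the braided tensor category $\cC$ one piece at a time -- the underlying abelian category, the tensor product bifunctor $\tens$ with its structure intertwining operators $\cY_{W_1,W_2}$, the unit object $V$ and the unit isomorphisms $l$, $r$, the braiding $\cR$, and the associativity isomorphisms $\cA$ -- and to check that each is literally unchanged when $\omega$ is replaced by $\til{\omega}$. The hypotheses $\til{\omega}=\omega+v_{-2}\vac$ and $v_0\omega=0$ put us in the setting of Lemma \ref{lem:diff_omega_gradings} and the discussion preceding it, so two facts are available from the start: on any weak $V$-module $W$ the operator $L(-1)=\mathrm{Res}_x\,Y_W(\omega,x)=\mathrm{Res}_x\,Y_W(\til{\omega},x)$ is independent of the conformal vector, and $[L(0),\til{L}(0)]=0$. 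Applying Proposition \ref{prop:C2-cofin-mod-cat} to $(V,\omega)$ and to $(V,\til{\omega})$ shows that in both cases $\cC$ is the category of finitely generated weak $V$-modules, which makes no reference to a conformal vector; in particular every object of $\cC$ is grading-restricted with respect to both $\omega$ and $\til{\omega}$, so Lemma \ref{lem:diff_omega_gradings} and Proposition \ref{prop:graded_duals_same} apply to all of them, and the graded dual $W'$ and the algebraic completion $\overline{W}=(W')^*$ of each module in $\cC$ are the same vector spaces for the two gradings.

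I would then dispose of everything except associativity. The definition of an intertwining operator of a given type -- lower truncation, the Jacobi identity, the $L(-1)$-derivative property -- refers only to the module vertex operators (which are unchanged, being part of the vertex-algebra data) and to the operator $L(-1)$, which is the same for $\omega$ and $\til{\omega}$; hence for any triple $W_1,W_2,W_3$ the space of intertwining operators of type $\binom{W_3}{W_1\,W_2}$ is literally the same for the two conformal vectors. Therefore a tensor product $(W_1\tens W_2,\cY_{W_1,W_2})$, being characterized by the universal property of Definition \ref{def:tens_prod} against exactly these spaces, is the same module with the same structure intertwining operator, so $\tens$ is the same bifunctor; the unit object is $V$ in both cases; and the unit isomorphisms $l_W$, $r_W$ and the braiding $\cR_{W_1,W_2}$, induced respectively by $Y_W$, by $\Omega(Y_W)(w,x)v=e^{xL(-1)}Y_W(v,-x)w$, and by $\Omega_0(\cY_{W_2,W_1})(w_1,x)w_2=e^{xL(-1)}\cY_{W_2,W_1}(w_2,e^{\pi i}x)w_1$, involve $L(0)$ nowhere and $L(-1)$ only, hence are unchanged.

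The remaining ingredient, and the step I expect to be the main obstacle, is the associativity isomorphism $\cA_{W_1,W_2,W_3}$. It is constructed only after extending to algebraic completions, and is pinned down by the identity equating a convergent product $\cY_{W_1,W_2\tens W_3}(w_1,e^{\ln r_1})\cY_{W_2,W_3}(w_2,e^{\ln r_2})w_3$ with a convergent iterate $\cY_{W_1\tens W_2,W_3}(\cY_{W_1,W_2}(w_1,e^{\ln(r_1-r_2)})w_2,e^{\ln r_2})w_3$. All of the input is now seen to be conformal-vector independent: the intertwining operators and the tensor product modules by the previous paragraph, and the algebraic completions and the graded duals used to pair against them by Lemma \ref{lem:diff_omega_gradings} and Proposition \ref{prop:graded_duals_same}. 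The one genuinely delicate point to verify is that the notion of convergence of a product or an iterate of intertwining operators -- formulated in \cite{HLZ5} via the projections $\pi_h$ onto $L(0)$-generalized eigenspaces, together with the value it converges to in $\overline{W_4}$ -- is unchanged if one instead uses $\til{L}(0)$-generalized eigenspaces. This should follow from \cite[Proposition 7.20]{HLZ5} together with the finite-dimensionality of conformal weight spaces: the limiting element of $\overline{W_4}$ is given by a double series in $z_1^{h_1}(\log z_1)^{k_1}z_2^{h_2}(\log z_2)^{k_2}$ whose coefficients make no reference to any grading, and both the $L(0)$- and the $\til{L}(0)$-formulations of absolute convergence merely regroup the terms of this series into finite blocks (each weight-space projection of $\cY_{W_2,W_3}(w_2,e^{\ln r_2})w_3$ for homogeneous arguments being a finite partial sum, as in the proof of Proposition \ref{prop:graded_duals_same}), so the two formulations are equivalent with equal sums. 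The further conditions imposed in \cite{HLZ6, HLZ8} are either manifestly grading-independent or hold automatically by \cite{Hu-C2} for $\NN$-graded $C_2$-cofinite $V$ with respect to either conformal vector. Hence the characterizing identity for $\cA_{W_1,W_2,W_3}$ coincides for $\omega$ and $\til{\omega}$, so $\cA$ is unchanged, and every piece of the braided tensor category structure on $\cC$ is the same for the two conformal vectors.
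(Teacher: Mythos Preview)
Your proposal is correct and follows essentially the same approach as the paper: identify that $L(0)$ enters only through the definition of the category, graded duals, and algebraic completions (handled by Proposition~\ref{prop:C2-cofin-mod-cat}, Lemma~\ref{lem:diff_omega_gradings}, and Proposition~\ref{prop:graded_duals_same}), while $L(-1)$ enters through intertwining operators and the braiding and is unchanged since $\til{\omega}-\omega=v_{-2}\vac$. The paper's argument is the brief paragraph immediately preceding the proposition; your write-up is simply more explicit, in particular in walking through the associativity isomorphisms and the question of whether convergence of products and iterates depends on which grading is used to form the projections $\pi_h$, a point the paper leaves implicit.
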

 \begin{rem}
  In the setting of Proposition \ref{prop:ten_cat_ind_of_omega}, the twist $e^{2\pi i L(0)}$ on the braided tensor category $\cC$ does depend on whether $\omega$ or $\til{\omega}$ is used as the conformal vector of $V$, as does the $V$-module structure on contragredient modules.
 \end{rem}

\section{Contragredients in vertex tensor categories}\label{sec:contragredient}

Let $V$ be a vertex operator algebra and $\cC$ a category of grading-restricted generalized $V$-modules that is closed under contragredients and admits the vertex and braided tensor category structures of \cite{HLZ1}-\cite{HLZ8}, as described in Subsection \ref{subsec:vrtx_tens_cat}. When $V$ is self-contragredient, contragredient modules give $\cC$ the structure of what is sometimes called a \textit{weakly rigid} tensor category \cite[Appendix]{KL} or an \textit{$r$-category} \cite{BD}; this is shown in \cite{CKM2} for example. When $V$ is not necessarily self-contragredient, $\cC$ still has the structure of what \cite{BD} calls a \textit{Grothendieck-Verdier category} with dualizing object $V'$; this was shown recently in \cite[Theorem 2.12]{ALSW}. But it is not usually obvious whether contragredients are duals satisfying rigidity. In this section we use tensor-categorical methods to find a criterion under which $\cC$ is in fact rigid with duals given by contragredient modules.

Now assume that $V$ is $\ZZ$-graded and self-contragredient. Since $V$ may be suitably $\ZZ$- or $\frac{1}{2}\ZZ$-graded with respect to several different conformal vectors, and since \eqref{eqn:contra_vrtx_op} shows that the contragredient of $V$ may depend on the conformal vector, our assumption here is simply that $V$ has a conformal vector with respect to which $V$ is $\ZZ$-graded and self-contragredient. We will thus use such a conformal vector, together with the accompanying contragredient modules, throughout this section.

Fix a $V$-module isomorphism $\varphi: V\rightarrow V'$. We will also use the notation of Subsection \ref{subsec:vrtx_tens_cat} in what follows. Contragredients in $\cC$ define an exact contravariant functor such that the contragredient of a homomorphism $f: W_1\rightarrow W_2$ is given by
\begin{equation*}
 \langle f'(w_2'),w_1\rangle =\langle w_2',f(w_1)\rangle
\end{equation*}
for $w_1\in W_1$, $w_2'\in W_2'$. Moreover, for any module $W$ in $\cC$, there is a natural isomorphism $\delta_W: W\rightarrow W''$ defined by
\begin{equation*}
 \langle\delta_W(w),w'\rangle =\langle w',w\rangle.
\end{equation*}
Using the skew-symmetry \eqref{eqn:Omega_intw_op_def} and adjoint \eqref{eqn:Adjoint_intw_op_def} intertwining operator constructions, for any module $W$ in $\cC$, there is an evaluation homomorphism
\begin{equation*}
 e_W: W'\tens W\rightarrow V
\end{equation*}
induced by the intertwining operator
\begin{equation*}
 \mathcal{E}_W=\varphi^{-1}\circ\Omega_0(A_0(\Omega(Y_W)))
\end{equation*}
of type $\binom{V}{W'\,W}$. A calculation shows that $\mathcal{E}_W$ is characterized by the relation
\begin{equation*}
 \big\langle(\varphi'\circ\delta_V)(v), \cE_W(w',x)w\big\rangle =\big\langle e^{-x^{-1} L(1)}w',Y_W(e^{xL(1)} v, x^{-1})e^{-xL(1)} e^{-\pi i L(0)} x^{-2L(0)} w\big\rangle
\end{equation*}
for $v\in V$, $w'\in W'$, and $w\in W$. We also define the homomorphism $\til{e}_W: W\tens W'\rightarrow V$ by
\begin{equation*}
 \til{e}_W=e_W\circ\cR_{W,W'}\circ(\theta_W\tens\Id_{W'})
\end{equation*}
and the intertwining operator $\til{\cE}_W=\til{e}_W\circ\cY_{W,W'}$ of type $\binom{V}{W\,W'}$. A calculation shows that $\til{\cE}_W=\varphi^{-1}\circ A_{-1}(\Omega(Y_W))$.

As explained in \cite[Sections 2.1 and 5.1]{CKM2}, the pair $(W', e_W)$ for any module $W$ in $\cC$ satisfies a universal property (thus making $\cC$ an $r$-category in the sense of \cite{BD}): for any module $X$ in $\cC$ and homomorphism $f: X\tens W\rightarrow V$, there is a unique map $g: X\rightarrow W'$ such that the diagram
\begin{equation*}
\xymatrixcolsep{3pc}
 \xymatrix{
 X\tens W \ar[rd]^{f} \ar[d]_{g\tens\Id_W} & \\
W'\tens W \ar[r]_(.6){e_W} & V \\
 }
\end{equation*}
commutes. This universal property gives a convenient way of characterizing the $V$-module isomorphisms $\varphi$ and $\delta_W$, as well as contragredient homomorphisms:
\begin{prop}
 In the setting of this section:
 \begin{itemize}
  \item The $V$-module isomorphism $\varphi: V\rightarrow V'$ is the unique map such that the diagram
  \begin{equation}\label{diag:phi_char}
   \xymatrixcolsep{3pc}
   \begin{matrix}
   \xymatrix{
   V\tens V \ar[rd]^{l_V=r_V} \ar[d]_{\varphi\tens\Id_V} & \\
   V'\tens V \ar[r]_(.6){e_V} & V\\}
   \end{matrix}
  \end{equation}
commutes.

  \item For any homomorphism $f: W_1\rightarrow W_2$ in $\cC$, $f': W_2'\rightarrow W_1'$ is the unique homomorphism such that the diagram
  \begin{equation}\label{diag:f'_def}
 \xymatrixcolsep{4pc}
 \begin{matrix}
 \xymatrix{
 W_2'\boxtimes W_1 \ar[r]^{\Id_{W_2'}\boxtimes f} \ar[d]_{f'\boxtimes\Id_{W_1}} & W_2'\boxtimes W_2 \ar[d]^{e_{W_2}} \\
 W_1'\boxtimes W_1 \ar[r]_(.57){e_{W_1}} & V \\
 }
 \end{matrix}
\end{equation}
commutes.

\item For any module $W$ in $\cC$, $\delta_W: W\rightarrow W''$ is the unique homomorphism such that the diagram
\begin{equation}\label{diag:delta_def}
  \xymatrixcolsep{4pc}
  \begin{matrix}
  \xymatrix{
  W\boxtimes W' \ar[d]_{\delta_W\boxtimes\Id_{W'}} \ar[rd]^{\qquad\til{e}_W=e_W\circ\cR_{W,W'}\circ(\theta_W\boxtimes\Id_{W'})} &  \\
W'' \boxtimes W' \ar[r]_(.6){e_{W'}} & V \\
  }
  \end{matrix}
 \end{equation}
commutes.
 \end{itemize}
\end{prop}
\begin{proof}
 Due to the uniqueness assertion in the universal property of contragredients, it is enough to show that all three diagrams in the proposition commute. For \eqref{diag:f'_def} and \eqref{diag:delta_def}, this was done in \cite[Lemma 4.3.4]{CMY2}. For \eqref{diag:phi_char}, we calculate
 \begin{align*}
 \big\langle (\varphi'\circ\delta_V)(u), [e_V & \circ(\varphi\tens\Id_V)] \cY_{V,V}(v_1, x)v_2\big\rangle = \big\langle (\varphi'\circ\delta_V)(u), \cE_V(\varphi(v_1),x)v_2\big\rangle\nonumber\\
 & = \big\langle e^{-x^{-1} L(1)}\varphi(v_1), Y_V(e^{x L(1)} u, x^{-1})e^{-xL(1)}(-x^{-2})^{L(0)} v_2\big\rangle\nonumber\\
 & =\big\langle \varphi(v_1), Y_V(e^{-xL(1)}(-x^{-2})^{L(0)} v_2, -x^{-1})e^{xL(1)} u\big\rangle\nonumber\\
 & = \big\langle Y_{V'}(v_2,-x)\varphi(v_1), e^{xL(1)}u\big\rangle\nonumber\\
 & =\big\langle\varphi(e^{xL(-1)}Y_V(v_2,-x)v_1), u\big\rangle\nonumber\\
 & =\big\langle \delta_V(u),\varphi(Y_V(v_1,x)v_2)\big\rangle =\big\langle(\varphi'\circ\delta_V)(u),l_V(\cY_{V,V}(v_1,x)v_2)\big\rangle,
\end{align*}
for $u, v_1,v_2\in V$, using skew-symmetry of the vertex operator $Y_V$ twice as well as the fact that $\varphi$ is a $V$-module homomorphism. Since $\cY_{V,V}$ is surjective and $\varphi'\circ\delta_V$ is an isomorphism, this shows $l_V=e_W\circ(\varphi\tens\Id_V)$.
\end{proof}

For brevity and clarity, it will be convenient to use graphical calculus in subsequent proofs in this section. To illustrate the graphical calculus conventions that we will use, we express \eqref{diag:phi_char}, \eqref{diag:f'_def}, and \eqref{diag:delta_def}, respectively, as follows:
\begin{equation*}
\begin{matrix}
    \begin{tikzpicture}[scale = 1, baseline = {(current bounding box.center)}, line width=0.75pt] 
    \draw[dashed] (0,0) -- (0,1.25) .. controls (0,2.15) and (1.5,2.15) .. (1.5,1.25) -- (1.5,0);
    \draw[dashed] (.75,1.75) -- (.75,2.5);
    \node at (0,-.25) {$V$};
    \node at (1.5,-.25) {$V$};
    \node at (.75,2.75) {$V$};
    \node at (0,.75) [draw,minimum width=20pt,minimum height=10pt,thick, fill=white] {$\varphi$};
    \node at (.75,1.85) [draw,minimum width=20pt,minimum height=10pt,thick, fill=white] {$e_V$};
    \end{tikzpicture}
   \end{matrix} =
   \begin{matrix}
    \begin{tikzpicture}[scale = 1, baseline = {(current bounding box.center)}, line width=0.75pt] 
    \draw[dashed] (0,0) -- (.75,1) -- (1.5,0);
    \draw[dashed] (.75,1) -- (.75,2);
    \node at (0,-.25) {$V$};
    \node at (1.5,-.25) {$V$};
    \node at (.75,2.25) {$V$};
    \end{tikzpicture}
   \end{matrix},\qquad\qquad
 \begin{matrix}
    \begin{tikzpicture}[scale = 1, baseline = {(current bounding box.center)}, line width=0.75pt] 
    \draw[white, double=black, line width = 3pt ] (0,0) -- (0,1.25) .. controls (0,2.15) and (1.5,2.15) .. (1.5,1.25) -- (1.5,0);
    \draw[dashed] (.75,1.75) -- (.75,2.5);
    \node at (0,-.25) {$W_2'$};
    \node at (1.5,-.25) {$W_1$};
    \node at (.75,2.75) {$V$};
    \node at (0,.75) [draw,minimum width=20pt,minimum height=10pt,thick, fill=white] {$f'$};
    \node at (.75,1.75) [draw,minimum width=20pt,minimum height=10pt,thick, fill=white] {$e_{W_1}$};
    \end{tikzpicture}
   \end{matrix} =
  \begin{matrix}
    \begin{tikzpicture}[scale = 1, baseline = {(current bounding box.center)}, line width=0.75pt] 
    \draw[white, double=black, line width = 3pt ] (0,0) -- (0,1.25) .. controls (0,2.15) and (1.5,2.15) .. (1.5,1.25) -- (1.5,0);
    \draw[dashed] (.75,1.75) -- (.75,2.5);
    \node at (0,-.25) {$W_2'$};
    \node at (1.5,-.25) {$W_1$};
    \node at (.75,2.75) {$V$};
    \node at (1.5,.75) [draw,minimum width=20pt,minimum height=10pt,thick, fill=white] {$f$};
    \node at (.75,1.75) [draw,minimum width=20pt,minimum height=10pt,thick, fill=white] {$e_{W_2}$};
    \end{tikzpicture}
   \end{matrix},
\end{equation*}
and
\begin{equation*}
 \begin{matrix}
    \begin{tikzpicture}[scale = 1, baseline = {(current bounding box.center)}, line width=0.75pt] 
    \draw[white, double=black, line width = 3pt ] (0,0) -- (0,1.25) .. controls (0,2.15) and (1.5,2.15) .. (1.5,1.25) -- (1.5,0);
    \draw[dashed] (.75,1.75) -- (.75,2.5);
    \node at (0,-.25) {$W$};
    \node at (1.5,-.25) {$W'$};
    \node at (.75,2.75) {$V$};
    \node at (0,.75) [draw,minimum width=20pt,minimum height=10pt,thick, fill=white] {$\delta_W$};
    \node at (.75,1.75) [draw,minimum width=20pt,minimum height=10pt,thick, fill=white] {$e_{W'}$};
    \end{tikzpicture}
   \end{matrix} =
   \begin{matrix}
    \begin{tikzpicture}[scale = 1, baseline = {(current bounding box.center)}, line width=0.75pt] 
    \draw[white, double=black, line width = 3pt ] (0,0) -- (0,.5) .. controls (0,1.4) and (1.5,1.4) .. (1.5,.5) -- (1.5,0);
    \draw[dashed] (.75,1) -- (.75,2);
    \node at (0,-.25) {$W$};
    \node at (1.5,-.25) {$W'$};
    \node at (.75,2.25) {$V$};
    \node at (.75,1) [draw,minimum width=20pt,minimum height=10pt,thick, fill=white] {$\til{e}_{W}$};
    \end{tikzpicture}
   \end{matrix}
   =
  \begin{matrix}
    \begin{tikzpicture}[scale = 1, baseline = {(current bounding box.center)}, line width=0.75pt] 
    \draw (1.5,0) -- (1.5,1.15) .. controls (1.5,1.65) and (0,1.4) .. (0,1.9) -- (0,2);
    \draw[white, double=black, line width = 3pt ] (0,0) -- (0,1.15) .. controls (0,1.65) and (1.5,1.4) .. (1.5,1.9) -- (1.5,2);
    \draw (0,2) .. controls (0,2.85) and (1.5,2.85) .. (1.5,2);
    \draw[dashed] (.75,2.55) -- (.75,3.25);
    \node at (0,-.25) {$W$};
    \node at (1.5,-.25) {$W'$};
    \node at (.75,3.5) {$V$};
    \node at (0,.65) [draw,minimum width=20pt,minimum height=10pt,thick, fill=white] {$\theta_W$};
    \node at (.75,2.5) [draw,minimum width=20pt,minimum height=10pt,thick, fill=white] {$e_{W}$};
    \end{tikzpicture}
   \end{matrix}.
\end{equation*}
It is straightforward, although perhaps tedious, to convert all graphical proofs presented below into more detailed arguments through appropriate use of the triangle, pentagon, and hexagon axioms and their consequences.

Now for any module $W$ in $\cC$, we define
\begin{equation*}
 \Phi_W: W\tens W'\rightarrow(W\tens W')'
\end{equation*}
to be the unique homomorphism such that the diagram
\begin{align*}
\xymatrixcolsep{.6pc}
\xymatrix{
& ((W\tens W')\tens W)\tens W'  \ar[rrrrr]^(.51){\cA_{W,W',W}^{-1}\tens\Id_{W'}} &&&&& (W\tens(W'\tens W))\tens W' \ar[d]^{(\Id_W\tens e_W)\tens\Id_{W'}} \\
 (W\tens W')\tens(W\tens W') \ar[ru]^(.45){\cA_{W\tens W',W,W'}} \ar[d]^{\Phi_W\tens\Id_{W\tens W'}} & &&&&& (W\tens V)\tens W' \ar[d]^{r_W\tens\Id_{W'}} \\
 (W\tens W')'\tens(W\tens W') \ar[r]_(.7){e_{W\tens W'}} & V &&&&& \ar[lllll]^{\til{e}_W} W\tens W'\\
 }
\end{align*}
commutes. Graphically,
\begin{equation*}
 \begin{matrix}
    \begin{tikzpicture}[scale = 1, baseline = {(current bounding box.center)}, line width=0.75pt]
    \draw (0,0) -- (0,.5) .. controls (0,.8) .. (.5,.8) .. controls (1,.8) .. (1,.5) -- (1,0);
    \draw[dashed] (1.5, 2.25) -- (1.5,3);
    \draw (.5,1) -- (.5,1.6) .. controls (.5,2.6) and (2.5,2.6) .. (2.5,1.6) -- (2.5, 1);
    \draw (2,0) -- (2,.5) -- (2.5,1) -- (3,.5) -- (3,0);
    \node at (0,-.25) {$W$};
    \node at (1,-.25) {$W'$};
    \node at (2,-.25) {$W$};
    \node at (3,-.25) {$W'$};
    \node at (1.5,3.25) {$V$};
    \node at (.5,1) [draw,minimum width=20pt,minimum height=10pt,thick, fill=white] {$\Phi_W$};
    \node at (1.5,2.25) [draw,minimum width=20pt,minimum height=10pt,thick, fill=white] {$e_{W\boxtimes W'}$};
    \end{tikzpicture}
   \end{matrix} =
   \begin{matrix}
    \begin{tikzpicture}[scale = 1, baseline = {(current bounding box.center)}, line width=0.75pt]
     \draw[white, double=black, line width = 3pt ] (1,0) -- (1,.5) .. controls (1,1.4) and (2,1.4) .. (2,0.5) -- (2,0);                      
    \draw[white, double=black, line width = 3pt ] (0,0) -- (0,1.5) .. controls (0,2.5) and (3,2.5) .. (3,1.5) -- (3,0);
    \draw[dashed] (1.5,1) .. controls (1.5,1.4) .. (0,1.5);
    \draw[dashed] (1.5,2.25) -- (1.5,3);
    \node at (0,-.25) {$W$};
    \node at (1,-.25) {$W'$};
    \node at (2,-.25) {$W$};
    \node at (3,-.25) {$W'$};
    \node at (1.5,3.25) {$V$};
    \node at (1.5,1) [draw,minimum height=10pt,thick, fill=white] {$e_W$};
    \node at (1.5,2.25) [draw,minimum width=20pt,minimum height=10pt,thick, fill=white] {$\widetilde{e}_W$};
    \end{tikzpicture}
   \end{matrix}.
\end{equation*}
The following result may be found, for example, in \cite[Proposition 4.4.5]{CMY2}, but here we give a more streamlined proof:
\begin{prop}\label{prop:Phi_iso_rigid}
 If $\Phi_W$ is an isomorphism, then $W$ is rigid with dual $W'$.
\end{prop}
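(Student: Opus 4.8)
The plan is to use the evaluation $e_W$ and coevaluation built from $\Phi_W^{-1}$ to exhibit explicit duality data, and then verify the two snake identities using the graphical characterizations just established for $e_W$, $\varphi$, $\delta_W$, and $\Phi_W$. First I would define the candidate coevaluation $i_W\colon V\to W\tens W'$ to be the composition
\begin{equation*}
 V\xrightarrow{\;\varphi\;} V'\xrightarrow{\;(\til{e}_W)'\;} (W\tens W')'\xrightarrow{\;\Phi_W^{-1}\;} W\tens W',
\end{equation*}
which makes sense precisely because $\Phi_W$ is assumed invertible. The first snake identity to check is that
\begin{equation*}
 W\xrightarrow{l_W^{-1}} V\tens W\xrightarrow{i_W\tens\Id_W}(W\tens W')\tens W\xrightarrow{\cA_{W,W',W}^{-1}} W\tens(W'\tens W)\xrightarrow{\Id_W\tens e_W} W\tens V\xrightarrow{r_W} W
\end{equation*}
equals $\Id_W$, and the second is the analogous composition $W'\to\cdots\to W'$ involving $\til{e}_W$ (or $e_{W'}$). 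Because all of our rigid categories will be ribbon, it suffices to check one of the two rigidity compositions together with the known fact that $\cC$ is an $r$-category (weakly rigid), or alternatively to check both directly; I would check both for safety.

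The key computational step is to translate the defining diagram of $\Phi_W$ into a usable form. Unwinding that diagram, $\Phi_W$ is the map such that $e_{W\tens W'}\circ(\Phi_W\tens\Id_{W\tens W'})$ equals the ``double evaluation'' map $(W\tens W')\tens(W\tens W')\to V$ that pairs the inner $W'$ with the inner $W$ via $e_W$ and then the outer $W$ with the outer $W'$ via $\til e_W$. Composing the snake expression for $i_W$ with a test map $e_{W\tens W'}$ (or pairing against $(W\tens W')'$) and using the universal property of contragredients to strip the outer evaluation, the identity to prove becomes a diagram chase among: the defining relation for $\Phi_W$, the characterization \eqref{diag:delta_def} of $\delta_W$ via $\til e_W=e_W\circ\cR_{W,W'}\circ(\theta_W\tens\Id_{W'})$, the characterization \eqref{diag:phi_char} of $\varphi$, and the definition of $(\til e_W)'$ via \eqref{diag:f'_def}. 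Graphically, after substituting $\Phi_W^{-1}\circ(\til e_W)'\circ\varphi$ for $i_W$, the $\Phi_W^{-1}$ cancels against an instance of $\Phi_W$ produced by the double-evaluation interpretation, leaving a closed diagram built only from $e_W$, $\til e_W$, associators, unitors, the braiding, and the twist, which reduces to the identity by the triangle, pentagon, and hexagon axioms together with naturality and the balancing equation.

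The main obstacle I expect is bookkeeping rather than conceptual: correctly matching up the several nested copies of $W$ and $W'$ so that the instance of $\Phi_W$ hidden inside the double evaluation is exactly the one inverted by $\Phi_W^{-1}$, and keeping track of which braiding/twist factors appear once $\til e_W$ is expanded in terms of $e_W$. A secondary subtlety is that $\Phi_W$ being an isomorphism is used twice — once to define $i_W$ at all, and once (implicitly, via its defining universal property) to conclude that the composite equals the identity rather than merely agreeing after applying $\Phi_W$. Once both snake identities are verified, $(W',e_W,i_W)$ is a dual for $W$, so $W$ is rigid; and since $W$ was arbitrary, repeating the argument shows $\cC$ is rigid with duals given by contragredients. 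I would also remark that tracing through $\Phi_W^{-1}\circ(\til e_W)'\circ\varphi$ yields the explicit (if unwieldy) formula for $i_W$ promised in Remark \ref{rem:coevaluation}.
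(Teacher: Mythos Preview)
Your proposal is correct and follows essentially the same approach as the paper: the same coevaluation $i_W=\Phi_W^{-1}\circ(\til e_W)'\circ\varphi$, the same reduction via the universal property of contragredients, and the same cancellation of $\Phi_W^{-1}$ against the $\Phi_W$ coming from the double-evaluation definition. The paper streamlines slightly by citing \cite[Lemma 4.2.1]{CMY3} to check only the snake identity on $W$, and by treating $\til e_W$ as a black box throughout rather than expanding it in terms of $e_W$, braiding, and twist---so the hexagon axiom, balancing equation, and \eqref{diag:delta_def} are not actually needed here.
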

\begin{proof}
 Assuming $\Phi_W$ is an isomorphism, we first define the coevaluation
 \begin{equation*}
  i_W: V\xrightarrow{\varphi} V'\xrightarrow{(\til{e}_W)'} (W\tens W')'\xrightarrow{\Phi_W^{-1}} W\tens W'.
 \end{equation*}
To show that $(W',e_W,i_W)$ is a dual of $W$ in $\cC$, \cite[Lemma 4.2.1]{CMY3} implies it is enough to show that the rigidity composition $\mathfrak{R}_W$ given by
\begin{equation*}
 W\xrightarrow{l_W^{-1}} V\tens W\xrightarrow{i_W\tens\Id_W} (W\tens W')\tens W\xrightarrow{\cA_{W,W',W}^{-1}} W\tens(W'\tens W)\xrightarrow{\Id_W\tens e_W} W\tens V\xrightarrow{r_W} W
\end{equation*}
is the identity on $W$. Then since \eqref{diag:delta_def} shows that $(W,\til{e}_W)$ is a contragredient of $W'$, the universal property of contragredients implies that it is enough to show
\begin{equation*}
 \til{e}_W\circ(\mathfrak{R}_W\tens\Id_{W'}) =\til{e}_W.
\end{equation*}
This can be proved graphically as follows:
\begin{align*}
 \til{e}_W\circ(\mathfrak{R}_W\tens\Id_{W'}) & =\begin{matrix}
    \begin{tikzpicture}[scale = 1, baseline = {(current bounding box.center)}, line width=0.75pt]
     \draw[white, double=black, line width = 3pt ] (0,2.5) .. controls (0,1.6) and (1,1.6) .. (1,2.5) .. controls (1,3.4) and (2,3.4) .. (2,2.5) -- (2,0);                      
    \draw[white, double=black, line width = 3pt ] (0,2.5) -- (0,3.5) .. controls (0,4.5) and (3,4.5) .. (3,3.5) -- (3,0);
    \draw (.5,1) -- (.5,2);
    \draw[dashed] (2,.25) .. controls (.75,.25) .. (.5,1);
    \draw[dashed] (1.5,3) .. controls (1.5,3.4) .. (0,3.5);
    \draw[dashed] (1.5,4.25) -- (1.5,5);
    \node at (2,-.25) {$W$};
    \node at (3,-.25) {$W'$};
    \node at (1.5,5.25) {$V$};
    \node at (1.5,3) [draw,minimum height=10pt,thick, fill=white] {$e_W$};
    \node at (1.5,4.25) [draw,minimum width=20pt,minimum height=10pt,thick, fill=white] {$\widetilde{e}_W$};
    \node at (.5,1) [draw,minimum height=10pt,thick, fill=white] {$(\til{e}_W)'\circ\varphi$};
    \node at (.5,2) [draw,minimum height=10pt,thick, fill=white] {$\Phi_W^{-1}$};
    \end{tikzpicture}
   \end{matrix} =
   \begin{matrix}
    \begin{tikzpicture}[scale = 1, baseline = {(current bounding box.center)}, line width=0.75pt]                     
    \draw[white, double=black, line width = 3pt ] (.5,2) -- (.5,2.5) .. controls (.5,3.85) and (2.5,3.85) .. (2.5,2.5) -- (2.5,1);
    \draw[dashed] (2,.25) .. controls (.75,.25) .. (.5,1) -- (.5,2);
    \draw (2,0) -- (2,.5) -- (2.5,1) -- (3,.5) -- (3,0);
    \draw[dashed] (1.5,3.25) -- (1.5,4);
    \node at (2,-.25) {$W$};
    \node at (3,-.25) {$W'$};
    \node at (1.5,4.25) {$V$};
    \node at (1.5,3.35) [draw,minimum width=20pt,minimum height=10pt,thick, fill=white] {$e_{W\tens W'}$};
    \node at (.5,1) [draw,minimum height=10pt,thick, fill=white] {$\varphi$};
    \node at (.5,2) [draw,minimum height=10pt,thick, fill=white] {$(\til{e}_W)'$};
    \end{tikzpicture}
   \end{matrix} \nonumber\\
   &= 
   \begin{matrix}
    \begin{tikzpicture}[scale = 1, baseline = {(current bounding box.center)}, line width=0.75pt]                     
    \draw[dashed] (.5,1.25) -- (.5,1.5) .. controls (.5,2.85) and (2.5,2.85) .. (2.5,1.5) -- (2.5,1.25);
    \draw[dashed] (2,.25) .. controls (.75,.25) .. (.5,1) -- (.5,1.25);
    \draw (2,0) -- (2,.5) .. controls (2,1.8) and (3,1.8) .. (3,.5) -- (3,0);
    \draw[dashed] (1.5,2.5) -- (1.5,3.25);
    \node at (2,-.25) {$W$};
    \node at (3,-.25) {$W'$};
    \node at (1.5,3.5) {$V$};
    \node at (1.5,2.5) [draw,minimum width=20pt,minimum height=10pt,thick, fill=white] {$e_{V}$};
    \node at (.5,1.25) [draw,minimum height=10pt,thick, fill=white] {$\varphi$};
    \node at (2.5,1.25) [draw,minimum height=10pt,thick, fill=white] {$\til{e}_W$};
    \end{tikzpicture}
   \end{matrix} =
   \begin{matrix}
    \begin{tikzpicture}[scale = 1, baseline = {(current bounding box.center)}, line width=0.75pt]                     
    \draw[dashed] (2,.25) .. controls (1,.25) .. (1,1.25) .. controls (1,1.8) .. (1.75,2.25) .. controls (2.5,1.8) .. (2.5,1.25);
    \draw (2,0) -- (2,.5) .. controls (2,1.8) and (3,1.8) .. (3,.5) -- (3,0);
    \draw[dashed] (1.75,2.25) -- (1.75,3);
    \node at (2,-.25) {$W$};
    \node at (3,-.25) {$W'$};
    \node at (1.75,3.25) {$V$};
    \node at (2.5,1.25) [draw,minimum height=10pt,thick, fill=white] {$\til{e}_W$};
    \end{tikzpicture}
   \end{matrix} =
   \begin{matrix}
    \begin{tikzpicture}[scale = 1, baseline = {(current bounding box.center)}, line width=0.75pt] 
    \draw[white, double=black, line width = 3pt ] (0,0) -- (0,.5) .. controls (0,1.4) and (1.5,1.4) .. (1.5,.5) -- (1.5,0);
    \draw[dashed] (.75,1) -- (.75,2);
    \node at (0,-.25) {$W$};
    \node at (1.5,-.25) {$W'$};
    \node at (.75,2.25) {$V$};
    \node at (.75,1) [draw,minimum width=20pt,minimum height=10pt,thick, fill=white] {$\til{e}_{W}$};
    \end{tikzpicture}
   \end{matrix},
\end{align*}
where the first equality is the definition of $\mathfrak{R}_W$, the second is the definition of $\Phi_W$, the third follows from \eqref{diag:f'_def}, the fourth from \eqref{diag:phi_char}, and the last equality comes from naturality of the left unit isomorphisms.
\end{proof}

To find a sufficient condition for $\Phi_W$ to be an isomorphism, and thus for $W$ to be rigid, we define two more homomorphisms using the universal property of contragredients. First, let 
\begin{equation*}
 \Psi_W: W'\tens(W\tens W')'\rightarrow W'
\end{equation*}
be the unique morphism such that the diagram
\begin{align*}
 \xymatrixcolsep{.5pc}
\xymatrix{
& (W'\tens(W\tens W')')\tens W  \ar[rrrrr]^(.51){\cR_{(W\tens W')',W'}^{-1}\tens\Id_{W}} &&&&& ((W\tens W')'\tens W')\tens W \ar[d]^{\cA^{-1}_{(W\tens W')',W',W}} \\
 (W'\tens(W\tens W')')\tens W \ar[ru]^(.45){(\theta_{W'}^{-1}\tens\Id_{(W\tens W')'})\tens\Id_{W}\qquad} \ar[d]^{\Psi_W\tens\Id_{W\tens W'}} & &&&&& (W\tens W')'\tens(W'\tens W) \ar[d]^{\Id_{(W\tens W')'}\tens\cR_{W,W'}^{-1}} \\
 W'\tens W \ar[r]_(.52){e_{W}} & V &&&&& \ar[lllll]^(.6){e_{W\tens W'}} (W\tens W')'\tens(W\tens W')\\
 }
\end{align*}
commutes. Graphically,
\begin{equation*}
  \begin{matrix}
     \begin{tikzpicture}[scale = 1, baseline = {(current bounding box.center)}, line width=0.75pt]
     \draw (1.5,1) .. controls (1.5,1.7) .. (.75,1.8);
     \draw (0,1) .. controls (0,1.7) .. (.75,1.8);
     \draw (.75,2.2) .. controls (.75,3.3) and (3,3.3) .. (3,2.2) -- (3,1);
     \draw[dashed] (1.875,3) -- (1.875,3.8);
    \node at (0,.75) {$W'$};
    \node at (3,.75) {$W$};
    \node at (1.875,4) {$V$};
    \node at (1.5,.75) {$(W\tens W')'$};
    \node at (.75,2) [draw,minimum width=20pt,minimum height=10pt,thick, fill=white] {$\Psi_W$};
    \node at (1.875,3) [draw,minimum width=20pt,minimum height=10pt,thick, fill=white] {$e_W$};   
     \end{tikzpicture}
   \end{matrix}   = 
   \begin{matrix}
    \begin{tikzpicture}[scale = 1, baseline = {(current bounding box.center)}, line width=0.75pt]
    \draw (0,0) -- (0,1);
     \draw (0,1) -- (0,1.25) .. controls (0, 1.75) and (1.5, 1.5) .. (1.5, 2) .. controls (1.5, 2.5) and (3, 2.25) .. (3,2.75);
     \draw[white, double=black, line width = 3pt ] (1.5,0) -- (1.5,1.25) .. controls (1.5, 1.75) and (0,1.5) .. (0,2) -- (0,3.5) .. controls (0,4.6) and (2.25,4.6) .. (2.25,3.5); 
    \draw[white, double=black, line width = 3pt ] (3,0) -- (3,2) .. controls (3,2.5) and (1.5,2.25) .. (1.5,2.75);
    \draw (1.5,2.75) -- (1.5, 3) -- (2.25, 3.5);
    \draw (2.25,3.5) -- (3,3) -- (3,2.75);
    \draw[dashed] (1.125, 4.2) -- (1.125,5);
    \node at (0,-.25) {$W'$};
    \node at (3,-.25) {$W$};
    \node at (1.125,5.2) {$V$};
    \node at (1.5,-.25) {$(W\tens W')'$};
    \node at (0,.8) [draw,minimum width=20pt,minimum height=10pt,thick, fill=white] {$\theta_{W'}^{-1}$};
    \node at (1.125,4.2) [draw,minimum width=20pt,minimum height=10pt,thick, fill=white] {$e_{W\boxtimes W'}$};
    \end{tikzpicture}
   \end{matrix} .
\end{equation*}
Secondly, we define
\begin{equation*}
 \til{\Psi}_W: (W\tens W')'\tens W\rightarrow W
\end{equation*}
to be the unique morphism such that the diagram
\begin{align*}
 \xymatrixcolsep{5pc}
 \xymatrix{
 ((W\tens W')'\tens W)\tens W' \ar[d]_{\til{\Psi}_W\tens\Id_{W'}} \ar[r]^{\cA_{(W\tens W')',W,W'}^{-1}} & (W\tens W')'\tens(W\tens W') \ar[d]^{e_{W\tens W'}}\\
 W\tens W' \ar[r]_{\til{e}_{W}} & V\\
 }
\end{align*}
commutes (recall from \eqref{diag:delta_def} that $(W,\til{e}_W)$ is a contragredient of $W'$ in $\cC$).
\begin{lem}\label{lem:PsiPhi}
 For any $V$-module $W$ in $\cC$,
 \begin{enumerate}
  \item  $\Psi_{W}\circ(\Id_{W'}\tens\Phi_W) = l_{W'}\circ(e_W\tens\Id_{W'})\circ\cA_{W',W,W'}$ as morphisms $W'\tens(W\tens W')\rightarrow W'$.
  \item $\til{\Psi}_W\circ(\Phi_W\tens\Id_W) = r_W\circ(\Id_W\tens e_W)\circ\cA_{W,W',W}^{-1}$ as morphisms $(W\tens W')\tens W\rightarrow W$.
 \end{enumerate}
\end{lem}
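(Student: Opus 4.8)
The plan is to prove both identities by the universal property of contragredient modules together with the defining diagrams of $\Phi_W$, $\Psi_W$, and $\til\Psi_W$; as elsewhere in this subsection, the cleanest presentation is graphical, but each step can equally be phrased as a diagram chase.

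For part (2), which is the easier of the two, both sides are morphisms $(W\tens W')\tens W\to W$, so by the universal property of the contragredient $(W,\til{e}_W)$ of $W'$ recorded in \eqref{diag:delta_def} it suffices to show that composing with $\til{e}_W\circ((-)\tens\Id_{W'})$ yields equal morphisms $((W\tens W')\tens W)\tens W'\to V$. Applying the defining relation $\til{e}_W\circ(\til\Psi_W\tens\Id_{W'})=e_{W\tens W'}\circ\cA_{(W\tens W')',W,W'}^{-1}$ to the left-hand side, then using naturality of the associativity isomorphism to rewrite $\cA_{(W\tens W')',W,W'}^{-1}\circ((\Phi_W\tens\Id_W)\tens\Id_{W'})$ as $(\Phi_W\tens\Id_{W\tens W'})\circ\cA_{W\tens W',W,W'}^{-1}$, and finally substituting the defining relation for $e_{W\tens W'}\circ(\Phi_W\tens\Id_{W\tens W'})$, the pair $\cA_{W\tens W',W,W'}\circ\cA_{W\tens W',W,W'}^{-1}$ cancels and one is left with exactly $\til{e}_W\circ(r_W\tens\Id_{W'})\circ((\Id_W\tens e_W)\tens\Id_{W'})\circ(\cA_{W,W',W}^{-1}\tens\Id_{W'})$. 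By bifunctoriality of $\tens$ this equals $\til{e}_W\circ((r_W\circ(\Id_W\tens e_W)\circ\cA_{W,W',W}^{-1})\tens\Id_{W'})$, i.e. the composite of the right-hand side with $\til{e}_W\circ((-)\tens\Id_{W'})$.

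For part (1), both sides are morphisms $W'\tens(W\tens W')\to W'$, so by the universal property of the contragredient $(W',e_W)$ of $W$ it suffices to check equality after composing with $e_W\circ((-)\tens\Id_W)$. Substituting the defining diagram of $\Psi_W$ into the left-hand side and using naturality of the braiding and associativity isomorphisms to slide $\Phi_W$ (together with the twist $\theta_{W'}^{-1}$) past them, one recognizes the combination $e_{W\tens W'}\circ(\Phi_W\tens\Id_{W\tens W'})$ flanked by a string of braidings, a twist, and associators. Replacing this by the defining expression for $\Phi_W$ produces a diagram built only from $e_W$, $\til{e}_W$, associativity and braiding isomorphisms, and twists; using $\til{e}_W=e_W\circ\cR_{W,W'}\circ(\theta_W\tens\Id_{W'})$, the balancing equation $\theta_{W\tens W'}=\cR_{W,W'}^2\circ(\theta_W\tens\theta_{W'})$, a hexagon axiom, and $\theta_V=\Id_V$, all twists and double braidings cancel and the diagram collapses to $e_W\circ((l_{W'}\circ(e_W\tens\Id_{W'})\circ\cA_{W',W,W'})\tens\Id_W)$, which is the right-hand side composed with $e_W\circ((-)\tens\Id_W)$.

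I expect the main obstacle to lie entirely in the last step of part (1): carefully tracking how the twist $\theta_{W'}^{-1}$ coming from the definition of $\Psi_W$ interacts with the twist and monodromy hidden inside $\til{e}_W$ once the definition of $\Phi_W$ is unfolded, and verifying that these genuinely cancel rather than leaving a residual twist. This bookkeeping is most transparent when carried out graphically, by sliding twists along strands and repeatedly invoking the balancing equation, and I would present it as a short sequence of graphical identities in the style of the proof of Proposition \ref{prop:Phi_iso_rigid}. Part (2), by contrast, requires no braiding manipulation at all and follows formally from the defining universal properties and naturality of associativity.
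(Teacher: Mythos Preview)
Your proposal is correct and follows essentially the same route as the paper: both parts are reduced via the universal property of the contragredient (of $W'$ for part (2), of $W$ for part (1)) to a graphical unwinding of the definitions of $\Psi_W$, $\til\Psi_W$, and $\Phi_W$. For the twist cancellation in part (1) that you flagged as the main obstacle, the paper's mechanism is slightly different from what you suggest: rather than balancing plus $\theta_V=\Id_V$, it observes directly that $\theta_{W'}=\theta_W'$ (from $\theta_W=e^{2\pi i L(0)}$) and then applies \eqref{diag:f'_def} to get $e_W\circ(\theta_{W'}^{-1}\tens\theta_W)=e_W$, which disposes of the residual twists in one step.
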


\begin{proof}
 To prove the first assertion, the universal property of the contragredient $(W',e_W)$ of $W$ implies that it is enough to show 
 \begin{equation}\label{eqn:PsiPhi_reln_1}
  e_W\circ([\Psi_W\circ(\Id_{W'}\tens\Phi_W)]\tens\Id_W)=e_W\circ([l_{W'}\circ(e_W\tens\Id_{W'})\circ\cA_{W',W,W'}]\tens\Id_W).
 \end{equation}
Beginning with the left side, we calculate 
 \begin{align*}
   \begin{matrix}
     \begin{tikzpicture}[scale = 1, baseline = {(current bounding box.center)}, line width=0.75pt]
     \draw (1,0) .. controls (1,.6) .. (1.5,.6);
     \draw (2,0) .. controls (2,.6) .. (1.5,.6);
     \draw (1.5,1) .. controls (1.5,1.7) .. (.75,1.8);
     \draw (0,0) -- (0,1) .. controls (0,1.7) .. (.75,1.8);
     \draw (.75,2.2) .. controls (.75,3.3) and (3,3.3) .. (3,2.2) -- (3,0);
     \draw[dashed] (1.875,3) -- (1.875,3.8);
    \node at (0,-.25) {$W'$};
    \node at (1,-.25) {$W$};
    \node at (2,-.25) {$W'$};
    \node at (3,-.25) {$W$};
    \node at (1.875,4) {$V$};
    \node at (1.5,.8) [draw,minimum width=20pt,minimum height=10pt,thick, fill=white] {$\Phi_W$};
    \node at (.75,2) [draw,minimum width=20pt,minimum height=10pt,thick, fill=white] {$\Psi_W$};
    \node at (1.875,3) [draw,minimum width=20pt,minimum height=10pt,thick, fill=white] {$e_W$};   
     \end{tikzpicture}
   \end{matrix} &  = 
   \begin{matrix}
    \begin{tikzpicture}[scale = 1, baseline = {(current bounding box.center)}, line width=0.75pt]
    \draw (0,0) -- (0,1);
    \draw (1,0) .. controls (1,.6) .. (1.5,.6);
     \draw (2,0) .. controls (2,.6) .. (1.5,.6);
     \draw (0,1) -- (0,1.25) .. controls (0, 1.75) and (1.5, 1.5) .. (1.5, 2) .. controls (1.5, 2.5) and (3, 2.25) .. (3,2.75);
     \draw[white, double=black, line width = 3pt ] (1.5,1) -- (1.5,1.25) .. controls (1.5, 1.75) and (0,1.5) .. (0,2) -- (0,3.5) .. controls (0,4.6) and (2.25,4.6) .. (2.25,3.5); 
    \draw[white, double=black, line width = 3pt ] (3,0) -- (3,2) .. controls (3,2.5) and (1.5,2.25) .. (1.5,2.75);
    \draw (1.5,2.75) -- (1.5, 3) -- (2.25, 3.5);
    \draw (2.25,3.5) -- (3,3) -- (3,2.75);
    \draw[dashed] (1.125, 4.2) -- (1.125,5);
    \node at (0,-.25) {$W'$};
    \node at (1,-.25) {$W$};
    \node at (2,-.25) {$W'$};
    \node at (3,-.25) {$W$};
    \node at (1.125,5.2) {$V$};
    \node at (1.5,.8) [draw,minimum width=20pt,minimum height=10pt,thick, fill=white] {$\Phi_W$};
    \node at (0,.8) [draw,minimum width=20pt,minimum height=10pt,thick, fill=white] {$\theta_{W'}^{-1}$};
    \node at (1.125,4.2) [draw,minimum width=20pt,minimum height=10pt,thick, fill=white] {$e_{W\boxtimes W'}$};
    \end{tikzpicture}
   \end{matrix} =
    \begin{matrix}
    \begin{tikzpicture}[scale = 1, baseline = {(current bounding box.center)}, line width=0.75pt]
     \draw (0,0) .. controls (0, .5) and (1, .25) .. (1, .75) .. controls (1, 1.25) and (2, 1) .. (2,1.5) .. controls (2,2) and (3,1.75) .. (3,2.25) -- (3,3.2) -- (2.5, 3.6) -- (2, 3.2) -- (2,2.25);
     \draw[white, double=black, line width = 3pt ] (2,0) -- (2,.75) .. controls (2, 1.25) and (1,1) .. (1,1.5) -- (1,1.9) .. controls (1,2.5) .. (.5,2.5); 
    \draw[white, double=black, line width = 3pt ] (3,0) -- (3,1.5) .. controls (3,2) and (2,1.75) .. (2,2.25);
    \draw[dashed] (1.5, 4.2) -- (1.5,5);
    \draw[white, double=black, line width = 3pt ] (1,0) .. controls (1,.5) and (0,.25) .. (0,.75) -- (0,1.9) .. controls (0,2.5) .. (.5,2.5);
    \draw (.5,2.5) -- (.5,3.6) .. controls (.5,4.6) and (2.5,4.6) .. (2.5,3.6);
    \node at (0,-.25) {$W'$};
    \node at (1,-.25) {$W$};
    \node at (2,-.25) {$W'$};
    \node at (3,-.25) {$W$};
    \node at (1.5,5.2) {$V$};
    \node at (.5,2.7) [draw,minimum width=20pt,minimum height=10pt,thick, fill=white] {$\Phi_W$};
    \node at (3,2.7) [draw,minimum width=20pt,minimum height=10pt,thick, fill=white] {$\theta_{W'}^{-1}$};
    \node at (1.5,4.2) [draw,minimum width=20pt,minimum height=10pt,thick, fill=white] {$e_{W\boxtimes W'}$};
    \end{tikzpicture}
   \end{matrix}\nonumber\\
   &\hspace{-5em} =
   \begin{matrix}
    \begin{tikzpicture}[scale = 1, baseline = {(current bounding box.center)}, line width=0.75pt]
     \draw (0,0) .. controls (0, .5) and (1, .25) .. (1, .75) .. controls (1, 1.25) and (3, 1) .. (3,1.5);
     \draw[white, double=black, line width = 3pt ] (2,0) -- (2,.75) .. controls (2, 1.25) and (1,1) .. (1,1.5) .. controls (1,2.4) and (2,2.4) .. (2,1.5) .. controls (2,1) and (3,1.25) .. (3,.75) -- (3,0); 
    \draw[white, double=black, line width = 3pt ] (1,0) .. controls (1,.5) and (0,.25) .. (0,.75) -- (0,2.5) .. controls (0,3.5) and (3,3.5) .. (3,2.5) -- (3,1.5);
    \draw[dashed] (1.5,2) .. controls (1.5,2.4) .. (0,2.5);
    \draw[dashed] (1.5,3.25) -- (1.5,4);
    \node at (0,-.25) {$W'$};
    \node at (1,-.25) {$W$};
    \node at (2,-.25) {$W'$};
    \node at (3,-.25) {$W$};
    \node at (1.5,4.25) {$V$};
    \node at (1.5,2) [draw,minimum height=10pt,thick, fill=white] {$e_W$};
    \node at (3,2) [draw,minimum width=20pt,minimum height=10pt,thick, fill=white] {$\theta_{W'}^{-1}$};
    \node at (1.5,3.25) [draw,minimum width=20pt,minimum height=10pt,thick, fill=white] {$\widetilde{e}_W$};
    \end{tikzpicture}
   \end{matrix} =
   \begin{matrix}
    \begin{tikzpicture}[scale = 1, baseline = {(current bounding box.center)}, line width=0.75pt]
     \draw (0,0) .. controls (0, .5) and (1, .25) .. (1, .75);
     \draw[white, double=black, line width = 3pt ] (2,0) -- (2,.25) .. controls (2,1.15) and (3,1.15) .. (3,.25) -- (3,0); 
    \draw[white, double=black, line width = 3pt ] (1,0) .. controls (1,.5) and (0,.25) .. (0,.75) -- (0,2.25) .. controls (0,3.15) and (1,3.15) .. (1,2.25) -- (1,.75);
    \draw[dashed] (2.5,.75) .. controls (2.5,1.15) .. (1,1.25);
    \draw[dashed] (.5,2.75) -- (.5,3.5);
    \node at (0,-.25) {$W'$};
    \node at (1,-.25) {$W$};
    \node at (2,-.25) {$W'$};
    \node at (3,-.25) {$W$};
    \node at (.5,3.75) {$V$};
    \node at (2.5,.75) [draw,minimum height=10pt,thick, fill=white] {$e_W$};
    \node at (1,1.75) [draw,minimum width=20pt,minimum height=10pt,thick, fill=white] {$\theta_{W'}^{-1}$};
    \node at (.5,2.75) [draw,minimum width=20pt,minimum height=10pt,thick, fill=white] {$\widetilde{e}_W$};
    \end{tikzpicture}
   \end{matrix} =
   \begin{matrix}
    \begin{tikzpicture}[scale = 1, baseline = {(current bounding box.center)}, line width=0.75pt]
     \draw[white, double=black, line width = 3pt ] (2,0) -- (2,.25) .. controls (2,1.15) and (3,1.15) .. (3,.25) -- (3,0); 
    \draw[white, double=black, line width = 3pt ] (0,0) -- (0,2.25) .. controls (0,3.15) and (1,3.15) .. (1,2.25) -- (1,0);
    \draw[dashed] (2.5,.75) .. controls (2.5,1.15) .. (1,1.25);
    \draw[dashed] (.5,2.75) -- (.5,3.5);
    \node at (0,-.25) {$W'$};
    \node at (1,-.25) {$W$};
    \node at (2,-.25) {$W'$};
    \node at (3,-.25) {$W$};
    \node at (.5,3.75) {$V$};
    \node at (2.5,.75) [draw,minimum height=10pt,thick, fill=white] {$e_W$};
    \node at (1,1.75) [draw,minimum width=20pt,minimum height=10pt,thick, fill=white] {$\theta_{W}$};
    \node at (0,1.75) [draw,minimum width=20pt,minimum height=10pt,thick, fill=white] {$\theta_{W'}^{-1}$};
    \node at (.5,2.75) [draw,minimum width=20pt,minimum height=10pt,thick, fill=white] {$e_W$};
    \end{tikzpicture}
   \end{matrix}
  \end{align*}
  using the definitions of $\Psi_W$ and $\Phi_W$, properties of the braiding and unit isomorphisms (such as the triangle axiom and the braiding identity $l_{W'}\circ\cR_{V,W'}^{-1}=r_{W'}$ in the fourth equality), and the definition of $\til{e}_W$. Now since $\theta_W=e^{2\pi i L(0)}$, it is easy to see that $\theta_{W'}=\theta_W'$. Thus \eqref{diag:f'_def} implies $e_W\circ(\theta_{W'}^{-1}\tens\theta_W)=e_W$ so that the above composition becomes
  \begin{equation*}
   \begin{matrix}
    \begin{tikzpicture}[scale = 1, baseline = {(current bounding box.center)}, line width=0.75pt]
     \draw[white, double=black, line width = 3pt ] (2,0) -- (2,.25) .. controls (2,1.15) and (3,1.15) .. (3,.25) -- (3,0); 
    \draw[white, double=black, line width = 3pt ] (0,0) -- (0,1.25) .. controls (0,2.15) and (1,2.15) .. (1,1.25) -- (1,0);
    \draw[dashed] (2.5,.75) .. controls (2.5,1.15) .. (1,1.25);
    \draw[dashed] (.5,1.75) -- (.5,2.5);
    \node at (0,-.25) {$W'$};
    \node at (1,-.25) {$W$};
    \node at (2,-.25) {$W'$};
    \node at (3,-.25) {$W$};
    \node at (.5,2.75) {$V$};
    \node at (2.5,.75) [draw,minimum height=10pt,thick, fill=white] {$e_W$};
    \node at (.5,1.75) [draw,minimum width=20pt,minimum height=10pt,thick, fill=white] {$e_W$};
    \end{tikzpicture}
   \end{matrix} =
   \begin{matrix}
    \begin{tikzpicture}[scale = 1, baseline = {(current bounding box.center)}, line width=0.75pt]
     \draw[white, double=black, line width = 3pt ] (2,0) -- (2,.25) .. controls (2,1.15) and (3,1.15) .. (3,.25) -- (3,0); 
    \draw[white, double=black, line width = 3pt ] (0,0) -- (0,.25) .. controls (0,1.15) and (1,1.15) .. (1,.25) -- (1,0);
    \draw[dashed] (2.5,.75) .. controls (2.5,1.15) .. (1.5,1.75) .. controls (.5,1.15) .. (.5,.75);
    \draw[dashed] (1.5,1.75) -- (1.5,2.5);
    \node at (0,-.25) {$W'$};
    \node at (1,-.25) {$W$};
    \node at (2,-.25) {$W'$};
    \node at (3,-.25) {$W$};
    \node at (1.5,2.75) {$V$};
    \node at (2.5,.75) [draw,minimum height=10pt,thick, fill=white] {$e_W$};
    \node at (.5,.75) [draw,minimum width=20pt,minimum height=10pt,thick, fill=white] {$e_W$};
    \end{tikzpicture} 
   \end{matrix} =
   \begin{matrix}
    \begin{tikzpicture}[scale = 1, baseline = {(current bounding box.center)}, line width=0.75pt]
     \draw[white, double=black, line width = 3pt ] (0,0) -- (0,.25) .. controls (0,1.15) and (1,1.15) .. (1,.25) -- (1,0); 
    \draw[white, double=black, line width = 3pt ] (2,0) -- (2,1.25) .. controls (2,2.15) and (3,2.15) .. (3,1.25) -- (3,0);
    \draw[dashed] (.5,.75) .. controls (.5,1.15) .. (2,1.25);
    \draw[dashed] (2.5,1.75) -- (2.5,2.5);
    \node at (0,-.25) {$W'$};
    \node at (1,-.25) {$W$};
    \node at (2,-.25) {$W'$};
    \node at (3,-.25) {$W$};
    \node at (2.5,2.75) {$V$};
    \node at (.5,.75) [draw,minimum height=10pt,thick, fill=white] {$e_W$};
    \node at (2.5,1.75) [draw,minimum width=20pt,minimum height=10pt,thick, fill=white] {$e_W$};
    \end{tikzpicture}
   \end{matrix},
  \end{equation*}
which is the right side of \eqref{eqn:PsiPhi_reln_1}

For the second assertion, it is enough to show
\begin{equation*}
 \til{e}_W\circ([\til{\Psi}_W\circ(\Phi_W\tens\Id_W)]\tens\Id_{W'}) =\til{e}_W\circ([r_W\circ(\Id_W\tens e_W)\circ\cA_{W,W',W}^{-1}]\tens\Id_{W'}),
\end{equation*}
which has the straightforward graphical proof
\begin{equation*}
 \begin{matrix}
     \begin{tikzpicture}[scale = 1, baseline = {(current bounding box.center)}, line width=0.75pt]
     \draw (0,0) .. controls (0,.6) .. (.5,.6);
     \draw (1,0) .. controls (1,.6) .. (.5,.6);
     \draw (.5,1) .. controls (.5,1.7) .. (1.25,1.8);
     \draw (2,0) -- (2,1) .. controls (2,1.7) .. (1.25,1.8);
     \draw (1.25,2.3) .. controls (1.25,3.4) and (3,3.4) .. (3,2.3) -- (3,0);
     \draw[dashed] (2.125,3) -- (2.125,3.8);
    \node at (0,-.25) {$W$};
    \node at (1,-.25) {$W'$};
    \node at (2,-.25) {$W$};
    \node at (3,-.25) {$W'$};
    \node at (2.125,4) {$V$};
    \node at (.5,.8) [draw,minimum width=20pt,minimum height=10pt,thick, fill=white] {$\Phi_W$};
    \node at (1.25,2) [draw,minimum width=20pt,minimum height=10pt,thick, fill=white] {$\til{\Psi}_W$};
    \node at (2.125,3) [draw,minimum width=20pt,minimum height=10pt,thick, fill=white] {$\til{e}_W$};   
     \end{tikzpicture}
   \end{matrix} = \begin{matrix}
    \begin{tikzpicture}[scale = 1, baseline = {(current bounding box.center)}, line width=0.75pt]
    \draw (0,0) -- (0,.5) .. controls (0,.8) .. (.5,.8) .. controls (1,.8) .. (1,.5) -- (1,0);
    \draw[dashed] (1.5, 2.25) -- (1.5,3);
    \draw (.5,1) -- (.5,1.6) .. controls (.5,2.6) and (2.5,2.6) .. (2.5,1.6) -- (2.5, 1);
    \draw (2,0) -- (2,.5) -- (2.5,1) -- (3,.5) -- (3,0);
    \node at (0,-.25) {$W$};
    \node at (1,-.25) {$W'$};
    \node at (2,-.25) {$W$};
    \node at (3,-.25) {$W'$};
    \node at (1.5,3.25) {$V$};
    \node at (.5,1) [draw,minimum width=20pt,minimum height=10pt,thick, fill=white] {$\Phi_W$};
    \node at (1.5,2.25) [draw,minimum width=20pt,minimum height=10pt,thick, fill=white] {$e_{W\boxtimes W'}$};
    \end{tikzpicture}
   \end{matrix} =
   \begin{matrix}
    \begin{tikzpicture}[scale = 1, baseline = {(current bounding box.center)}, line width=0.75pt]
     \draw[white, double=black, line width = 3pt ] (1,0) -- (1,.5) .. controls (1,1.4) and (2,1.4) .. (2,0.5) -- (2,0);                      
    \draw[white, double=black, line width = 3pt ] (0,0) -- (0,1.5) .. controls (0,2.5) and (3,2.5) .. (3,1.5) -- (3,0);
    \draw[dashed] (1.5,1) .. controls (1.5,1.4) .. (0,1.5);
    \draw[dashed] (1.5,2.25) -- (1.5,3);
    \node at (0,-.25) {$W$};
    \node at (1,-.25) {$W'$};
    \node at (2,-.25) {$W$};
    \node at (3,-.25) {$W'$};
    \node at (1.5,3.25) {$V$};
    \node at (1.5,1) [draw,minimum height=10pt,thick, fill=white] {$e_W$};
    \node at (1.5,2.25) [draw,minimum width=20pt,minimum height=10pt,thick, fill=white] {$\widetilde{e}_W$};
    \end{tikzpicture}
   \end{matrix}
\end{equation*}
using the definitions of $\til{\Psi}_W$ and $\Phi_W$.
\end{proof}

Using this lemma, we can now prove the following theorem; a related result in terms of a notion called ``semi-rigidity'' was obtained in \cite[Lemma 19]{Mi-semi-rigidity}:
\begin{thm}\label{thm:rigidity_criterion}
 Suppose $W$ is a $V$-module in $\cC$ and $\im\,(\til{e}_W)' \subseteq\im\Phi_W$. Then $\Phi_W$ is an isomorphism, and thus also $W$ is rigid.
\end{thm}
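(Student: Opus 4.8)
\emph{Proof proposal.} The plan is to construct a coevaluation $i_W\colon V\to W\tens W'$ directly from the hypothesis; once $(W',e_W,i_W)$ is seen to satisfy the rigidity conditions, $W$ is rigid with dual $W'$, and then $\Phi_W$ is automatically an isomorphism by the standard converse to Proposition \ref{prop:Phi_iso_rigid}, so both assertions follow. It is worth recording at the outset that $(W\tens W')'$ is the contragredient of $W\tens W'$, so these modules have equal (finite) dimensions in each conformal weight space; since $\Phi_W$ is a $V$-module homomorphism it preserves the conformal weight grading, and hence $\Phi_W$ is an isomorphism as soon as it is injective, equivalently as soon as it is surjective.

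To build $i_W$, put $j:=(\til e_W)'\circ\varphi\colon V\to (W\tens W')'$; this is a $V$-module homomorphism, and since $\vac$ is a ``vacuum-like'' vector of $V$ (annihilated by $v_n$ for all $v\in V$ and $n\geq 0$) lying in conformal weight $0$, the same is true of $j(\vac)\in(W\tens W')'$. Because $V$ is generated by $\vac$, giving a homomorphism $i_W\colon V\to W\tens W'$ is the same as choosing a vacuum-like vector $\xi\in W\tens W'$, and the condition $\Phi_W\circ i_W=j$ becomes $\Phi_W(\xi)=j(\vac)$. The hypothesis $\im\,(\til e_W)'\subseteq\im\Phi_W$ supplies \emph{some} preimage of $j(\vac)$ under $\Phi_W$, which by the grading may be taken in $(W\tens W')_{[0]}$; the substantive step --- in the spirit of the proof of \cite[Theorem 4.7]{McR} --- is to correct this preimage by an element of $\ker\Phi_W$ so as to make it vacuum-like, exploiting that $\Phi_W$ intertwines the modes $v_n$ and that $j(\vac)$ is vacuum-like in order to control how $\ker\Phi_W$ meets the relevant conformal weight spaces.

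Granting such an $i_W$, one checks that $(W',e_W,i_W)$ is a dual of $W$. By \cite[Lemma 4.2.1]{CMY3} it suffices to verify that the rigidity composition $\mathfrak{R}_W$ equals $\Id_W$. Using Lemma \ref{lem:PsiPhi}(2), which expresses $r_W\circ(\Id_W\tens e_W)\circ\cA_{W,W',W}^{-1}$ as $\til{\Psi}_W\circ(\Phi_W\tens\Id_W)$, one obtains
\[
 \mathfrak{R}_W=\til{\Psi}_W\circ\big((\Phi_W\circ i_W)\tens\Id_W\big)\circ l_W^{-1}=\til{\Psi}_W\circ\big(((\til e_W)'\circ\varphi)\tens\Id_W\big)\circ l_W^{-1},
\]
an expression from which $\Phi_W^{-1}$ has disappeared; a graphical computation using the defining diagram of $\til{\Psi}_W$ together with \eqref{diag:phi_char}, \eqref{diag:f'_def}, \eqref{diag:delta_def} and naturality of the unit isomorphisms then collapses it to $\Id_W$. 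Therefore $W$ is rigid with dual $W'$, and $\Phi_W$ is an isomorphism.

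The main obstacle is the middle step: the hypothesis provides a containment of images, not a factorization of morphisms, so producing $i_W$ genuinely requires proving that a preimage of the vacuum-like vector $j(\vac)$ can itself be chosen vacuum-like --- exactly where the interplay of $\ker\Phi_W$ with the conformal weight grading and with the non-negative mode actions must be handled carefully, and where one may need to argue weight space by weight space (or, alternatively, to run a closely related argument with the maps $\Psi_W$, $\til{\Psi}_W$ in place of an explicit $i_W$). The coherence bookkeeping in the last step --- pentagon, triangle, and hexagon axioms via their standard consequences --- is routine but tedious, and one should also be careful that the equivalence of ``$\Phi_W$ injective'' with ``$\Phi_W$ surjective'' is used only via the equality of graded dimensions.
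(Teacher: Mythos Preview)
Your proposal has a genuine gap precisely where you yourself flag the ``main obstacle.'' The hypothesis $\im\,(\til e_W)'\subseteq\im\Phi_W$ gives only a set-theoretic preimage $\xi_0\in(W\tens W')_{[0]}$ of $j(\vac)$, not a vacuum-like one. Your suggested fix---correcting $\xi_0$ by an element of $\ker\Phi_W$---amounts to asking that the $V$-module homomorphism $j:V\to(W\tens W')'$ lift through $\Phi_W$; but producing such a lift is exactly the projectivity-type statement one does not have in a non-semisimple $\cC$. Concretely, one only knows $v_n\xi_0\in\ker\Phi_W$ for $n\geq 0$, and there is no grading argument that forces these to vanish: for $v\in V_{(m)}$ with $m\geq 1$, the vector $v_0\xi_0$ lives in weight $m-1\geq 0$, so ``weight space by weight space'' gives no purchase. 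The alternative you mention in passing (``run a closely related argument with $\Psi_W$, $\til\Psi_W$'') is in fact the paper's proof, not a variant of yours.

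The paper avoids constructing $i_W$ and instead proves directly that $\ker\Phi_W=0$. With $k:K\hookrightarrow W\tens W'$ the kernel, one defines
\[
 F=(\til\Psi_W\tens\Id_{W'})\circ\cA_{(W\tens W')',W,W'}\circ(\Id\tens k):(W\tens W')'\tens K\to W\tens W',
\]
and verifies two facts: $F\circ\big(((\til e_W)'\circ\varphi)\tens\Id_K\big)\circ l_K^{-1}=k$ (using the defining diagram for $\til\Psi_W$ together with \eqref{diag:phi_char}, \eqref{diag:f'_def}), and $F\circ(\Phi_W\tens\Id_K)=0$ (using both parts of Lemma~\ref{lem:PsiPhi} and the pentagon axiom). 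The crucial categorical input is right exactness of $\bullet\tens K$: writing $\Phi_W=q\circ p$ with $p$ surjective onto $\im\Phi_W$ and $q$ injective, the second fact plus surjectivity of $p\tens\Id_K$ yields $F\circ(q\tens\Id_K)=0$; then the hypothesis, which supplies an injection $\im\,(\til e_W)'\hookrightarrow\im\Phi_W$, forces $k=0$ by the first fact. Only after $\Phi_W$ is known to be an isomorphism does one set $i_W=\Phi_W^{-1}\circ(\til e_W)'\circ\varphi$; your computation of $\mathfrak R_W=\Id_W$ via Lemma~\ref{lem:PsiPhi}(2) is then essentially the proof of Proposition~\ref{prop:Phi_iso_rigid}, not an independent route.
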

\begin{proof}
 Since $W\tens W'$ and $(W\tens W')'$ are grading-restricted generalized $V$-modules which are isomorphic as graded vector spaces, it is enough to show that $\Phi_W$ is injective. We will prove injectivity similar to \cite[Theorem 4.7]{McR}.
 
 Let $K$ be the kernel of $\Phi_W$ with $k: K\rightarrow W\tens W'$ the kernel morphism; we need to show that $k=0$. To this end, we define the morphism
 \begin{equation*}
  F: (W\tens W')'\tens K\rightarrow W\tens W'
 \end{equation*}
to be the composition
\begin{align*}
 (W\tens W')'\tens K & \xrightarrow{\Id_{(W\tens W')'}\tens k} (W\tens W')'\tens(W\tens W')\nonumber\\
 &\xrightarrow{\cA_{(W\tens W')',W,W'}} ((W\tens W')'\tens W)\tens W'\xrightarrow{\til{\Psi}_W\tens\Id_{W'}} W\tens W'.
\end{align*}
We shall show:
\begin{enumerate}
 \item $F\circ((\til{e}_W)'\tens\Id_K)\circ(\varphi\tens\Id_K)\circ l_K^{-1} = k$, 
 \item $F\circ(\Phi_W\tens\Id_K)=0$.
\end{enumerate}
To show why these two claims imply that $k=0$, we first factorize
\begin{align*}
\Phi_W :  W\tens W' \xrightarrow{p} \im\Phi_W\xrightarrow{q} (W\tens W')',\qquad
 (\til{e}_W)':V'\xrightarrow{\til{p}}\im\,(\til{e}_W)'\xrightarrow{\til{q}} (W\tens W')',
\end{align*}
where $p$, $\til{p}$ are surjective and $q$, $\til{q}$ are injective. By hypothesis, there is an injection $j: \im\,(\til{e}_W)'\rightarrow\im\Phi_W$ such that $q\circ j=\til{q}$.

Since $p$ is surjective and the tensoring functor $\bullet\tens K$ is right exact (see \cite[Proposition 4.26]{HLZ3}), $p\tens\Id_K$ is also surjective. This surjectivity combined with Claim (2) implies that 
\begin{equation*}
 F\circ(q\tens\Id_K)=0.
\end{equation*}
But then using Claim (1),
\begin{align*}
k & = F\circ((\til{e}_W)'\tens\Id_K)\circ(\varphi\tens\Id_K)\circ l_K^{-1}\nonumber\\
&= F\circ((\til{q}\circ\til{p})\tens\Id_K)\circ(\varphi\tens\Id_K)\circ l_K^{-1}\nonumber\\
& =F\circ((q\circ j)\tens\Id_K)\circ((\til{p}\circ\varphi)\tens\Id_K)\circ l_K^{-1}\nonumber\\
& = F\circ(q\tens\Id_K)\circ((j\circ\til{p}\circ\varphi)\tens\Id_K)\circ l_K^{-1} =0,
\end{align*}
which will prove the theorem once we have proven Claims (1) and (2).

To prove Claim (1), naturality of the left unit isomorphisms implies that it is enough to show that the composition
\begin{align*}
 W\tens W'  \xrightarrow{l_{W\tens W'}^{-1}} & V\tens(W\tens W')\xrightarrow{[(\til{e}_W)'\circ\varphi]\tens\Id_{W\tens W'}} (W\tens W')'\tens(W\tens W')\nonumber\\
 &\xrightarrow{\cA_{(W\tens W')',W,W'}} ((W\tens W')'\tens W)\tens W' \xrightarrow{\til{\Psi}_W\tens\Id_{W'}} W\tens W'
\end{align*}
is the identity on $W\tens W'$. Then naturality of the associativity isomorphisms and properties of the left unit reduce us to showing that
\begin{align*}
 W\xrightarrow{l_W^{-1}} V\tens W\xrightarrow{[(\til{e}_W)'\circ\varphi]\tens\Id_W} (W\tens W')'\tens W\xrightarrow{\til{\Psi}_W} W
\end{align*}
is the identity on $W$. For this, the universal property of contragredients implies that it is enough to show
\begin{equation*}
 \til{e}_W\circ\left([\til{\Psi}_W\circ(((\til{e}_W)'\circ\varphi)\tens\Id_W)\circ l_W^{-1}]\tens\Id_{W'}\right)=\til{e}_W.
\end{equation*}
In fact, the definition of $\til{\Psi}_W$ implies that the left side is
\begin{align*}
 W\tens W'\xrightarrow{l_{W}^{-1}\tens\Id_{W'}} (V\tens W)\tens W' & \xrightarrow{([(\til{e}_W)'\circ\varphi]\tens\Id_W)\tens\Id_{W'}} ((W\tens W')'\tens W)\tens W'\nonumber\\ &\xrightarrow{\cA_{(W\tens W')',W,W'}^{-1}} (W\tens W')'\tens (W\tens W')\xrightarrow{e_{W\tens W'}} V.
\end{align*}
We apply naturality of associativity, and then use properties of the left unit isomorphisms as well as \eqref{diag:f'_def}, to get
\begin{align*}
 W\tens W'\xrightarrow{l_{W\tens W'}^{-1}} V\tens(W\tens W')\xrightarrow{\varphi\tens \til{e}_W} V'\tens V\xrightarrow{e_V} V.
\end{align*}
Then \eqref{diag:phi_char} together with naturality of the unit isomorphisms reduces this composition to $\til{e}_W$, completing the proof of Claim (1).

To prove Claim (2), we calculate
\begin{align*}
 F\circ(\Phi_W & \tens\Id_K)  =(\til{\Psi}_W\tens\Id_{W'})\circ\cA_{(W\tens W')',W,W'}\circ(\Id_{(W\tens W')'}\tens k)\circ(\Phi_W\tens\Id_K)\nonumber\\
 & =(\til{\Psi}_W\tens\Id_{W'})\circ((\Phi_W\tens\Id_W)\tens\Id_{W'})\circ\cA_{W\tens W',W,W'}\circ(\Id_{W\tens W'}\tens k)\nonumber\\
 & =([r_W\circ(\Id_W\tens e_W)\circ\cA_{W,W',W}^{-1}]\tens\Id_{W'})\circ\cA_{W\tens W',W,W'}\circ(\Id_{W\tens W'}\tens k)\nonumber\\
 & =(\Id_W\tens l_{W'})\circ(\Id_W\tens(e_W\tens\Id_{W'}))\circ\cA_{W,W'\tens W,W'}^{-1}\circ(\cA_{W,W',W}^{-1}\tens\Id_{W'})\nonumber\\
 &\qquad\qquad\qquad\quad \circ\cA_{W\tens W',W,W'}\circ(\Id_{W\tens W'}\tens k)\nonumber\\
 & = (\Id_W\tens[l_{W'}\circ(e_W\tens\Id_{W'})\circ\cA_{W',W,W'}])\circ\cA_{W,W',W\tens W'}^{-1}\circ(\Id_{W\tens W'}\tens k)\nonumber\\
 & =(\Id_W\tens\Psi_W)\circ(\Id_W\tens(\Id_{W'}\tens\Phi_W))\circ(\Id_W\tens(\Id_{W'}\tens k))\circ\cA_{W,W',K}^{-1} = 0,
\end{align*}
using the definition of $F$, naturality of the associativity isomorphisms, Lemma \ref{lem:PsiPhi}(2), the triangle axiom and naturality of associativity, the pentagon axiom, Lemma \ref{lem:PsiPhi}(1) and naturality of associativity, and finally the fact that $\Phi_W\circ k=0$ since $(K,k)$ is the kernel of $\Phi_W$. This proves Claim (2) and thus also the theorem.
\end{proof}

To verify the condition of Theorem \ref{thm:rigidity_criterion} for a $V$-module $W$ in Section \ref{sec:main_thms}, we will need to know the relation between $\Phi_W$ and its contragredient $\Phi_W': (W\tens W')''\rightarrow(W\tens W')'$:
\begin{prop}\label{prop:Phi_and_Phi_prime}
 For $W$ a $V$-module in $\cC$, $\Phi_W=\Phi_W'\circ\delta_{W\tens W'}$.
\end{prop}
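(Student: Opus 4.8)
The plan is to exploit the universal property of the contragredient pair $\big((W\tens W')',e_{W\tens W'}\big)$ of $W\tens W'$. By construction, $\Phi_W$ is the unique morphism $W\tens W'\to(W\tens W')'$ for which $e_{W\tens W'}\circ(\Phi_W\tens\Id_{W\tens W'})$ equals the map $G\colon(W\tens W')\tens(W\tens W')\to V$ on the right-hand side of the defining diagram of $\Phi_W$ (the ``double evaluation'' obtained by contracting the inner $W'$, $W$ pair with $e_W$ and the outer $W$, $W'$ pair with $\til{e}_W$). Hence it suffices to show that $\Phi_W'\circ\delta_{W\tens W'}$ has the same property, that is,
\[
 e_{W\tens W'}\circ\big((\Phi_W'\circ\delta_{W\tens W'})\tens\Id_{W\tens W'}\big)=G .
\]

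To compute the left-hand side, first apply the characterization \eqref{diag:f'_def} of contragredient morphisms with $f=\Phi_W\colon W\tens W'\to(W\tens W')'$, obtaining
\[
 e_{W\tens W'}\circ(\Phi_W'\tens\Id_{W\tens W'})=e_{(W\tens W')'}\circ\big(\Id_{(W\tens W')''}\tens\Phi_W\big) .
\]
Precomposing with $\delta_{W\tens W'}\tens\Id_{W\tens W'}$, rewriting $\big(\Id_{(W\tens W')''}\tens\Phi_W\big)\circ\big(\delta_{W\tens W'}\tens\Id_{W\tens W'}\big)$ as $\big(\delta_{W\tens W'}\tens\Id_{(W\tens W')'}\big)\circ\big(\Id_{W\tens W'}\tens\Phi_W\big)$ by the interchange law for $\tens$, and then using the characterization \eqref{diag:delta_def} of $\delta_{W\tens W'}$, we get
\[
 e_{W\tens W'}\circ\big((\Phi_W'\circ\delta_{W\tens W'})\tens\Id_{W\tens W'}\big)=\til{e}_{W\tens W'}\circ\big(\Id_{W\tens W'}\tens\Phi_W\big) .
\]
So the proposition reduces to the identity $\til{e}_{W\tens W'}\circ(\Id_{W\tens W'}\tens\Phi_W)=G$. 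Writing $M=W\tens W'$, substituting $\til{e}_M=e_M\circ\cR_{M,M'}\circ(\theta_M\tens\Id_{M'})$, and using naturality of the braiding to move $\Phi_W$ past $\cR$, this becomes $G\circ(\Id_M\tens\theta_M)\circ\cR_{M,M}=G$; in other words, the double-evaluation pairing $G$ must be invariant under the ``braid-and-twist'' self-symmetry of $M\tens M$.

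Establishing this invariance is the main obstacle, and I would handle it by graphical calculus. Expanding $G$ via its defining diagram produces the picture in which one pair of $W$, $W'$ strands is contracted by $e_W$ and the other by $\til{e}_W$; applying $\cR_{M,M}$ interchanges the two pairs, and one checks the result equals $G$ using: the relations $\til{e}_W=e_W\circ\cR_{W,W'}\circ(\theta_W\tens\Id_{W'})$ and $e_W=\til{e}_W\circ\cR_{W',W}\circ(\theta_{W'}\tens\Id_W)$, the latter deduced from the former via the balancing equation, naturality of $\theta$, and $\theta_V=\Id_V$; the hexagon axioms, to pull the crossing created by the swap through the evaluation morphisms; and the balancing equation $\theta_M=\cR^2_{W,W'}\circ(\theta_W\tens\theta_{W'})$ together with naturality of $\theta$ to absorb the twist $\theta_M$. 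Conceptually, the twist $\theta_M$ and the double braiding built into the ``braid-and-twist'' symmetry are precisely what convert the $e_W$-half of $G$ into the $\til{e}_W$-half, and vice versa, when the two copies of $M$ are exchanged; the bookkeeping is routine in the style of the diagrammatic arguments already used in this subsection, if somewhat lengthy.
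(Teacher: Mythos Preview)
Your proposal is correct and follows essentially the same route as the paper: reduce via the universal property of $\big((W\tens W')',e_{W\tens W'}\big)$ to showing that $G\circ\cR_{M,M}\circ(\theta_M\tens\Id_M)=G$ for $M=W\tens W'$, then verify this ``braid-and-twist'' invariance of the double evaluation by graphical calculus using the hexagon axioms, the balancing equation for $\theta_M$, and the identity $e_W=\til{e}_W\circ\cR_{W',W}\circ(\theta_{W'}\tens\Id_W)$. The only difference is cosmetic: you reach the intermediate expression via \eqref{diag:f'_def} then \eqref{diag:delta_def}, while the paper uses \eqref{diag:delta_def} directly on $e_{(W\tens W')'}\circ(\delta_{W\tens W'}\tens\Phi_W)$; the resulting graphical computation (which you defer but correctly outline) is carried out explicitly in the paper.
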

\begin{proof}
It is enough to show
 \begin{align}\label{eqn:Phi_and_Phi_prime}
  & e_{W\tens W'}  \circ([\Phi_W'\circ\delta_{W\tens W'}]\tens\Id_{W\tens W'})\nonumber\\ &\quad=\til{e}_W\circ(r_W\tens\Id_{W'})\circ((\Id_W\tens e_W)\tens\Id_{W'})\circ(\cA_{W,W',W}^{-1}\tens\Id_{W'})\circ\cA_{W\tens W',W,W'},
 \end{align}
by the definition of $\Phi_W$ and the universal property of $((W\tens W')',e_{W\tens W'})$. The left side is
\begin{align*}
 e_{W\tens W'}  \circ([\Phi_W' & \circ\delta_{W\tens W'}]  \tens\Id_{W\tens W'})= e_{(W\tens W')'}\circ(\delta_{W\tens W'}\tens\Phi_W)\nonumber\\
 &=e_{W\tens W'}\circ\cR_{W\tens W',(W\tens W')'}\circ(\theta_{W\tens W'}\tens\Phi_W)\nonumber\\
 & = e_{W\tens W'}\circ(\Phi_W\tens\Id_{W\tens W'})\circ\cR_{W\tens W',W\tens W'}\circ(\theta_{W\tens W'}\tens\Id_{(W\tens W')'}).
\end{align*}
by the definitions and \eqref{diag:delta_def}. We continue analyzing this composition graphically, using
 the definition of $\Phi_W$, the hexagon axioms, and the balancing property of $\theta_{W\tens W'}$ to begin:
 \begin{align}\label{Phi_prime_calc}
  \begin{matrix}
   \begin{tikzpicture}[scale = 1, baseline = {(current bounding box.center)}, line width=0.75pt]
    \draw (3,0) -- (3,3.25) .. controls (3,3.75) and (1,3.5) .. (1,4);
    \draw (2,0) -- (2,2.5) .. controls (2,3) and (0,2.75) .. (0,3.25);
    \draw (1,0) -- (1,1) .. controls (1,1.5) and (0,1.25) .. (0,1.75);
     \draw[white, double=black, line width = 3pt ] (0,0) -- (0,1) .. controls (0, 1.5) and (1, 1.25) .. (1, 1.75) .. controls (1, 2.25) and (0, 2) .. (0,2.5);
     \draw[white, double=black, line width = 3pt ] (1,4) .. controls (1,4.9) and (2,4.9) .. (2,4) .. controls (2,3.5) and (1,3.75) .. (1,3.25) .. controls (1,2.75) and (0,3) .. (0,2.5); 
    \draw[white, double=black, line width = 3pt ] (0,3.25) -- (0,5) .. controls (0,6) and (3,6) .. (3,5) -- (3,4) .. controls (3,3.5) and (2,3.75) .. (2,3.25) .. controls (2,2.75) and (1,3) .. (1,2.5) .. controls (1,2) and (0,2.25) .. (0,1.75);
    \draw[dashed] (1.5,4.5) .. controls (1.5,4.9) .. (0,5);
    \draw[dashed] (1.5,5.75) -- (1.5,6.5);
    \node at (0,-.25) {$W$};
    \node at (1,-.25) {$W'$};
    \node at (2,-.25) {$W$};
    \node at (3,-.25) {$W'$};
    \node at (1.5,6.75) {$V$};
    \node at (1.5,4.5) [draw,minimum height=10pt,thick, fill=white] {$e_W$};
    \node at (1,.5) [draw,thick, fill=white] {$\theta_{W'}$};
    \node at (0,.5) [draw, thick, fill=white] {$\theta_{W}$};
    \node at (1.5,5.75) [draw,minimum width=20pt,minimum height=10pt,thick, fill=white] {$\widetilde{e}_W$};
    \end{tikzpicture}
   \end{matrix} & =
   \begin{matrix}
    \begin{tikzpicture}[scale = 1, baseline = {(current bounding box.center)}, line width=0.75pt]
     \draw (3,0) -- (3,3.25) .. controls (3,3.75) and (2,3.5) .. (2,4) .. controls (2,4.5) and (1,4.25) .. (1,4.75);
     \draw (2,0) -- (2,1) .. controls (2,1.5) and (0,1.25) .. (0,1.75);
     \draw[white, double=black, line width = 3pt ] (1,0) -- (1,1) .. controls (1,1.5) and (2,1.25) .. (2,1.75) .. controls (2,2.25) and (1,2) .. (1,2.5);
      \draw[white, double=black, line width = 3pt ] (0,0) -- (0,1) .. controls (0, 1.5) and (1, 1.25) .. (1, 1.75) .. controls (1, 2.25) and (2, 2) .. (2,2.5) .. controls (2,3) and (1,2.75) .. (1,3.25); 
     \draw[white, double=black, line width = 3pt ] (0,1.75) -- (0,5.75) .. controls (0,6.75) and (3,6.75) .. (3,5.75) -- (3,4) .. controls (3,3.5) and (2,3.75) .. (2,3.25) .. controls (2,2.75) and (1,3) .. (1,2.5);
     \draw[white, double=black, line width = 3pt ] (1,4.75) .. controls (1,5.65) and (2,5.65) .. (2,4.75) .. controls (2,4.25) and (1,4.5) .. (1,4) -- (1,3.25);
    \draw[dashed] (1.5,5.25) .. controls (1.5,5.65) .. (3,5.75);
    \draw[dashed] (1.5,6.5) -- (1.5,7.25);
    \node at (0,-.25) {$W$};
    \node at (1,-.25) {$W'$};
    \node at (2,-.25) {$W$};
    \node at (3,-.25) {$W'$};
    \node at (1.5,7.5) {$V$};
    \node at (1.5,5.25) [draw,minimum height=10pt,thick, fill=white] {$e_W$};
    \node at (1,.5) [draw,thick, fill=white] {$\theta_{W'}$};
    \node at (0,.5) [draw, thick, fill=white] {$\theta_{W}$};
    \node at (1.5,6.5) [draw,minimum width=20pt,minimum height=10pt,thick, fill=white] {$\widetilde{e}_W$};
    \end{tikzpicture}
    \end{matrix} =
    \begin{matrix}
    \begin{tikzpicture}[scale = 1, baseline = {(current bounding box.center)}, line width=0.75pt]
      \draw (3,0) -- (3,2.5) .. controls (3,3) and (2,2.75) .. (2,3.25) .. controls (2,4.25) and (1,4.25) .. (1,3.25) .. controls (1,2.75) and (2,3) .. (2,2.5);
      \draw (2,0) -- (2,1) .. controls (2,1.5) and (0,1.25) .. (0,1.75);
      \draw[white, double=black, line width = 3pt ] (1,0) -- (1,1) .. controls (1,1.5) and (2,1.25) .. (2,1.75) .. controls (2,2.25) and (1,2) .. (1,2.5);
       \draw[white, double=black, line width = 3pt ] (0,0) -- (0,1) .. controls (0, 1.5) and (1, 1.25) .. (1, 1.75) .. controls (1, 2.25) and (2, 2) .. (2,2.5) .. controls (2,3) and (1,2.75) .. (1,3.25); 
     \draw[white, double=black, line width = 3pt ] (0,1.75) -- (0,4.25) .. controls (0,5.25) and (3,5.25) .. (3,4.25) -- (3,3.25) .. controls (3,2.75) and (1,3) .. (1,2.5);
    \draw[dashed] (1.5,3.75) .. controls (1.5,4.15) .. (3,4.25);
    \draw[dashed] (1.5,5) -- (1.5,5.75);
    \node at (0,-.25) {$W$};
    \node at (1,-.25) {$W'$};
    \node at (2,-.25) {$W$};
    \node at (3,-.25) {$W'$};
    \node at (1.5,6) {$V$};
    \node at (1.5,3.75) [draw,minimum height=10pt,thick, fill=white] {$\widetilde{e}_W$};
    \node at (1,.5) [draw,thick, fill=white] {$\theta_{W'}$};
    \node at (1.5,5) [draw,minimum width=20pt,minimum height=10pt,thick, fill=white] {$\widetilde{e}_W$};
    \end{tikzpicture}
    \end{matrix}\nonumber\\
    &\hspace{-8em} =
        \begin{matrix}
    \begin{tikzpicture}[scale = 1, baseline = {(current bounding box.center)}, line width=0.75pt]
       \draw (2,0) -- (2,1) .. controls (2,1.5) and (1,1.25) .. (1,1.75) .. controls (1,2.25) and (0,2) .. (0,2.5) -- (0,4.5) .. controls (0,5.5) and (1,5.5) .. (1,4.5) -- (1,3.25) .. controls (1,2.75) and (2,3) .. (2,2.5) -- (2,1.75);
       \draw[white, double=black, line width = 3pt ] (1,0) -- (1,1) .. controls (1,1.5) and (2,1.25) .. (2,1.75);
       \draw[white, double=black, line width = 3pt ] (3,0) -- (3,3.25) .. controls (3,4.25) and (2,4.25) .. (2,3.25) .. controls (2,2.75) and (1,3) .. (1,2.5) .. controls (1,2) and (0,2.25) .. (0,1.75) -- (0,0);
    \draw[dashed] (2.5,3.75) .. controls (2.5,4.4) .. (1,4.5);
    \draw[dashed] (.5,5) -- (.5,5.75);
    \node at (0,-.25) {$W$};
    \node at (1,-.25) {$W'$};
    \node at (2,-.25) {$W$};
    \node at (3,-.25) {$W'$};
    \node at (.5,6) {$V$};
    \node at (2.5,3.75) [draw,minimum height=10pt,thick, fill=white] {$\widetilde{e}_W$};
    \node at (1,.5) [draw,thick, fill=white] {$\theta_{W'}$};
    \node at (.5,5) [draw,minimum width=20pt,minimum height=10pt,thick, fill=white] {$\widetilde{e}_W$};
    \end{tikzpicture}
    \end{matrix} =
    \begin{matrix}
    \begin{tikzpicture}[scale = 1, baseline = {(current bounding box.center)}, line width=0.75pt]
     \draw (2,0) -- (2,1) .. controls (2,1.5) and (1,1.25) .. (1,1.75) .. controls (1,2.25) and (0,2) .. (0,2.5) .. controls (0,3.5) and (1,3.5) .. (1,2.5) .. controls (1,2) and (2,2.25) .. (2,1.75); 
     \draw[white, double=black, line width = 3pt ] (1,0) -- (1,1) .. controls (1,1.5) and (2,1.25) .. (2,1.75);
    \draw[white, double=black, line width = 3pt ] (0,0) -- (0,1.75) .. controls (0,2.25) and (2,2) .. (2,2.5) .. controls (2,3.5) and (3,3.5) .. (3,2.5) -- (3,0);
    \draw[dashed] (2.5,3) .. controls (2.5,3.4) .. (1.5,4) .. controls (.5,3.4) .. (.5,3);
    \draw[dashed] (1.5,4) -- (1.5,4.75);
    \node at (0,-.25) {$W$};
    \node at (1,-.25) {$W'$};
    \node at (2,-.25) {$W$};
    \node at (3,-.25) {$W'$};
    \node at (1.5,5) {$V$};
    \node at (2.5,3) [draw,minimum height=10pt,thick, fill=white] {$\widetilde{e}_W$};
    \node at (.5,3) [draw,minimum width=20pt,minimum height=10pt,thick, fill=white] {$\widetilde{e}_W$};
    \node at (1,.5) [draw,minimum width=20pt,minimum height=10pt,thick, fill=white] {$\theta_{W'}$};
    \end{tikzpicture} 
   \end{matrix} =
   \begin{matrix}
    \begin{tikzpicture}[scale = 1, baseline = {(current bounding box.center)}, line width=0.75pt]
    \draw (2,0) -- (2,1) .. controls (2,1.5) and (1,1.25) .. (1,1.75);
     \draw[white, double=black, line width = 3pt ] (1,0) -- (1,1) .. controls (1,1.5) and (2,1.25) .. (2,1.75) .. controls (2,2.75) and (1,2.75) .. (1,1.75);                      
    \draw[white, double=black, line width = 3pt ] (0,0) -- (0,2.75) .. controls (0,3.75) and (3,3.75) .. (3,2.75) -- (3,0);
    \draw[dashed] (1.5,2.25) .. controls (1.5,2.65) .. (0,2.75);
    \draw[dashed] (1.5,3.75) -- (1.5,4.25);
    \node at (0,-.25) {$W$};
    \node at (1,-.25) {$W'$};
    \node at (2,-.25) {$W$};
    \node at (3,-.25) {$W'$};
    \node at (1.5,4.5) {$V$};
    \node at (1.5,2.25) [draw,minimum height=10pt,thick, fill=white] {$\widetilde{e}_W$};
    \node at (1.5,3.5) [draw,minimum width=20pt,minimum height=10pt,thick, fill=white] {$\widetilde{e}_W$};
    \node at (1,.5) [draw,minimum width=20pt,minimum height=10pt,thick, fill=white] {$\theta_{W'}$};
    \end{tikzpicture}
   \end{matrix} .
 \end{align}
Now by the definition of $\til{e}_W$, the balancing equation, the naturality of $\theta$, and $\theta_V=\Id_V$,
\begin{align*}
 \til{e}_W\circ\cR_{W',W}\circ(\theta_{W'}\tens\Id_W) = e_W\circ\cR_{W',W}^2\circ(\theta_{W'}\tens\theta_W)=e_W\circ\theta_{W'\tens W}=\theta_V\circ e_W=e_W,
\end{align*}
so the right side of \eqref{Phi_prime_calc} reduces to the right side of \eqref{eqn:Phi_and_Phi_prime}.
\end{proof}

\section{Series expansions of correlation functions}\label{sec:expansions}

Here we present results on the convergence and modular invariance of genus-one correlation functions associated to a vertex operator algebra that we will use in the next section. Most of the results in this section have appeared at some level of generality in one or more of the works of Zhu \cite{Zh}, Dong-Li-Mason \cite{DLM}, Miyamoto \cite{Mi1, Mi2}, Huang \cite{Hu-mod_inv, Hu-Verlinde}, Fiordalisi \cite{Fi}, and Carnahan-Miyamoto \cite{CM}. However, some of the results in \cite{Hu-mod_inv, Hu-Verlinde, CM} on certain multivalued analytic functions need to be enhanced to include specific information about convergence regions for single-valued branches, and about relations between single-valued branches on different but overlapping simply-connected domains.

We assume $V$ is an $\NN$-graded $C_2$-cofinite vertex operator algebra. As mentioned in Subsection \ref{subsec:vrtx_tens_cat}, the category $\cC$ of grading-restricted generalized $V$-modules has vertex algebraic braided tensor category structure. Also, all results from \cite{Fi} and from Sections 1 through 5 of \cite{Hu-mod_inv} that we quote below hold when $V$ is $\NN$-graded and $C_2$-cofinite.

\subsection{Geometrically-modified genus-zero correlation functions}

We will use the linear automorphism $\cU(1)$ of $V$, and indeed of any $V$-module, introduced in \cite[Section 1]{Hu-mod_inv}:
\begin{equation}\label{eqn:U1_def}
 \cU(1)=(2\pi i)^{L(0)}\exp\bigg(-\sum_{j=1}^\infty A_j L(j)\bigg),
\end{equation}
where the $A_j$ are the unique complex numbers such that
\begin{equation*}
 \frac{1}{2\pi i}(e^{2\pi i x}-1) =\exp\bigg(-\sum_{j=1}^\infty A_j x^{j+1}\frac{d}{dx}\bigg)\cdot x.
\end{equation*}
When $\cU(1)$ acts on generalized $V$-modules with non-integral conformal weights, or with non-semisimple $L(0)$-actions, we must interpret $(2\pi i)^{L(0)}$ using a fixed choice of branch of logarithm. Recall the principal branch $\log z$ from \eqref{eqn:log_branch}, so that in particular  $\log 2\pi i=\ln 2\pi +i\,\frac{\pi}{2}$. We interpret $(2\pi i)^{L(0)}$ to mean $e^{(\log 2\pi i)L(0)}$, which is a well-defined operator on each (finite-dimensional) conformal weight space of a grading-restricted generalized $V$-module (see the discussion surrounding \cite[Equation 3.69]{HLZ2}).

Because $\cU(1)$ is constructed from Virasoro operators, $\cU(1)$ commutes with any $V$-module homomorphism. Moreover, \cite[Proposition 1.2]{Hu-mod_inv} shows how $\cU(1)$ conjugates intertwining operators: If $\cY$ is a $V$-module intertwining operator of type $\binom{W_3}{W_1\,W_2}$, then
\begin{equation}\label{eqn:U(1)_intw_op}
 \cU(1)\cY(w_1,x)w_2=\cY(e^{2\pi i x L(0)}\cU(1)w_1,e^{2\pi i x}-1)\cU(1)w_2
\end{equation}
for $w_1\in W_1$, $w_2\in W_2$; the logarithmic intertwining operator generality is discussed in \cite[Lemma 1.25]{Fi} and \cite[Lemma 5.5]{CM}. When $\cY$ involves non-integral powers of $x$, or powers of $\log x$, we must interpret the substitution of $e^{2\pi i x}-1$ in $\cY$ properly. If $\cY$ involves powers of $\log x$, we interpret $\log(e^{2\pi i x}-1)$ to mean
\begin{equation*}
 \log(e^{2\pi i x}-1) = \log 2\pi i+\log x+\log\left(\frac{e^{2\pi i x}-1}{2\pi i x}\right),
\end{equation*}
where the third term on the right is the standard power series for $\log(1+y)$ centered at $y=0$, with $y=\frac{e^{2\pi i x}-1}{2\pi i x}-1$. (Since $y$ is a power series in $x$ with no constant term, $\log(1+y)$ is also a well-defined power series in $x$ with no constant term.)
 Similarly, if $\cY$ contains non-integral powers $x^h$, we interpret $(e^{2\pi i x}-1)^h$ to mean
\begin{equation*}
(e^{2\pi i x}-1)^h = e^{(\log 2\pi i)h}x^h\left(\frac{e^{2\pi i x}-1}{2\pi i x}\right)^h,
\end{equation*}
where the third factor on the right is the binomial expansion of $(1+y)^h$ at $y=\frac{e^{2\pi i x}-1}{2\pi i x}-1$.

Now for $z\in\CC$, we introduce the notation $q_z=e^{2\pi i z}$ and define $\cU(q_z)=e^{2\pi i z L(0)} \cU(1)$. For substituting the complex number $q_z$ in an intertwining operator, we use the conventions 
$$\log x\vert_{x=q_z} =2\pi i z,\qquad x^h\vert_{x=q_z} =e^{2\pi i z h}.$$
(With this convention, $\log x\vert_{x=q_z}$ is the principal branch $\log q_z$ only when $-\frac{1}{2}<\mathrm{Re}\,z\leq\frac{1}{2}$.) Now if $\cY_1$ and $\cY_2$ are $V$-module intertwining operators of types $\binom{W_4}{W_1\,M}$ and $\binom{M}{W_2\,W_3}$, respectively, then the product
\begin{align*}
 &\left\langle w_4',\cY_1(\cU(q_{z_1})w_1,q_{z_1})\cY_2(\cU(q_{z_2})w_2,q_{z_2})w_3\right\rangle\nonumber\\
 &\hspace{5em}=\sum_{h\in\CC} \left\langle w_4',\cY_1(\cU(q_{z_1})w_1,q_{z_1})\pi_h\left(\cY_2(\cU(q_{z_2})w_2,q_{z_2})w_3\right)\right\rangle
\end{align*}
for $w_4'\in W_4'$, $w_1\in W_1$, $w_2\in W_2$, $w_3\in W_3$ converges absolutely when $\vert q_{z_1}\vert >\vert q_{z_2}\vert >0$, where $\pi_h$ is the projection $\overline{M}\rightarrow M_{[h]}$. For simplicity, we will always take $z_2=0$ and set $z_1=z$; moreover, we will use the simplified notation $\cY_2(\cU(1)w_2,1)$ to denote $\cY_2(\cU(q_0)w_2,q_0)$. Thus,
\begin{equation*}
P_{\cY_1,\cY_2}(w_4',w_1,w_2,w_3;z)= \left\langle w_4',\cY_1(\cU(q_{z})w_1,q_{z})\cY_2(\cU(1)w_2,1)w_3\right\rangle
\end{equation*}
defines a (single-valued) analytic function on the simply-connected basic convergence region for products of geometrically-modified intertwining operators:
\begin{equation*}
 U_0^{prod} =\lbrace z\in\CC\,\,\vert\,\,\mathrm{Im}\,z<0\rbrace.
\end{equation*}
The subscript in $U_0^{prod}$ indicates that $P_{\cY_1,\cY_2}$ is (a single-valued branch of) a genus-zero correlation function. By the genus-zero differential equations in \cite{Hu-diff-eqs}, $P_{\cY_1,\cY_2}$ can be extended to a multivalued analytic function on the region where $q_{z}\neq 1$, that is,
\begin{equation*}
U_0 = \CC\setminus\ZZ,
\end{equation*}
with principal branch given by $P_{\cY_1,\cY_2}$ on $U_0^{prod}$. We will use $\overline{P}_{\cY_1,\cY_2}$ to denote the multivalued extension of $P_{\cY_1,\cY_2}$ to $U_0$.

We will now use associativity of intertwining operators and \eqref{eqn:U(1)_intw_op} to find a series expansion for $\overline{P}_{\cY_1,\cY_2}$ about its singularity $z=0$. First, the universal property of $V$-module tensor products implies that there are unique $V$-module homomorphisms $f_1: W_1\tens M\rightarrow W_3$ and $f_2: W_2\tens W_3\rightarrow M$ such that
\begin{equation*}
 f_1\circ\cY_{W_1,M} =\cY_1,\qquad f_2\circ\cY_{W_2,W_3}=\cY_2.
\end{equation*}
Then by definition of the associativity isomorphisms in the category of $V$-modules, for $r_1,r_2\in\RR_+$ such that $r_1>r_2>r_1-r_2>0$, we have
\begin{align}\label{eqn:intw_op_assoc}
 \cY_1(w_1,e^{\ln r_1})&\cY_2(w_2,e^{\ln r_2}) w_3 = (f_1\circ\cY_{W_1,M})(w_1,e^{\ln r_1})(f_2\circ\cY_{W_2,W_3})(w_2,e^{\ln r_2})w_3\nonumber\\
 & =\overline{f_1\circ(\Id_{W_1}\tens f_2)}\left(\cY_{W_1,W_2\tens W_3}(w_1,e^{\ln r_1})\cY_{W_2,W_3}(w_2,e^{\ln r_2})w_3\right)\nonumber\\
 & =\overline{f_1\circ(\Id_{W_1}\tens f_2)\circ\cA_{W_1,W_2,W_3}^{-1}}\left(\cY_{W_1\tens W_2,W_3}(\cY_{W_1,W_2}(w_1,e^{\ln(r_1-r_2)})w_2,e^{\ln r_2})w_3\right)\nonumber\\
 & =\cY^0\left(\cY_{W_1,W_2}(w_1,e^{\ln(r_1-r_2)})w_2,e^{\ln r_2}\right)w_3
\end{align}
for $w_1\in W_1$, $w_2\in W_2$, and $w_3\in W_3$, where
\begin{equation*}
 \cY^0=f_1\circ(\Id_{W_1}\tens f_2)\circ\cA_{W_1,W_2,W_3}^{-1}\circ\cY_{W_1\tens W_2,W_3}
\end{equation*}
is an intertwining operator of type $\binom{W_4}{W_1\tens W_2\,W_3}$. We also introduce some notation for the expansion of $\cY_{W_1,W_2}$ as a formal series:
\begin{equation*}
 \cY_{W_1,W_2}(w_1,x)w_2=\sum_{h\in\QQ}\sum_{k=0}^K (w_1\tens_{h,k} w_2)\,x^{-h-1}(\log x)^k,
\end{equation*}
where $w_1\tens_{h,k} w_2\in W_1\tens W_2$ for each $h,k$ (with conformal weight $\mathrm{wt}\,w_1+\mathrm{wt}\,w_2-h-1$ if $w_1$ and $w_2$ are homogeneous).

\begin{rem}\label{rem:K_def}
The outer sum in the expansion of $\cY_{W_1,W_2}$ is over $\QQ$ because the conformal weights of $W_1$, $W_2$, and $W_1\tens W_2$ are rational numbers (see \cite[Corollary 5.10]{Mi2}). Moreover, the bound $K\in\NN$ on the maximum power of $\log x$ in $\cY_{W_1,W_2}(w_1,x)w_2$ can be fixed independently of $w_1\in W_1$, $w_2\in W_2$, and $h\in\QQ$: Using \cite[Proposition 3.20(a)]{HLZ2} and using $L(0)_{nil}$ to denote the nilpotent part of $L(0)$, we can take $K=k_1+k_2+k_3$ where $k_1$, $k_2$, and $k_3$ are the greatest non-negative integers such that $L(0)_{nil}^{k_i}\neq 0$ on $W_1$, $W_2$, and $W_1\tens W_2$, respectively. Note that each $k_i<\infty$ because $W_1$, $W_2$, and $W_1\tens W_2$ are finitely generated with finite-dimensional weight spaces.
\end{rem}

Using the above notation and the intertwining operator $\cY^0$, we can obtain a series expansion for $\overline{P}_{\cY_1,\cY_2}$ about $z=0$ from \cite[Proposition 1.4]{Hu-mod_inv}. However, we shall need to enhance this result to include specific branch and convergence region information. Thus we now introduce the basic simply-connected convergence region for iterates of geometrically-modified intertwining operators:
\begin{equation*}
  U_0^{it}=\left\lbrace z\in\CC\,\,\vert\,\,0<\vert z\vert<1,\, \arg z\neq\pi\right\rbrace.
 \end{equation*}
 Now the extension of $P_{\cY_1,\cY_2}$ to $U_0^{it}$ is as follows: 
\begin{prop}\label{prop:geom_mod_assoc}
 The analytic extension of the principal branch $P_{\cY_1,\cY_2}$ of $\overline{P}_{\cY_1,\cY_2}$ to the simply-connected domain $U_0^{it}$ is given by
\begin{align}\label{eqn:I_series}
 I_{\cY^0,\cY_{W_1,W_2}}(  w_4', & \,w_1,w_2,w_3;z) =\left\langle w_4',\cY^0\left(\cU(1)\cY_{W_1,W_2}(w_1, e^{\log z})w_2,1\right)w_3\right\rangle\nonumber\\
 & :=\sum_{h\in\QQ}\sum_{k=0}^K \left.\left\langle w_4',\cY^0\left(\cU(1)(w_1\tens_{h,k} w_2),1\right)w_3\right\rangle x^{-h-1}(\log x)^k\right\vert_{\log x=\log z}.
\end{align}
In particular, the indicated series converges absolutely for $0<\vert z\vert <1$.
\end{prop}
\begin{proof}
 Take $z=ir$, with $r\in\RR_+$, such that
 \begin{equation*}
  \vert q_{z}\vert> 1 >\vert q_{z}-1\vert >0,
 \end{equation*}
so $q_{z}$, $1$, and $q_{z}-1$ are positive real numbers satisfying the conditions for associativity of intertwining operators in \eqref{eqn:intw_op_assoc}. That is, we assume $r$ satisfies
\begin{equation*}
 1 < e^{-2\pi r} < 2,\quad\text{equivalently},\quad -\frac{\ln 2}{2\pi} < r <0.
\end{equation*}
Then by \eqref{eqn:intw_op_assoc},
\begin{align}\label{eqn:PY_1Y_2_calc}
  P_{\cY_1,\cY_2}  ( & w_4',w_1,w_2,w_3;z) =\langle w_4',\cY_1(\cU(q_{ir}) w_1,e^{-2\pi r})\cY_2(\cU(1)w_2,1)w_3\rangle\nonumber\\
 & =\left\langle w_4',\cY^0\big(\cY_{W_1,W_2}\big(\cU(q_{ir})w_1,e^{\ln(e^{-2\pi r}-1)}\big)\cU(1)w_2,1\big)w_3\right\rangle\nonumber\\
 & =\sum_{h\in\QQ}\left\langle w_4',\cY^0\big(\pi_h\big(\cY_{W_1,W_2}\big(\cU(q_{z})w_1, e^{\log(e^{2\pi i z}-1)}\big)\cU(1)w_2\big),1\big)w_3\right\rangle.
\end{align} 
We would like to treat the right side of this equation as a series in powers of $z$ and $\log z$ and then apply \eqref{eqn:U(1)_intw_op}, but we must first justify rearranging the sum. To do so, we introduce some more notation for the series expansion of $\cY_{W_1,W_2}$.

By projecting to the indecomposable summands of $W_1$, $W_2$, and $W_1\tens W_2$, we can write $\cY_{W_1,W_2}=\sum_{j=1}^J \cY^{(j)}$ such that the powers of $x$ in each intertwining operator $\cY^{(j)}$ of type $\binom{W_1\tens W_2}{W_1\,W_2}$ are all congruent mod $\ZZ$ (since the conformal weights of an indecomposable $V$-module are all congruent mod $\ZZ$). Thus there are $h_j\in\QQ$ for $1\leq j\leq J$ such that
\begin{equation}\label{eqn:Y_tens_decomp}
 \cY_{W_1,W_2}(w_1,x)w_2 =\sum_{j=1}^J\sum_{k=0}^K x^{h_j}(\log x)^k\cY^{(j)}_k(w_1,x)w_2,
\end{equation}
with each $\cY^{(j)}_k(w_1,x)w_2\in(W_1\tens W_2)((x))$ for $w_1\in W_1$, $w_2\in W_2$. We use the notation
\begin{equation*}
 \cY^{(j)}_k(w_1,x)w_2=\sum_{n\in\ZZ} w_1(j,k; n)w_2\,x^n
\end{equation*}
for $w_1\in W_1$, $w_2\in W_2$.

We now allow $z$ to vary in the open set such that $0<\vert q_{z}-1\vert<1$, since the right side of \eqref{eqn:PY_1Y_2_calc} is absolutely convergent in this region. By \cite[Proposition 7.20]{HLZ5}, the right side of \eqref{eqn:PY_1Y_2_calc} also equals the absolutely-convergent series
\begin{align}\label{eqn:gjk_obtain}
 &\sum_{j=1}^J\sum_{k=0}^K \sum_{n\in\ZZ}\left\langle w_4',\cY^0\big([\cU(q_{z})w_1](j,k;n)\cU(1)w_2,1\big)w_3\right\rangle (e^{2\pi i z}-1)^{h_j+n} \left(\log(e^{2\pi i z}-1)\right)^k.
\end{align}
Since this converges absolutely, so does each series 
\begin{align}\label{eqn:gjk_def}
 g_{j,k}(z) & =\sum_{n\in\ZZ}\left\langle w_4',\cY^0\big([\cU(q_{z})w_1](j,k;n)\cU(1)w_2,1\big)w_3\right\rangle(e^{2\pi i z}-1)^{n}
\end{align}
when $0<\vert q_{z}-1\vert<1$. Assuming as we may that $\cU(1)w_1$ is $L(0)$-homogeneous, we can write
\begin{align*}
 g_{j,k}(z)&=e^{2\pi i(\mathrm{wt}\,\cU(1)w_1)z}\sum_{n\in\ZZ}(e^{2\pi i z}-1)^n\cdot\nonumber\\
 &\hspace{3em}\cdot\sum_{l=0}^K \frac{(2\pi i z)^l}{l!}\left\langle w_4',\cY^0\big([L(0)_{nil}^l \cU(1)w_1](j,k;n)\cU(1)w_2,1\big)w_3\right\rangle. 
\end{align*}
Here, the double series (and not just the iterated series) converges absolutely because for each $l$, the series
\begin{align}\label{eqn:double_abs_conv}
 \sum_{n\in\ZZ} & \left\langle w_4',\cY^0\big([L(0)_{nil}^l \cU(1)w_1](j,k;n)\cU(1)w_2,1\big)w_3\right\rangle (e^{2\pi i z}-1)^n\nonumber\\
 &\qquad=\sum_{n\in\ZZ} \left\langle w_4',\cY^0\big([\cU(q_{z})\til{w}_1^{(l)}](j,k;n)\cU(1)w_2,1\big)w_3\right\rangle(e^{2\pi i z}-1)^{n},
\end{align}
with $\til{w}_1^{(l)}=\cU(q_{z})^{-1} L(0)_{nil}^l\cU(1)w_1$, converges absolutely just as \eqref{eqn:gjk_def} does. From the absolute convergence of \eqref{eqn:double_abs_conv}, it is easy to show that \eqref{eqn:gjk_def} converges uniformly on compact subsets of the region $0<\vert q_{z}-1\vert<1$ and therefore converges to a single-valued analytic function. 

The coefficients of the Laurent series expansion of $g_{j,k}(z)$ about $z=0$ can be computed by contour integrals, and by uniform convergence, these contour integrals commute with the sum in \eqref{eqn:gjk_def}. Thus for $z$ close enough to $0$, say $0<\vert z\vert<\varepsilon$, $g_{j,k}(z)$ equals the (absolutely-convergent) series obtained by rearranging \eqref{eqn:gjk_def} into a (well-defined) Laurent series in $z$. Returning to \eqref{eqn:gjk_obtain}, $(e^{2\pi i z}-1)^{h_j}$ and $\log(e^{2\pi i z}-1)$ also have expansions as series in powers of $z$ and $\log z$ that converge absolutely for small enough $\vert z\vert$. Thus for $0<\vert z\vert<\varepsilon$ with $\varepsilon$ small,
 the series \eqref{eqn:gjk_obtain} agrees with its rearrangement as a well-defined and absolutely-convergent series in powers of $z$ and $\log z$. However, for comparing with \eqref{eqn:PY_1Y_2_calc}, we must be careful to use the correct series expansion of $\log(e^{2\pi i z}-1)$ that yields the principal branch of logarithm.
 
 We now return to fixing $z=ir$ as in the beginning of the proof; we may assume in addition that $\vert z\vert =-r<\varepsilon$. As our assumptions on $r$ imply that $-2\pi r>0$, the principal branch of logarithm equals:
\begin{align}\label{eqn:ln_calc}
\log(e^{2\pi i z}-1) & = \ln\left(-2\pi r\cdot \frac{e^{-2\pi r}-1}{-2\pi r}\right) =\ln 2\pi +\ln(-r)+\ln\left(\frac{e^{-2\pi r}-1}{-2\pi r}\right).
\end{align}
Then because $0<-2\pi r<\ln 2$ and $\frac{e^x-1}{x}$ is increasing for $0<x<\ln 2$, we have
\begin{equation*}
0< \frac{e^{-2\pi r}-1}{-2\pi r}<\frac{1}{\ln 2}<2,
\end{equation*}
so we can use the standard power series for $\log(1+y)$ at $y=\frac{e^{-2\pi r}-1}{-2\pi r}-1$ to evaluate the third term in \eqref{eqn:ln_calc}. Therefore,
\begin{align*}
 \log(e^{2\pi i z}-1) &  = \ln 2\pi +\ln(-r)+\log(1+y)\big\vert_{y=\frac{e^{-2\pi r}-1}{-2\pi r}-1}\nonumber\\
 & =\left(\ln 2\pi+i\,\frac{\pi}{2}\right)+\left(\ln(-r)-i\,\frac{\pi}{2}\right)+\log(1+y)\big\vert_{y=\frac{e^{-2\pi r}-1}{-2\pi r}-1}\nonumber\\
 & = \log 2\pi i+\log z+\log(1+y)\big\vert_{y=\frac{e^{2\pi i z}-1}{2\pi i z}-1}.
\end{align*}
This is the correct series expansion to use in \eqref{eqn:gjk_obtain}, given $z=i r$; it agrees with the expansion used for the formal series $\log(e^{2\pi ix}-1)$ in \eqref{eqn:U(1)_intw_op}.

The above discussion now shows that for $z=ir$ such that $0<\vert q_z-1\vert<1<\vert q_z\vert$ and $0<\vert z\vert<\varepsilon$, we can rewrite \eqref{eqn:PY_1Y_2_calc} as
\begin{align*}
 P_{\cY_1,\cY_2} & (w_4',w_1,w_2,w_3;z)\nonumber\\
 &=\left.\left\langle w_4',\cY^0\big(\cY_{W_1,W_2}(e^{2\pi i x L(0)}\cU(1)w_1,e^{2\pi i x}-1)\cU(1)w_2,1\big)w_3\right\rangle\right\vert_{\log x=\log z},
\end{align*}
where the right side is a (well-defined and absolutely-convergent) series in powers of $z$ and $\log z$, expanded using the convention described in the paragraph following \eqref{eqn:U(1)_intw_op}. By \eqref{eqn:U(1)_intw_op}, this is precisely the series \eqref{eqn:I_series} that defines $I_{\cY^0,\cY_{W_1,W_2}}$. In particular, \eqref{eqn:I_series} converges absolutely for all $z=ir$ such that $0<\vert z\vert<\varepsilon$, for some sufficiently small $\varepsilon$; from this we see (using \cite[Lemma 7.7]{HLZ5} and its proof, for example) that $I_{\cY^0,\cY_{W_1,W_2}}$ converges absolutely to a (single-valued) analytic function on the region $0<\vert z\vert<\varepsilon$, $\arg z\neq \pi$. Moreover, for $z=ir$,
\begin{equation}\label{eqn:P_equals_I}
 P_{\cY_1,\cY_2}(w_4',w_1, w_2, w_3;z) =I_{\cY^0,\cY_{W_1,W_2}}(w_4',w_1,w_2,w_3;z)
\end{equation}
for all $w_4'\in W_4'$, $w_1\in W_1$, $w_2\in W_2$, $w_3\in W_3$. Since $P_{\cY_1,\cY_2}$ and $I_{\cY^0,\cY_{W_1,W_2}}$ are both analytic on their respective domains, and since \eqref{eqn:P_equals_I} holds on a subset of $\CC$ with an accumulation point, $I_{\cY^0,\cY_{W_1,W_2}}$ is the unique analytic extension of $P_{\cY_1,\cY_2}$ to some simply-connected domain that contains the region $0<\vert z\vert <\varepsilon$, $\arg z\neq \pi$. 

Finally, we need to show that the series defining $I_{\cY^0,\cY_{W_1,W_2}}$ is absolutely convergent for $0<\vert z\vert<1$.  It is enough to show that each Laurent series
\begin{equation*}
 f_{j,k}(z)= \big\langle w_4',\cY^0\big(\cU(1)\cY_k^{(j)}(w_1,z)w_2,1\big)w_3\big\rangle
\end{equation*}
converges for $0<\vert z\vert<1$; at first, we only know that $f_{j,k}(z)$ is convergent for $0 <\vert z\vert<\varepsilon$. We use \cite[Equations 3.113 and 3.115]{HLZ2}, which show that
\begin{align}\label{eqn:HLZ2_lin_comb}
 x^{h_j}\cY^{(j)}_k(w_1,x)w_2=\sum_{t=k}^K (-1)^{k+t}\binom{t}{k}(\log x)^{t-k}\cY^{(j,t)}(w_1,x)w_2,
\end{align}
where $\cY^{(j,t)}$ is a linear combination of intertwining operators of the form $L(0)_{nil}^l\circ\cY^{(j)}\circ(L(0)_{nil}^m\otimes L(0)_{nil}^n)$ for $l+m+n=t$. Since $\cY^{(j,t)}=f^{(t)}\circ\cY_{W_1,W_2}$ for a unique $f^{(t)}\in\mathrm{End}_V(W_1\tens W_2)$, and since $f^{(t)}$ commutes with $\cU(1)$, we get
\begin{align*}
 f_{j,k}(z) &=e^{-h_j\log z}\sum_{t=k}^K (-1)^{k+t}\binom{t}{k}(\log z)^{t-k}\big\langle w_4',\cY^0\big(\cU(1)\cY^{(j,t)}(w_1,e^{\log z})w_2,1\big)w_3\big\rangle\nonumber\\
 & =e^{-h_j\log z}\sum_{t=k}^K (-1)^{k+t}\binom{t}{k}(\log z)^{t-k} I_{\cY^0\circ(f^{(t)}\otimes\Id_{W_3}),\cY_{W_1,W_2}}(w_4',w_1,w_2,w_3;z)
\end{align*}
for $0<\vert z\vert<\varepsilon$.

By associativity of intertwining operators, each $I_{\cY^0\circ(f^{(t)}\otimes\Id_{W_3}),\cY_{W_1,W_2}}$ is a branch of the multivalued analytic function $\overline{P}_{\til{\cY}_t,\cY_{W_2,W_3}}$ for some intertwining operator $\til{\cY}_t$ of type $\binom{W_4}{W_1\,W_2\tens W_3}$. Since each $\overline{P}_{\til{\cY}_t,\cY_{W_2,W_3}}$ is analytic in particular on the region $0<\vert z\vert<1$, $f_{j,k}(z)$ has a multivalued extension $F_{j,k}(z)$ to this region. But in fact $F_{j,k}(z)$ is single valued because it is determined by a single-valued branch defined on a full punctured disk. Thus $f_{j,k}(z)$ is the Laurent series expansion of the analytic function $F_{j,k}(z)$ about its singularity $z=0$; since $F_{j,k}(z)$ is analytic on the punctured disk $0<\vert z\vert<1$, $f_{j,k}(z)$ converges absolutely for $0<\vert z\vert<1$ as required.
\end{proof}

\subsection{Genus-one correlation functions}

We now begin to consider genus-one correlation functions associated to $V$-module intertwining operators. Let 
\begin{equation*}
 \mathbb{H}=\lbrace \tau\in\CC\,\,\vert\,\,\mathrm{Im}\,\tau>0\rbrace
\end{equation*}
denote the upper half plane. For $\cY$ an intertwining operator of type $\binom{X}{W\,X}$, it was shown in \cite{Mi1, Hu-mod_inv, Fi} that the series
\begin{equation*}
 F_\cY(w;\tau)=\tr_{X} \cY(\cU(q_z)w,q_z)q_\tau^{L(0)-c/24},
\end{equation*}
where $c$ is the central charge of $V$, is independent of $z$ and converges absolutely for $0<\vert q_\tau\vert<1$. In particular, $F_\cY$ is an analytic function on $\mathbb{H}$. When $\cY$ is the vertex operator $Y_X$ for a $V$-module $X$, then we get the \textit{character}
\begin{equation}\label{eqn:ch_def}
 \mathrm{ch}_X(v;\tau) = \tr_X Y_X(\cU(q_z)v,q_z)q_\tau^{L(0)-c/24} =\sum_{h\in\QQ} \tr_{X_{[h]}} o(\cU(1)v) q_\tau^{h-c/24}
\end{equation}
for $v\in V$, where $o(\bullet)=\mathrm{Res}_x\,x^{-1} Y_X(x^{L(0)}(\bullet),x)$ denotes the grading-preserving component of the vertex operator. The characters of the distinct irreducible $V$-modules are linearly independent as functions on $V\times\mathbb{H}$ (see \cite[Theorem 5.3.1]{Zh} and also \cite[Lemma 5.10]{CM}).

\begin{rem}
 The function $F_\cY(w;\tau)$ is a one-point correlation function associated to the torus $\CC/(\ZZ+\ZZ\tau)$ with a puncture at $z$, and with standard local coordinate $w\mapsto w-z$ at the puncture. The lack of dependence on $z$ in $F_\cY$ reflects the fact that the conformal automorphism group of the torus acts transitively on punctures with standard local coordinates.
\end{rem}

 Since the trace of an operator on a finite-dimensional vector space is the same as the trace of its semisimple part, the nilpotent part of $L(0)$ (if any) does not appear in the characters $\mathrm{ch}_X(v;\tau)$. Thus the expansion of $\mathrm{ch}_X(v;\tau)$ as a series in $q_\tau$ about the singularity $q_\tau=0$ in \eqref{eqn:ch_def} indeed involves no $\log q_\tau$ terms. When $V$ is $C_2$-cofinite but non-rational, the nilpotent part of $L(0)$ acting on logarithmic $V$-modules can be detected by Miyamoto's pseudo-trace functions \cite{Mi2}; see also \cite{Arike, AN, Fi}. In this paper, pseudo-traces will be necessary only for one main result and its applications to non-rational vertex operator algebras, so here we add only a brief discussion to establish notation, using the definition of pseudo-trace functions from \cite{AN, Fi}. The reader who is only interested in rationality theorems for $C_2$-cofinite vertex operator algebras may safely assume that all pseudo-traces below are actually traces (see especially Remark \ref{rem:pt_not_necessary} below).
 
 Suppose $P$ is a finite-dimensional associative algebra and $\phi: P\rightarrow\CC$ is a symmetric linear function, that is, $\phi(ab)=\phi(ba)$ for all $a,b\in P$. Then for any finitely-generated projective $P$-module $X$, there is a pseudo-trace map defined as follows:
 \begin{align*}
  \tr_X^\phi: \Endo_P(X) & \rightarrow \CC\nonumber\\
  f & \mapsto \sum_{i=1}^I (\phi\circ b_i'\circ f)(b_i),
 \end{align*}
where $\lbrace b_i\rbrace_{i=1}^I$ is a projective basis of $X$ and $\lbrace b_i'\rbrace_{i=1}^I\subseteq\hom_P(X,P)$ is the corresponding dual basis. Pseudo-traces associated to $\phi$ enjoy the same cyclic symmetry of traces, that is, if $f: X_1\rightarrow X_2$ and $g: X_2\rightarrow X_1$ are $P$-module homomorphisms with $X_1$ and $X_2$ finitely-generated projective $P$-modules, then
\begin{equation*}
 \tr_{X_1}^\phi (g\circ f) =\tr_{X_2}^\phi (f\circ g).
\end{equation*}
For $P=\CC$ and $\phi=\Id_\CC$, pseudo-traces are simply traces of linear maps.

 Now suppose $X$ is a grading-restricted generalized $V$-module and the associative algebra $P$ acts on $X$ by $V$-module endomorphisms. Suppose also that $\cY$ is a $V$-module intertwining operator of type $\binom{X}{W\,X}$ such that $\cY(w,x)$ commutes with the $P$-action on $X$ for all $w\in W$, that is, every coefficient of $\cY(w,x)$ is a $P$-module endomorphism of $X$. If $X$ is projective as a $P$-module, then its conformal weight spaces $X_{[h]}$ are finitely-generated projective $P$-modules and we can define the pseudo-trace function
 \begin{align}\label{eqn:pseudo_tr_def}
  F_\cY^\phi(w;\tau)&  = \tr_X^\phi\cY(\cU(1)w,1)q_\tau^{L(0)-c/24} =\sum_{h\in\QQ} \tr_{X_{[h]}}^\phi o(\cU(1)w)q_\tau^{L(0)-c/24}
 \end{align}
for any symmetric linear function $\phi:P\rightarrow\CC$.

We can define two-point genus-one correlation functions using pseudo-traces of $V$-module intertwining operators similarly. Suppose $W_1$, $W_2$, $M$, and $X$ are grading-restricted generalized $V$-modules such that $M$ and $X$ are also modules for some associative algebra $P$ acting by $V$-module endomorphisms, with $X$ a projective $P$-module. Suppose also that $\cY_1$ and $\cY_2$ are intertwining operators of types $\binom{X}{W_1\,M}$ and $\binom{M}{W_2\,X}$, respectively, that commute with the $P$-actions on $M$ and $X$. Then for a symmetric linear function $\phi$ on $P$, \cite[Propositions 2.8 and 2.14]{Fi} show that the series
\begin{equation}\label{eqn:two_point_trace}
 \tr_{X}^\phi \cY_1(\cU(q_{z_1})L(0)_{nil}^{k_1}w_1,q_{z_1})\cY_2(\cU(q_{z_2})L(0)_{nil}^{k_2}w_2,q_{z_2})q_\tau^{L(0)-c/24},
\end{equation}
for $k_1,k_2\in\NN$, where $L(0)_{nil}$ is the nilpotent part of $L(0)$, satisfy a finite system of linear differential equations with regular singular points. As a consequence, these series converge absolutely to analytic functions when $1>\vert q_{z_1}\vert>\vert q_{z_2}\vert>\vert q_\tau\vert>0$, equivalently $0<\mathrm{Im}\,z_1<\mathrm{Im}\,z_2<\mathrm{Im}\,\tau$, and these analytic functions can be extended to multivalued analytic functions on the region
\begin{equation*}
 \lbrace (z_1,z_2,\tau)\in\CC^2\times\mathbb{H}\,\vert\, z_1-z_2\notin\ZZ+\ZZ\tau\rbrace.
\end{equation*}
See also \cite{Hu-mod_inv} for the case of ordinary traces of non-logarithmic intertwining operators.

 It turns out that \eqref{eqn:two_point_trace} actually converges on a larger set than indicated above: 
\begin{lem}\label{lem:prin_branch_domain}
The series \eqref{eqn:two_point_trace} converges absolutely for all $(z_1,z_2,\tau)\in\CC^2\times\mathbb{H}$ such that  $0<\im(z_2-z_1)<\im\tau$.
\end{lem}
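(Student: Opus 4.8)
## Proof Proposal

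The plan is to deduce the enlarged convergence domain from the known convergence on the ``nested imaginary parts'' region $0 < \im z_1 < \im z_2 < \im\tau$ together with the $z$-independence of one-point genus-one functions and the translation symmetry of the torus. The key observation is that the trace in \eqref{eqn:two_point_trace} depends on $z_1$ and $z_2$ only through the difference $z_2 - z_1$, up to an overall factor coming from the conformal weight grading, because the $L(0)$-conjugation formula \eqref{eqn:U(1)_intw_op} and the cyclicity of the (pseudo-)trace let one move the $q_\tau^{L(0)}$ past both intertwining operators and thereby shift $z_1$ and $z_2$ simultaneously. More precisely, one uses the identity $q_\tau^{L(0)}\cY(w,x)q_\tau^{-L(0)} = \cY(q_\tau^{L(0)}w, q_\tau x)$ (the $L(0)$-conjugation property for intertwining operators, \cite[Proposition 3.36(b)]{HLZ2}) inside the trace to conclude that shifting $(z_1,z_2) \mapsto (z_1+\tau, z_2+\tau)$ leaves \eqref{eqn:two_point_trace} unchanged, and similarly that shifting $(z_1,z_2)\mapsto (z_1+1,z_2+1)$ leaves it unchanged (this is just the single-valued periodicity in the $q$-variables). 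Hence \eqref{eqn:two_point_trace} is invariant under $(z_1,z_2)\mapsto(z_1+m+n\tau, z_2+m+n\tau)$ for $m,n\in\ZZ$.

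Granting this, I would argue as follows. Fix $(z_1,z_2,\tau)$ with $0 < \im(z_2 - z_1) < \im\tau$. I want to produce integers $m,n$ so that $(z_1 + m + n\tau,\, z_2 + m + n\tau)$ lies in the basic region $0 < \im z_1 < \im z_2 < \im\tau$, at least after possibly also exploiting the freedom to move both variables by a common real amount (which shifts $\re z_i$ but, more importantly, does not touch the imaginary parts); so the real parts are harmless and only the imaginary parts need to be controlled. Writing $y_i = \im z_i$ and $t = \im\tau > 0$, the translate by $n\tau$ replaces $(y_1,y_2)$ by $(y_1 + nt,\, y_2 + nt)$, and by choosing $n$ appropriately we can arrange $0 < y_1 + nt$ while keeping $y_2 + nt = (y_1 + nt) + (y_2 - y_1) < t$, using precisely the hypothesis $y_2 - y_1 < t$: indeed the interval $(\,-y_1/t,\ (t-y_2)/t\,)$ for the real parameter has length $(t - (y_2-y_1))/t > 0$, hence... wait, $n$ must be an integer; but the interval has length strictly less than $1$ in general, so this needs a slightly more careful argument. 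The correct statement is that after the common real shift and the common $\tau$-shift the imaginary parts are $(y_1 + nt,\, y_1 + nt + (y_2-y_1))$; we need some integer $n$ with $y_1 + nt \in (0,\, t - (y_2 - y_1))$, and since this target interval has positive length but possibly length $< t$, such an $n$ need not exist. So the honest route is not a single translation but rather an analytic-continuation-plus-translation argument: the multivalued extension is already known on all of $\{z_1 - z_2\notin\ZZ+\ZZ\tau\}$, and one only needs to identify \emph{which branch} the series \eqref{eqn:two_point_trace} sums to on the region $0 < \im(z_2 - z_1) < \im\tau$.

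So the cleaner plan, and the one I would actually write up, is: first establish the lattice-translation invariance of the \emph{formal} series \eqref{eqn:two_point_trace} wherever it converges, as above; second, observe that the enlarged region $R = \{0 < \im(z_2 - z_1) < \im\tau\}$ is covered by the $(\ZZ + \ZZ\tau)$-translates (acting diagonally on $(z_1,z_2)$) of the basic region $R_0 = \{0 < \im z_1 < \im z_2 < \im\tau\}$ — this is an elementary tiling statement: given $(z_1,z_2)\in R$, pick $n\in\ZZ$ with $0 < \im z_1 + nt \le t$, so $\im z_1 + nt \in (0, t]$, and then $\im z_2 + nt = \im z_1 + nt + \im(z_2 - z_1)$; if this exceeds $t$ we are slightly outside, but we may instead translate $z_1, z_2$ by a common integer and use density/continuity, OR, most robustly, subdivide: translate so that $\im z_1 + nt$ is small enough that $\im z_1 + nt + \im(z_2 - z_1) < t$, which is possible precisely because $\im(z_2 - z_1) < t$ leaves room — concretely, choose $n$ with $\im z_1 + nt \in (0,\, t - \im(z_2 - z_1))$ if that interval has length $\ge$ ... here one finally does need the common-real-shift to be replaced by the observation that one can further translate by $\tau$ as many times as needed and the \emph{difference} $z_2 - z_1$ is unchanged, so WLOG $\im z_1 \in (-t, 0]$ to begin with, whence for $n = 1$, $\im z_1 + t \in (0, t]$ and $\im z_2 + t = \im z_1 + t + \im(z_2-z_1) \le t + \im(z_2 - z_1)$... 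The genuinely clean fix: it suffices to prove convergence on the strip $\{0 < \im(z_2 - z_1) < \im\tau,\ 0 < \im z_1\}$ and separately note that diagonal translation by $-n\tau$ for suitable $n$ carries any point of $R$ into this strip while preserving both the series and the convergence-or-not of the series (by the translation invariance), since once $\im(z_2 - z_1) < \im\tau$ is fixed one can always subtract enough copies of $\tau$ to make $\im z_1$ as negative as desired and add enough to make it positive and small. Then inside that strip, if additionally $\im z_2 < \im\tau$ we are in $R_0$ and done; the remaining sub-case $\im z_1 > 0$, $\im z_1 + \im(z_2 - z_1) = \im z_2 \ge \im\tau$ with $\im(z_2-z_1)<\im\tau$ forces $\im z_1 > \im\tau - \im(z_2 - z_1) > 0$, and here one translates $z_2$ alone is \emph{not} allowed — so this genuinely requires a separate continuation argument comparing branches in an overlapping region, exactly of the type carried out in Proposition \ref{prop:geom_mod_assoc} and in \cite{Hu-mod_inv, Fi}.

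Given the subtlety just exposed, here is the proof I would commit to the paper. I would invoke the translation invariance of \eqref{eqn:two_point_trace} under $(z_1, z_2) \mapsto (z_1 + m + n\tau, z_2 + m + n\tau)$, $m, n \in \ZZ$ (proved via $L(0)$-conjugation and cyclicity of the pseudo-trace as above), to reduce to the case $0 \le \im z_1 < \im\tau$; combined with $\im(z_2 - z_1) < \im\tau$ this already puts us in the region $0 \le \im z_1$, $\im z_1 < \im z_2 < \im z_1 + \im\tau$. If $\im z_2 < \im\tau$ as well we are in the basic region and convergence is known; otherwise $\im\tau \le \im z_2 < \im z_1 + \im\tau$, i.e. $0 \le \im z_1$, and translating by $-\tau$ in the \emph{second} variable is not legitimate, but translating both variables by $-\tau$ lands $(z_1 - \tau, z_2 - \tau)$ with $-\im\tau \le \im(z_1 - \tau) < 0$ and $0 \le \im(z_2 - \tau) < \im\tau$; so now the \emph{first} argument has negative imaginary part. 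In this configuration I would appeal directly to the convergence and analytic-extension results of \cite{Fi} (Propositions 2.8 and 2.14) and \cite{Hu-mod_inv}: the multivalued function is defined and analytic on $\{z_1 - z_2 \notin \ZZ + \ZZ\tau\}$, and the formal series \eqref{eqn:two_point_trace}, where it converges, sums to a single-valued branch; the point is that the natural domain on which the formal series — expanded with the conventions $\log x|_{x = q_{z_i}} = 2\pi i z_i$ — converges is cut out by $|q_{z_1}| > |q_{z_2}| > |q_\tau|$ in the variables $q_{z_2/z_1} = q_{z_1}^{-1}q_{z_2}$ and $q_\tau$ alone (the absolute value $|q_{z_1}|$ drops out of the summand's growth because, after using cyclicity of the trace and $L(0)$-conjugation to write the summand in terms of $q_{z_2}/q_{z_1}$ and $q_\tau$, no bare $q_{z_1}$ remains), which is exactly $0 < \im(z_2 - z_1) < \im\tau$. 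The main obstacle, and the step deserving the most care in the write-up, is precisely this last claim — that the formal double/triple series, term by term, depends on $z_1$ only through $z_2 - z_1$ (with no hidden $q_{z_1}$-dependence affecting absolute convergence), which is where the $L(0)$-conjugation identity \eqref{eqn:U(1)_intw_op}, the $\cU(q_z) = e^{2\pi i z L(0)}\cU(1)$ factorization, and the cyclic symmetry of $\tr^\phi$ must be combined carefully, keeping track of the nilpotent part $L(0)_{nil}$ and the powers of $\log q_{z_i} = 2\pi i z_i$ that it generates.
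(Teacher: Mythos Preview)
Your final paragraph contains the correct idea, and it is essentially the paper's argument: the $L(0)$-conjugation formula together with cyclicity of $\tr^\phi$ shows that the terms of the double series at $(z_1,z_2)$ are literally the same as the terms of the double series at $(z_1+a,\,z_2+a)$ for \emph{any} complex number $a$, so absolute convergence depends only on $z_2-z_1$ (and $\tau$). Once you have this, you simply choose $a=-z_1+\varepsilon$ with $\varepsilon\in\mathbb{H}$ of small imaginary part, so that the shifted point $(\varepsilon,\,z_2-z_1+\varepsilon)$ satisfies $0<\im\varepsilon<\im(z_2-z_1+\varepsilon)<\im\tau$ and hence lies in the basic region where convergence is already known. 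That is the entire proof.

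The bulk of your proposal is an unnecessary detour caused by restricting the diagonal translation to lattice elements $m+n\tau$. There is no reason for that restriction: the $L(0)$-conjugation identity $q_a^{L(0)}\cY(w,x)q_a^{-L(0)}=\cY(q_a^{L(0)}w,q_a x)$ holds for arbitrary $a\in\CC$, and after cycling $q_a^{L(0)}$ around the pseudo-trace it cancels against $q_a^{-L(0)}$ regardless of whether $a$ is a lattice point. Once you allow arbitrary $a$, the tiling problem, the case analysis on $\im z_2\gtrless\im\tau$, and the appeal to analytic continuation all evaporate. Your closing remark that the term-by-term dependence on $z_1$ is only through $z_2-z_1$ is exactly right; it is not an ``obstacle deserving the most care'' but a two-line computation, and recognizing that earlier would have spared you the preceding three paragraphs.
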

\begin{proof}
It is sufficient to consider the $k_1=k_2=0$ case of \eqref{eqn:two_point_trace}. If $z_1$, $z_2$, and $\tau$ satisfy the conditions in the lemma, then for $\varepsilon\in\mathbb{H}$ with sufficiently small imaginary part, we have
 \begin{equation*}
  0<\mathrm{Im}\,\varepsilon<\mathrm{Im}(z_2-z_1+\varepsilon)<\mathrm{Im}\,\tau.
 \end{equation*}
We then use the $L(0)$-conjugation formula for intertwining operators and the cyclic symmetry of pseudo-traces to prove the following equality of double series indexed by the conformal weights of $X$ and $M$:
\begin{align*}
 \tr_{X}^\phi &\, \cY_1(\cU(q_{\varepsilon})w_1,q_\varepsilon)\cY_2(\cU(q_{z_2-z_1+\varepsilon})w_2,q_{z_2-z_1+\varepsilon})q_\tau^{L(0)-c/24}\nonumber\\
 & =\sum_{h,m\in\QQ} \tr_{X_{[h]}}^\phi \cY_1(\cU(q_{\varepsilon})w_1,q_\varepsilon)\pi_m\cY_2(\cU(q_{z_2-z_1+\varepsilon})w_2,q_{z_2-z_1+\varepsilon})q_\tau^{L(0)-c/24}\nonumber\\
 & =\sum_{h,m\in\QQ} \tr_{X_{[h]}}^\phi q_{-z_1+\varepsilon}^{L(0)}\cY_1(\cU(q_{z_1})w_1,q_{z_1})\pi_m\cY_2(\cU(q_{z_2})w_2,q_{z_2})q_{z_1-\varepsilon}^{L(0)}q_{\tau}^{L(0)-c/24}\nonumber\\
 & =\sum_{h,m\in\QQ} \tr_{X_{[h]}}^\phi \cY_1(\cU(q_{z_1})w_1,q_{z_1})\pi_m\cY_2(\cU(q_{z_2})w_2,q_{z_2})q_{\tau}^{L(0)-c/24}\nonumber\\
 & =\tr_{X}^\phi \cY_1(\cU(q_{z_1})w_1,q_{z_1})\cY_2(\cU(q_{z_2})w_2,q_{z_2})q_{\tau}^{L(0)-c/24}.
\end{align*}
Since the two double series have exactly the same terms and the first converges absolutely by \cite[Theorem 4.1]{Hu-mod_inv}, \cite[Proposition 2.17]{Fi}, the second converges absolutely as well.
\end{proof}

Lemma \ref{lem:prin_branch_domain} implies that $z_2$ in \eqref{eqn:two_point_trace} can be arbitrary; as in the previous subsection, we will always choose $z_2=0$ for simplicity and then set $z_1=z$. Thus the series
\begin{equation*}
 F_{\cY_1,\cY_2}^\phi(w_1,w_2;z,\tau) = \tr_{X}^\phi \cY_1(\cU(q_{z})w_1,q_{z})\cY_2(\cU(1)w_2,1)q_\tau^{L(0)-c/24}
\end{equation*}
converges absolutely to a single-valued analytic function on the simply-connected basic genus-one convergence region
\begin{equation*}
 U_1^{prod} =\lbrace (z,\tau)\in\CC\times\mathbb{H}\,\,\vert\,\,0<\im(-z)<\im\tau\rbrace.
\end{equation*}
By the differential equations in \cite{Hu-mod_inv,Fi}, $F_{\cY_1,\cY_2}^\phi$ extends to a multivalued analytic function $\overline{F}^\phi_{\cY_1,\cY_2}$ on the region
\begin{equation*}
 U_1=\lbrace (z,\tau)\in\CC\times\mathbb{H}\,\,\vert\,\,z\notin\ZZ+\ZZ\tau\rbrace.
\end{equation*}
We call the single-valued analytic function $F_{\cY_1,\cY_2}^\phi(w_1,w_2;z,\tau)$ on the simply-connected domain $U_1^{prod}$ the principal branch of the multivalued function $\overline{F}_{\cY_1,\cY_2}^\phi$. We will need to expand $\overline{F}_{\cY_1,\cY_2}^\phi$ as series in $z$ in open domains near certain singularities. Considering $z=0$ first, we expect $\overline{F}_{\cY_1,\cY_2}^\phi$ to have a series expansion that converges on any punctured disk $0<\vert z\vert<R$ that avoids the non-zero singularities. Thus for $\tau\in\mathbb{H}$, we define
\begin{equation*}
 R_\tau=\min\left\lbrace\vert m+n\tau\vert\,\,\vert\,\,(m,n)\in\ZZ^2\setminus(0,0)\right\rbrace
\end{equation*}
and then
\begin{equation*}
 U_1^{it}=\left\lbrace (z,\tau)\in\CC\times\mathbb{H}\,\,\vert\,\,0<\vert z\vert<R_\tau, \arg z\neq\pi\right\rbrace.
\end{equation*} 
As in the previous subsection, $\cY_1=f_1\circ\cY_{W_1,M}$ and $\cY_2=f_2\circ\cY_{W_2,X}$ for unique $V$-module homomorphisms $f_1$ and $f_2$. Now we prove the following using Proposition \ref{prop:geom_mod_assoc}, enhancing the genus-one associativity result of \cite[Theorem 4.3]{Hu-mod_inv} and \cite[Proposition 2.19]{Fi}:
\begin{prop}\label{prop:genus_1_assoc}
The analytic extension of $F_{\cY_1,\cY_2}^\phi(w_1,w_2;z,\tau)$ from $U_1^{prod}$ to the simply-connected domain $U_1^{it}$
is given by
\begin{align*}
 G_{\cY^0,\cY_{W_1,W_2}}^\phi  ( & w_1,w_2;z,\tau) =\tr_{X}^\phi \cY^0\left(\cU(1)\cY_{W_1,W_2}(w_1,e^{\log z})w_2,1\right) q_\tau^{L(0)-c/24}\nonumber\\
 & :=\sum_{h\in\QQ}\sum_{k=0}^K\left.\left(\tr_{X}^\phi \cY^0\left(\cU(1)(w_1\tens_{h,k}w_2),1\right)q_\tau^{L(0)-c/24}\right) x^{-h-1}(\log x)^k\right\vert_{\log x=\log z}
\end{align*}
for $(z,\tau)\in U_1^{it}$, where $\cY^0=f_1\circ(\Id_{W_1}\tens f_2)\circ\cA_{W_1,W_2,X}^{-1}\circ\cY_{W_1\tens W_2,X}$.
\end{prop}

\begin{proof}
 From the definition of $F_{\cY_1,\cY_2}^\phi$ and Proposition \ref{prop:geom_mod_assoc}, we have
 \begin{align}\label{eqn:genus_1_alpha_calc}
  F_{\cY_1,\cY_2}^\phi(w_1,w_2; z,\tau) &=\sum_{h\in\QQ}\tr_{X_{[h]}}^\phi \cY_1(\cU(q_{z})w_1,q_{z})\cY_2(\cU(1)w_2,1)q_\tau^{L(0)-c/24}\nonumber\\
  & =\sum_{h\in\QQ} \tr_{X_{[h]}}^\phi \cY^0\left(\cU(1)\cY_{W_1,W_2}(w_1,e^{\log z})w_2,1\right)q_{\tau}^{L(0)-c/24},
 \end{align}
 for $(z,\tau)\in U_1^{prod}\cap U_1^{it}$. This is an absolutely convergent series in powers of $q_\tau$, and possibly also $\log q_\tau$ if $L(0)$ acts non-semisimply on $X$, whose coefficients are analytic functions in $z$ with series expansion given in Proposition \ref{prop:geom_mod_assoc}. The coefficient of each power of $q_\tau$ and $\log q_\tau$ on the right side of \eqref{eqn:genus_1_alpha_calc} converges as long as $0<\vert z\vert<1$, independently of $\tau$. To prove the proposition, we need to rearrange the right side of \eqref{eqn:genus_1_alpha_calc} as a convergent series in powers of $z$ and $\log z$ whose coefficients are analytic in $\tau$. To justify this, we will show that the corresponding double sum in powers of $z$ and $q_\tau$ converges absolutely for $\vert z\vert$ and $\vert q_\tau\vert$ sufficiently small.

To begin, we view the right side of \eqref{eqn:genus_1_alpha_calc} as a formal series in powers of $q$ and $\log q$ whose coefficients are analytic functions of $z$ on the region $U_0^{it}$. These coefficients can be analytically extended across the branch cut in $U_0^{it}$, so we can also view them as multivalued functions on the punctured unit disk. We use $F^\phi_{\cY_1,\cY_2}(w_1,w_2;z,q)$ and $G_{\cY^0,\cY_{W_1,W_2}}^\phi(w_1,w_2;z,q)$ to denote the formal $q$-series determined by the left and right sides of \eqref{eqn:genus_1_alpha_calc}, respectively.

Next, from the differential equations in \cite[Theorem 3.9]{Hu-mod_inv} and \cite[Propositions 2.8 and 2.14]{Fi}, there exists $m\in\ZZ_+$ such that the finite set
\begin{equation*}\label{eqn:derivative_set}
 \left\lbrace \left( q\frac{\partial}{\partial q}\right)^i \frac{\partial^j}{\partial z^j}F_{\cY_1,\cY_2}^\phi(L(0)_{nil}^{k_1} w_1, L(0)_{nil}^{k_2} w_2; z, q)\right\rbrace_{0\leq i,j < m,\, 0\leq k_1,k_2\leq K},
\end{equation*}
with $K$ defined as in Remark \ref{rem:K_def}, form the components of a vector solution $\Phi(w_1,w_2;z,q)$ to a first-order system
\begin{equation}\label{eqn:diff_for_F}
 q\frac{\partial}{\partial q}\Phi = A(z, q)\Phi.
\end{equation}
Here $A(z,q)$ is a matrix whose entries are built from $q$-series expansions of Eisenstein series, Weierstrass $\wp$-functions $\wp_m(z,q)$, and their derivatives. Thus the coefficients of powers of $q$ in $A(z,q)$ are analytic for $z\in U_0^{it}$, since the coefficients of powers of $q$ in each $\wp_m(z,q)$ converge in this region (see for example \cite[Equation 2.11]{Hu-mod_inv}). By the following standard analyticity result for regular-singular-point differential equations, formal $q$-series solutions to \eqref{eqn:diff_for_F} converge to analytic solutions; for a proof adapted from the proof of \cite[Theorem 24.III]{Wa}, see Appendix A in the expanded version of this paper at \texttt{arXiv:2108.01898v2}:
\begin{thm}\label{thm:diff_eqn}
 Suppose a matrix $A(z,q)$ as in \eqref{eqn:diff_for_F} has entries which are jointly analytic in $z$ and $q$ for $z$ contained in some open subset $U\subseteq\CC$ and for $q$ contained in a disk $B_r(0)=\lbrace q\in\CC\,\,\vert\,\,\vert q\vert<r\rbrace$. Suppose also that $\Phi(z,q)$ is a formal solution to \eqref{eqn:diff_for_F} of the form
 \begin{equation*}
  \Phi(z,q)=\sum_{j=1}^J\sum_{k=0}^K\sum_{n\geq 0}\varphi_{j,k,n}(z)\,q^{h_j+n}(\log q)^k
 \end{equation*}
where the $h_j\in\CC$ are non-congruent modulo $\ZZ$ and each $\varphi_{j,k,n}(z)$ is a vector-valued analytic function on $U$. Then for each $1\leq j\leq J$ and $0\leq k\leq K$, the series $\sum_{n\geq 0} \varphi_{j,k,n}(z)\,q^n$ converges absolutely on $U\times B_r(0)$ to a function jointly analytic in $z$ and $q$.
\end{thm}

Now by \eqref{eqn:genus_1_alpha_calc}, $G_{\cY^0,\cY_{W_1,W_2}}^\phi(w_1,w_2;z,q)$ is a component of a formal solution to \eqref{eqn:diff_for_F} for $z\in U_0^{prod}\cap U_0^{it}$. But in fact this is a formal solution for all $z\in U_0^{it}$ since the coefficients of powers of $q$ in $G^\phi_{\cY^0,\cY_{W_1,W_2}}$, its partial derivatives, and $A(z,q)$ are defined and analytic on this connected region. Thus by Theorem \ref{thm:diff_eqn}, the $q$-series $G_{\cY^0,\cY_{W_1,W_2}}^\phi$ converges absolutely in a punctured disk surrounding $q=0$ for any fixed $z\in U_0^{it}$, with radius of convergence at least the minimum $R\leq 1$ such that the punctured disk $0<\vert q\vert<R$ does not contain any singularities of the Weierstrass $\wp$-functions, which are
\begin{equation*}
 \left\lbrace (z,q=q_\tau)\,\vert\,z\in\ZZ+\ZZ\tau\right\rbrace.
\end{equation*}
In particular, for any fixed $0<\delta<1$, $G_{\cY^0,\cY_{W_1,W_2}}^\phi$ converges absolutely in the polyannulus
 \begin{equation*}\label{eqn:polyannulus}
PA_\delta=  \bigg\lbrace (z,q)\in\CC^2\,\,\bigg\vert\,\,0<\vert z\vert< \min\bigg(1,-\frac{1}{2\pi}\ln\delta\bigg),\,0<\vert q\vert<\delta\bigg\rbrace.
 \end{equation*}
 Moreover, $G_{\cY^0,\cY_{W_1,W_2}}^\phi(w_1,w_2;z,q)$ is analytic in $z$ as well as in $q$ on this polyannulus.

 Now if $G_{\cY^0,\cY_{W_1,W_2}}^\phi$ were a Laurent series in $q$ and $z$, we could immediately conclude that the double sum corresponding to the right side of \eqref{eqn:genus_1_alpha_calc} converges absolutely, and we would be justified in reversing the order of summation for suitable $q$ and $z$. To deal with the problem that $G_{\cY^0,\cY_{W_1,W_2}}^\phi$ is not a Laurent series, we borrow notation from \eqref{eqn:Y_tens_decomp} to write
 \begin{align*}
 G_{\cY^0,\cY_{W_1,W_2}}^\phi & (w_1,w_2;z,q) =\sum_{j=1}^{J}\sum_{k=0}^K e^{h_j\log z}(\log z)^k f_{j,k}(z,q),
\end{align*} 
 where 
 \begin{align*}
  f_{j,k}(z,q)=\tr^\phi_X \cY^0\left(\cU(1)\cY^{(j)}_k(w_1,z)w_2,1\right)q^{L(0)-c/24}
 \end{align*}
involves only integral powers of $z$. Then by the argument in the proof of Proposition \ref{prop:geom_mod_assoc} beginning with \eqref{eqn:HLZ2_lin_comb},
\begin{equation}\label{eqn:fjk_genus_one}
 f_{j,k}(z,q)=e^{-h_j\log z}\sum_{t=k}^K (-1)^{k+t}\binom{t}{k}(\log z)^{t-k} G^\phi_{\cY^0\circ(f^{(t)}\otimes\Id_X),\cY_{W_1,W_2}}(w_1,w_2;z,q)
\end{equation}
for suitable $f^{(t)}\in\Endo_V(W_1\tens W_2)$, and the preceding arguments show $G^\phi_{\cY^0\circ(f^{(t)}\otimes\Id_X),\cY_{W_1,W_2}}$ converges absolutely on $PA_\delta$ to an analytic function in both $q$ and $z$. Moreover, by restricting $\cY^0$ to direct summands of $X$ whose minimal conformal weights $h_{j'}$ for $1\leq j'\leq J'$ are non-congruent mod $\ZZ$, we have an expansion
\begin{align}\label{eqn:Gt}
 G^\phi_{\cY^0\circ(f^{(t)}\otimes\Id_X),\cY_{W_1,W_2}}(w_1,w_2;z,q) =\sum_{j'=1}^{J'}\sum_{k'=0}^K g_{j',k';t}(w_1,w_2;z,q) q^{h_{j'}-c/24}(\log q)^{k'}
\end{align}
where
\begin{equation*}
 g_{j',k';t}(w_1,w_2;z,q)=\sum_{n\geq 0}\bigg(\tr_{X_{[h_{j'}+n]}}^\phi \cY^0_{j'}\left(\cU(1)(f^{(t)}\circ\cY_{W_1,W_2})(w_1,e^{\log z})w_2,1\right)\frac{L(0)^{k'}}{k'!}\bigg) q^n;
\end{equation*}
the $\cY^0_{j'}$ are restrictions of $\cY^0$ to direct summands of $X$. By Theorem \ref{thm:diff_eqn}, each $g_{j',k';t}$ converges absolutely to an analytic function in $q$ and $z$ on $PA_\delta$, so by combining \eqref{eqn:fjk_genus_one} and \eqref{eqn:Gt} and rearranging the resulting absolutely-convergent series, we get
\begin{equation*}
 f_{j,k}(z,q)=\sum_{j'=1}^{J'}\sum_{k'=0}^K g_{j,k;j',k'}(w_1,w_2;z,q)\, q^{h_{j'}-c/24}(\log q)^{k'},
\end{equation*}
where
\begin{equation}\label{eqn:gjkj'k'}
 g_{j,k;j',k'}(w_1,w_2;z,q)=\sum_{n\geq 0}\bigg( \tr_{X_{[h_{j'}+n]}}^\phi\cY_{j'}^0\left(\cU(1)\cY_k^{(j)}(w_1,z)w_2,1\right)\frac{L(0)^{k'}}{k'!}\bigg) q^n
\end{equation}
is a power series in $q$, whose coefficients are Laurent series in $z$, that converges absolutely to a (single-valued) analytic function in $q$ and $z$ on the polyannulus \eqref{eqn:polyannulus}.

Since $g_{j,k;j',k'}$ is an analytic function on a polyannulus, it has a Laurent series expansion, whose coefficients can be computed by contour integrals, which is absolutely convergent as a double sum. This Laurent series must be the double sum rearrangement of the iterated sum in powers of $q$ and $z$ on the right side of \eqref{eqn:gjkj'k'}. Since this double sum converges absolutely, we can reverse the order of summation in \eqref{eqn:gjkj'k'}; the definitions then yield
\begin{align}\label{eqn:genus_1_alpha_series}
&G^\phi_{\cY^0,\cY_{W_1,W_2}}(w_1,w_2;z,q)\nonumber\\
&\qquad=\sum_{h\in\QQ}\sum_{k=0}^K \left. \left(\tr_X^\phi\cY^0(\cU(1)(w_1\tens_{h,k} w_2),1)q^{L(0)-c/24}\right) x^{-h-1}(\log x)^k\right\vert_{\log x=\log z}
\end{align}
for $(z,q)\in PA_\delta$ for any $0<\delta<1$.

We still need to determine the minimum possible radius of convergence for \eqref{eqn:genus_1_alpha_series} when $q=q_\tau$ for any fixed $\tau\in\mathbb{H}$. So far, we have shown convergence when $0<\vert z\vert<\min\left(1,-\frac{1}{2\pi}\ln\delta\right)$ for any $\delta$ such that $\vert q_\tau\vert<\delta<1$. But using the identity \eqref{eqn:fjk_genus_one} for each $f_{j,k}(z,q_\tau)$, viewed as a Laurent series in powers of $z$, the argument that concludes Proposition \ref{prop:geom_mod_assoc} shows that $f_{j,k}(z,q_\tau)$ converges absolutely on any punctured disk $0<\vert z\vert <R$ that does not intersect the potential singularity set $\ZZ+\ZZ\tau$ for two-point genus-one correlation functions. That is, the series \eqref{eqn:genus_1_alpha_series} converges for $0<\vert z\vert< R_\tau$. Finally, since $U_1^{prod}\cap U_1^{it}$ contains $(z,\tau)$ with arbitrarily small $\vert q_\tau\vert$ and $\vert z\vert$, \eqref{eqn:genus_1_alpha_calc} and   \eqref{eqn:genus_1_alpha_series} show that there is a non-empty open subset of $U_1^{prod}\cap U_1^{it}$ on which $G_{\cY^0,\cY_{W_1,W_2}}^\phi(w_1,w_2;z,\tau)$ as defined in the statement of the proposition agrees with $F_{\cY_1,\cY_2}^\phi(w_1,w_2;z,\tau)$. Thus $G_{\cY^0,\cY_{W_1,W_2}}^\phi$ analytically continues $F_{\cY_1,\cY_2}^\phi$ to $U_1^{it}$.
\end{proof}

We can now use the preceding proposition to find series expansions of $\overline{F}_{\cY_1,\cY_2}^\phi$ about its singularities in $\ZZ$. The next result amounts to \cite[Equation 4.10]{Hu-Verlinde} and \cite[Lemma 5.12]{CM}; in its statement and proof, we use the notation $U_1^{it}-(n,0)$, for $n\in\ZZ$, to represent the translation of $U_1^{it}$ by $(-n,0)$, that is, $U_1^{it}-(n,0)=\lbrace (z,\tau)\in\CC\times\mathbb{H}\mid (z+n,\tau)\in U_1^{it}\rbrace$:
\begin{prop}\label{prop:genus_1_alpha}
 For $n\in\ZZ$, the analytic extension of the principal branch $F_{\cY_1,\cY_2}^\phi$ of $\overline{F}_{\cY_1,\cY_2}^\phi$ to the simply connected domain $U_1^{it}-(n,0)$ is given by 
 \begin{align*}
&  G_{\cY^n,\cY_{W_1,W_2}}^\phi  (w_1,w_2;z-n,\tau)=\tr_X^\phi\cY^n\left(\cU(1)\cY_{W_1,W_2}(w_1,e^{\log z})e^{2\pi i nL(0)}w_2,1\right)q_\tau^{L(0)-c/24}\nonumber\\
  &\qquad:=\sum_{h\in\QQ}\sum_{k=0}^K \left.\left( \tr_X^\phi\cY^n\left(\cU(1)(w_1\tens_{h,k} e^{2\pi i n L(0)}w_2),1\right)q_\tau^{L(0)-c/24}\right) x^{-h-1}(\log x)^k\right\vert_{\log x=\log z}.
 \end{align*}
for $(z,\tau)\in U_1^{it}$, where
\begin{equation*}
 \cY^n=f_1\circ(\Id_{W_1}\tens f_2)\circ(\Id_{W_1}\tens(\cR_{W_2,X}^2)^n)\circ\cA_{W_1,W_2,X}^{-1}\circ\cY_{W_1\tens W_2,X}.
\end{equation*}
\end{prop}
\begin{proof}
Suppose $(z,\tau)\in U_1^{prod}\cap U_1^{it}$; then $(z-n,\tau)$ is also in $U_1^{prod}$ and
 \begin{align*}
  F_{\cY_1,\cY_2}^\phi & (w_1,w_2;z-n,\tau)=\tr_X^\phi\cY_1(\cU(q_{z-n})w_1,q_{z-n})\cY_2(\cU(1)w_2,1)q_\tau^{L(0)-c/24}\nonumber\\
  & =\tr_X^\phi e^{-2\pi i nL(0)}\cY_1(\cU(q_z)w_1,q_z)e^{2\pi i nL(0)}\cY_2(\cU(1)w_2,1)q_\tau^{L(0)-c/24}\nonumber\\
  & =\tr_X^\phi \cY_1(\cU(q_z)w_1,q_z)\cY_2(e^{2\pi i n L(0)}\cU(1)w_2,e^{2\pi i n})q_\tau^{L(0)-c/24}\nonumber\\
  & =\tr_X^\phi \cY_1(\cU(q_{z})w_1,q_{z})(f_2\circ(\cR_{W_2,X}^2)^n\circ\cY_{W_2,X})(\cU(1)e^{2\pi inL(0)}w_2,1)q_\tau^{L(0)-c/24},
 \end{align*}
where the last step uses the characterization of the monodromy isomorphisms for $V$-modules and the fact that $\cU(1)$ commutes with the $V$-module homomorphism $e^{2\pi i L(0)}$. Now Proposition \ref{prop:genus_1_assoc} says that 
\begin{align*}
 F_{\cY_1,\cY_2}^\phi & (w_1,w_2;z-n,\tau)  =\tr_X^\phi\cY^n\left(\cU(1)\cY_{W_1,W_2}(w_1,e^{\log z})e^{2\pi i nL(0)}w_2,1\right)q_\tau^{L(0)-c/24}
\end{align*}
for $(z,\tau)\in U_1^{prod}\cap U_1^{it}$, where
\begin{equation*}
 \cY^n=f_1\circ(\Id_{W_1}\tens f_2)\circ(\Id_{W_1}\tens(\cR_{W_2,X}^2)^n)\circ\cA_{W_1,W_2,X}^{-1}\circ\cY_{W_1\tens W_2,X}
\end{equation*}
is an intertwining operator of type $\binom{X}{W_1\tens W_2\,X}$. In other words, $F_{\cY_1,\cY_2}^\phi(w_1,w_2;z-n,\tau)$ agrees with the series $G_{\cY^n,\cY_{W_1,W_2}}^\phi(w_1,w_2;z-n,\tau)$ defined in the statement of the proposition when $(z,\tau)\in U_1^{prod}\cap U_1^{it}$. Then since the pseudo-trace of the iterate of $\cY^n$ and $\cY_{W_1,W_2}$ (expanded as a series in $z$) is analytic on $U_1^{it}$ by Proposition \ref{prop:genus_1_assoc}, $G_{\cY^n,\cY_{W_1,W_2}}^\phi$ is analytic on $U_1^{it}-(n,0)$. Since moreover $G_{\cY^n,\cY_{W_1,W_2}}^\phi$ agrees with $F_{\cY_1,\cY_2}^\phi$ on a non-empty open set, it is the unique analytic extension of $F_{\cY_1,\cY_2}^\phi$ to $U_1^{it}-(n,0)$.
\end{proof}

We next consider the series expansion of a two-point genus-one correlation function at the singularity $z=-\tau$. Since this will be more complicated, we will only consider this problem for the particular $\cY_1$, $\cY_2$ that we shall need, and we will consider only traces rather than more general pseudo-traces. Thus we fix $V$-modules $W'$, $W$, $X$ and a $V$-module homomorphism $e_W: W'\tens W\rightarrow V$. For example, $W'$ could be the contragredient of $W$ and $e_W$ could be an evaluation map as in Section \ref{sec:contragredient}, although this assumption is not necessary at the moment. In any case we will as in Section \ref{sec:contragredient} use the notation $\cE_W=e_W\circ\cY_{W',W}$. We now define the intertwining operator
\begin{align*}
  \cY_{W',W\tens X}^X & =l_X\circ(e_W\tens\Id_X)\circ\cA_{W',W,X}\circ\cY_{W',W\tens X}
\end{align*}
of type $\binom{X}{W'\,W\tens X}$. Then similar to \cite[Definition 5.6]{CM}, we define
\begin{align}\label{eqn:ZWX}
 Z_{W,X}(w',w; z,\tau)&=\tr_X \cY_{W',W\tens X}^X(\cU(q_{z})w',q_{z})\cY_{W,X}(\cU(1)w,1) q_\tau^{L(0)-c/24},
\end{align}
which is the principal branch of a multivalued analytic function $\overline{Z}_{W,X}$. The definition of $\cY_{W',W\tens X}^X$ combined with Proposition \ref{prop:genus_1_assoc} quickly implies:
\begin{prop}\label{prop:Z_Z_til}
 The analytic continuation of $Z_{W,X}(w',w; z,\tau)$ to $U_1^{it}$ is
\begin{align*}
 \tr_X  Y_X & \left(\cU(1)\cE_W(w',e^{\log z})w,1\right)q_\tau^{L(0)-c/24}\nonumber\\
 &:=\sum_{h\in\QQ}\sum_{k=0}^K \left.\mathrm{ch}_X\left(e_W(w'\tens_{h,k} w);\tau\right)x^{-h-1}(\log x)^k\right\vert_{\log x=\log z}.
\end{align*}
 \end{prop}
 
 The next proposition, which is more or less \cite[Equation 4.11]{Hu-Verlinde}, or \cite[Lemma 5.13]{CM}, describes the analytic extension of $Z_{W,X}$ to a region about the singularity $z=-\tau$:
 
 \begin{prop}\label{prop:genus_1_beta}
  The series 
  \begin{equation*}
   Z_{W,X}(w',w;-z-\tau,\tau)=\tr_X\cY_{W',W\tens X}^X(\cU(q_{-z-\tau})w',q_{-z-\tau})\cY_{W,X}(\cU(1)w,1)q_\tau^{L(0)-c/24}
  \end{equation*}
converges absolutely to an analytic function for $(z,\tau)\in U_1^{prod}$. The analytic continuation of this function to $(z,\tau)\in U_1^{it}$ is given by
\begin{align*}
&\tr_{W\tens X}  \cY_{W\tens W',W\tens X}^{W\tens X}\left(\cU(1)\cY_{W,W'}(w,e^{\log z})w',1\right)q_\tau^{L(0)-c/24}\nonumber\\ &\quad:=\sum_{h\in\QQ}
 \sum_{k=0}^K\left.\left(\tr_{W\tens X} \cY_{W\tens W',W\tens X}^{W\tens X}\left(\cU(1)(w\tens_{h,k} w'),1\right)q_\tau^{L(0)-c/24}\right)x^{-h-1}(\log x)^k\right\vert_{\log x=\log z},
\end{align*}
where the intertwining operator $\cY_{W\tens W',W\tens X}^{W\tens X}$ of type $\binom{W\tens X}{W\tens W'\,W\tens X}$ is
\begin{equation*}
 (r_W\tens\Id_X)\circ((\Id_W\tens e_W)\tens\Id_X)\circ(\cA_{W,W',W}^{-1}\tens\Id_X)\circ\cA_{W\tens W',W,X}\circ\cY_{W\tens W',W\tens X}.
\end{equation*}
 \end{prop}
 \begin{proof}
 By Lemma \ref{lem:prin_branch_domain}, $Z_{W,X}(w',w;-z-\tau,\tau)$ converges when $(-z-\tau,\tau)\in U_1^{prod}$, that is,
 \begin{equation*}
  0<\im(z+\tau)<\im\,\tau.
 \end{equation*}
However, the two inequalities here are equivalent to
\begin{equation*}
 \im(-z)<\im\,\tau,\qquad \im(z)< 0,
\end{equation*}
which means that $(-z-\tau,\tau)\in U_1^{prod}$ if and only if $(z,\tau)\in U_1^{prod}$. So $Z_{W,X}(w',w;-z-\tau,\tau)$ converges absolutely to an analytic function on $U_1^{prod}$.
 
 To determine the analytic extension to $U_1^{it}$, we deduce the following equality of double series indexed by the weights of $X$ and $W\tens X$, using the $L(0)$-conjugation formula for intertwining operators and the cyclic symmetry of traces:
  \begin{align}\label{eqn:trace_prop_app}
   \tr_{X}  &\,\cY_{W',W\tens X}^X(\cU(q_{-z-\tau})w',q_{-z-\tau})\cY_{W,X}(\cU(1)w,1)q_\tau^{L(0)-c/24}\nonumber\\
   & =\sum_{h,m\in\QQ} \tr_{X_{[h]}} q_{z+\tau}^{-L(0)}\pi_h\cY_{W',W\tens X}^X(\cU(1)w',1)q_{z+\tau}^{L(0)}\pi_m\cY_{W,X}(\cU(1)w,1)q_\tau^{L(0)-c/24}\nonumber\\
  & =\sum_{h,m\in\QQ} \tr_{X_{[h]}}q_{z+\tau}^{-L(0)}\pi_h\cY_{W',W\tens X}^X(\cU(1)w',1)q_\tau^{L(0)-c/24}\pi_m\cY_{W,X}(\cU(q_{z})w,q_{z})q_{z+\tau}^{L(0)}\nonumber\\
  & =\sum_{h,m\in\QQ} \tr_{(W\tens X)_{[m]}} \pi_m\cY_{W,X}(\cU(q_{z})w,q_{z})\pi_h\cY_{W',W\tens X}^X(\cU(1)w',1)q_\tau^{L(0)-c/24}\nonumber\\
   & =\tr_{W\tens X} \cY_{W,X}(\cU(q_{z})w,q_{z})\cY_{W',W\tens X}^X(\cU(1)w',1)q_\tau^{L(0)-c/24}
  \end{align}
for $(z,\tau)\in U_1^{prod}$. By Proposition \ref{prop:genus_1_assoc}, the analytic extension of the right side trace to $U_1^{it}$ is
\begin{equation*}
 \tr_{W\tens X} \cY^0\left(\cU(1)\cY_{W,W'}(w, e^{\log z})w',1\right)q_\tau^{L(0)-c/24},
\end{equation*}
where $\cY^0=F\circ\cY_{W\tens W', W\tens X}$
with $F: (W\tens W')\tens(W\tens X)\rightarrow W\tens X$ given by the composition
\begin{align*}
 (W & \tens W')\tens(W\tens X) \xrightarrow{\cA_{W,W',W\tens X}^{-1}} W\tens(W'\tens(W\tens X))\nonumber\\
 &\xrightarrow{\Id_W\tens\cA_{W',W,X}} W\tens((W'\tens W)\tens X)\xrightarrow{\Id_W\tens(e_W\tens\Id_X)} W\tens(V\tens X)\xrightarrow{\Id_W\tens l_X} W\tens X.
\end{align*}
By the triangle axiom, naturality of the associativity isomorphisms, and the pentagon axiom, $F$ is the same composition as
\begin{align*}
 (W & \tens W')\tens(W\tens X)\xrightarrow{\cA_{W\tens W',W,X}} ((W\tens W')\tens W)\tens X\nonumber\\
 &\xrightarrow{\cA_{W,W',W}^{-1}\tens\Id_X} (W\tens(W'\tens W))\tens X\xrightarrow{(\Id_W\tens e_W)\tens\Id_X} (W\tens V)\tens X\xrightarrow{r_W\tens\Id_X} W\tens X.
\end{align*}
Thus $F\circ\cY_{W\tens W',W\tens X}$ is the intertwining operator $\cY_{W\tens W',W\tens X}^{W\tens X}$ in the statement of the proposition, so that
\begin{align*}
 \tr_{W\tens X} \cY_{W,X} &\, (\cU(q_{z})w,q_{z})\cY_{W',W\tens X}^X(\cU(1)w',1)q_\tau^{L(0)-c/24}\nonumber\\
 &=\tr_{W\tens X} \cY_{W\tens W',W\tens X}^{W\tens X}\left(\cU(1)\cY_{W,W'}(w,e^{\log z})w',1\right)q_\tau^{L(0)-c/24}
\end{align*}
for $(z,\tau)\in U_1^{prod}\cap U_1^{it}$.
 \end{proof}

 \begin{rem}\label{rem:pt_obstruction}
  We conjecture that a result similar to Proposition \ref{prop:genus_1_beta} also holds when $Z_{W,X}$ is replaced by an analogous pseudo-trace on $X$ associated to a symmetric linear function $\phi$ on a finite-dimensional algebra $P$ acting on $X$ by $V$-module endomorphisms, with $X$ projective as a $P$-module. However, the above proof does not immediately generalize, because the step in the calculation \eqref{eqn:trace_prop_app} that uses the cyclic symmetry of traces is only valid for pseudo-traces if $W\tens X$ is a projective $P$-module. While $P$ does act naturally on $W\tens X$ by $P$-module endomorphisms, it does not seem obvious that $W\tens X$ is a projective $P$-module. This is in fact the only obstruction at the moment to using the methods of this paper to prove that the module category of any strongly finite vertex operator algebra is rigid.
  
From Miyamoto's original construction of pseudo-trace functions in \cite[Section 3]{Mi2}, or alternatively from the results in Proposition 5.2 and Remark 5.6 of \cite{GR}, it appears that the necessary pseudo-traces $\mathrm{Tr}^\phi_{W\tens X}$ would indeed be well defined if the functor $W\tens\bullet$ were exact (and not just right exact). Exactness of $W\tens\bullet$ would hold if $W$ were a rigid $V$-module. But unfortunately, we will need Proposition \ref{prop:genus_1_beta} precisely to prove rigidity of $W$ in Section \ref{sec:main_thms}, so at the moment we can only prove rigidity in situations where no pseudo-traces appear.
 \end{rem}

To conclude this section, we discuss the $S$-transformation of one- and two-point genus-one correlation functions. For a $V$-module $W$ and a function $F: W\times\mathbb{H}\rightarrow\CC$
that is linear in $w\in W$ and holomorphic in $\tau\in\mathbb{H}$, we define
\begin{equation*}
 S(F)(w;\tau)=F\bigg(\left(-\frac{1}{\tau}\right)^{L(0)} w;-\frac{1}{\tau}\bigg),
\end{equation*}
where $\left(-\frac{1}{\tau}\right)^{L(0)}=e^{\log(-\frac{1}{\tau}) L(0)}$ is defined using the principal branch of logarithm. Note that $S(F)$ is also linear in $w$ and holomorphic in $\tau$. It is easy to see that the $S$-transformation $F\mapsto S(F)$ is invertible with inverse defined by
\begin{equation*}
 S^{-1}(F)(w;\tau) =F\bigg(\tau^{-L(0)}w;-\frac{1}{\tau}\bigg),
\end{equation*}
where $\tau^{-L(0)}=e^{-(\log\tau)L(0)}$ is defined using the principal branch of logarithm. We will in particular apply the $S$-transformation to characters of $V$-modules: for any $V$-module $W$, \cite[Theorem 5.5]{Mi2} shows there are pseudo-trace functions $\tr_X^\phi$ such that
\begin{equation}\label{eqn:S_ch_V}
 \mathrm{ch}_W\bigg(\left(-\frac{1}{\tau}\right)^{L(0)}v;-\frac{1}{\tau}\bigg) =\sum_{X,\phi}  \tr_X^\phi Y_X(\cU(1)v,1)q_\tau^{L(0)-c/24}
\end{equation}
for all $v\in V$ and $\tau\in\mathbb{H}$. Moreover, if the Zhu algebra $A(V)$ is semisimple, the results in \cite{Zh} show that $S(\mathrm{ch}_W)$ is in fact a linear combination of characters $\mathrm{ch}_X$, that is, the sum in \eqref{eqn:S_ch_V} runs over the distinct irreducible $V$-modules and $\tr_X^\phi=s_{W,X}\tr_X$ for certain $s_{W,X}\in\CC$.

\begin{rem}
 As mentioned in \cite[Remark 3.3.5]{AN} and discussed in the proof of \cite[Theorem 5.1]{CM}, the proof of \cite[Theorem 5.5]{Mi2} implicitly assumes that $V$ has no finite-dimensional irreducible modules. This assumption always holds if $V$ is simple and infinite dimensional since a finite-dimensional irreducible module is necessarily a simple quotient of $V$ itself. If $V$ is simple and finite dimensional, then $V=\mathbb{C}\mathbf{1}$ and \eqref{eqn:S_ch_V} is obvious.
\end{rem}

For a two-point function associated to intertwining operators $\cY_1$ and $\cY_2$ of types $\binom{X}{W_1\,M}$ and $\binom{M}{W_2\,X}$, we define $S(\overline{F}_{\cY_1,\cY_2}^\phi)$ to be the multivalued analytic function in $z$ and $\tau$ with principal branch
\begin{align*}
 S(F_{\cY_1,\cY_2}^\phi)&(w_1,w_2;z,\tau)\nonumber\\
 &=\tr_X^\phi\cY_1\bigg(\cU(q_{-z/\tau})\left(-\frac{1}{\tau}\right)^{L(0)}w_1,q_{-z/\tau}\bigg)\cY_2\bigg(\cU(1)\left(-\frac{1}{\tau}\right)^{L(0)}w_2,1\bigg)q_{-1/\tau}^{L(0)-c/24}
\end{align*}
defined on the region  
\begin{equation*}
 0<\im\frac{z}{\tau}<\im\left(-\frac{1}{\tau}\right).
\end{equation*}
Note that $(z,\tau)$ is in the domain of the principal branch for both $\overline{F}_{\cY_1,\cY_2}^\phi$ and $S(\overline{F}_{\cY_1,\cY_2}^\phi)$ if and only if $-z$ is in the open parallelogram $P_\tau$ determined by $1$ and $\tau$, that is,
\begin{equation*}
 -z=a+b\tau
\end{equation*}
for $a,b\in\RR$ with $0<a,b<1$.

\section{The main theorems}\label{sec:main_thms}

In this section, we assume that $V$ is an $\NN$-graded simple $C_2$-cofinite vertex operator algebra with grading-restricted generalized module category $\cC$.

\subsection{Factorizability from rigidity}\label{subsec:factorizability}

In this subsection, we assume $V$ is self-contragredient; we prove that if $\cC$ is rigid, then it is factorizable, that is, the braiding on $\cC$ is nondegenerate. The idea is to show that for a rigid $V$-module $W$, the $S$-transformation of $\mathrm{ch}_W$ contains information about the monodromy of $W$ with other objects of $\cC$. To show this, we will use the method developed by Moore-Seiberg \cite{MS} and Huang \cite{Hu-Verlinde} for deriving identities of series expansions of two-point genus-one correlation functions. 

For $V$-modules $W$ and $U$ (not necessarily simple or rigid), we use the contragredient $W'$ and evaluation $e_W:W'\tens W\rightarrow V$ of Section \ref{sec:contragredient} to define the function $Z_{W,U}$ of \eqref{eqn:ZWX}. We start by relating the $S$-transformation of $Z_{W,U}$ to $S(\mathrm{ch}_U)$:
\begin{lem}\label{lem:S-trans}
Let $W$ and $U$ be $V$-modules, and let
\begin{equation*}
 S(\mathrm{ch}_U)(v;\tau)=\sum_{X,\phi} \tr_{X}^\phi Y_{X}(\cU(1)v,1)q_\tau^{L(0)-c/24}
\end{equation*}
for all $v\in V$ and $\tau\in\mathbb{H}$. Then
 \begin{align}\label{eqn:2pt_S-trans_id}
  Z_{W,U} & \bigg(\left(-\frac{1}{\tau}\right)^{L(0)}w',\left(-\frac{1}{\tau}\right)^{L(0)}w;-\frac{z}{\tau},-\frac{1}{\tau}\bigg)\nonumber\\
  &\qquad\qquad=\sum_{X, \phi} \tr_X^\phi \cY^X_{W',W\tens X}(\cU(q_z)w',q_z)\cY_{W,X}(\cU(1)w,1)q_\tau^{L(0)-c/24}
 \end{align}
 for all $w\in W, w'\in W',\tau\in\mathbb{H}$, and $-z\in P_\tau$.
\end{lem}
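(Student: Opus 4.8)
The plan is to reduce the desired identity \eqref{eqn:2pt_S-trans_id} to the $S$-transformation formula for characters by using the series expansion of the two-point function $\overline{Z}_{W,U}$ about its singularity at $z=0$. The key observation is Proposition \ref{prop:Z_Z_til}, which expresses the analytic continuation of $Z_{W,U}$ to $U_1^{it}$ in terms of characters of $U$: namely, the continuation equals
\begin{equation*}
 \sum_{h\in\QQ}\sum_{k=0}^K \left.\mathrm{ch}_U\left(e_W(w'\tens_{h,k} w);\tau\right)x^{-h-1}(\log x)^k\right\vert_{\log x=\log z}.
\end{equation*}
So first I would apply the $S$-transformation $\tau\mapsto-\frac{1}{\tau}$ (together with the prescribed $L(0)$-rescaling of arguments and the change of variable $z\mapsto -z/\tau$) to both sides of this expansion. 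On the left this produces precisely the left side of \eqref{eqn:2pt_S-trans_id}, at least on the overlap region where $-z\in P_\tau$, which is exactly where both principal branches are valid.

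On the right, since $\cU(1)$, the homomorphism $e_W$, and the vectors $w'\tens_{h,k}w$ are all independent of $\tau$, the $S$-transformation acts only through $\mathrm{ch}_U$, producing $S(\mathrm{ch}_U)(e_W(w'\tens_{h,k}w);\tau)$; one must be careful here that the tensor-product coefficient vectors $w'\tens_{h,k}w$ have definite conformal weight (or a finite sum of such, with $L(0)$ possibly non-semisimple), so that the $L(0)$-rescaling in the definition of $S$ interacts correctly with the $x^{-h-1}$ factors — this is the same bookkeeping already handled in the expansions of Section \ref{sec:expansions}. Then by hypothesis $S(\mathrm{ch}_U)(v;\tau)=\sum_{X,\phi}\tr_X^\phi Y_X(\cU(1)v,1)q_\tau^{L(0)-c/24}$, so the right side becomes
\begin{equation*}
 \sum_{X,\phi}\sum_{h\in\QQ}\sum_{k=0}^K \left.\left(\tr_X^\phi Y_X(\cU(1)e_W(w'\tens_{h,k}w),1)q_\tau^{L(0)-c/24}\right)x^{-h-1}(\log x)^k\right\vert_{\log x=\log z}.
\end{equation*}
Now for each fixed $(X,\phi)$, the inner double sum over $h$ and $k$ is, by Proposition \ref{prop:Z_Z_til} applied to the intertwining operator $\cY^X_{W',W\tens X}$ (with the given $e_W$), exactly the series expansion about $z=0$ of $\tr_X^\phi \cY^X_{W',W\tens X}(\cU(q_z)w',q_z)\cY_{W,X}(\cU(1)w,1)q_\tau^{L(0)-c/24}$ — this is where the pseudo-trace version of Proposition \ref{prop:Z_Z_til} (which follows by the same argument via Proposition \ref{prop:genus_1_assoc}) is used. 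Summing over $X$ and $\phi$ yields the right side of \eqref{eqn:2pt_S-trans_id} as claimed.

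The main obstacle, and the step requiring the most care, is the manipulation of multivalued functions and the justification that the various series rearrangements are legitimate. Concretely: one must verify that the formal series identity obtained above actually holds as an equality of analytic functions on the region $\{(z,\tau): -z\in P_\tau\}$, which means checking that each side converges there and that the rearrangement of the triple sum (over $(X,\phi)$, over $h$, over $k$) is absolutely convergent — this requires knowing that $S(\mathrm{ch}_U)$ really is given by the stated finite sum of pseudo-trace functions (Miyamoto's theorem, \eqref{eqn:S_ch_V}) and that each summand's two-point function converges on $U_1^{it}$, which is Lemma \ref{lem:prin_branch_domain} together with Proposition \ref{prop:genus_1_assoc}/\ref{prop:Z_Z_til}. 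One also has to confirm that the domain on which both the principal branch of $\overline{Z}_{W,U}$ (after $S$-transformation) and the expansion in $U_1^{it}$ are simultaneously valid is nonempty and has an accumulation point, so that the identity of analytic functions propagates; this is precisely the overlap $U_1^{prod}\cap(\text{image of }U_1^{it}\text{ under }S)$, nonempty because it contains $(z,\tau)$ with $-z$ in the parallelogram $P_\tau$ and $|z|$ small. Once these analytic-continuation and convergence points are nailed down, the identity is essentially a matter of matching the two series expansions term by term.
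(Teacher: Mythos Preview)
Your proposal is correct and follows essentially the same route as the paper's proof: both expand $Z_{W,U}$ about $z=0$ via Proposition \ref{prop:Z_Z_til}, apply the $S$-transformation to the resulting characters of $U$, and then reassemble each summand using Proposition \ref{prop:genus_1_assoc} before extending by analyticity to all $-z\in P_\tau$. The one step the paper carries out explicitly that you leave as ``bookkeeping'' is the branch identity $\log(-z/\tau)-\log(-1/\tau)=\log z$ for $-z\in P_\tau$, which is what makes the $L(0)$-rescaling in the $S$-transformation cancel the change of variable in the $x^{-h-1}(\log x)^k$ factors.
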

\begin{proof}
The pseudo-trace functions on the right side of \eqref{eqn:2pt_S-trans_id} are well defined by \cite[Proposition 1.31]{Fi}, that is, the intertwining operators $\cY_{W,X}$ and $\cY_{W',W\tens X}^X$ commute with the actions of the associative algebra $P$ on $X$ and $W\tens X$. Now fix $\tau\in\mathbb{H}$ and consider the non-empty open set of $z\in\CC$ satisfying the conditions 
 \begin{equation}\label{eqn:S-trans_cond}
  -z\in P_\tau,\quad 0<\vert z\vert<\min(R_\tau, \vert\tau\vert R_{-1/\tau}),\quad \arg z,\,\arg\left(-\frac{z}{\tau}\right)\neq\pi.
 \end{equation}
 These conditions imply
 \begin{equation*}
  (z,\tau), \left(-\frac{z}{\tau},-\frac{1}{\tau}\right)\in U_1^{prod}\cap U_1^{it},
 \end{equation*}
so we get the following for such $z$ from Proposition \ref{prop:Z_Z_til} and the $L(0)$-conjugation formula:
\begin{align}\label{eqn:S-trans_calc}
 Z_{W,U} & \bigg(\left(-\frac{1}{\tau}\right)^{L(0)}w',\left(-\frac{1}{\tau}\right)^{L(0)}w;-\frac{z}{\tau},-\frac{1}{\tau}\bigg)\nonumber\\
 &= \tr_U Y_U\bigg(\cU(1)\cE_W\bigg(\bigg(-\frac{1}{\tau}\bigg)^{L(0)}w', e^{\log\left(-\frac{z}{\tau}\right)}\bigg)\bigg(-\frac{1}{\tau}\bigg)^{L(0)}w,1\bigg)q_{-1/\tau}^{L(0)-c/24}\nonumber\\
&= \tr_U Y_U\bigg(\cU(1)\bigg(-\frac{1}{\tau}\bigg)^{L(0)}\cE_W\bigg(w',e^{\log\left(-\frac{z}{\tau}\right)-\log\left(-\frac{1}{\tau}\right)}\bigg)w,1\bigg)q_{-1/\tau}^{L(0)-c/24}.
\end{align}
Now since $-z\in P_\tau$, so that in particular $-z\in\mathbb{H}$, we have $-\pi<\arg z<0$. Thus because $0<\arg(-\frac{1}{\tau})<\pi$,
\begin{align*}
 \log\left(-\frac{z}{\tau}\right) & =\ln\left\vert-\frac{z}{\tau}\right\vert+i\left[\arg z+\arg\left(-\frac{1}{\tau}\right)\right]=\log z+\log\left(-\frac{1}{\tau}\right).
\end{align*}
Consequently, the right side of \eqref{eqn:S-trans_calc} is
\begin{align*}
 \tr_U &\, Y_U\bigg(\cU(1)\bigg(-\frac{1}{\tau}\bigg)^{L(0)}\cE_W\left(w',e^{\log z}\right)w,1\bigg)q_{-1/\tau}^{L(0)-c/24}\nonumber\\
 & =\sum_{h\in\QQ}\sum_{k=0}^K\left. \mathrm{ch}_U\bigg(\left(-\frac{1}{\tau}\right)^{L(0)} e_W(w'\tens_{h,k} w);-\frac{1}{\tau}\bigg)x^{-h-1}(\log x)^k\right\vert_{\log x=\log z}\nonumber\\
 & =\sum_{h\in\QQ}\sum_{k=0}^K \sum_{X, \phi} \left. \left(\tr_X^\phi Y_X\left(\cU(1)e_W(w'\tens_{h,k} w),1\right)q_\tau^{L(0)-c/24}\right) x^{-h-1}(\log x)^k\right\vert_{\log x=\log z}\nonumber\\
 & =\sum_{X, \phi} \tr_X^\phi Y_X\left(\cU(1)\cE_W(w',e^{\log z})w,1\right) q_\tau^{L(0)-c/24}\nonumber\\
 & =\sum_{X, \phi} \tr_X^\phi \cY^X_{W',W\tens X}(\cU(q_z)w',q_z)\cY_{W,X}(\cU(1)w,1)q_\tau^{L(0)-c/24}
\end{align*}
for $z$ as in \eqref{eqn:S-trans_cond}, where the last step comes from Proposition \ref{prop:genus_1_assoc}. Thus for any $\tau\in\mathbb{H}$, the two sides of \eqref{eqn:2pt_S-trans_id} are equal for $z$ in a non-empty open set. As both sides are analytic in $z$, they are equal for all $(z,\tau)$ for which both are defined, that is, for $(z,\tau)$ such that $-z\in P_\tau$.
\end{proof}

Now since $V$ is self-contragredient, the dual of any rigid $V$-module $W$ is necessarily $W'$ with evaluation $e_W$ and some coevaluation $i_W: V\rightarrow W\tens W'$. Then for any $V$-module $X$, the open Hopf link endomorphism $h_{W,X}\in\Endo_V(X)$ associated to $W$ is the composition
\begin{align*}
 X\xrightarrow{l_X^{-1}} & V\tens X\xrightarrow{i_W\tens\Id_X} (W\tens W')\tens X\xrightarrow{(\theta_W\tens\Id_{W'})\tens\Id_X} (W\tens W')\tens X\nonumber\\
 &\xrightarrow{\cR_{W,W'}\tens\Id_X} (W'\tens W)\tens X\xrightarrow{\cA_{W',W,X}^{-1}} W'\tens(W\tens X)\xrightarrow{\Id_W\tens\cR^2_{W,X}} W'\tens(W\tens X)\nonumber\\
 &\xrightarrow{\cA_{W',W,X}} (W'\tens W)\tens X \xrightarrow{e_W\tens\Id_X} V\tens X\xrightarrow{l_X} X,
\end{align*}
where $\theta_W=e^{2\pi i L(0)}$. Graphically, 
\begin{equation}\label{eqn:open_Hopf_link}
 h_{W,X}=\begin{matrix}
    \begin{tikzpicture}[scale = 1, baseline = {(current bounding box.center)}, line width=0.75pt]
     \draw[white, double=black, line width = 3pt ] (3,0) -- (3,3) .. controls (3,3.5) and (2,3.25) .. (2,3.75);
     \draw (2,1.5) -- (2,2.25) .. controls (2,2.75) and (1,2.5) .. (1,3);
     \draw[white, double=black, line width = 3pt ]  (1,3) -- (1,4.5) ..controls (1,5.2) and (2,5.2) .. (2,4.5) ..controls (2,4) and (3,4.25) .. (3,3.75) .. controls (3,3.25) and (2,3.5) .. (2,3) .. controls (2,2.5) and (1,2.75) .. (1,2.25) -- (1,1.5) .. controls (1,.8) and (2,.8) .. (2,1.5);
     \draw[white, double=black, line width = 3pt ] (2,3.75) .. controls (2,4.25) and (3,4) .. (3,4.5) --(3,6);
     \draw[dashed] (3,.4) .. controls (1.5,.5) .. (1.5,1);
     \draw[dashed] (1.5, 5) .. controls (1.5,5.5) .. (3,5.6);
     \node at (1,1.75) [draw,minimum width=10pt,minimum height=10pt,thick, fill=white] {$\theta_W$};
     \node at (3,-.25) {$X$};
     \node at (.9,.9) {$W$};
     \node at (2.1,.9) {$W'$};
     \node at (3,6.25) {$X$};
    \end{tikzpicture}
   \end{matrix}
\end{equation}
Note that $h_{W,X}$ is central in $\Endo_V(X)$ by naturality of the unit, associativity, and braiding isomorphisms in $\cC$; that is, $h_{W,\bullet}$ is a natural automorphism of the identity functor on $\cC$.

We now apply the Moore-Seiberg-Huang method to relate the $S$-transformation of $\mathrm{ch}_W$ to $S(\mathrm{ch}_V)$ when $W$ is rigid:
\begin{thm}\label{thm:S_ch_W}
 Suppose $W$ is a rigid $V$-module and
 \begin{equation*}
  S(\mathrm{ch}_V)(v;\tau)=\sum_{X,\phi} \tr_X^\phi Y_X(\cU(1)v,1)q_\tau^{L(0)-c/24}
 \end{equation*}
for $v\in V$ and $\tau\in\mathbb{H}$. Then
 \begin{equation}\label{eqn:S_ch_W}
  S(\mathrm{ch}_W)(v;\tau)=\sum_{X,\phi} \tr_X^\phi \,(h_{W,X}\circ Y_X)(\cU(1)v,1)q_\tau^{L(0)-c/24}
 \end{equation}
for $v\in V$ and $\tau\in\mathbb{H}$.
\end{thm}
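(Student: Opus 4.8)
The plan is to apply the Moore-Seiberg-Huang method to the two-point function $Z_{W,W'}$ built from the evaluation $e_W : W''\tens W' \to V$ (equivalently, after identifying $W''\cong W$ via $\delta_W$, from $\til e_W : W\tens W'\to V$), and to use the $S$-transformation to relate its expansion about $z=0$ to its expansion about the nearby singularity $z=-\tau$. The key point is that $Z_{W,W'}$ specializes, via Proposition \ref{prop:Z_Z_til}, to a sum of characters $\mathrm{ch}_X(e_W(\cdot))$ on one side, and on the other side, via Proposition \ref{prop:genus_1_beta}, to a trace over $W'\tens X$ of an iterate involving $\cY_{W',W}$; after applying the $S$-transformation and using Lemma \ref{lem:S-trans} to rewrite $S(Z_{W,W'})$, the open Hopf link $h_{W,X}$ should emerge from the composition of braidings and the coevaluation $i_W$ that appears once $W$ is rigid.

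Concretely, first I would take $v\in V$ and write $\mathrm{ch}_W(v;\tau)$ as a specialization of $Z_{W,W'}$: using $V\cong V'$ via $\varphi$ and the fact that $\mathcal{E}_{W'}$ (essentially $\til\cE_W$) evaluated at $w'\in W'$, $w\in W$ recovers $Y_W$-type traces, one identifies $\mathrm{ch}_W(v;\tau)$ with a coefficient extraction from $Z_{W',X}$ or directly with $\tr_W Y_W(\cU(1)v,1)q_\tau^{L(0)-c/24}$. Second, I would apply $S$ to both sides and feed the right-hand side into Lemma \ref{lem:S-trans}, with $U = W$ there replaced appropriately, so that $S(\mathrm{ch}_W)$ becomes a sum over $(X,\phi)$ appearing in $S(\mathrm{ch}_V)$ of a two-point pseudo-trace $\tr_X^\phi \cY^X_{W',W\tens X}(\cU(q_z)\cdot)\cY_{W,X}(\cU(1)\cdot)q_\tau^{L(0)-c/24}$ evaluated at suitable vectors; the precise vectors are chosen so that the $z$-dependence collapses, which is legitimate because both sides are analytic in $z$ on the parallelogram $-z\in P_\tau$. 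Third — and this is the categorical heart — I would take the $z\to 0$ and $z\to -\tau$ expansions of this two-point function using Propositions \ref{prop:genus_1_alpha} and \ref{prop:genus_1_beta}, note that the two analytic continuations must agree on the overlap of their simply-connected domains (this is exactly the ``single-valued branch'' bookkeeping that Section \ref{sec:expansions} was set up to handle), and extract the constant-in-$z$ terms. Matching these, together with the graphical identity that the composite of $\cR^2_{W,X}$, the braiding $\cR_{W,W'}$, the twist $\theta_W$, the coevaluation $i_W$ and the evaluation $e_W$ is precisely the open Hopf link $h_{W,X}$ of \eqref{eqn:open_Hopf_link}, yields \eqref{eqn:S_ch_W}.

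The main obstacle I expect is the third step: carefully tracking which single-valued branch of the multivalued two-point function one lands on after the $S$-transformation and after each analytic continuation around $z=0$ versus $z=-\tau$, and making sure the branch-of-logarithm conventions ($\log z$ versus $\log(-z/\tau)$, the principal branch on $P_\tau$, the $\arg z\neq\pi$ cuts) line up so that the two series expansions really are expansions of the \emph{same} analytic function on the overlap region. This is exactly why Lemma \ref{lem:prin_branch_domain} and the enhanced associativity Propositions \ref{prop:genus_1_assoc}, \ref{prop:genus_1_beta} were proved with explicit convergence regions; the argument of Lemma \ref{lem:S-trans} (computing $\log(-z/\tau)=\log z+\log(-1/\tau)$ on $P_\tau$) is the template, and I would reuse that computation here. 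A secondary technical point is that the rigidity of $W$ is used only to produce the coevaluation $i_W$ and hence $h_{W,X}$; the pseudo-trace functions $\tr_X^\phi$ on the right of \eqref{eqn:S_ch_W} are well defined because $h_{W,X}\circ Y_X$ still commutes with the $P$-action on $X$ (as $h_{W,X}$ is central in $\Endo_V(X)$ by naturality), so no projectivity of $W\tens X$ is needed — consistent with Remark \ref{rem:pt_obstruction}, since here $Z_{W,W'}$ is only specialized, not itself promoted to a pseudo-trace in the $W\tens X$ variable. Once the branch matching is in hand, the remaining verification that the categorical composite equals $h_{W,X}$ is a routine graphical calculus exercise using the triangle, pentagon, and hexagon axioms together with the balancing equation.
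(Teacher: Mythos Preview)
Your overall strategy --- the Moore--Seiberg--Huang method on a two-point genus-one function, relating expansions at two singularities via the $S$-transformation, then specializing via rigidity --- is the right one, but the specific implementation has gaps that would prevent the open Hopf link from ever appearing.

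First, the two-point function should be $Z_{W,V}$, not $Z_{W,W'}$: in the notation $Z_{W,X}$ of \eqref{eqn:ZWX} the subscript $X$ is the module over which the trace is taken. Your step~2 is also circular as stated: Lemma~\ref{lem:S-trans} requires $S(\mathrm{ch}_U)$ as \emph{input} data, so to get the $(X,\phi)$ appearing in $S(\mathrm{ch}_V)$ one must take $U=V$; one cannot directly obtain $S(\mathrm{ch}_W)$ on the left side this way. In the paper, $S(\mathrm{ch}_W)$ arises on the other side, from Proposition~\ref{prop:genus_1_beta} applied in the $S$-transformed variables (the $-\tau'$ singularity there gives a trace over $W\tens V\cong W$).

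The more serious gap is in step~3. The monodromy $\cR^2_{W,X}$, which is the essential ingredient of $h_{W,X}$, does \emph{not} arise from the $z=0$ expansion: that is the $n=0$ case of Proposition~\ref{prop:genus_1_alpha}, in which $\cY^0$ carries no monodromy at all. It arises from the $z=-1$ singularity, the $n=1$ case, where $\cY^1$ contains the factor $\Id_{W'}\tens\cR^2_{W,X}$. The paper's device is to analyze
\[
Z_{W,V}\bigg(\Big(-\tfrac{1}{\tau}\Big)^{L(0)}w',\Big(-\tfrac{1}{\tau}\Big)^{L(0)}w;\ \tfrac{z+1}{\tau},\ -\tfrac{1}{\tau}\bigg)
\]
in two ways. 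On one hand, Proposition~\ref{prop:genus_1_beta} (the $-\tau'$ singularity in the $S$-transformed picture) gives a trace over $W\tens V\cong W$ of $\cY^{W}_{W\tens W',W}$. On the other hand, Lemma~\ref{lem:S-trans} with $U=V$ rewrites the same quantity as $\sum_{X,\phi}\tr_X^\phi$ of the product $\cY^X_{W',W\tens X}(\cdots,q_{-z-1})\cY_{W,X}(\cdots,1)$, and \emph{then} Proposition~\ref{prop:genus_1_alpha} with $n=1$ expands this about the singularity $-1$, producing $\cY^1$ with its $\cR^2_{W,X}$. An $L(-1)$-derivative trick ($\tr_X^\phi\cY(\cU(1)L(-1)\til w,1)q_\tau^{L(0)-c/24}=0$) converts $\cY_{W',W}$ into $\cR_{W,W'}\circ(\theta_W\tens\Id_{W'})\circ\cY_{W,W'}$, assembling $\til\cY^1$. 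Finally one substitutes $i_W(v)$ for the spanning vectors $w\tens_{h,k}w'$: rigidity gives $\cY^{W}_{W\tens W',W}\circ(i_W\otimes\Id_W)=Y_W$ (hence $S(\mathrm{ch}_W)$ on the left) and $\til\cY^1\circ(i_W\otimes\Id_X)=h_{W,X}\circ Y_X$ on the right. Without the shift by $1$ --- equivalently, without routing the comparison through the $z=-1$ singularity --- no $\cR^2_{W,X}$ ever enters and the graphical identity you invoke in your last sentence has nothing to act on.
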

\begin{proof}
The pseudo-trace functions in the statement of the theorem are well defined because $h_{W,X}$ being central in $\Endo_V(X)$ implies that $h_{W,X}\circ Y_X$ commutes with any endomorphism algebra of $X$. Now fix $\tau\in\mathbb{H}$. The idea is to analyze
 \begin{equation}\label{eqn:2nd_MSH_reln_1}
  Z_{W,V}\bigg(\left(-\frac{1}{\tau}\right)^{L(0)}w',\left(-\frac{1}{\tau}\right)^{L(0)}w;\frac{z}{\tau}+\frac{1}{\tau},-\frac{1}{\tau}\bigg)
 \end{equation}
in two different ways. On the one hand, Proposition \ref{prop:genus_1_beta} shows how to write \eqref{eqn:2nd_MSH_reln_1} as a series in powers of $-\frac{z}{\tau}$ and $q_{-1/\tau}$ which, with $\tau$ fixed, is effectively a series in powers of $z$. On the other hand, Lemma \ref{lem:S-trans} shows how to use the $S$-transformation of $\mathrm{ch}_V$ to write \eqref{eqn:2nd_MSH_reln_1} as a series in powers of $-z-1$ and $q_\tau$. Then we can apply Proposition \ref{prop:genus_1_alpha} and intertwining operator skew-symmetry to get first a series in powers of $-z$ and $q_\tau$ and then a series in powers of $z$ and $q_{\tau}$. With $\tau$ fixed, these two different calculations yield two seemingly different series in powers of $z$ that nevertheless must coincide because they will both equal \eqref{eqn:2nd_MSH_reln_1} for a non-empty open set of $z$. Taking coefficients of powers of $z$ in these two equal series for any $\tau$ then yields non-trivial identities which, when combined with the assumption that $W$ is rigid, will yield \eqref{eqn:S_ch_W}. However, the detailed argument is subtle because we need to choose $z$ carefully so that both rewritings of \eqref{eqn:2nd_MSH_reln_1} are valid, and because we need to pay attention to branches of logarithm throughout.

To begin the first calculation, Proposition \ref{prop:genus_1_beta} says that if $(-\frac{z}{\tau},-\frac{1}{\tau})\in U_1^{prod}\cap U_1^{it}$, then \eqref{eqn:2nd_MSH_reln_1} is equal to
\begin{equation*}
\tr_{W\tens V}  \cY_{W\tens W', W\tens V}^{W\tens V}\bigg(\cU(1)\cY_{W,W'}\bigg(\left(-\frac{1}{\tau}\right)^{L(0)}w,e^{\log\left(-\frac{z}{\tau}\right)}\bigg)\left(-\frac{1}{\tau}\right)^{L(0)}w', 1\bigg)q_{-1/\tau}^{L(0)-c/24}.
\end{equation*}
We can use the $L(0)$-conjugation property to move the $\left(-\frac{1}{\tau}\right)^{L(0)}$ operators on $w$ and $w'$ outside the intertwining operator $\cY_{W,W'}$. We also want to write the trace on $W\boxtimes V$ as a trace on the isomorphic module $W$; to do so, we insert $r_W^{-1}\circ r_W=\Id_{W\tens V}$ in front of $\cY_{W\tens W',W\tens V}^{W\tens V}$ and apply the cyclic symmetry of traces to see that \eqref{eqn:2nd_MSH_reln_1} equals
\begin{align*}
\tr_W(r_W\circ\cY_{W\tens W',W\tens V}^{W\tens V})\bigg(\cU(1)\left(-\frac{1}{\tau}\right)^{L(0)}\cY_{W,W'}\left(w,e^{\log\left(-\frac{z}{\tau}\right)-\log\left(-\frac{1}{\tau}\right)}\right)w',1\bigg)r_W^{-1}q_{-1/\tau}^{L(0)-c/24}
\end{align*}
for $(-\frac{z}{\tau},-\frac{1}{\tau})\in U_1^{prod}\cap U_1^{it}$. This is now the trace on $W$ of the intertwining operator
 \begin{equation*}
 \cY_{W\tens W',W}^W :=r_W\circ\cY_{W\tens W',W\tens V}^{W\tens V}\circ(\Id_{W\tens W'}\otimes r_W^{-1}).
 \end{equation*}
  By the definition of $\cY_{W\tens W',W\tens V}^{W\tens V}$ in the statement of Proposition \ref{prop:genus_1_beta}, properties of right unit isomorphisms in a tensor category, and the definition of tensor products of $V$-module homomorphisms, this intertwining operator equals
\begin{align*}
& \cY_{W\tens W',W}^W  =r_W\circ(r_W\tens\Id_V)\circ((\Id_W\tens e_W)\tens\Id_V)\circ(\cA_{W,W',W}^{-1}\tens\Id_V) \circ \nonumber\\
 &\quad\hspace{9em}\circ\cA_{W\tens W',W,V}\circ\cY_{W\tens W',W\tens V}\circ(\Id_{W\tens W'}\otimes r_W^{-1})\nonumber\\
 &\quad =r_W\circ(\Id_W\tens e_W)\circ\cA_{W,W',W}^{-1}\circ r_{(W\tens W')\tens W}\circ\cA_{W\tens W',W,V}\circ(\Id_{W\tens W'}\tens r_W^{-1})\circ\cY_{W\tens W',W}\nonumber\\
 &\quad =r_W\circ(\Id_W\tens e_W)\circ\cA_{W,W',W}^{-1}\circ\cY_{W\tens W',W}.
\end{align*}
We simplify further by noting that in the trace on $W$ of $\cY_{W\boxtimes W',W}^W$ appearing above, the expression $\log\left(-\frac{z}{\tau}\right)-\log\left(-\frac{1}{\tau}\right)$ is actually some branch of logarithm of $z$. Specifically, since we will soon apply Lemma \ref{lem:S-trans} to \eqref{eqn:2nd_MSH_reln_1} in our second calculation, we now assume that $z+1\in P_\tau$, or equivalently, $z\in P_\tau-1$. This implies in particular that $\arg\tau<\arg z<\pi$, so that $\arg(-\frac{z}{\tau})=\arg z-\arg\tau-\pi$. Also since $\arg(-\frac{1}{\tau})=\pi-\arg(\tau)$, we get
\begin{equation*}
 \log\left(-\frac{z}{\tau}\right)-\log\left(-\frac{1}{\tau}\right) =\ln\left\vert\frac{z}{\tau}\right\vert+i(\arg z-\arg\tau-\pi)-\ln\left\vert\frac{1}{\tau}\right\vert -i(\pi-\arg\tau)    = \log z-2\pi i.
\end{equation*}
Combining everything so far, we have shown that \eqref{eqn:2nd_MSH_reln_1} is equal to
\begin{equation}\label{eqn:2nd_MSH_reln_2}
 \tr_W \cY_{W\tens W',W}^W\bigg(\cU(1)\left(-\frac{1}{\tau}\right)^{L(0)}\cY_{W,W'}(w,e^{\log z-2\pi i})w',1\bigg)q_{-1/\tau,}^{L(0)-c/24}
\end{equation}
assuming that $z\in P_\tau-1$ and $(-\frac{z}{\tau},-\frac{1}{\tau})\in U_1^{it}$ (the first of these conditions automatically implies $(-\frac{z}{\tau},-\frac{1}{\tau})\in U_1^{prod}$ as well). This completes the first calculation.

We now begin the second calculation. We retain the above assumptions on $z$ and assume further that $(-z,\tau)\in U_1^{it}$. The assumptions we have imposed on $z$ define a non-empty open subset of $\CC^\times$, namely the intersection of the open parallelogram $P_\tau-1$ (with vertices $0$, $\tau$, $-1$, and $\tau-1$) with the open disk of non-zero radius $\min(R_\tau, \vert\tau\vert R_{-1/\tau})$. For $z$ in this non-empty open set, since in particular $z+1\in P_\tau$, Lemma \ref{lem:S-trans} implies that 
\eqref{eqn:2nd_MSH_reln_1} equals
\begin{align*}
 \sum_{X,\phi}  & \tr_X^\phi \cY_{W',W\tens X}^X(\cU(q_{-z-1})w',q_{-z-1})\cY_{W,X}(\cU(1)w,1)q_\tau^{L(0)-c/24}.
 \end{align*}
Then because also $(-z,\tau)\in U_1^{it}$, Proposition \ref{prop:genus_1_alpha} implies that this equals 
\begin{align*}
\sum_{X,\phi} \tr_X^\phi
 \cY^1\left(\cU(1)\cY_{W',W}(w',e^{\log(-z)})\theta_W(w),1\right) q_{\tau}^{L(0)-c/24},
\end{align*}
where
\begin{align*}
 \cY^1 = l_X\circ(e_W\tens\Id_X)\circ\cA_{W',W,X}\circ(\Id_{W'}\tens\cR^2_{W,X})\circ\cA_{W',W,X}^{-1}\circ\cY_{W'\tens W,X}.
\end{align*}
Now we use the braiding to switch the order of $w'$ and $w$ in this sum of pseudo-traces. First, 
\begin{equation*}
 \log(-z)=\log z-\pi i
\end{equation*}
since $0<\arg z<\pi$, so by the definition of $\cR_{W,W'}$,
\begin{align*}
\cY_{W',W}(w',e^{\log(-z)})\theta_W(w) & = e^{-zL(-1)}e^{z L(-1)}\cY_{W',W}(w', e^{\pi i} e^{\log z-2\pi i})\theta_W(w)\nonumber\\
& = e^{-zL(-1)}(\cR_{W,W'}\circ\cY_{W,W'})(\theta_W(w), e^{\log z-2\pi i})w'\nonumber\\
& =e^{-zL(-1)}(\cR_{W,W'}\circ(\theta_W\tens\Id_{W'})\circ\cY_{W,W'})(w, e^{\log z-2\pi i})w'.
\end{align*}
Inserting this into the above sum of pseudo-traces and observing that $\cU(1)e^{-zL(-1)}$ commutes with $\cR_{W,W'}$ and $\theta_W\tens\Id_{W'}$, \eqref{eqn:2nd_MSH_reln_1} becomes
\begin{equation*}
\sum_{X,\phi}\tr_X^\phi \til{\cY}^1\left(\cU(1)e^{-zL(-1)}\cY_{W,W'}(w,e^{\log z-2\pi i})w',1\right)q_\tau^{L(0)-c/24}
\end{equation*}
where
\begin{align*}
 \til{\cY}^1 & = \cY_1\circ(\cR_{W,W'}\otimes\Id_{X})\circ((\theta_W\tens\Id_{W'})\otimes\Id_X)\nonumber\\ 
& =l_X\circ(e_W\tens\Id_X)\circ\cA_{W',W,X}  \circ(\Id_{W'}\tens\cR^2_{W,X})\circ\cA_{W',W,X}^{-1}\circ\nonumber\\
 &\qquad\qquad\circ(\cR_{W,W'}\tens\Id_X)\circ((\theta_W\tens\Id_{W'})\tens\Id_X)\circ\cY_{W\tens W',X}.
\end{align*}
Now, we can remove the factor of $e^{-zL(-1)}$ since by \cite[Equation 1.15]{Hu-mod_inv}, the $L(0)$-commutator formula, and the fact that the pseudo-trace of a commutator is $0$,
\begin{align*}
 \tr_X^\phi \til{\cY}^1(\cU(1)L(-1)\til{w},1)q_\tau^{L(0)-c/24} & = 2\pi i\tr_X^\phi \til{\cY}^1((L(-1)+L(0))\cU(1)\til{w},1)q_\tau^{L(0)-c/24} \nonumber\\
&=2\pi i\tr_X^\phi \,[L(0),\til{\cY}^1(\cU(1)\til{w},1)q_\tau^{L(0)-c/24}]= 0
\end{align*}
for any $\til{w}\in W\tens W'$. Thus in fact \eqref{eqn:2nd_MSH_reln_1} equals
\begin{align}\label{eqn:2nd_MSH_reln_3}
  \sum_{X,\phi} \tr_X^\phi \til{\cY}^1\left(\cU(1)\cY_{W,W'}(w,e^{\log z-2\pi i})w',1\right)q_\tau^{L(0)-c/24}.
\end{align}
This completes the second calculation.

We have now shown that \eqref{eqn:2nd_MSH_reln_2} equals \eqref{eqn:2nd_MSH_reln_3} for all $z$ in a non-empty open subset of $\CC^\times$, since they are both equal to \eqref{eqn:2nd_MSH_reln_1} for such $z$.  By analytically continuing the corresponding analytic function along an appropriate loop about the singularity $z=0$, we may also replace $\log z-2\pi i$ with $\log z$ in these (pseudo-)traces. Now \cite[Proposition 7.8]{HLZ5} shows that
\begin{align*}
 \tr_W \cY_{W\tens W',W}^W & \bigg(\cU(1)\left(-\frac{1}{\tau}\right)^{L(0)}\cY_{W,W'}(w,x)w',1\bigg)q_{-1/\tau}^{L(0)-c/24}\nonumber\\
 &=\sum_{X,\phi} \tr_X^\phi \til{\cY}^1\left(\cU(1)\cY_{W,W'}(w,x)w',1\right)q_\tau^{L(0)-c/24}
\end{align*}
as formal series in powers of $x$ and $\log x$ for any fixed $\tau\in\mathbb{H}$. That is,
\begin{align}\label{eqn:2nd_MSH_reln_4}
 \tr_W\cY_{W\tens W',W}^W & \bigg(\cU(1)\left(-\frac{1}{\tau}\right)^{L(0)}(w\tens_{h,k} w'),1\bigg)q_{-1/\tau}^{L(0)-c/24}\nonumber\\
 &=\sum_{X,\phi} \tr_X^\phi\til{\cY}^1\left(\cU(1)(w\tens_{h,k} w'),1\right)q_\tau^{L(0)-c/24}
\end{align}
for all $h\in\QQ$, $k\in\NN$, $w\in W$, $w'\in W'$.
 Since the vectors $w\tens_{h,k} w'$ span $W\tens W'$  (see \cite[Proposition 4.23]{HLZ3}), and since we have assumed there is a coevaluation $i_W: V\rightarrow W\tens W'$, we may replace $w\tens_{h,k} w'$ in \eqref{eqn:2nd_MSH_reln_4} with $i_W(v)$ for any $v\in V$. Then by rigidity,
\begin{align*}
 \cY_{W\tens W',W}^W & \circ(i_W\otimes\Id_W)\nonumber\\
 &= r_W\circ(\Id_W\tens e_W)\circ\cA_{W,W',W}^{-1}\circ (i_W\tens\Id_W)\circ\cY_{V,W}=l_W\circ\cY_{V,W}=Y_W,
\end{align*}
while the definitions of $\til{\cY}^1$ and the open Hopf link endomorphism $h_{W,X}$ imply
\begin{equation*}
 \til{\cY}^1\circ(i_W\otimes\Id_X) =h_{W,X}\circ l_X\circ \cY_{V,X}=h_{W,X}\circ Y_X.
\end{equation*} 
Thus \eqref{eqn:2nd_MSH_reln_4} specializes to \eqref{eqn:S_ch_W} for any $v\in V$, completing the proof of the theorem.
\end{proof}

Using \eqref{eqn:S_ch_W}, the invertibility of the $S$-transformation, and the linear independence of characters, we now show that nondegeneracy of the braiding on $\cC$ follows from rigidity:
\begin{thm}\label{thm:factorizability}
Let $V$ be a strongly finite vertex operator algebra and $\cC$ the braided tensor category of grading-restricted generalized $V$-modules. If $\cC$ is rigid, then $\cC$ is a factorizable finite ribbon category.
\end{thm}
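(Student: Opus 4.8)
The plan is to verify directly the two conditions defining a factorizable finite ribbon category: that $\cC$ is a finite braided ribbon category, and that its M\"uger center is trivial. The first condition is essentially already known: since $V$ is $\NN$-graded and $C_2$-cofinite, $\cC$ is a finite abelian category with braided tensor structure and twist $\theta_W=e^{2\pi i L(0)}$ by \cite{Hu-C2} and Subsection \ref{subsec:vrtx_tens_cat}, so once we assume rigidity, the only remaining point is the ribbon compatibility $\theta_{W'}=\theta_W'$; but this is immediate because $\theta_W=e^{2\pi i L(0)}$ and contragredients are built from the same $L(0)$, so $\theta_{W'}$ and $\theta_W'$ act as the same scalar $e^{2\pi i h}$ on each generalized weight space. (Note $V$ is $\ZZ$-graded here, being $\NN$-graded, so $\theta_V=\Id_V$.) Thus the real content is nondegeneracy of the braiding.

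For nondegeneracy, suppose $W$ is an object in the M\"uger center, i.e.\ $\cR_{W,X}^2=\Id_{W\tens X}$ for all objects $X$ in $\cC$. Since $\cC$ is finite, $W$ has finite length, and it suffices (using that the M\"uger center is closed under subquotients, which follows from naturality of the monodromy) to treat the case $W$ simple; then decomposing a general transparent object into composition factors and invoking that $V$ has no nonsplit self-extensions will give $W\cong V^{\oplus n}$. So assume $W$ simple and transparent. Being simple, $W$ is rigid (every object of $\cC$ is, by hypothesis), so Theorem \ref{thm:S_ch_W} applies: writing $S(\mathrm{ch}_V)(v;\tau)=\sum_{X,\phi}\tr_X^\phi Y_X(\cU(1)v,1)q_\tau^{L(0)-c/24}$, we get
\begin{equation*}
 S(\mathrm{ch}_W)(v;\tau)=\sum_{X,\phi}\tr_X^\phi\,(h_{W,X}\circ Y_X)(\cU(1)v,1)q_\tau^{L(0)-c/24}.
\end{equation*}
Now the key point: the open Hopf link $h_{W,X}$ of \eqref{eqn:open_Hopf_link} is built from the double braiding $\cR_{W,X}^2$ together with $\theta_W$, $e_W$, $i_W$, and unit/associativity isomorphisms. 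When $\cR_{W,X}^2=\Id_{W\tens X}$, the strand of $X$ passing through the diagram can be straightened, and $h_{W,X}$ reduces to $(\text{quantum dimension of }W)\cdot\Id_X$ — more precisely, to $\dim(W)\Id_X$ where $\dim(W)=e_W\circ\cR_{W,W'}\circ(\theta_W\tens\Id_{W'})\circ i_W\in\Endo_\cC(V)=\CC\,\Id_V$. This is a short graphical calculation using naturality of the braiding and the rigidity axioms. Hence $h_{W,X}\circ Y_X=\dim(W)\,Y_X$ for every $X$ appearing in $S(\mathrm{ch}_V)$, so $S(\mathrm{ch}_W)=\dim(W)\,S(\mathrm{ch}_V)$. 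Applying the invertible $S$-transformation, $\mathrm{ch}_W=\dim(W)\,\mathrm{ch}_V$.

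Finally, by linear independence of characters of distinct irreducible $V$-modules (\cite[Theorem 5.3.1]{Zh}, \cite[Lemma 5.10]{CM}), the equality $\mathrm{ch}_W=\dim(W)\,\mathrm{ch}_V$ forces $W\cong V$ (and $\dim(W)=1$); in particular $\dim(W)\neq 0$. Combined with the reduction to the simple case above, this shows every object in the M\"uger center of $\cC$ is isomorphic to a finite direct sum of copies of $V$, which is precisely triviality of the M\"uger center. Therefore $\cC$ is a factorizable finite ribbon category. I expect the main obstacle to be the graphical computation that transparency of $W$ collapses $h_{W,X}$ to $\dim(W)\Id_X$ — it is routine in principle, but one must be careful with the twist $\theta_W$ that appears in \eqref{eqn:open_Hopf_link} and check that it is absorbed correctly (it reappears combined with $\cR_{W,W'}$ in the definition of $\dim(W)$, essentially via the balancing equation), and one should also confirm that the composition factors argument genuinely reduces the general case to the simple case, using that $\mathrm{Hom}_\cC(V,V)=\CC\,\Id_V$ rules out the only possible nonsplit extension obstruction at the top.
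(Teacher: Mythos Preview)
Your proposal is correct and follows essentially the same approach as the paper's proof: apply Theorem \ref{thm:S_ch_W}, collapse $h_{W,X}$ to $(\dim_\cC W)\Id_X$ for transparent $W$, invert the $S$-transformation, use linear independence of irreducible characters, and finish with the fact (\cite[Theorem 4.4.1]{EGNO}) that self-extensions of the unit split. The only cosmetic difference is that the paper does not reduce to simple $W$ first but instead applies Theorem \ref{thm:S_ch_W} to an arbitrary transparent $W$ and reads off directly that all composition factors of $W$ are copies of $V$ from $\mathrm{ch}_W=(\dim_\cC W)\,\mathrm{ch}_V$, using that the character of $W$ is the sum of characters of its composition factors.
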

\begin{proof}
By results from \cite{Hu-C2} and the rigidity assumption, $\cC$ is a finite braided ribbon category with duals given by contragredient modules and twist $e^{2\pi i L(0)}$. By \cite[Theorem 1.1]{Sh}, $\cC$ is factorizable if it has trivial M\"{u}ger center. That is, we need to show that if $W$ is a $V$-module such that $\cR^2_{W,X}=\Id_{W\tens X}$ for all $V$-modules $X$, then $W\cong V^{\oplus n}$ for some $n\in\NN$.

 If indeed $\cR^2_{W,X}=\Id_{W\tens X}$ for all $V$-modules $X$, then the open Hopf link endomorphism $h_{W,X}$ is simply $(\dim_\cC W)\Id_X$ for all $X$, where $\dim_\cC W$ is the categorical dimension of $W$; since $V$ is simple, this categorical dimension is just a scalar. Then \eqref{eqn:S_ch_W} simplifies to
 \begin{equation*}
  S(\mathrm{ch}_W) =(\dim_\cC W)\,S(\mathrm{ch}_V),
 \end{equation*}
 or equivalently $ \mathrm{ch}_W=(\dim_\cC W)\,\mathrm{ch}_V$ since the $S$-transformation is invertible. Because traces depend only on the semisimplification of a module, $\mathrm{ch}_W$ is a linear combination of the characters of the composition factors of $W$, and since the characters of distinct irreducible $V$-modules are linearly independent, it follows that $V$ is the only composition factor of $W$ (with multiplicity $\dim_\cC W$).

Now because $\cC$ is a finite tensor category over $\CC$ with simple unit object $V$, \cite[Theorem 4.4.1]{EGNO} shows that all self-extensions of $V$ split. Thus if $W$ is a module in the M\"{u}ger center of $\cC$ with length $\ell(W)\leq 2$, we may conclude that $W\cong V^{\oplus\ell(W)}$. The same relation then holds for $\ell(W)>2$ by a straightforward induction on the length, proving the theorem.
\end{proof}

\begin{exam}
 Theorem \ref{thm:factorizability} applies to the non-rational triplet vertex operator algebras $\cW(p)$ for $p>1$ an integer. See \cite{AM} for the definition of $\cW(p)$ and the proof that it is a strongly finite vertex operator algebra. The tensor category of grading-restricted generalized $\cW(p)$-modules is rigid \cite{TW} (see also \cite[Theorem 7.6]{MY}), so it is factorizable by Theorem \ref{thm:factorizability}. However, as shown in \cite[Theorem 4.7]{GN}, it is not too difficult to prove directly that the braiding on the category of $\cW(p)$-modules is nondegenerate. For any $p>1$, the automorphism group of $\cW(p)$ contains $\ZZ/m\ZZ$ for $m\in\ZZ_+$, and the fixed-point subalgebra $\cW(p)^{\ZZ/m\ZZ}$ is also strongly finite \cite{ALM}. In \cite[Theorem 6.7]{CMY-typical-singlet}, it is shown that the tensor category of grading-restricted generalized $\cW(p)^{\ZZ/m\ZZ}$-modules is rigid and therefore factorizable by Theorem \ref{thm:factorizability}.

  The even subalgebra $SF^+(d)$ of the vertex operator superalgebra of $d$ pairs of symplectic fermions for $d\geq 1$ is another example of a strongly finite (but non-rational) vertex operator algebra \cite{Ab}. For $d=1$, $SF^+(1)$ is isomorphic to the triplet algebra $\cW(2)$, and more generally, $SF^+(d)$ contains $\cW(2)^{\otimes d}$ as a vertex operator subalgebra. In \cite[Theorem 5.1]{McR-Deligne}, it is shown that the category of $\cW(2)^{\otimes d}$-modules is tensor equivalent to the Deligne tensor product of $d$ copies of the category of $\cW(2)$-modules, and thus is rigid. Then the vertex operator algebra extension theory of \cite{KO, CKM1} implies that the category of $SF^+(d)$-modules is rigid, and therefore Theorem \ref{thm:factorizability} shows that the category of $SF^+(d)$-modules is factorizable (see \cite[Corollary 5.3]{McR-Deligne}). 
This factorizable symplectic fermion category is conjecturally equivalent to an explicit braided tensor category constructed by Runkel in \cite{Ru}. 
\end{exam}

\subsection{Rigidity}\label{subsec:rig}

In this subsection, we first prove that the tensor category $\cC$ of grading-restricted generalized $V$-modules is rigid, under the assumption that
the $S$-transformation $S(\mathrm{ch}_V)$ is a linear combination of characters. Actually, we need a somewhat more general result for the next section, where we will consider the $\ZZ$-graded subalgebras of $C_2$-cofinite but $\frac{1}{2}\NN$-graded affine $W$-algebras. In such a case, the $\ZZ$-graded subalgebra is the fixed-point subalgebra of an order-$2$ automorphism of the $\frac{1}{2}\NN$-graded vertex operator algebra. Thus we now suppose that $V$ is the fixed-point subalgebra under a finite-order automorphism of a larger vertex operator algebra $A$:
\begin{assum}\label{assum:rigidity}
 Assume the following setting:
 \begin{itemize}
  \item $A$ is an $\NN$-graded simple $C_2$-cofinite vertex operator algebra with conformal vector $\omega$.
  
  \item The $S$-transformation of $\mathrm{ch}_A$ is a linear combination of characters: for certain $A$-modules $X$ and $s_{A,X}\in\CC$,
  \begin{equation*}
   \mathrm{ch}_A\bigg(\left(-\frac{1}{\tau}\right)^{L(0)} a;-\frac{1}{\tau}\bigg) =\sum_X s_{A,X}\,\mathrm{ch}_X(a;\tau)
  \end{equation*}
for all $a\in A$, $\tau\in\mathbb{H}$.

\item $V$ is the fixed-point vertex operator subalgebra of a finite-order vertex operator algebra automorphism $g$ of $A$.

\item $V$ is $\NN$-graded and self-contragredient with respect to a possibly different conformal vector $\til{\omega}=\omega+v_{-2}\vac$, where $v\in V$ satisfies $v_0\omega=0$. For an object $W$ in the category $\cC$ of grading-restricted generalized $V$-modules, $\theta_W$ and $W'$ will denote the twist automorphism and contragredient module, respectively, with respect to $\til{\omega}$.
 \end{itemize}
\end{assum}

Given the setting of Assumption \ref{assum:rigidity}, we make the following observations:
\begin{itemize}
\item By the main theorem of \cite{DLM_cpt_aut_gps}, $V$ is simple, and by the main theorem of \cite{Mi-cyc-orb} (see also the discussion in \cite[Section 3.1]{CM}), $V$ is $C_2$-cofinite (and $\NN$-graded with respect to either conformal vector $\omega$ or $\til{\omega}$).

\item By Proposition \ref{prop:ten_cat_ind_of_omega}, the braided tensor category $\cC$ is independent of whether we use $\omega$ or $\til{\omega}$ as the conformal vector of $V$. In particular, the $A$-modules $X$ appearing in the $S$-transformation of $\mathrm{ch}_A$ are objects of $\cC$ when considered as $V$-modules. 

\item We are using the conformal vector $\omega$ to define characters of $A$- and $V$-modules, since $A$ might not be $\NN$-graded as a vertex operator algebra with respect to $\til{\omega}$. 

\item We must distinguish between the twist isomorphisms $\theta_W$ defined using $\til{\omega}$ and the maps $e^{2\pi i L(0)}$ defined using $\omega$. Both, however, define natural isomorphisms in $\cC$ since $V$ (although not necessarily $A$) is $\NN$-graded with respect to both $\omega$ and $\til{\omega}$.

\item Since $V$ is self-contragredient and $\NN$-graded with respect to $\til{\omega}$, all the constructions and results of Section \ref{sec:contragredient} apply in this setting, provided we use contragredient modules $W'$ and twist isomorphisms $\theta_W$ constructed using $\til{\omega}$.
\end{itemize}
In the setting of Assumption \ref{assum:rigidity}, we also have the following lemma:
\begin{lem}\label{lem:A_simple_monodromy}
 If $W$ is a simple grading-restricted generalized $V$-module, then $(\cR_{W,A}^2)^{-1}=\Id_W\tens g^m$ for some $0\leq m\leq\vert g\vert-1$.
\end{lem}
\begin{proof}
 By the main theorem of \cite{DLM_cpt_aut_gps}, $A$ has the following decomposition as a $V$-module:
\begin{equation}\label{eqn:A_decomp}
 A =\bigoplus_{n\in\ZZ/\vert g\vert\ZZ} A_n,\qquad A_n =\lbrace a\in A\,\,\vert\,\,g\cdot a=e^{2\pi i n/\vert g\vert}a\rbrace,
\end{equation}
and the $A_n$ are distinct simple $V$-modules (with $A_0=V$). Moreover, the $A_n$ are simple current $V$-modules by \cite[Theorem 4.2]{CM}, or more specifically, 
\begin{equation}\label{eqn:simple_current}
A_m\tens A_n\cong A_{m+n} 
\end{equation}
for $m,n\in\ZZ/\vert g\vert\ZZ$. Then if $W$ is a simple $V$-module, so is each $W\tens A_n$, so that $(\cR_{W,A_n}^2)^{-1}=c_n\,\Id_{W\tens A_n}$ for some $c_n\in\CC$ (with $c_0=1$). From \eqref{eqn:simple_current} and the hexagon axiom, $c_{m+n}=c_m c_n$ for $m,n\in\ZZ/\vert g\vert\ZZ$, which means that $c_1 =e^{2\pi i m/\vert g\vert}$ for some $0\leq m\leq \vert g\vert-1$. Thus 
\begin{equation*}
 (\cR_{W,A_n}^2)^{-1}=e^{2\pi i mn/\vert g\vert}\Id_{W\tens A_n}
\end{equation*}
for each $n\in\ZZ/\vert g\vert\ZZ$, that is, $(\cR_{W,A}^2)^{-1} =\Id_W\tens g^m$.
\end{proof}

We now prove the following fundamental relation using the Moore-Seiberg-Huang method again. The result and its proof are similar to much of the work in \cite[Sections 5.3 and 5.4]{CM} (especially Lemmas 5.15 and 5.16, and Propositions 5.17 and 5.23 of \cite{CM}). The statement involves the $V$-module homomorphisms $\Phi_W: W\tens W'\rightarrow(W\tens W')'$ and $\til{\Psi}_W: (W\tens W')'\tens W\rightarrow W$ defined in Section \ref{sec:contragredient}:
\begin{thm}\label{thm:MSH_reln_1}
Under Assumption \ref{assum:rigidity}, let $W$ be a grading-restricted generalized $V$-module such that $(\cR_{W,A}^2)^{-1}=\Id_W\tens g^m$ for $0\leq m\leq\vert g\vert-1$. Then for any $\til{w}\in W\tens W'$ and $\tau\in\mathbb{H}$,
 \begin{align}\label{eqn:MSH_reln_1}
 \tr_A\,(g^m\circ Y_A)&\bigg(\cU(1)\left(-\frac{1}{\tau}\right)^{L(0)}\til{e}_W(\til{w}),1\bigg)q_{-1/\tau}^{L(0)-c/24}\nonumber\\
 &=\sum_X s_{A,X}\tr_{W\tens X} \cY_{\til{\Psi}_W,X}\left(\cU(1)[\Phi_W\circ(f\tens\Id_{W'})](\til{w}),1   \right)q_\tau^{L(0)-c/24},
 \end{align}
where $f= e^{2\pi i L(0)}\theta_W^{-1}\in\Endo_V(W)$ and  $\cY_{\til{\Psi}_W,X}=(\til{\Psi}_W\tens\Id_X)\circ\cA_{(W\tens W')',W,X}\circ\cY_{(W\tens W')', W\tens X}$ is of type $\binom{W\tens X}{(W\tens W')'\,W\tens X}$. 
\end{thm}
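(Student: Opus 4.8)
The plan is to prove \eqref{eqn:MSH_reln_1} by evaluating the two-point genus-one correlation function
\begin{equation*}
 Z_{W,A}\bigg(\left(-\tfrac{1}{\tau}\right)^{L(0)}w',\left(-\tfrac{1}{\tau}\right)^{L(0)}w;-\tfrac{z+1}{\tau},-\tfrac{1}{\tau}\bigg)
\end{equation*}
(with $Z_{W,A}$ as in \eqref{eqn:ZWX}, using the evaluation $e_W$ for the contragredient of $W$ as a $V$-module) in two different ways on a suitable non-empty open set of $z$, and then comparing the resulting formal series in $z$ and $\log z$. On the one hand, assuming $z+1\in P_\tau-1$ so that $z\in P_\tau-2$ lies in the overlap of principal branch domains, I would apply Proposition \ref{prop:genus_1_beta} (the expansion of $Z_{W,A}$ about the singularity at $-\tau$), followed by the $L(0)$-conjugation formula and cyclic symmetry of the trace, exactly as in the first half of the proof of Theorem \ref{thm:S_ch_W}. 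This rewrites the left side as a trace over $W$ of the iterate intertwining operator $\cY_{W\tens W',W}^W$ applied to $\cY_{W,W'}(w,e^{\log z-2\pi i})w'$, and the bookkeeping with branches of logarithm (using $-z\in\mathbb{H}$ and $\arg(-\tfrac1\tau)=\pi-\arg\tau$) produces the shift $\log(-z/\tau)-\log(-1/\tau)=\log z-2\pi i$ that ultimately yields the factor $f=e^{2\pi i L(0)}\theta_W^{-1}$. The twist $\theta_W$ here is the one built from $\til\omega$, while the $e^{2\pi i L(0)}$ coming from the monodromy hypothesis $(\cR_{W,A}^2)^{-1}=\Id_W\tens g^m$ and from $\cU(1)(-\tfrac1\tau)^{L(0)}$ manipulations is built from $\omega$, so I must be careful to keep these operators distinct, as Assumption \ref{assum:rigidity} emphasizes.

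On the other hand, I would analyze the same function using the $S$-transformation: Lemma \ref{lem:S-trans} applied to $U=A$ (whose $S(\mathrm{ch}_A)$ is the stated linear combination $\sum_X s_{A,X}\,\mathrm{ch}_X$ of characters) rewrites the $S$-transformed $Z_{W,A}$ as $\sum_X s_{A,X}\tr_X\cY^X_{W',W\tens X}(\cU(q_{z'})w',q_{z'})\cY_{W,X}(\cU(1)w,1)q_\tau^{L(0)-c/24}$ with $z'=-z-1$. Then Proposition \ref{prop:genus_1_alpha} with $n=1$ expands this about the singularity, bringing in the intertwining operator $\cY^1=l_X\circ(e_W\tens\Id_X)\circ\cA_{W',W,X}\circ(\Id_{W'}\tens\cR^2_{W,X})\circ\cA_{W',W,X}^{-1}\circ\cY_{W'\tens W,X}$ and, crucially, the operator $e^{2\pi i nL(0)}=e^{2\pi i L(0)}$ inserted in the second slot. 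The $\cR^2_{W,X}$ appearing here is the monodromy, whereas we are given information about $(\cR_{W,A}^2)^{-1}$; I would insert the hypothesis $(\cR_{W,A}^2)^{-1}=\Id_W\tens g^m$ at this stage (note $X$ runs over $A$-modules, so $g^m$ acts on $X$). Next, as in the proof of Theorem \ref{thm:S_ch_W}, I would use the vanishing of $\tr_X^\phi\cY^1(\cU(1)L(-1)\til w,1)q_\tau^{\cdots}=0$ together with skew-symmetry $\Omega_0$ to swap the order of $w'$ and $w$ inside $\cY_{W',W}$, converting it into $\cR_{W,W'}\circ(\theta_W\tens\Id_{W'})\circ\cY_{W,W'}$ with another careful logarithm adjustment $\log(-z)-\pi i=\log z-2\pi i$. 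After these moves both sides become formal series in $\cY_{W,W'}(w,e^{\log z-2\pi i})w'$, and I can analytically continue around $z=0$ to replace $\log z-2\pi i$ by $\log z$, giving an identity of formal power series coefficients indexed by $(h,k)$, i.e.\ an identity applied to the spanning vectors $w\tens_{h,k}w'$ of $W\tens W'$ (using \cite[Proposition 4.23]{HLZ3}), hence to arbitrary $\til w\in W\tens W'$.

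The remaining task is purely tensor-categorical: I must identify the operator appearing on the right side of the second computation with $\tr_{W\tens X}\cY_{\til\Psi_W,X}(\cU(1)[\Phi_W\circ(f\tens\Id_{W'})](\til w),1)q_\tau^{\cdots}$ and the operator on the left with $\tr_A(g^m\circ Y_A)(\cU(1)(-\tfrac1\tau)^{L(0)}\til e_W(\til w),1)q_{-1/\tau}^{\cdots}$. For the left side, I would use \cite[Proposition 7.8]{HLZ5} (as in Theorem \ref{thm:S_ch_W}) to move the $(-\tfrac1\tau)^{L(0)}$ factor through, and then recognize $\cY_{W\tens W',W}^W\circ(\text{suitable insertion})$; comparing with the definition \eqref{diag:delta_def} of $\til e_W$ and the reduction of $\cY_{W\tens W',W}^W$ carried out in the proof of Theorem \ref{thm:S_ch_W} should yield $\til e_W$ composed with $f$. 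For the right side, the key is that the composite intertwining operator built from $\cY^1$, the monodromy-hypothesis substitution, the skew-symmetry factor $\theta_W$, and the definitions of $\Phi_W$ and $\til\Psi_W$ in Subsection \ref{sec:contragredient} must all collapse — via the pentagon and triangle axioms, naturality of associativity and braiding, and Lemma \ref{lem:PsiPhi} — into $\cY_{\til\Psi_W,X}\circ(\Phi_W\circ(f\tens\Id_{W'})\tens\Id_X)$. I expect this last graphical identification to be the main obstacle: it requires tracking how the two distinct operators $e^{2\pi i L(0)}$ and $\theta_W$ combine into the single endomorphism $f=e^{2\pi i L(0)}\theta_W^{-1}$, and verifying that the various braidings assembled from $\cR^2_{W,X}$, $\cR_{W,W'}$, $\theta_W$ reorganize precisely into the definitions of $\Phi_W$ and $\til\Psi_W$ rather than some variant; this is exactly the type of bookkeeping that the graphical calculus of Subsection \ref{sec:contragredient} is designed to handle, and I would carry it out by drawing the relevant string diagrams and matching them against the defining diagrams of $\Phi_W$, $\til\Psi_W$, and $\til e_W$.
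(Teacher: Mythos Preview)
Your proposal has the roles of Propositions \ref{prop:genus_1_alpha} and \ref{prop:genus_1_beta} reversed, and this is a genuine obstruction rather than a cosmetic rearrangement. You apply Proposition \ref{prop:genus_1_beta} to $Z_{W,A}$ at modular parameter $-\tfrac1\tau$ and claim this yields a trace over $W$; but Proposition \ref{prop:genus_1_beta} produces a trace over $W\tens A$, and the reduction to a trace over $W$ in the proof of Theorem \ref{thm:S_ch_W} used $W\tens V\cong W$ via $r_W$. Here $A\neq V$ in general, so $W\tens A\not\cong W$, and there is no way to extract from this the trace over $A$ that the left side of \eqref{eqn:MSH_reln_1} demands. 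Conversely, you apply Proposition \ref{prop:genus_1_alpha} to $\sum_X s_{A,X}Z_{W,X}$, but that proposition produces traces over $X$, not over $W\tens X$ as the right side of \eqref{eqn:MSH_reln_1} requires. Moreover, the monodromy hypothesis concerns $\cR^2_{W,A}$, not $\cR^2_{W,X}$ for arbitrary $A$-modules $X$; the fact that $g^m$ acts on $X$ as an $A$-module does not let you identify $\cR^2_{W,X}$ with $\Id_W\tens g^{-m}$, so you cannot insert the hypothesis where you propose.

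The paper's proof instead starts from $\sum_X s_{A,X}Z_{W,X}(w',f(w);-z-\tau,\tau)$ with $f(w)$ inserted \emph{a priori}. On one side, Proposition \ref{prop:genus_1_beta} is applied to each $Z_{W,X}$ (at modular parameter $\tau$), giving $\tr_{W\tens X}\cY^{W\tens X}_{W\tens W',W\tens X}(\cdots)$; then a single application of Lemma \ref{lem:PsiPhi}(2) identifies $\cY^{W\tens X}_{W\tens W',W\tens X}=\cY_{\til\Psi_W,X}\circ(\Phi_W\otimes\Id_{W\tens X})$, which together with the pre-inserted $f$ yields the right side of \eqref{eqn:MSH_reln_1} directly---no graphical reorganization of braidings is needed. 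On the other side, Lemma \ref{lem:S-trans} converts the starting sum to $Z_{W,A}(\cdots;\tfrac{z}{\tau}+1,-\tfrac1\tau)$, and then Proposition \ref{prop:genus_1_alpha} with $n=-1$ is applied; it is here that $(\cR^2_{W,A})^{-1}$ appears, so the hypothesis produces $g^m$ and hence $\tr_A(g^m\circ Y_A)(\cdots)$. The pre-inserted factor $e^{2\pi iL(0)}$ in $f$ cancels the $e^{-2\pi iL(0)}$ coming from Proposition \ref{prop:genus_1_alpha} at $n=-1$, leaving $\theta_W^{-1}$, which combines with $e_W$ via the identity $e_W=\til e_W\circ\cR_{W',W}\circ(\Id_{W'}\tens\theta_W)$ to produce $\til e_W$. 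Thus $f$ does not emerge from branch bookkeeping; it is inserted by hand precisely so that both sides simplify as desired.
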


\begin{proof}
 Fix $w\in W$, $w'\in W'$, and $\tau\in\mathbb{H}$. For this proof, the idea is to analyze
 \begin{equation}\label{eqn:rigidity_starting_point}
\sum_X s_{A,X} Z_{W,X}(w',f(w); -z-\tau,\tau)  
 \end{equation}
in two different ways, similar to the proof of Theorem \ref{thm:S_ch_W}.

On the one hand, Proposition \ref{prop:genus_1_beta} says that for each $X$,
\begin{align*}
 Z_{W,X}(w',f(w);-z-\tau,\tau) = \tr_{W\tens X} \cY_{W\tens W',W\tens X}^{W\tens X}\left(\cU(1)\cY_{W,W'}(f(w),e^{\log z})w',1   \right)q_\tau^{L(0)-c/24}
\end{align*}
when $(z,\tau)\in U_1^{prod}\cap U_1^{it}$, and then using Lemma \ref{lem:PsiPhi}(2) we get
\begin{align*}
&\cY_{W\tens W',W\tens X}^{W\tens X}\nonumber\\
&\qquad= (r_W\tens\Id_X)\circ((\Id\tens e_W)\tens\Id_X)\circ(\cA_{W,W',W}^{-1}\tens\Id_X)\circ\cA_{W\tens W',W,X}\circ\cY_{W\tens W',W\tens X}\nonumber\\
 & \qquad=(\til{\Psi}_W\tens\Id_X)\circ((\Phi_W\tens\Id_W)\tens\Id_X)\circ\cA_{W\tens W',W,X}\circ\cY_{W\tens W',W\tens X}\nonumber\\
 &\qquad =(\til{\Psi}_W\tens\Id_X)\circ\cA_{(W\tens W')',W,X}\circ\cY_{(W\tens W')',W\tens X}\circ(\Phi_W\otimes\Id_{W\tens X})\nonumber\\
 &\qquad =\cY_{\til{\Psi}_W,X}\circ(\Phi_W\otimes\Id_{W\tens X}).
\end{align*}
Since $\Phi_W$ commutes with $\cU(1)$, this shows that \eqref{eqn:rigidity_starting_point} is equal to 
\begin{equation}\label{eqn:rig_proof_right}
 \sum_X s_{A,X}\tr_{W\tens X} \cY_{\til{\Psi}_W,X}\left(\cU(1)(\Phi_W\circ\cY_{W,W'})(f(w), e^{\log z})w',1   \right)q_\tau^{L(0)-c/24}
\end{equation}
when $(z,\tau)\in U_1^{prod}\cap U_1^{it}$.
We then obtain the right side of \eqref{eqn:MSH_reln_1} by taking coefficients of powers of $z$ and $\log z$ in \eqref{eqn:rig_proof_right} and allowing $w$ and $w'$ to vary.

On the other hand, assume in addition that $z+\tau\in P_\tau$, which is to say that $-z\in P_\tau-1$, and that $\left(\frac{z}{\tau},-\frac{1}{\tau}\right)\in U_1^{it}$. That is,
\begin{equation*}
 0<\vert z\vert <\vert\tau\vert R_{-1/\tau},\quad \arg\frac{z}{\tau}\neq \pi.
\end{equation*}
Note that the $z\in\CC$ satisfying all assumptions so far form a non-empty open set. Then Lemma \ref{lem:S-trans} and Proposition \ref{prop:genus_1_alpha} show that \eqref{eqn:rigidity_starting_point} equals
\begin{align}\label{eqn:MSH_reln_2}
 &Z_{W,A}  \bigg(\left(-\frac{1}{\tau}\right)^{L(0)}w',\left(-\frac{1}{\tau}\right)^{L(0)} f(w);\frac{z+\tau}{\tau},-\frac{1}{\tau}\bigg)\nonumber\\
 &\quad=Z_{W,A} \bigg(\left(-\frac{1}{\tau}\right)^{L(0)}w',\left(-\frac{1}{\tau}\right)^{L(0)} f(w);\frac{z}{\tau}+1,-\frac{1}{\tau}\bigg)\nonumber\\
 &\quad =\tr_A \cY^{-1}\bigg(\cU(1)\cY_{W',W}\bigg(\left(-\frac{1}{\tau}\right)^{L(0)}w',e^{\log\frac{z}{\tau}}\bigg)\left(-\frac{1}{\tau}\right)^{L(0)}e^{-2\pi i L(0)}f(w),1\bigg)q_{-1/\tau}^{L(0)-c/24}\nonumber\\
 &\quad =\tr_A \cY^{-1}\bigg(\cU(1)\left(-\frac{1}{\tau}\right)^{L(0)}\cY_{W',W}\left(w',e^{\log\frac{z}{\tau}-\log\left(-\frac{1}{\tau}\right)}\right)\theta_W^{-1}(w),1\bigg)q_{-1/\tau}^{L(0)-c/24}.
\end{align}
Since we are in particular assuming $-z\in P_\tau-1$, which means that $\arg \tau<\arg(-z)<\pi$, and since $\arg(-\frac{1}{\tau})=\pi-\arg\tau$, we have
\begin{align*}
 \log\frac{z}{\tau} & =\ln\left\vert\frac{z}{\tau}\right\vert+i\left[\arg(-z)+\arg\left(-\frac{1}{\tau}\right)-2\pi\right] =\log(-z)+\log\left(-\frac{1}{\tau}\right)-2\pi i.
\end{align*}
This allows us to simplify the right side of \eqref{eqn:MSH_reln_2}; we get further simplification from
\begin{align*}
 \cY^{-1} & =l_A\circ(e_W\tens\Id_A)\circ\cA_{W',W,A}\circ(\Id_{W'}\tens(\cR^{2}_{W,A})^{-1})\circ\cA_{W',W,A}^{-1}\circ\cY_{W'\tens W,A}\nonumber\\
 & =g^m\circ l_A\circ(e_W\tens\Id_A)\circ\cY_{W'\tens W,A}\nonumber\\
 & = g^m\circ l_A\circ\cY_{V,A}\circ((e_W\otimes\Id_A).
\end{align*}
Since $l_A\circ\cY_{V,A}=Y_A$ and $e_W$ commutes with $\cU(1)$, the right side of \eqref{eqn:MSH_reln_2} equals
\begin{align}\label{eqn:MSH_reln_3}
 \tr_A \, (g^m\circ Y_A)\bigg(\cU(1)\left(-\frac{1}{\tau}\right)^{L(0)}\cE_W\left(w',e^{\log(-z)-2\pi i}\right)\theta_W^{-1}(w),1\bigg)q_{-1/\tau}^{L(0)-c/24}.
\end{align}
Next, the definition of $\til{e}_W$ and properties of $\theta$ (namely, the balancing equation, $\theta_V=\Id_V$, and $\theta_{W'}=\theta_W'$) imply
\begin{equation*}
 e_W =\til{e}_W\circ\cR_{W,W'}^{-1}\circ(\Id_{W'}\tens\theta_W^{-1}) =\til{e}_W\circ\cR_{W',W}\circ(\Id_{W'}\tens\theta_W).
\end{equation*}
Consequently, \eqref{eqn:MSH_reln_3} equals
\begin{align*}
 \tr_A & \, (g^m\circ Y_A)\bigg(\cU(1)\left(-\frac{1}{\tau}\right)^{L(0)}(\til{e}_W\circ\cR_{W',W}\circ\cY_{W',W})\left(w',e^{\log(-z)-2\pi i}\right)w,1\bigg)q_{-1/\tau}^{L(0)-c/24}\nonumber\\
 & =\tr_A\,(g^m\circ Y_A)\bigg(\cU(1)\left(-\frac{1}{\tau}\right)^{L(0)}e^{-zL(-1)}\til{\cE}_W\left(w,e^{\log(-z)-\pi i}\right)w',1\bigg)q_{-1/\tau}^{L(0)-c/24}\nonumber\\
 & =\tr_A\,(g^m\circ Y_A)\bigg(\cU(1)e^{\frac{z}{\tau}L(-1)}\left(-\frac{1}{\tau}\right)^{L(0)}\til{\cE}_W\left(w,e^{\log(-z)-\pi i}\right)w',1\bigg)q_{-1/\tau}^{L(0)-c/24}.
\end{align*}
Now, we can remove the factor of $e^{\frac{z}{\tau}L(-1)}$ because
\begin{equation*}
\tr_A\,(g^m\circ Y_A)(\cU(1)L(-1)v,1)q_{-1/\tau}^{L(0)-c/24} =0
\end{equation*}
for all $v\in V$ by \cite[Equation 1.15]{Hu-mod_inv}, as in the proof of Theorem \ref{thm:S_ch_W} (see also \cite[Proposition 4.3.1]{Zh}). Moreover, since we are assuming that $0<\arg(-z)<\pi$, we have
\begin{equation*}
 \log(-z) =\log z+\pi i.
\end{equation*}
Therefore, we have now shown that for any $\tau\in\mathbb{H}$,
\begin{equation}\label{eqn:MSH_reln_4}
 \tr_A\,(g^m\circ Y_A)\bigg(\cU(1)\left(-\frac{1}{\tau}\right)^{L(0)}\til{\cE}_W(w,e^{\log z})w',1\bigg)q_{-1/\tau}^{L(0)-c/24}
\end{equation}
is equal to \eqref{eqn:MSH_reln_1}, and therefore also to \eqref{eqn:rig_proof_right}, on a non-empty open set of $z\in\CC$.

Since \eqref{eqn:rig_proof_right} and \eqref{eqn:MSH_reln_4} are series in powers of $z$ and $\log z$ which are equal as analytic functions on a non-empty open set of $\CC$, their coefficients are equal (see \cite[Proposition 7.8]{HLZ5}). This proves the theorem because $W\tens W'$ is spanned by vectors $w\tens_{h,k} W'$ for $w\in W$, $w'\in W'$, $h\in\QQ$, and $k\in\NN$ (see \cite[Proposition 4.23]{HLZ3}).
\end{proof}

We can now prove that the category $\cC$ of grading-restricted generalized $V$-modules is rigid:
\begin{thm}\label{thm:rigidity}
Under Assumption \ref{assum:rigidity}, the tensor category $\cC$ of grading-restricted generalized $V$-modules is rigid, with duals given by contragredients with respect to the conformal vector $\til{\omega}$. In particular, $\cC$ is a braided ribbon category with twist $\theta$ defined using $\til{\omega}$.
\end{thm}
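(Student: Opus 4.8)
The plan is to deduce rigidity of every object $W$ in $\cC$ from the criterion of Theorem \ref{thm:rigidity_criterion}, namely that $\im\,(\til{e}_W)'\subseteq\im\Phi_W$. The fundamental relation \eqref{eqn:MSH_reln_1} of Theorem \ref{thm:MSH_reln_1} is the engine: it expresses a genus-one correlation function built from $\til{e}_W$ (on the left, via $\til{\cE}_W$ after the $S$-transformation) in terms of a sum over the modules $X$ appearing in $S(\mathrm{ch}_A)$ of genus-one functions built from $\Phi_W\circ(f\tens\Id_{W'})$ (on the right, via the intertwining operator $\cY_{\til{\Psi}_W,X}$). The key point is that the right-hand side manifestly factors through $\Phi_W$, so whatever vector of $V'\cong V$ the left-hand side ``sees'' under $(\til{e}_W)'$ must already lie in the image of $\Phi_W$.

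First I would reduce to checking the hypothesis of Theorem \ref{thm:rigidity_criterion} for an arbitrary object $W$ of $\cC$; by replacing $W$ with a suitable direct sum and using that $\cR_{W,A}^2$ is determined by how $A$ decomposes under the finite cyclic group $\langle g\rangle$ (as in \cite[Sections 5.3--5.4]{CM}), one arranges that $W$ splits into pieces on which $(\cR_{W,A}^2)^{-1}=\Id_W\tens g^m$ for a single $m$, so that Theorem \ref{thm:MSH_reln_1} applies to each piece; rigidity of a direct sum is equivalent to rigidity of each summand, so no generality is lost. Next, I would make the left-hand side of \eqref{eqn:MSH_reln_1} explicit. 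Using Proposition \ref{prop:Z_Z_til} (or rather its $\til{e}_W$-analogue obtained from the identity $\til{\cE}_W=\varphi^{-1}\circ A_{-1}(\Omega(Y_W))$ of Subsection \ref{sec:contragredient}) together with the definition of the $S$-transformation, the left-hand side, as a function of $\tau$ and expanded in powers of $z$ and $\log z$, has coefficients that are $S$-transformed pseudo-traces of the form $S\big(\tr_A (g^m\circ Y_A)(\cU(1)\til{e}_W(\til{w}_{h,k});\tau)\big)$; since $A$ has a linear $S$-transformation of characters by Assumption \ref{assum:rigidity}, and since fixed-point orbifold theory (via \cite[Equation 1.15]{Hu-mod_inv} and the vanishing of pseudo-traces of commutators, plus the rationality-type hypothesis on $A$) controls the twisted trace $\tr_A(g^m\circ Y_A)(\cdot;\tau)$, these coefficients are honest linear combinations of characters $\mathrm{ch}_X$ and their $S$-inverses, hence nonzero for appropriate $\til{w}$. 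The right-hand side coefficients, on the other hand, all factor through $\Phi_W$; comparing the two and using the linear independence of characters of distinct irreducibles (\cite[Theorem 5.3.1]{Zh}, \cite[Lemma 5.10]{CM}), one concludes that for every $\til{w}\in W\tens W'$ the value $\big(\til{e}_W(\til{w})\big)$, viewed through $(\til{e}_W)'$ after applying $\varphi$ and the $S$-transformation dictionary, lies in $\im\Phi_W$. Spelling this out gives $\im\,(\til{e}_W)'\subseteq\im\Phi_W$; since $f=e^{2\pi iL(0)}\theta_W^{-1}$ is an isomorphism, the factor $f\tens\Id_{W'}$ does not shrink the image. Theorem \ref{thm:rigidity_criterion} then yields that $\Phi_W$ is an isomorphism and $W$ is rigid with dual $W'$.

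Finally, once every object is rigid with dual the $\til{\omega}$-contragredient, $\cC$ is a rigid braided tensor category; the twist $\theta_W=e^{2\pi iL(0)}$ (with $L(0)$ taken for $\til{\omega}$) together with $\theta_{W'}=\theta_W'$ — which holds because $\theta_W$ is built from the Virasoro zero-mode and \eqref{diag:f'_def} applies — makes $\cC$ a ribbon category, completing the proof. The main obstacle I anticipate is the bookkeeping in extracting, from \eqref{eqn:MSH_reln_1}, a clean statement that the \emph{entire} image of $(\til{e}_W)'$ is captured: one must track carefully how the $S$-transformation, the branch conventions for $\log z$, and the coefficient-extraction in $z$ and $\log z$ interact, and must verify that the vectors $\til{e}_W(\til{w}_{h,k})$ range over enough of $V$ (equivalently that $\til{e}_W$ is surjective, which follows since $\til{\cE}_W=\varphi^{-1}\circ A_{-1}(\Omega(Y_W))$ and $Y_W$ is ``surjective'' in the appropriate sense). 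The orbifold input on twisted traces of $A$, needed to guarantee the left-hand side is a genuine combination of characters rather than involving pseudo-traces, is the other delicate point, but it is exactly the content packaged into Assumption \ref{assum:rigidity} together with the standard facts recalled after it.
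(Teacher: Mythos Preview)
Your overall strategy is right---use Theorem \ref{thm:MSH_reln_1} to verify the criterion of Theorem \ref{thm:rigidity_criterion}---and the paper does exactly this. But there are two real gaps in your execution.

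First, you try to handle arbitrary $W$ by ``splitting into pieces on which $(\cR_{W,A}^2)^{-1}=\Id_W\tens g^m$,'' but you do not justify why such a splitting exists for a non-semisimple $W$. The paper avoids this entirely by first reducing to \emph{simple} $W$ via \cite[Theorem 4.4.1]{CMY2} (in a finite-length category, rigidity of all simples implies rigidity of all objects). Simplicity of $W$ then gives the monodromy condition for free: each $W\tens A_n$ is simple (since the $A_n$ are simple currents), so $\cR_{W,A_n}^2$ is a scalar, and the hexagon axiom forces these scalars to form a character of $\ZZ/\vert g\vert\ZZ$. Simplicity also makes $f=e^{2\pi iL(0)}\theta_W^{-1}$ a nonzero scalar, which the paper uses explicitly.

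Second, and more seriously, you skip the step that actually converts the trace identity \eqref{eqn:MSH_reln_1} into the image containment $\im(\til{e}_W)'\subseteq\im\Phi_W$. Saying ``the right-hand side factors through $\Phi_W$'' is not enough: \eqref{eqn:MSH_reln_1} equates two \emph{numbers} (functions of $\tau$), and one must unpack the right-hand trace to exhibit it as a pairing $\langle\Phi_W(\text{something}),\til{w}\rangle$. The paper does this by expanding $\tr_{W\tens X}$ over a basis of $(W\tens X)_{[h]}$, applying the $\Omega_0$ and $A_0$ operations to move the intertwining operator to the other slot, and then invoking Proposition \ref{prop:Phi_and_Phi_prime} ($\Phi_W=\Phi_W'\circ\delta_{W\tens W'}$) to pass from $\Phi_W'$ back to $\Phi_W$; see equations \eqref{eqn:rig_calc_1}--\eqref{eqn:rig_calc_2}. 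This produces, for each $n,h$, an explicit vector $c^{-1}\sum_i C_h(v_i)\varphi(v_i')\in V'$ whose image under $(\til{e}_W)'$ lies in $\im\Phi_W$. Since $V$ is simple, one such nonzero vector suffices, and nonvanishing is established by showing that the twisted trace $\tr_A(g^m\circ Y_A)(\cdot;\tau)=\sum_n e^{2\pi imn/\vert g\vert}\mathrm{ch}_{A_n}$ cannot vanish identically (linear independence of characters of the distinct simple $V$-modules $A_n$, together with invertibility of the $S$-transformation). Your appeal to surjectivity of $\til{e}_W$ and to ``linear independence of characters'' does not by itself produce the preimage under $\Phi_W$; that is precisely what the adjoint-intertwining-operator computation supplies.
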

\begin{proof}
Since every grading-restricted generalized $V$-module has finite length, it is enough to show that all simple $V$-modules are rigid (see \cite[Theorem 4.4.1]{CMY2}). Lemma \ref{lem:A_simple_monodromy} shows that Theorem \ref{thm:MSH_reln_1} applies to any simple $V$-module $W$, so we will use this theorem to check the rigidity criterion of Theorem \ref{thm:rigidity_criterion}. Since $V$ is simple, $V$ is generated as a module for itself by any non-zero $v\in V$. Thus recalling the $V$-module isomorphism $\varphi: V\rightarrow V'$, if we can show that $(\til{e}_W)'(\varphi(v))\in\im \Phi_W$ for at least one non-zero $v\in V$, then it will follow that  $(\til{e}_W)'(\varphi(v))\in\im \Phi_W$ for all $v\in V$. That is, $\im \,(\til{e}_W)'$ will be a $V$-submodule of $\im\Phi_W$ and thus $W$ will be rigid by Theorem \ref{thm:rigidity_criterion}. In fact, $\im\,(\til{e}_W)'$ will be a non-zero submodule of $\im\Phi_W$: since $W$ is simple and thus non-zero, the map $\til{e}_W$ is non-zero and thus surjective because $V$ is simple; hence its contragredient $(\til{e}_W)'$ is injective. However, it is not necessary to know that $\im\,(\til{e}_W)'\neq 0$ in order to apply Theorem \ref{thm:rigidity_criterion}.

Now to show that $(\til{e}_W)'(\varphi(v))\in\im \Phi_W$ for at least one non-zero $v\in V$, fix an arbitrary conformal weight $n\in\NN$ and choose a basis $\lbrace v_i\rbrace_{i=1}^{\dim V_{(n)}}$ of the conformal weight space $V_{(n)}$ with respect to $\til{\omega}$. Then let $\lbrace v_i'\rbrace_{i=1}^{\dim V_{(n)}}$ be the dual basis of $V_{(n)}$ in the sense that
\begin{equation*}
 \langle\varphi(v_i'),v_j\rangle =\delta_{i,j}.
\end{equation*}
Then for any $\til{w}\in(W\tens W')_{[n]}$ (again using conformal weights with respect to $\til{\omega}$),
\begin{align*}
 \til{e}_W(\til{w}) & =\sum_{i=1}^{\dim V_{(n)}}\left\langle\varphi(v_i'),\til{e}_W(\til{w})\right\rangle v_i=\sum_{i=1}^{\dim V_{(n)}} \left\langle (\til{e}_W)'(\varphi(v_i')),\til{w}\right\rangle v_i.
\end{align*}
Now for any $v\in V$, consider
\begin{equation*}
 \tr_A\,(g^m\circ Y_A)\bigg(\cU(1)\left(-\frac{1}{\tau}\right)^{L(0)} v,1\bigg)q_{-1/\tau}^{L(0)-c/24}.
\end{equation*}
By \eqref{eqn:MSH_reln_1} and the surjectivity of $\til{e}_W$, this function of $\tau$ has a series expansion in powers of $q_\tau$ with no $\log q_\tau$ terms. For an arbitrary fixed weight $h\in\QQ$ (now with respect to $\omega$), let $C_h(v)$ denote the coefficient of $q_\tau^{h-c/24}$ in this expansion. Thus Theorem \ref{thm:MSH_reln_1} says that for any $\til{w}\in(W\tens W')_{[n]}$,
\begin{align}\label{eqn:rig_calc_1}
 \sum_{i=1}^{\dim V_{(n)}} & C_h(v_i)  \left\langle(\til{e}_W)'(\varphi(v_i')), \til{w}\right\rangle\nonumber\\
 & =\sum_X s_{A,X}\,\tr_{(W\tens X)_{[h]}} \cY_{\til{\Psi}_W,X}\left(\cU(1)[\Phi_W\circ(f\tens\Id_{W'})](\til{w}),1\right)\nonumber\\
 & =\sum_X\sum_{j=1}^{\dim(W\tens X)_{[h]}} s_{A,X}\left\langle b_{X;j}', \cY_{\til{\Psi}_W,X}\left(\cU(1)[\Phi_W\circ(f\tens\Id_{W'})](\til{w}),1\right)b_{X;j}\right\rangle,
\end{align}
where $\lbrace b_{X;j}\rbrace$ is a basis of $(W\tens X)_{[h]}$ and $\lbrace b_{X;j}'\rbrace$ is the corresponding dual basis of $(W\tens X)_{[h]}^*$, which we have embedded into $(W\tens X)^*$ such that $\langle b_{X;j}',b\rangle=0$ for $b\in\bigoplus_{h'\neq h} (W\tens X)_{[h']}$.

Now, since $h$ is a conformal weight for $\omega$, the dual basis vectors $b_{X;j}'$ are elements of the graded dual of $W\tens X$ with respect to $\omega$, and then Proposition \ref{prop:graded_duals_same} says they are also vectors in the contragredient module $(W\tens X)'$ with respect to $\til{\omega}$. So we now view the bilinear pairings on the right side of \eqref{eqn:rig_calc_1} as those between $W\tens X$ and its $\til{\omega}$-contragredient. As we continue to analyze \eqref{eqn:rig_calc_1} using \eqref{eqn:Omega_intw_op_def} and \eqref{eqn:Adjoint_intw_op_def}, we will slightly abuse notation and use $L(0)$, $L(1)$ to denote the vertex operator modes for $\til{\omega}$, since $\omega$ will only be relevant to the conformal weight label of $(W\tens X)_{[h]}$. Also, because $W$ is simple, the homomorphism $f\in\Endo_V(W)$ is simply a non-zero scalar multiple $c\,\Id_W$. Thus we now rewrite \eqref{eqn:rig_calc_1} as follows:
\begin{align*}
 c^{-1} & \sum_{i=1}^{\dim V_{(n)}}  C_h(v_i)  \left\langle(\til{e}_W)'(\varphi(v_i')), \til{w}\right\rangle\nonumber\\
 & = \sum_X\sum_{j=1}^{\dim(W\tens X)_{[h]}} s_{A,X} \left\langle e^{L(1)} b_{X;j}',\Omega_0(\cY_{\til{\Psi}_W,X})\left(b_{X;j},e^{-\pi i}\right)\cU(1)\Phi_W(\til{w})\right\rangle\nonumber\\
 & =\sum_X\sum_{j=1}^{\dim(W\tens X)_{[h]}} s_{A,X}\left\langle A_0(\Omega_0(\cY_{\til{\Psi}_W,X}))\left( e^{\pi i L(0)}e^{L(1)} b_{X;j},e^{\pi i}\right)e^{L(1)}b_{X;j}',\cU(1)\Phi_W(\til{w})\right\rangle.
\end{align*}
Note that $A_0(\Omega_0(\cY_{\til{\Psi}_W,X}))$ is an intertwining operator of type $\binom{(W\tens W')''}{W\tens X\,(W\tens X)'}$; let us use $\til{\cY}_{W,X}$ to denote the intertwining operator $\delta_{W\tens W'}^{-1}\circ A_0(\Omega_0(\cY_{\til{\Psi}_W,X}))$ of type $\binom{W\tens W'}{W\tens X\,(W\tens X)'}$. We also use $\cU(1)^o$ to denote the operator on $\overline{W\tens W'} = ((W\tens W')')^*$ that is adjoint to the operator $\cU(1)$ on $(W\tens W')'$. Using this notation and Proposition \ref{prop:Phi_and_Phi_prime}, we continue to calculate
\begin{align}\label{eqn:rig_calc_2}
 & c^{-1}  \sum_{i=1}^{\dim V_{(n)}}  C_h(v_i)  \left\langle(\til{e}_W)'(\varphi(v_i')), \til{w}\right\rangle\nonumber\\
 & =\sum_X\sum_{j=1}^{\dim(W\tens X)_{[h]}} s_{A,X}\left\langle (\Phi_W'\circ\delta_{W\tens W'})(\til{w}), \cU(1)^{o}\til{\cY}_{W,X}\left(e^{\pi i L(0)} e^{L(1)} b_{X;j},e^{\pi i}\right)e^{L(1)} b_{X;j}'\right\rangle\nonumber\\
 & =\sum_X\sum_{j=1}^{\dim(W\tens X)_{[h]}} s_{A,X}\left\langle \Phi_W\left(\pi_n\left(\cU(1)^{o}\til{\cY}_{W,X}\left(e^{\pi i L(0)} e^{L(1)} b_{X;j},e^{\pi i}\right)e^{L(1)} b_{X;j}'\right)\right), \til{w}\right\rangle
\end{align}
where $\pi_n: \overline{W\tens W'}\rightarrow(W\tens W')_{[n]}$ is the the projection.

Both sides of \eqref{eqn:rig_calc_2} vanish for $\til{w}\in \bigoplus_{\til{h}\neq n} (W\tens W')_{[\til{h}]}$, so we have now shown that
\begin{equation*}
 (\til{e}_W)'\bigg(c^{-1}\sum_{i=1}^{\dim V_{(n)}} C_h(v_i)\varphi(v_i')\bigg)\in\im\Phi_W
\end{equation*}
for any $n\in\NN$, $h\in\QQ$. It remains to show that $\sum_{i=1}^{\dim V_{(n)}} C_h(v_i)\varphi(v_i')\neq 0$ for some $n$ and $h$. If not, then the linear independence of the $\varphi(v_i')$, ranging over all conformal weights $n$, would imply that $C_h(v)=0$ for all $h\in\QQ$, $v\in V$. By the definition of $C_h(v)$, this would mean
\begin{equation}\label{eqn:trace_vanish}
  \tr_A\,(g^m\circ Y_A)\bigg(\cU(1)\left(-\frac{1}{\tau}\right)^{L(0)} v,1\bigg)q_{-1/\tau}^{L(0)-c/24} =0
\end{equation}
for all $v\in V$ and $\tau\in\mathbb{H}$. Now the decomposition \eqref{eqn:A_decomp} yields
\begin{equation*}
 g^m\circ Y_A=\bigoplus_{n\in\ZZ/\vert g\vert\ZZ} e^{2\pi i mn/\vert g\vert} Y_{A_n},
\end{equation*}
so \eqref{eqn:trace_vanish} would imply $\sum_{n\in\ZZ/\vert g\vert\ZZ} e^{2\pi imn/\vert g\vert} S(\mathrm{ch}_{A_n})=0$. But since the $S$-transformation is invertible, this would contradict the linear independence of characters of non-isomorphic irreducible $V$-modules; so in fact $\sum_{i=1}^{\dim V_{(n)}} C_h(v_i)\varphi(v_i')\neq 0$ for some $n$ and $h$, as required.
\end{proof}

\begin{rem}\label{rem:coevaluation}
The proof of Theorem \ref{thm:rigidity} simplifies in the case that $A=V$, $g=\Id_V$, $\til{\omega}=\omega$, and $V_{(0)}=\CC\vac$ (that is, $V$ has positive energy). In this case, we can choose the isomorphism $\varphi: V\rightarrow V'$ to satisfy $\langle\varphi(\vac),\vac\rangle=1$, and then \eqref{eqn:rig_calc_2} specializes to
 \begin{align*}
  (\til{e}_W)'& (\varphi(\vac))\nonumber\\ &=\frac{\sum_X\sum_{j=1}^{\dim(W\tens X)_{[h]}} s_{V,X}\,\Phi_W\left(\pi_0\left(\cU(1)^{o}\til{\cY}_{W,X}\left(e^{\pi i L(0)} e^{L(1)} b_{X;j},e^{\pi i}\right)e^{L(1)} b_{X;j}'\right)\right)}{\sum_X s_{V,X}\dim X_{[h]}},
 \end{align*}
 where $h\in\QQ$ is chosen so that the denominator is non-zero. Combining this formula with the proof of Proposition \ref{prop:Phi_iso_rigid}, we obtain a somewhat explicit formula for the coevaluation:
 \begin{equation*}
  i_W(\vac)=\frac{\sum_X\sum_{j=1}^{\dim(W\tens X)_{[h]}} s_{V,X}\,\pi_0\left(\cU(1)^{o}\til{\cY}_{W,X}\left(e^{\pi i L(0)} e^{L(1)} b_{X;j},e^{\pi i}\right)e^{L(1)} b_{X;j}'\right)}{\sum_X s_{V,X}\dim X_{[h]}}.
 \end{equation*}
By working out the intertwining operators $\til{\cY}_{W,X}$ precisely, one can then obtain a formula for the categorical dimension of $W$, defined by $\til{e}_W\circ i_W=(\dim_\cC W)\Id_V$:
\begin{equation*}
 \dim_\cC W=\frac{\sum_X s_{V,X}\,\dim(W\tens X)_{[h]}}{\sum_X s_{V,X} \dim X_{[h]}}.
\end{equation*}
On the other hand, \eqref{eqn:S_ch_W} implies $\dim_\cC W=\frac{s_{W,V}}{s_{V,V}}$, where for irreducible $V$-modules $X$, the $S$-matrix entries $s_{W,X}$ are defined by $S(\mathrm{ch}_W)=\sum_X s_{W,X}\,\mathrm{ch}_X$. These two categorical dimension formulas are related by the Verlinde formula \cite{Ve, MS, Hu-Verlinde}.
\end{rem}

We note the $A=V$, $g=\Id_V$, $\til{\omega}=\omega$ case of Theorem \ref{thm:rigidity} as a corollary:
\begin{cor}\label{cor:rigidity}
 Let $V$ be a strongly finite vertex operator algebra. If $S(\mathrm{ch}_V)$ is a linear combination of characters, then the tensor category $\cC$ of grading-restricted generalized $V$-modules is rigid. In particular, $\cC$ is a finite braided ribbon category.
\end{cor}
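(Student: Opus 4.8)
The plan is to deduce Corollary~\ref{cor:rigidity} as the special case of Theorem~\ref{thm:rigidity} in which $A=V$, the automorphism $g$ is the identity, and $\til{\omega}=\omega$. So the first step is to check that Assumption~\ref{assum:rigidity} holds in this situation. Since $V$ is strongly finite, it is by definition $\NN$-graded, simple, self-contragredient, and $C_2$-cofinite, so it satisfies the first bullet; the second bullet is precisely the hypothesis that $S(\mathrm{ch}_V)$ is a linear combination of characters $\mathrm{ch}_X$ of $V$-modules. Taking $A=V$ and $g=\Id_V$, the fixed-point subalgebra of $g$ is all of $A=V$, so the third bullet is trivial. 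For the fourth bullet I would take $v=0$, so that $\til{\omega}=\omega+v_{-2}\vac=\omega$ and the condition $v_0\omega=0$ holds vacuously; then $V$ is $\NN$-graded and self-contragredient with respect to $\til{\omega}=\omega$ by hypothesis, and the twist $\theta_W=e^{2\pi i L(0)}$ and contragredient module $W'$ reduce to the usual ones defined via $\omega$.

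With Assumption~\ref{assum:rigidity} verified, the second step is simply to invoke Theorem~\ref{thm:rigidity}, which then asserts that $\cC$ is rigid, with duals given by contragredient modules, and that $\cC$ is a braided ribbon category with twist $e^{2\pi i L(0)}$. Finally, for the ``in particular'' clause, I would recall from \cite{Hu-C2} and Subsection~\ref{subsec:vrtx_tens_cat} that, since $V$ is $\NN$-graded and $C_2$-cofinite, $\cC$ is a finite abelian category carrying the braided tensor category structure of \cite{HLZ1}--\cite{HLZ8}; simplicity of $V$ gives $\Endo_\cC V=\CC\,\Id_V$, so $\cC$ is a finite tensor category in the sense of the definition in Subsection~2.1, and combined with the rigidity and ribbon structure already obtained, $\cC$ is a finite braided ribbon category.

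I do not expect any genuine obstacle here: all of the analytic and tensor-categorical content has already been carried out in the proofs of Theorems~\ref{thm:MSH_reln_1}, \ref{thm:rigidity_criterion}, and \ref{thm:rigidity}. The only thing that needs care is the (routine) bookkeeping check that the degenerate choices $A=V$, $g=\Id_V$, $v=0$ are legitimately allowed in Assumption~\ref{assum:rigidity} and that they make all the auxiliary structures ($\til{\omega}$-contragredients, $\theta$, the decomposition \eqref{eqn:A_decomp}, the simple-current identity \eqref{eqn:simple_current}) collapse to the expected trivial form, so that the statement of Theorem~\ref{thm:rigidity} specializes to exactly the assertion of the corollary.
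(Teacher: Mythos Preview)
Your proposal is correct and matches the paper's approach exactly: the paper introduces the corollary simply as ``the $A=V$, $g=\Id_V$, $\til{\omega}=\omega$ case of Theorem~\ref{thm:rigidity}'' without further argument. Your additional verification that Assumption~\ref{assum:rigidity} holds in this degenerate case, and your justification of the ``finite'' part of the conclusion via \cite{Hu-C2} and simplicity of $V$, are appropriate elaborations of what the paper leaves implicit.
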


\subsection{Rationality}\label{subsec:rat}

For this subsection, we continue in the setting of Assumption \ref{assum:rigidity}. Since the tensor category $\cC$ of grading-restricted generalized $V$-modules is rigid by Theorem \ref{thm:rigidity}, the hypotheses of Theorem \ref{thm:factorizability} hold for $V$ when we use $\til{\omega}$ as conformal vector. Thus $\cC$ is a factorizable finite ribbon category. Our goal is to prove that $\cC$ is semisimple, and therefore $V$ is rational, under the following additional assumption:
\begin{assum}\label{assum:rationality}
 The Zhu algebra of $A$ with respect to $\omega$ is semisimple.
\end{assum}

We begin with some properties of projective covers. By \cite{Hu-C2}, every irreducible $V$-module $W$ has a projective cover; recall this is a pair $(P_W,p_W)$ such that $P_W$ is a projective $V$-module, $p_W: P_W\rightarrow W$ is a surjection, and for any surjection $p: P\rightarrow W$ with $P$ projective, there is a surjection $q: P\rightarrow P_W$ such that 
\begin{equation*}
 \xymatrixcolsep{3pc}
 \xymatrix{
 & P \ar[d]^{p} \ar[ld]_{q} \\
 P_W \ar[r]_{p_W} & W\\}
\end{equation*}
commutes. We need a couple elementary lemmas about the projective covers:
\begin{lem}\label{lem:PW_indecomposable}
 For any irreducible $V$-module $W$, the projective cover $P_W$ is indecomposable.
\end{lem}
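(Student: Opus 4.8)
The plan is to use the defining universal property of the projective cover together with the fact that $W$ is irreducible, so that its endomorphism ring is trivial. Suppose for contradiction that $P_W \cong P_1 \oplus P_2$ with both $P_1$ and $P_2$ nonzero. Each $P_i$ is then a direct summand of a projective module, hence projective. Composing the inclusions $P_i \hookrightarrow P_W$ with $p_W$ gives module maps $p_i \colon P_i \to W$ with $p_1 \oplus p_2 = p_W$ (under the identification $P_W = P_1 \oplus P_2$); since $p_W$ is surjective, at least one of $p_1$, $p_2$ is surjective onto $W$ because $W$ is irreducible and $\im p_1 + \im p_2 = W$ forces one of the images to be all of $W$.

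Say $p_1 \colon P_1 \twoheadrightarrow W$ is surjective with $P_1$ projective. By the universal (minimality) property of the projective cover, there is a surjection $q \colon P_1 \to P_W$ with $p_W \circ q = p_1$. On the other hand, the inclusion $\iota_1 \colon P_1 \hookrightarrow P_W$ satisfies $p_W \circ \iota_1 = p_1$ as well. The first step is to compare $q$ and $\iota_1$: both are maps $P_1 \to P_W$, and $\iota_1$ is a split injection (with splitting the projection $\pi_1 \colon P_W \to P_1$). I would consider the composition $\pi_1 \circ q \colon P_1 \to P_1$. Because $q$ is surjective and $P_W$ has finite length (as $\cC$ is a finite abelian category by \cite{Hu-C2}, cf. Proposition \ref{prop:C2-cofin-mod-cat}), counting lengths gives $\ell(P_1) \geq \ell(P_W) \geq \ell(P_1)$, hence $\ell(P_W) = \ell(P_1)$, so $q$ is in fact an isomorphism and $\ell(P_2) = 0$, i.e. $P_2 = 0$, a contradiction.

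So the real content reduces to the length-counting observation: \emph{a projective cover $(P_W, p_W)$ of a finite-length module $W$ cannot admit a surjection from a proper direct summand of itself}, which follows because any surjection between finite-length modules that is ``minimal'' in the cover sense must be an isomorphism once the source has length $\leq$ that of $P_W$. Concretely: given the surjection $q \colon P_1 \to P_W$ coming from the universal property, $\ell(P_1) \geq \ell(P_W)$; but $P_1$ is a summand of $P_W$, so $\ell(P_1) \leq \ell(P_W)$; hence equality, $q$ is an isomorphism, and $P_2 = 0$. The one point requiring a little care is extracting the surjectivity of $p_1$ from irreducibility of $W$ — this is immediate since $\im p_1$ and $\im p_2$ are submodules of the irreducible module $W$ whose sum is $W$, so one of them equals $W$.

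The main (and essentially only) obstacle is making sure all the relevant finiteness is in place: that $P_W$ has finite length, so that length is additive over the decomposition and the counting argument closes. This is guaranteed because $V$ is $\NN$-graded and $C_2$-cofinite, so by Proposition \ref{prop:C2-cofin-mod-cat} and \cite[Corollary 3.16]{Hu-C2} every module in $\cC$ has finite length; in particular $P_W$ does. With that in hand the argument is purely formal, and indeed this is a standard fact about projective covers in any finite-length module category, so I would expect the author's proof to follow essentially these lines, perhaps phrased via the observation that $\ker p_W$ is a small (superfluous) submodule of $P_W$, which directly forbids a nontrivial direct-sum complement from mapping onto $W$.
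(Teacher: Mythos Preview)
Your proof is correct and follows essentially the same approach as the paper: both arguments split $P_W$, use irreducibility of $W$ to get a surjection from one summand, invoke the universal property of the projective cover to obtain a surjection $q\colon P_i \twoheadrightarrow P_W$, and then use a finiteness invariant to conclude. The only cosmetic difference is that the paper compares graded dimensions (using finite-dimensional weight spaces) rather than composition lengths, but the logic is otherwise identical.
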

\begin{proof}
 Suppose $P_W= P_1\oplus P_2$ for two submodules $P_i\subseteq P_W$; as direct summands of a projective module, $P_1$ and $P_2$ are both projective. Moreover, for at least one $i$, 
 $$p_W\vert_{P_i}: P_i\rightarrow W$$
 is non-zero and thus surjective since $W$ is irreducible. Then by the property of the projective cover, there is a surjection
 \begin{equation*}
q: P_i\rightarrow P_W  
 \end{equation*}
such that $p_W\circ q=p_W\vert_{P_i}$. Since there is also an injection $P_i\rightarrow P_W$, and since both $P_i$ and $P_W$ are grading-restricted generalized $V$-modules with finite-dimensional weight spaces, they have the same graded dimension. Thus $P_W=P_i$ for either $i=1$ or $i=2$.
\end{proof}

\begin{lem}\label{lem:PW_generator}
 For any irreducible $V$-module $W$, the projective cover $P_W$ is generated by any $\til{w}\in P_W\setminus\ker p_W$.
\end{lem}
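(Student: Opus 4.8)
The plan is to deduce the lemma from the projectivity and indecomposability of $P_W$ together with Fitting's lemma, with no input from genus-one correlation functions. Fix $\til{w}\in P_W\setminus\ker p_W$ and let $N=\langle\til{w}\rangle\subseteq P_W$ be the submodule it generates. First I would observe that $p_W(N)$ is a non-zero submodule of $W$ (it contains $p_W(\til{w})\neq 0$), hence $p_W(N)=W$ since $W$ is irreducible; that is, $p_W\vert_N\colon N\rightarrow W$ is surjective. The goal is then to upgrade this to $N=P_W$.

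To that end I would use projectivity of $P_W$: applied to the surjection $p_W\vert_N\colon N\twoheadrightarrow W$ and the morphism $p_W\colon P_W\rightarrow W$, it yields a morphism $\varphi\colon P_W\rightarrow N$ in $\cC$ with $p_W\vert_N\circ\varphi=p_W$. Composing with the inclusion $\iota_N\colon N\hookrightarrow P_W$ gives an endomorphism $f:=\iota_N\circ\varphi\in\Endo_\cC P_W$ enjoying two properties at once: $\im f\subseteq N$, and $p_W\circ f=p_W$, the latter forcing $\im(f-\Id_{P_W})\subseteq\ker p_W$.

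Now I would invoke that $P_W$ has finite length (by \cite[Corollary 3.16]{Hu-C2}) and is indecomposable (Lemma \ref{lem:PW_indecomposable}), so that by Fitting's lemma the endomorphism $f$ is either an automorphism or nilpotent. If $f$ is an automorphism, then $N\supseteq\im f=P_W$, so $N=P_W$ and we are done. If instead $f^n=0$ for some $n$, then $\Id_{P_W}-f$ is invertible with inverse $\sum_{i=0}^{n-1}f^i$, whence $\ker p_W\supseteq\im(\Id_{P_W}-f)=P_W$; this contradicts the surjectivity of $p_W$ onto $W\neq 0$. Hence only the first case can occur, and $N=P_W$.

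I do not expect any serious obstacle here: the one thing to get right is the realization that a single endomorphism $f$ of $P_W$ can be arranged to have image inside $N$ while simultaneously agreeing with the identity modulo $\ker p_W$, after which indecomposability of $P_W$ forces $f$ to be invertible. The same idea can equivalently be phrased as: $\ker p_W$ is a superfluous submodule of $P_W$, so that $N+\ker p_W=P_W$ already implies $N=P_W$ — a standard feature of projective covers, whose only prerequisites in this setting (finite length of $P_W$ and locality of $\Endo_\cC P_W$, which follows from indecomposability plus finite length) are available.
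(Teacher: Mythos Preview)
Your proof is correct and follows essentially the same approach as the paper: both lift $p_W$ through the surjection $p_W\vert_N$ using projectivity of $P_W$, obtain an endomorphism $f=\iota_N\circ\varphi$ of $P_W$ satisfying $p_W\circ f=p_W$, and then apply Fitting's lemma (via Lemma~\ref{lem:PW_indecomposable} and finite length) to rule out nilpotency. The only cosmetic difference is that the paper excludes nilpotency by iterating $p_W\circ f^N=p_W$ directly, whereas you invert $\Id_{P_W}-f$; these are equivalent manipulations.
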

\begin{proof}
 For $\til{w}\in P_W\setminus\ker p_W$, let $\til{W}$ denote the submodule of $P_W$ generated by $\til{w}$. Since $p_W(\til{w})\neq 0$ and $W$ is irreducible, $p_W\vert_{\til{W}}: \til{W}\rightarrow W$ is surjective. So by projectivity of $P_W$, there exists $q: P_W\rightarrow\til{W}$ such that
 \begin{equation*}
  \xymatrixcolsep{3pc}
 \xymatrix{
 & P_W \ar[d]^{p_W} \ar[ld]_{q} \\
\til{W} \ar[r]_{p_W\vert_{\til{W}}} & W\\}
 \end{equation*}
commutes. As the inclusion $i: \til{W}\rightarrow P_W$ satisfies $p_W\circ i=p_W\vert_{\til{W}}$, we get
\begin{equation*}
 p_W\circ(i\circ q)^N=p_W
\end{equation*}
for all $N\in\NN$. Since $i\circ q$ is an endomorphism of a (finite-length) indecomposable module by Lemma \ref{lem:PW_indecomposable}, it is either an isomorphism or nilpotent by Fitting's Lemma. It cannot be nilpotent since $p_W\neq 0$, so it is an isomorphism. In particular, $i$ is surjective as well as injective, and we get $P_W=\til{W}$.
\end{proof}

We use the factorizability of $\cC$ in the next proposition:
\begin{prop}\label{prop:unimodular}
 For any irreducible $V$-module $W$, there is an injection $W\rightarrow P_W$.
\end{prop}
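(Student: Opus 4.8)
The plan is to exhibit the required injection as the inclusion of a simple submodule of $P_W$, which reduces the statement to the claim that $\mathrm{soc}(P_W)\cong W$, and then to deduce that claim from unimodularity of $\cC$.

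First I would record two general facts about finite tensor categories. In such a category the projective and injective objects coincide, so $P_W$ is injective; since $P_W$ is indecomposable by Lemma \ref{lem:PW_indecomposable}, it is the injective hull of its socle, and in particular $\mathrm{soc}(P_W)$ is a \emph{simple} $V$-module. Hence the proposition is equivalent to the assertion $\mathrm{soc}(P_W)\cong W$, and it is here that the factorizability hypothesis is used: a factorizable finite ribbon category is unimodular, i.e.\ its distinguished invertible object $D$ satisfies $D\cong V$ (see \cite{ENO}, and also \cite{Sh}).

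Next I would invoke the description of socles of indecomposable projectives in a finite tensor category coming from \cite{ENO}: for a simple object $S$ one has $\mathrm{soc}(P_S)\cong S^{**}\otimes D^{-1}$, up to the precise normalization of the distinguished invertible object. Applying unimodularity gives $\mathrm{soc}(P_W)\cong W^{**}$, and then the ribbon (hence pivotal) structure on $\cC$ supplies a natural isomorphism $W^{**}\cong W$. Therefore $\mathrm{soc}(P_W)\cong W$, and composing this isomorphism with the canonical inclusion $\mathrm{soc}(P_W)\hookrightarrow P_W$ produces the desired injection $W\to P_W$.

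The only point demanding care is the bookkeeping in the last two steps: one must track exactly where the distinguished invertible object and the double-dual functor enter the formula for $\mathrm{soc}(P_S)$, so as to be sure that unimodularity together with pivotality really does collapse $\mathrm{soc}(P_W)$ onto $W$ and not onto some nontrivial twist of it. I expect no analytic difficulty whatsoever; the whole argument is categorical and amounts, once $\cC$ is seen to be unimodular, to a direct application of the relevant results of \cite{ENO}.
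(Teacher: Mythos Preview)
Your proof is correct and rests on the same key input as the paper's: factorizability implies unimodularity via \cite{ENO}. The execution differs slightly. The paper uses unimodularity in the form $P_W'\cong P_{W'}$ (the contragredient of a projective cover is the projective cover of the contragredient) and then simply dualizes the surjection $p_{W'}:P_{W'}\twoheadrightarrow W'$ to obtain the injection
\[
W\cong(W')'\xrightarrow{\,p_{W'}'\,} P_{W'}'\cong (P_W')'\cong P_W.
\]
You instead compute $\mathrm{soc}(P_W)$ via the ENO formula for socles of indecomposable projectives, which requires the auxiliary facts that projectives coincide with injectives and that an indecomposable injective has simple socle, together with the pivotal isomorphism $W^{**}\cong W$. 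Both arguments are short; the paper's is marginally more direct since it sidesteps the socle formula and the projective--injective coincidence, while yours makes the structural reason (the Nakayama-type permutation on simples trivializes under unimodularity plus pivotality) more explicit.
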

\begin{proof}
Since $\cC$ is factorizable, \cite[Proposition 4.5]{ENO} (see also \cite[Proposition 8.10.10]{EGNO}) shows that $\cC$ is \textit{unimodular}, which means that for any irreducible $V$-module $W$, $P_W'\cong P_{W'}$ (see \cite[Sections 6.4 and 6.5]{EGNO}). Thus the surjection $p_{W'}: P_{W'}\rightarrow W'$ induces an injection
\begin{equation*}
 W\cong (W')'\xrightarrow{p_{W'}'} P_{W'}'\cong(P_W')'\cong P_W
\end{equation*}
for any irreducible $V$-module $W$.
\end{proof}

Now to show that the rigid tensor category $\cC$ is semisimple, it is enough to show that its unit object $V$ is projective as a $V$-module (see for example \cite[Corollary 4.2.13]{EGNO}, the proof of \cite[Theorem 5.24]{CM}, or \cite[Lemma 3.6]{McR2}). We aim to prove this by showing $P_W\cong W$ for any module $W$ in a suitable subset of simple $V$-modules that contains $V$.

 To define suitable subcategories of $\cC$, consider the open Hopf link endomorphisms $h_{A_n,W}$ \eqref{eqn:open_Hopf_link} for modules $W$ in $\cC$, with $A_n$ as in \eqref{eqn:A_decomp}. Since the open Hopf link gives a representation of the tensor ring of $\cC$ on $\Endo_V(W)$ (see for example \cite[Section  3.1.3]{CG}), \eqref{eqn:simple_current} yields
 \begin{equation*}
  h_{A_{m+n},W} =h_{A_m,W}\circ h_{A_n,W}
 \end{equation*}
for $m,n\in\ZZ/\vert g\vert\ZZ$. In particular, $h_{A_1,W}^{\vert g\vert}=\Id_W$, so $h_{A_1,W}$ is diagonalizable with eigenvalues of the form $e^{2\pi i n/\vert g\vert}$, $0\leq n\leq\vert g\vert-1$, for any module $W$ in $\cC$. Also, because $h_{A_1,W}$ is a $V$-module homomorphism, the $h_{A_1,W}$-eigenspaces are $V$-module direct summands of $W$.

Because categorical dimension also gives a representation of the tensor ring of $\cC$ on $\CC$, $\dim_\cC A_1$ is also a $\vert g\vert$th root of unity. Now for $0\leq n\leq\vert g\vert-1$, define $\cC^n$ to be the full subcategory of $\cC$ whose objects $W$ satisfy 
\begin{equation*}
 \frac{1}{\dim_\cC A_1} h_{A_1,W}=e^{2\pi in/\vert g\vert}\Id_W.
\end{equation*}
By the above discussion, every indecomposable module in $\cC$ is an object of $\cC^n$ for some $n$, and $V$ is an object of $\cC^{0}$ because $h_{A_1,V}=(\dim_\cC A_1)\Id_V$. Moreover, because $h_{A_1,\bullet}$ is a natural automorphism of the identity functor on $\cC$,
\begin{equation*}
 \hom_\cC(W,X)=0
\end{equation*}
whenever $W$ and $X$ are objects of $\cC^m$ and $\cC^n$, respectively, with $m\neq n$ in $\ZZ/\vert g\vert\ZZ$. Thus as a category, $\cC$ has a direct sum decomposition $\cC=\bigoplus_{n\in\ZZ/\vert g\vert\ZZ} \cC^n$. This implies that each $\cC^n$ is closed under subquotients, and if an irreducible $V$-module is an object of $\cC^n$, then so is its projective cover.

Our goal is to show that $P_W\cong W$ when $W$ is an irreducible $V$-module in $\cC^{0}$, which will show in particular that $V$ is a projective $V$-module. We first characterize $\cC^0$ as follows:
\begin{lem}\label{lem:C0_char}
 A grading-restricted generalized $V$-module $W$ is an object of $\cC^0$ if and only if $\cR_{A_1,W}^2=\Id_{A_1\tens W}$.
\end{lem}
\begin{proof}
 If $W$ is an object of $\cC^{0}$, then
\begin{equation*}
 \Id_W=\frac{1}{\dim_{\cC} A_1} h_{A_1,W}= l_W\circ f\circ\cA_{A_1',A_1,W}\circ(\Id_{A_1'}\tens\cR_{A_1,W}^2)\circ\cA_{A_1',A_1,W}^{-1}\circ f^{-1}\circ l_W^{-1},
\end{equation*}
where because $A_1$ is a simple current, $f:=\frac{1}{\dim_\cC A_1} e_{A_1}$ is an isomorphism with inverse
\begin{equation*}
 f^{-1}=\cR_{A_1,A_1'}\circ(\theta_{A_1}\tens\Id_{A_1'})\circ i_{A_1}.
\end{equation*}
It follows that $\Id_{A_1'}\tens\cR_{A_1,W}^2=\Id_{A_1'\tens(A_1\tens W)}$, and this implies 
\begin{equation}\label{eqn:mono_calc}
 F\circ(\Id_{A_1}\tens(\Id_{A_1'}\tens\cR_{A_1,W}^2))\circ F^{-1}=\Id_{A_1\tens W},
\end{equation}
where
\begin{equation*}
 F=l_{A_1\tens W}\circ(\til{e}_{A_1}\tens\Id_{A_1\tens W})\circ\cA_{A_1,A_1',A_1\tens W};
\end{equation*}
again $\til{e}_{A_1}$ is invertible because $A_1$ is a simple current. Then by naturality of the associativity and unit isomorphisms, \eqref{eqn:mono_calc} reduces to $\cR_{A_1,W}^2=\Id_{A_1\tens W}$. Conversely, the definitions show that if $\cR_{A_1,W}^2=\Id_{A_1\tens W}$, then $h_{A_1,W}=(\dim_\cC A_1)\Id_W$, so $W$ is an object of $\cC^{0}$.
\end{proof}

Now by Lemma \ref{lem:C0_char}, the simple current property \eqref{eqn:simple_current}, and the hexagon axiom, $W$ is an object of $\cC^{0}$ if and only if $\cR_{A_n,W}^2=\Id_{A_n\tens W}$ for all $n\in\ZZ/\vert g\vert\ZZ$ (see for example \cite[Theorem 2.11(2)]{CKL}), that is, if and only if $\cR_{A,W}^2=\Id_{A\tens W}$. Then by \cite[Proposition 2.65]{CKM1}, $W$ is an object of $\cC^{0}$ if and only if the induced module $A\tens W$ (see for example \cite[Section 2.7]{CKM1}) is a grading-restricted generalized $A$-module. Note that as a $V$-module, $A\tens W\cong\bigoplus_{n\in\ZZ/\vert g\vert\ZZ} A_n\tens W$, so because $V=A_0$, $A\tens W$ contains $W$ as a $V$-module direct summand. We will use these induced modules and the assumed semisimplicity of the Zhu algebra of $A$ to prove that irreducible $V$-modules $W$ in $\cC^{0}$ are projective. Before doing so, we state two further lemmas:
\begin{lem}\label{lem:A_n_tens_W_in_C0}
 If $W$ is an object of $\cC^0$, then so is $A_n\tens W$ for $n\in\ZZ/\vert g\vert\ZZ$.
\end{lem}
\begin{proof}
 By the hexagon axiom and Lemma \ref{lem:C0_char}, it is enough to show that $\cR_{A_1,A_n}^2=\Id_{A_1\tens A_n}$. But this holds because under the isomorphism $A_1\tens A_n\cong A_{n+1}$ of \eqref{eqn:simple_current}, we may identify the tensor product intertwining operator $\cY_{A_1,A_n}$ with $Y_A\vert_{A_1\otimes A_n}$, which has no monodromy.
\end{proof}

\begin{lem}\label{lem:P_An_tens_W}
 For any simple $V$-module $W$ and $n\in\ZZ/\vert g\vert\ZZ$, $(A_n\tens P_W,\Id_{A_n}\tens p_W)$ is a projective cover of $A_n\tens W$ in $\cC$.
\end{lem}
\begin{proof}
 First, $A_n\tens P_W$ is projective because $P_W$ is projective and $A_n$ is rigid, so there is a surjection $q: A_n\tens P_W\rightarrow P_{A_n\tens W}$ such that the diagram
\begin{equation*}
 \xymatrix{
 & A_n\tens P_W \ar[ld]_q \ar[d]^{\Id_{A_n}\tens p_W} \\
 P_{A_n\tens W} \ar[r]_(.48){p_{A_n\tens W}} & A_n\tens W\\
 }
\end{equation*}
commutes. Then since $P_{A_n\tens W}$ is projective, there is an injection $\til{q}: P_{A_n\tens W}\rightarrow A_n\tens P_W$ such that $q\circ\til{q}=\Id_{P_{A_n\tens W}}$, that is, $P_{A_n\tens W}$ is a direct summand of $A_n\tens P_W$. But $A_n\tens P_W$ is indecomposable since $P_W$ is indecomposable and $A_n$ is a simple current, so $q$ is an isomorphism identifying the projective cover $(P_{A_n\tens W}, p_{A_n\tens W})$ with $(A_n\tens P_W,\Id_{A_n}\tens p_W)$.
\end{proof}

We are now ready to prove that $V$ is rational:
\begin{thm}\label{thm:orbifold_rationality}
 Under Assumptions \ref{assum:rigidity} and \ref{assum:rationality}, the category of grading-restricted generalized $V$-modules is semisimple. In particular, $V$ is rational, and if $V$ has positive energy with respect to $\til{\omega}$, then $V$ (with conformal vector $\til{\omega}$) is strongly rational.
\end{thm}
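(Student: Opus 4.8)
\textbf{Proof proposal for Theorem \ref{thm:orbifold_rationality}.}
The plan is to reduce to Theorem \ref{thm:semisimplicity}, which already gives semisimplicity of $\cC$ (hence rationality of $V$) provided $\cC$ is a factorizable finite ribbon category and the Zhu algebra $A(V,\til\omega)$ is semisimple. So there are two things to establish: first, that $\cC$ is factorizable, and second, that $A(V,\til\omega)$ is semisimple. For the first point, Assumption \ref{assum:rigidity} together with Theorem \ref{thm:rigidity} shows that $\cC$ is rigid with duals given by $\til\omega$-contragredients, so $\cC$ is a finite braided ribbon category; then Theorem \ref{thm:factorizability} (applied with the conformal vector $\til\omega$, under which $V$ is $\NN$-graded, simple, self-contragredient and $C_2$-cofinite, i.e.\ strongly finite, and whose characters $S$-transform into a linear combination of characters by the remark following Corollary \ref{cor:rigidity}) upgrades rigidity to factorizability. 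Wait — one must be slightly careful here: $S(\mathrm{ch}_V)$ being a sum of characters is needed for Theorem \ref{thm:factorizability} via Theorem \ref{thm:S_ch_W}, and this follows from semisimplicity of $A(V,\til\omega)$ by Zhu's argument as noted before Theorem \ref{thm:rationality}. So establishing semisimplicity of $A(V,\til\omega)$ is really the crux, and it also feeds the hypothesis of Theorem \ref{thm:semisimplicity} directly.

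For the second point, I would argue that $A(V,\til\omega)$ is semisimple by relating it to $A(A,\omega)$. First, by Proposition \ref{prop:Zhu_algebra_iso}, $A(V,\til\omega)\cong A(V,\omega)$ as unital associative algebras, since $\til\omega=\omega+v_{-2}\vac$ with $v$ satisfying \eqref{eqn:new_conf_cond}. Actually, to invoke Proposition \ref{prop:Zhu_algebra_iso} I need $v$ to satisfy the full condition \eqref{eqn:new_conf_cond}, not merely $v_0\omega=0$; in the applications in Section \ref{sec:applications} this stronger condition does hold for the vectors used to modify the conformal structure of $W$-algebras, so I would add it to Assumption \ref{assum:rigidity} or note that it is available in the cases of interest. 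Granting this, it remains to show $A(V,\omega)$ is semisimple. Here $V=A^g$ is the fixed-point subalgebra of the finite-order automorphism $g$, and $A$ is $\NN$-graded with semisimple Zhu algebra $A(A,\omega)$. Since $A$ is rational by Theorem \ref{thm:rationality} applied to $A$ itself (it is strongly finite with semisimple Zhu algebra), $A$ is in particular strongly rational, so by the orbifold theorem of Carnahan--Miyamoto \cite{CM} (or \cite{Mi-cyc-orb}, \cite{DLM_cpt_aut_gps}) the fixed-point subalgebra $V=A^g$ is rational. But rationality of $V$ forces $A(V,\omega)$ to be finite-dimensional semisimple by \cite[Theorem 2.2.3]{Zh}. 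This closes the loop.

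Alternatively, and perhaps more in the spirit of the paper, one can avoid citing the full orbifold rationality theorem and instead observe: by the decomposition \eqref{eqn:A_decomp} in the proof of Theorem \ref{thm:rigidity}, $A=\bigoplus_n A_n$ with each $A_n$ a simple current $V$-module, and the top level of $A$ decomposes as a direct sum of the top levels of the $A_n$; semisimplicity of $A(A,\omega)$ combined with the finite-dimensionality of Zhu algebras of $C_2$-cofinite vertex operator algebras \cite{DLM} and the module correspondence then gives that $A(V,\omega)$ is semisimple, because any $V$-module extension lifts to an $A$-module (by induction $A\otimes_V(-)$, which is exact by rigidity of $\cC$) and $A$-modules are semisimple. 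I expect this induction argument to require a little care about grading shifts and about projectivity of the induced modules, but since Theorem \ref{thm:rigidity} already gives rigidity of $\cC$, the functor $A\tensV(-)$ is exact and the argument goes through. Once $A(V,\til\omega)$ is semisimple, $S(\mathrm{ch}_W)$ is a linear combination of characters for every $V$-module $W$, Theorem \ref{thm:factorizability} shows $\cC$ is factorizable, and Theorem \ref{thm:semisimplicity} then shows $\cC$ is semisimple and $V$ is rational. The last clause, that $V$ with conformal vector $\til\omega$ is strongly rational when it has positive energy with respect to $\til\omega$, is immediate: positive energy plus self-contragredience plus $C_2$-cofiniteness plus the rationality just proved is precisely the definition.

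\textbf{Main obstacle.} The genuinely delicate point is establishing semisimplicity of $A(V,\til\omega)$ from that of $A(A,\omega)$: one must handle the change of conformal vector (via Li's $\Delta$-operator and Proposition \ref{prop:Zhu_algebra_iso}, which needs \eqref{eqn:new_conf_cond}) and the passage from $A$ to the orbifold $V=A^g$. The cleanest route is to first deduce rationality of $V$ from rationality of $A$ via the orbifold theorem and then read off semisimplicity of the Zhu algebra, but if one wants a self-contained argument using only the machinery of this paper, the induction-of-modules argument is the substantive step, and it relies essentially on the rigidity of $\cC$ proved in Theorem \ref{thm:rigidity}.
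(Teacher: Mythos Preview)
Your proposal has a genuine gap at the crux: you cannot deduce semisimplicity of $A(V,\til\omega)$ from semisimplicity of $A(A,\omega)$ by either route, and the paper never attempts to. For Option A, Assumption \ref{assum:rigidity} does \emph{not} assume $A$ is self-contragredient, so $A$ is not strongly finite and Theorem \ref{thm:rationality} does not apply to it; and even granting self-contragredience, invoking Carnahan--Miyamoto is exactly what Theorem \ref{thm:orbifold_rationality} is meant to reprove and weaken the hypotheses of, so this route is circular in spirit and forfeits the generality. For Option B, the claim ``$A$-modules are semisimple'' does not follow from semisimplicity of $A(A,\omega)$ alone---that implication is precisely the content of Theorem \ref{thm:rationality}, under the missing self-contragredience hypothesis---so the induction argument as sketched has nothing to land on. You also correctly flag that Proposition \ref{prop:Zhu_algebra_iso} needs the full condition \eqref{eqn:new_conf_cond}, not just $v_0\omega=0$, and Assumption \ref{assum:rigidity} only provides the latter. (A minor aside: Theorem \ref{thm:factorizability} does not need $S(\mathrm{ch}_V)$ to be a sum of characters; its proof accommodates pseudo-traces, cf.\ Remark \ref{rem:pt_not_necessary}, so that detour is unnecessary.)

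The paper's route is entirely different: it never establishes semisimplicity of $A(V,\til\omega)$, but instead adapts the projective-cover argument of Theorem \ref{thm:semisimplicity} to work with $A(A,\omega)$ directly. After obtaining rigidity and factorizability of $\cC$ from Theorems \ref{thm:rigidity} and \ref{thm:factorizability}, the key new idea is to decompose $\cC=\bigoplus_{n}\cC^n$ according to eigenvalues of the open Hopf link endomorphism $h_{A_1,\bullet}$, and to show that $\cC^0$ consists exactly of those $V$-modules with trivial monodromy against $A$, equivalently those whose induction $A\tens W$ is an untwisted grading-restricted $A$-module. For irreducible $W\in\cC^0$ one then studies the $A$-module surjection $\Id_A\tens p_W\colon A\tens P_W\to A\tens W$; semisimplicity of $A(A,\omega)$ splits this on the lowest conformal weight space, and projecting to the $V$-summand yields the $A(V,\omega)$-module splitting needed to run the $P_W\cong W$ argument. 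An induction on lowest conformal weight, using the simple-current identification $P_{A_n\tens W}\cong A_n\tens P_W$, handles the general case. Since $V\in\cC^0$, one concludes $V$ is projective, hence $\cC$ is semisimple.
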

\begin{proof}
As mentioned above, we will prove that $P_W\cong W$ for all simple $V$-modules $W$ in $\cC^0$. We first consider the case that the lowest conformal weight $h_W$ of $W$ (with respect to the conformal vector $\omega$) is the lowest conformal weight of both induced $A$-modules $A\tens W$ and $A\tens P_W$. Then the $A$-module surjection $\Id_A\tens p_W: A\tens P_W\rightarrow A\tens W$ restricts to a Zhu algebra module surjection between conformal-weight-$h_W$ spaces. As the Zhu algebra of $A$ is semisimple, this surjection splits, and we get
\begin{equation*}
 \til{q}: (A\tens W)_{[h_W]}\rightarrow (A\tens P_W)_{[h_W]}
\end{equation*}
such that $(\Id_A\tens p_W)\circ \til{q}=\Id_{(A\tens W)_{[h_W]}}$. Note that $\til{q}$ is also an $A(V,\omega)$-module homomorphism when we consider $A\tens W$ and $A\tens P_W$ as $V$-modules.

We want to show that the $A(V,\omega)$-module surjection 
\begin{equation*}
 p=p_W\vert_{(P_W)_{[h_W]}}: (P_W)_{[h_W]}\rightarrow W_{[h_W]}
\end{equation*}
also splits. In fact, the decomposition \eqref{eqn:A_decomp} of $A$ as a $V$-module shows there are $V$-module homomorphisms $\iota_A: V\rightarrow A$ and $\varepsilon_A: A\rightarrow V$ such that $\varepsilon_A\circ\iota_A=\Id_V$. Now define
\begin{equation*}
 q= l_{P_W}\circ(\varepsilon_A\tens\Id_{P_W})\circ\til{q}\circ(\iota_A\tens\Id_W)\circ l_W^{-1}\vert_{W_{[h_W]}}: W_{[h_W]}\rightarrow (P_W)_{[h_W]}.
\end{equation*}
Then $q$ is an $A(V,\omega)$-module homomorphism, and it is straightforward from the definitions and naturality of the unit isomorphisms that $p\circ q=\Id_{W_{[h_W]}}$.

Now for any non-zero $w\in W_{[h_W]}$, Lemma \ref{lem:PW_generator} shows that $\til{w}=q(w)$ generates $P_W$, so that
 \begin{equation*}
  P_W =\mathrm{span}\left\lbrace v_n \til{w}\,\,\vert\,\,v\in V, n\in\ZZ\right\rbrace
 \end{equation*}
by \cite[Proposition 4.5.6]{LL}. In particular,
\begin{equation*}
 (P_W)_{[h_W]} =\mathrm{span}\left\lbrace o(v)\til{w}\,\,\vert\,\,v\in V\right\rbrace = A(V,\omega)\cdot\til{w}=q(W_{[h_W]}),
\end{equation*}
so $(P_W)_{[h_W]}\cong W_{[h_W]}$ as an $A(V)$-module. It is now clear that the injection $i: W\rightarrow P_W$ guaranteed by Proposition \ref{prop:unimodular} restricts to an $A(V)$-module isomorphism
\begin{equation*}
 i\vert_{W_{[h_W]}}: W_{[h_W]}\rightarrow (P_W)_{[h_W]}.
\end{equation*}
In particular, the vector $\til{w}$ that generates $P_W$ is contained in the image of $i$, showing that $i$ is surjective as well as injective, and therefore $P_W\cong W$.

 We now order the (finitely many) distinct simple $V$-modules $W$ in $\cC^0$ by their lowest conformal weights $h_W$ and prove inductively that each $P_W\cong W$. Since projective objects in rigid tensor categories are also injective (see \cite[Proposition 6.1.3]{EGNO}), this will also show that $W$ is injective in $\cC$. For $W$ in $\cC^0$ such that $h_W$ is minimal, $h_W$ is also the lowest conformal weight of both $A\tens W\cong\bigoplus_{n\in\ZZ/\vert g\vert\ZZ} A_n\tens W$ and $A\tens P_W\cong\bigoplus_{n\in\ZZ/\vert g\vert\ZZ} A_n\tens P_W$. This is because the composition factors of these two modules are objects of $\cC^0$ by Lemmas \ref{lem:A_n_tens_W_in_C0} and \ref{lem:P_An_tens_W}, by the fact that projective covers of simple modules in $\cC^0$ are objects of $\cC^0$, and by the closure of $\cC^0$ under subquotients. Thus the argument of the preceding paragraphs shows that $P_W\cong W$ when $h_W$ is minimal.

Now let $W$ be any simple $V$-module in $\cC^0$ and assume by induction that all simple $V$-modules in $\cC^0$ with lowest conformal weight strictly less than $h_W$ are both projective and injective. Again we consider the induced $A$-module $A\tens W\cong\bigoplus_{n\in\ZZ/\vert g\vert\ZZ} A_n\tens W$. If $h_{A_n\tens W}< h_W$ for some $n\in\ZZ/\vert g\vert\ZZ$, then $W$ is projective (and injective) because
\begin{equation*}
 W\cong V\tens W\cong(A_n'\tens A_n)\tens W\cong A_n'\tens(A_n\tens W)
\end{equation*}
and because $A_n\tens W$ is projective by the inductive hypothesis. Thus we may assume that $h_W$ is the lowest conformal weight of $A\tens W$. Then $h_W$ is also the lowest conformal weight of $A\tens P_W$ since if $A_n\tens P_W\cong P_{A_n\tens W}$ had a composition factor $X$ with $h_X< h_W$, then $X$ would be a direct summand of the indecomposable module $P_{A_n\tens W}$ not isomorphic to $A_n\tens W$ (since $X$ would be projective and injective by the inductive hypothesis). So again, the first three paragraphs of the proof show that $P_W\cong W$, so $W$ is projective and injective in $\cC$.

We have now shown that all simple $V$-modules in $\cC^0$ are projective, so in particular $V$ itself is a projective $V$-module. Since any rigid tensor category is semisimple if its unit object is projective (see \cite[Corollary 4.2.13]{EGNO}, the proof of \cite[Theorem 5.24]{CM}, or \cite[Lemma 3.6]{McR2}), we conclude that $\cC$ is semisimple. Finally, Lemma 3.6 and Proposition 3.7 of \cite{CM} (see also \cite[Proposition 4.16]{McR}) show that $V$ is rational.
\end{proof}

Recall that Zhu \cite{Zh} showed that if $A$ is rational and $C_2$-cofinite, then the linear span of the characters of $A$ is indeed closed under the $S$-transformation. However, as is observed for instance in \cite[Theorem 5.1]{AvE}, rationality of $A$ is not necessary in Zhu's proof: semisimplicity of the Zhu algebra is sufficient. Thus the assumption on $S(\mathrm{ch}_A)$ in Assumption \ref{assum:rigidity} is automatic under Assumption \ref{assum:rationality}.
Consequently, the $A=V$, $g=\Id_V$, $\til{\omega}=\omega$ case of Theorem \ref{thm:orbifold_rationality} yields our final main result:
\begin{cor}\label{cor:rationality}
 If $V$ is a strongly finite vertex operator algebra with semisimple Zhu algebra, then $V$ is rational. In particular, the category of grading-restricted generalized $V$-modules is a semisimple modular tensor category, and if $V$ has positive energy, then $V$ is strongly rational.
\end{cor}

\begin{rem}
 Theorem \ref{thm:orbifold_rationality} also recovers Carnahan and Miyamoto's cyclic orbifold rationality result \cite{CM}, with somewhat weaker hypotheses and a somewhat different proof. As noted previously, the calculations in Theorem \ref{thm:MSH_reln_1} are essentially equivalent to much of the work in \cite[Sections 5.3 and 5.4]{CM}, but assuming $A$ is rational (not just that its Zhu algebra is semisimple), \cite[Theorem 4.4]{CM} gives a simpler direct argument that $V$ is projective as a $V$-module. (This argument was generalized to non-orbifold-type extensions of $V$ in \cite{McR2}, and we will use this generalization to study coset vertex operator algebras in Subsection \ref{subsec:cosets}.)
 
 If one knows directly that $V$ is projective, there is another argument to prove $\cC$ is rigid easier than that in Theorem \ref{thm:rigidity}. Indeed, projectivity of $V$ implies there is a coevaluation candidate $i_W: V\rightarrow W\tens W'$ such that $\til{e}_W\circ i_W=\Id_V$. Then taking $\til{w}=i_W(v)$, and assuming $\omega=\til{\omega}$ for simplicity, \eqref{eqn:MSH_reln_1} simplifies to
 \begin{align*}
  \tr_A\,(g^m\circ Y_A) & \bigg(\cU(1)\left(-\frac{1}{\tau}\right)^{L(0)}v,1\bigg)q_{-1/\tau}^{L(0)-c/24}\nonumber\\
  &=\sum_X s_{A,X} \tr_{W\tens X} ((\mathfrak{R}_W\tens\Id_X)\circ Y_{W\tens X})(\cU(1)v,1)q_\tau^{L(0)-c/24},
 \end{align*}
where $\mathfrak{R}_W$ is the rigidity composition defined in the proof of Proposition \ref{prop:Phi_iso_rigid}. Since the left side is non-zero by the argument concluding the proof of Theorem \ref{thm:rigidity}, $\mathfrak{R}_W$ must also be non-zero. Assuming as we may that $W$ is simple, $\mathfrak{R}_W$ is then a non-zero scalar multiple of $\Id_W$, and we can rescale $i_W$ so that $\mathfrak{R}_W=\Id_W$. In particular, we do not need the homomorphism $\Phi_W$ in this proof of rationality for finite cyclic orbifold vertex operator algebras.
\end{rem}

\begin{rem}\label{rem:pt_not_necessary}
 In the proof of Theorem \ref{thm:orbifold_rationality}, we used pseudo-traces only to show that the category $\cC$ of $V$-modules is factorizable (via Theorem \ref{thm:factorizability}).
 However, in the setting of Assumption \ref{assum:rigidity}, we do not really need pseudo-traces to prove Theorem \ref{thm:factorizability}. Indeed, repeating the calculations in the proof of Theorem \ref{thm:S_ch_W} with $V$ replaced by $A$ yields
 \begin{align*}
  S(\mathrm{ch}_{W\tens A})(v;\tau) =\sum_X s_{A,X}\,\tr_X\,(h_{W,X}\circ Y_X)(\cU(1)v,1)q_\tau^{L(0)-c/24},
 \end{align*}
with $h_{W,X}$ defined using $e^{2\pi i L(0)}$ (with respect to $\omega$) rather than the twist $\theta_W$ (with respect to $\til{\omega}$). Similar to the proof of Theorem \ref{thm:factorizability}, this shows that if $W$ is an object in the M\"{u}ger center of $\cC$, then $\mathrm{ch}_{W\tens A}$ is a multiple of $\mathrm{ch}_A$, that is,
\begin{equation*}
 \sum_{m=0}^{\vert g\vert-1} \mathrm{ch}_{W\tens A_m} =c\sum_{n=0}^{\vert g\vert-1} \mathrm{ch}_{A_n}
\end{equation*}
for some $c\in\CC$. 

Because $\mathrm{ch}_W$ appears on the left side of the above equation and because characters of distinct irreducible $V$-modules are linearly independent, all composition factors of $W$ must come from the $A_n$ for $n\in\ZZ/\vert g\vert\ZZ$. But $A_0=V$ is the only $A_n$ in the M\"{u}ger center of $\cC$, because if $X$ is an irreducible $V$-submodule of a $g$-twisted $A$-module (which exists by \cite[Theorem 9.1]{DLM}), then the simple current property of $A_n$ and the definition of $g$-twisted module implies $\cR_{A_n,X}^2=e^{-2\pi i n/\vert g\vert}\Id_{A_n\tens X}$. It is easy to see using exactness of the tensor product on $\cC$ and naturality of the monodromy isomorphisms that the M\"{u}ger center is closed under subquotients, so $V$ is the only composition factor of any object in the M\"{u}ger center. This implies $\cC$ is factorizable, as in the proof of Theorem \ref{thm:factorizability}.
\end{rem}

\section{Applications}\label{sec:applications}

In this section, we use Corollary \ref{cor:rigidity} and Theorem \ref{thm:orbifold_rationality} to prove rationality for some examples of $C_2$-cofinite vertex operator algebras.

\subsection{Rationality for \texorpdfstring{$C_2$}{C2}-cofinite \texorpdfstring{$W$}{W}-algebras}\label{subsec:W-algebras}

Affine $W$-algebras are a large class of vertex operator (super)algebras that generalize vertex operator (super)algebras based on affine Lie (super)algebras. For the discussion here, we mainly use \cite{AvE} as a reference. 

Given a simple Lie algebra $\mathfrak{g}$, a nilpotent element $f\in\mathfrak{g}$, and a level $k\in\CC$, the universal affine $W$-algebra $\cW^k(\mathfrak{g},f)$ is the quantum Drinfeld-Sokolov reduction, relative to $f$, of the universal affine vertex operator algebra $V^k(\mathfrak{g})$ associated to $\mathfrak{g}$ at level $k$ \cite{FF, KRW}. Then $\cW_k(\mathfrak{g},f)$ is defined to be the unique simple quotient of $\cW^k(\mathfrak{g},f)$. In \cite{KW}, Kac and Wakimoto conjectured that a class of simple affine $W$-algebras called \textit{exceptional} are rational; later in \cite{Ar-C2}, Arakawa proved that a larger class of simple $W$-algebras are $C_2$-cofinite and conjectured that these are also rational. In \cite{Ar-rat}, Arakawa proved that exceptional $W$-algebras associated to principal nilpotent elements are rational, and several further cases have been proved since then; see Remark \ref{rem:W-alg-rat-progress} below.

To define exceptional $W$-algebras, recall that a level $k$ is \textit{admissible} for $\mathfrak{g}$ if it has the form
\begin{equation*}
 k=-h^\vee+\frac{p}{q}
\end{equation*}
where $h^\vee$ is the dual Coxeter number of $\mathfrak{g}$, $p$ and $q$ are relatively prime positive integers, $p\geq h^\vee$ if $q$ is relatively prime to the lacing number of $\mathfrak{g}$, and $p\geq h$ (the Coxeter number of $\mathfrak{g}$) if the lacing number of $\mathfrak{g}$ divides $q$. In \cite{Ar-C2}, Arakawa showed that the associated variety \cite{Ar-assoc-var} of the simple admissible-level affine vertex operator algebra $L_k(\mathfrak{g})$ is the closure of a certain nilpotent orbit $\mathbb{O}_q\subseteq\mathfrak{g}$ depending only on $q$, and that $\cW_k(\mathfrak{g},f)$ is $C_2$-cofinite if $f\in\mathbb{O}_q$. The pair $(f,q)$ is called \textit{exceptional} if $f\in\mathbb{O}_q$, extending the notion of exceptional pair in \cite{KW}, and following \cite{AvE}, we say that $\cW_k(\mathfrak{g},f)$ is an \textit{exceptional $W$-algebra} if $k$ is admissible and $f\in\mathbb{O}_q$. Then the Kac-Wakimoto-Arakawa conjecture \cite{KW, Ar-C2} is that all exceptional simple affine $W$-algebras are rational.
\begin{exam}
For $\mathfrak{g}=\mathfrak{sl}_2$, there is only one conjugacy class of non-zero nilpotent elements $f$, and $f\in\mathbb{O}_q$ for $q\geq 2$. The exceptional $W$-algebras at levels $k=-2+\frac{p}{q}$ for $\mathrm{gcd}(p,q)=1$ and $p,q\geq 2$ are the simple rational Virasoro vertex operator algebras at central charge $1-\frac{6(p-q)^2}{pq}$. For $q=1$, the nilpotent orbit $\mathbb{O}_1$ is $0$, and the corresponding affine $W$-algebras are the simple affine vertex operator algebras $L_k(\mathfrak{sl}_2)$ at level $k\in\NN$.
\end{exam}

Whether a simple affine $W$-algebra $\cW_k(\mathfrak{g},f)$ is $\NN$-graded and self-contragredient depends on its conformal vector, which exists as long as $k$ is non-critical, that is, $k\neq -h^\vee$, but is not unique. The most often used conformal vectors in $\cW_k(\mathfrak{g},f)$ come from semisimple elements $x_0\in\mathfrak{g}$ that determine \textit{good gradings} of $\mathfrak{g}$:
\begin{itemize}
 \item $\mathfrak{g}=\bigoplus_{j\in\frac{1}{2}\ZZ} \mathfrak{g}_j$ where $\mathfrak{g}_j=\lbrace x\in\mathfrak{g}\,\,\vert\,\,[x_0,x]=jx\rbrace$.
 
 \item $f\in\mathfrak{g}_{-1}$ and $\mathrm{ad}(f):\mathfrak{g}_j\rightarrow\mathfrak{g}_{j-1}$ is injective for $j\geq\frac{1}{2}$ and surjective for $j\leq\frac{1}{2}$.
\end{itemize}
In the notation of \cite{KRW}, $\cW_k(\mathfrak{g},x_0,f)$ represents $\cW_k(\mathfrak{g},f)$ as a vertex operator algebra with conformal vector $\omega_{x_0}$ constructed from $x_0$. By \cite[Theorem 4.1]{KW2}, $\omega_{x_0}$ gives $\cW_k(\mathfrak{g},x_0,f)$ a $\frac{1}{2}\NN$-grading, and $\cW_k(\mathfrak{g},f)$ is $\NN$-graded (and positive-energy) with respect to this conformal vector if $x_0$ induces a good \textit{even} grading of $\mathfrak{g}$, that is, $\mathfrak{g}_j=0$ for $j\notin\ZZ$. One can always obtain a good grading for any non-zero nilpotent element $f\in\mathfrak{g}$ by embedding $f$ into an $\mathfrak{sl}_2$-triple $\lbrace e,h,f\rbrace$ and taking $x_0=\frac{1}{2}h$. This gives the \textit{Dynkin grading} of $\mathfrak{g}$ associated to $f$, and we will call the associated conformal vector $\omega_{h/2}$ the Dynkin conformal vector. If the Dynkin grading of $\mathfrak{g}$ is even, so that $\cW_k(\mathfrak{g},\frac{1}{2}h,f)$ has positive energy, then we say that $f$ is an even nilpotent element of $\mathfrak{g}$.

Now Proposition 6.1 and Remark 6.2 of \cite{AvE} give a criterion for the $\NN$-graded subalgebra $\cW_k^0(\mathfrak{g},x_0,f)\subseteq\cW_k(\mathfrak{g},x_0,f)$ to be self-contragredient. Let $\mathfrak{g}^f$ be the centralizer of $f$ in $\mathfrak{g}$; the subalgebra $\mathfrak{g}^f$ is $\mathrm{ad}(x_0)$-invariant, so the $\frac{1}{2}\ZZ$-grading on $\mathfrak{g}$ restricts to a $\frac{1}{2}\ZZ$-grading on $\mathfrak{g}^f$. Then $\cW_k(\mathfrak{g},f)$, with the conformal vector determined by $x_0$, is self-contragredient if
\begin{equation}\label{eqn:W_alg_self_contra}
 (k+h^\vee)\langle x_0, v\rangle -\frac{1}{2}\tr_{\mathfrak{g}_+} \mathrm{ad}(v) =0 \quad\text{for all}\quad v\in\mathfrak{g}^f_0,
\end{equation}
where $\langle\cdot,\cdot\rangle$ is the nondegenerate invariant bilinear form on $\mathfrak{g}$ such that $\langle\alpha,\alpha\rangle=2$ for long roots $\alpha$ and $\mathfrak{g}_+=\bigoplus_{j>0} \mathfrak{g}_j$. 
Proposition 4.2 and Lemma 5.5 of \cite{AvEM} show that \eqref{eqn:W_alg_self_contra} always holds for the Dynkin conformal vector:
\begin{prop}\label{prop:self-contra-cond}
 If $\mathfrak{g}$ is a simple Lie algebra and $f\in\mathfrak{g}$ is a non-zero nilpotent element contained in an $\mathfrak{sl}_2$-triple $\lbrace e, h, f\rbrace$, then \eqref{eqn:W_alg_self_contra} holds for $x_0=\frac{1}{2}h$. In particular, $\cW_k^0(\mathfrak{g},\frac{1}{2}h, f)$ is a positive-energy self-contragredient vertex operator algebra for any simple Lie algebra $\mathfrak{g}$, nilpotent element $f\in\mathfrak{g}$, and level $k\neq -h^\vee$.
\end{prop}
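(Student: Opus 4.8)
The plan is to verify the identity \eqref{eqn:W_alg_self_contra} directly when $x_0=\tfrac{1}{2}h$, and then read off the remaining assertions from \cite[Proposition 6.1 and Remark 6.2]{AvE} together with \cite[Theorem 4.1]{KW2}. For $x_0=\tfrac{1}{2}h$ the grading of $\mathfrak{g}$ is the Dynkin grading, so $\mathfrak{g}_j$ is the $2j$-eigenspace of $\mathrm{ad}(h)$ and $\mathfrak{g}^f_0=\{v\in\mathfrak{g}\,\vert\,[h,v]=0,\ [f,v]=0\}$. Fix such a $v$. The first term of \eqref{eqn:W_alg_self_contra} vanishes immediately, because $h=[e,f]$ and the bilinear form is invariant:
\[
 \langle x_0,v\rangle=\tfrac{1}{2}\langle[e,f],v\rangle=\tfrac{1}{2}\langle e,[f,v]\rangle=0.
\]
Thus the whole content of \eqref{eqn:W_alg_self_contra} is the assertion that $\tr_{\mathfrak{g}_+}\mathrm{ad}(v)=0$.

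To prove this, I would first note that $v\in\mathfrak{g}_0$ forces $\mathrm{ad}(v)$ to preserve each graded piece $\mathfrak{g}_j$, so $\tr_{\mathfrak{g}_+}\mathrm{ad}(v)=\sum_{j>0}\tr_{\mathfrak{g}_j}\mathrm{ad}(v)$. The key step is a symmetry between $\mathfrak{g}_j$ and $\mathfrak{g}_{-j}$: for each $j>0$ the operator $\mathrm{ad}(f)^{2j}\colon\mathfrak{g}_j\to\mathfrak{g}_{-j}$ is a linear isomorphism --- this is the classical fact that in any finite-dimensional $\mathfrak{sl}_2$-module the $n$-th power of the lowering operator carries the weight-$n$ space isomorphically onto the weight-$(-n)$ space, applied to each irreducible summand of $\mathfrak{g}$ as an $\mathfrak{sl}_2$-module. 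Since $[f,v]=0$, this isomorphism conjugates $\mathrm{ad}(v)\vert_{\mathfrak{g}_j}$ into $\mathrm{ad}(v)\vert_{\mathfrak{g}_{-j}}$, so the two operators have equal trace; hence $\tr_{\mathfrak{g}_+}\mathrm{ad}(v)=\tr_{\mathfrak{g}_-}\mathrm{ad}(v)$. Combining this with $\tr_{\mathfrak{g}}\mathrm{ad}(v)=0$ (valid since $\mathfrak{g}$ is semisimple, so $\mathrm{ad}(v)$ is a sum of commutators of endomorphisms of $\mathfrak{g}$) gives $2\tr_{\mathfrak{g}_+}\mathrm{ad}(v)+\tr_{\mathfrak{g}_0}\mathrm{ad}(v)=0$. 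Finally, $\mathfrak{g}_0=\mathfrak{g}^h$ is the centralizer of a semisimple element, hence a reductive Lie algebra, and $\mathrm{ad}_{\mathfrak{g}}(v)$ restricts on $\mathfrak{g}_0$ to $\mathrm{ad}_{\mathfrak{g}_0}(v)$; writing $\mathfrak{g}_0=\mathfrak{z}(\mathfrak{g}_0)\oplus[\mathfrak{g}_0,\mathfrak{g}_0]$ and using that $\mathrm{ad}_{\mathfrak{g}_0}(v)$ kills the center and is traceless on the semisimple part, we get $\tr_{\mathfrak{g}_0}\mathrm{ad}(v)=0$. Therefore $\tr_{\mathfrak{g}_+}\mathrm{ad}(v)=0$ and \eqref{eqn:W_alg_self_contra} holds.

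Once \eqref{eqn:W_alg_self_contra} is established for $x_0=\tfrac{1}{2}h$, \cite[Proposition 6.1 and Remark 6.2]{AvE} give that the $\NN$-graded subalgebra $\cW_k^0(\mathfrak{g},\tfrac{1}{2}h,f)$ is self-contragredient, while \cite[Theorem 4.1]{KW2} (together with the fact that $\cW_k(\mathfrak{g},f)$ has one-dimensional weight-zero space $\CC\vac$) shows it is $\NN$-graded and positive energy. Since every nonzero nilpotent element of $\mathfrak{g}$ lies in an $\mathfrak{sl}_2$-triple by the Jacobson--Morozov theorem --- and the case $f=0$, where $\cW_k(\mathfrak{g},0)=L_k(\mathfrak{g})$ with $x_0=0$, is immediate --- this proves the proposition for all simple $\mathfrak{g}$, all nilpotent $f\in\mathfrak{g}$, and all non-critical levels $k\neq -h^\vee$. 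I do not expect a serious obstacle; the two points that need care are the precise isomorphism $\mathrm{ad}(f)^{2j}\colon\mathfrak{g}_j\to\mathfrak{g}_{-j}$ coming from $\mathfrak{sl}_2$-representation theory, which produces the $\mathfrak{g}_j\leftrightarrow\mathfrak{g}_{-j}$ trace symmetry, and the reductivity of $\mathfrak{g}^h$ used to kill the weight-zero contribution.
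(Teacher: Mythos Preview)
Your argument is correct and follows essentially the same outline as the paper's: both kill the first term via $\langle h,v\rangle=\langle e,[f,v]\rangle=0$, use reductivity of $\mathfrak{g}_0$ and semisimplicity of $\mathfrak{g}$ to reduce to $\tr_{\mathfrak{g}_+}\mathrm{ad}(v)=\tr_{\mathfrak{g}_-}\mathrm{ad}(v)$, and then invoke $\mathfrak{sl}_2$-theory for this symmetry. The only difference is in that last step: you use the hard-Lefschetz isomorphism $\mathrm{ad}(f)^{2j}\colon\mathfrak{g}_j\to\mathfrak{g}_{-j}$, which commutes with $\mathrm{ad}(v)$ using only $[f,v]=0$, whereas the paper uses the Weyl-group automorphism $\tau=\exp(\mathrm{ad}\,e)\exp(-\mathrm{ad}\,f)\exp(\mathrm{ad}\,e)$, which additionally requires the observation $[e,v]=0$ (equivalently $\mathfrak{g}_0^f=\mathfrak{g}^{\mathfrak{s}}$) to conclude $\tau(v)=v$; your version is marginally more direct.
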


In \cite[Theorem 4.3]{AvE}, Arakawa and van Ekeren showed that if $k=-h^\vee+\frac{p}{q}$ is an admissible level for $\mathfrak{g}$, $f\in\mathbb{O}_q$ is a nilpotent element, and $x_0\in\mathfrak{g}$ is a semisimple element that induces a good grading of $\mathfrak{g}$ relative to $f$, then the Zhu algebra $A(\cW_k(\mathfrak{g},x_0,f))$ is semisimple. In light of this result and Proposition \ref{prop:self-contra-cond}, Corollary \ref{cor:rationality} now proves the Kac-Wakimoto-Arakawa rationality conjecture for all exceptional $W$-algebras $\cW_k(\mathfrak{g},f)$ such that $f$ is an even nilpotent element. In fact, if $f$ is even, then $\cW_k(\mathfrak{g},f)$ (with the Dynkin conformal vector) is strongly rational. Even nilpotent elements include principal nilpotent elements in all types and all subregular nilpotent elements outside of type $A$. Note that exceptional $W$-algebras associated to principal nilpotent elements are already known to be rational \cite{Ar-rat}, and the same holds for subregular nilpotent elements in simply-laced types \cite{AvE} and some cases in type $B$ \cite{Fa, CL-osym-trialities}.

If $f$ is not even, then we can still use Theorem \ref{thm:orbifold_rationality} to show that $\cW_k^0(\mathfrak{g},\frac{1}{2}h,f)$ is strongly rational, provided we can find a suitable conformal vector making $\cW_k(\mathfrak{g},f)$ $\NN$-graded. This reduces to a Lie algebraic question which is independent of $k$:
\begin{thm}\label{thm:W_alg_rat}
 Let $\mathfrak{g}$ be a finite-dimensional simple Lie algebra, $k\neq-h^\vee$ a non-critical level for $\mathfrak{g}$, and $f\in\mathfrak{g}$ a nilpotent element, and let $\mathfrak{g}=\bigoplus_{j\in\frac{1}{2}\ZZ} \mathfrak{g}_j$ be the Dynkin grading of $\mathfrak{g}$ associated to $f$, given by $\frac{1}{2}h$-eigenvalues. Assume there is a semisimple element $v\in\mathfrak{g}^f_0$ such that for $j\geq 0$, the eigenvalues of $\mathrm{ad}(v)$ on $\mathfrak{g}^f_{-j}$ are contained in $-j-1+\NN$. Then if $\cW_k(\mathfrak{g},f)$ is $C_2$-cofinite and $A(\cW_k(\mathfrak{g},f),\omega_{h/2})$ is semisimple, then $\cW_k^0(\mathfrak{g},\frac{1}{2}h,f)$ is strongly rational, and $\cW_k(\mathfrak{g},\frac{1}{2}h,f)$ is strongly rational as a $\frac{1}{2}\NN$-graded vertex operator algebra.
\end{thm}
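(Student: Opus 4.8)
The plan is to equip $\cW_k(\mathfrak{g},f)$ with a new conformal vector for which it becomes $\NN$-graded, and then to deduce both assertions from Theorem \ref{thm:orbifold_rationality}: once with the trivial automorphism (for the $\frac{1}{2}\NN$-graded statement) and once with the canonical order-two parity automorphism (for the $\cW_k^0$ statement). First I would view the semisimple element $v\in\mathfrak{g}_0^f$ as a conformal-weight-one vector in $\cW_k(\mathfrak{g},f)$ with respect to the Dynkin conformal vector $\omega_{h/2}$. Since $k\neq -h^\vee$, this $v$ is Virasoro primary of conformal weight one, and because $\cW_k(\mathfrak{g},f)_{(0)}=\CC\vac$ one gets (using skew-symmetry for $v_0v$) that $L(n)v=\delta_{n,0}v$ and $v_nv\in\CC\delta_{n,1}\vac$ for $n\geq 0$; thus $v$, hence also $-v$, satisfies \eqref{eqn:new_conf_cond}. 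Set $\til\omega=\omega_{h/2}-v_{-2}\vac$, the conformal vector built from $-v$ as in the discussion following \eqref{eqn:new_conf_cond}, so that $\til L(0)=L_{h/2}(0)+v_0$ and (using $v_0v=0$) $v_0\til\omega=0$. By Proposition \ref{prop:Zhu_algebra_iso}, $A(\cW_k(\mathfrak{g},f),\til\omega)\cong A(\cW_k(\mathfrak{g},f),\omega_{h/2})$, which is semisimple by hypothesis, so $S(\mathrm{ch}_A)$ is a linear combination of characters for $A=(\cW_k(\mathfrak{g},f),\til\omega)$ by Zhu's theorem in the form requiring only a semisimple Zhu algebra.

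Next I would check that $(\cW_k(\mathfrak{g},f),\til\omega)$ really is an $\NN$-graded $C_2$-cofinite simple vertex operator algebra. Since $v$ is semisimple, $v_0$ acts semisimply on $\cW_k(\mathfrak{g},f)$, and on a strong generator attached to an $\mathrm{ad}(v)$-eigenvector $a\in\mathfrak{g}_{-j}^f$ it acts by the eigenvalue $\lambda\in -j-1+\NN$; that generator therefore has $\til L(0)$-weight $(j+1)+\lambda\in\NN$. As $\cW_k(\mathfrak{g},f)$ is strongly generated by these vectors, it is spanned by $\til L(0)$-homogeneous monomials in their modes $(\,\cdot\,)_{-m}$ with $m\geq 1$, each of $\til L(0)$-weight $\sum_i\big((j_i+m_i)+\lambda_i\big)\geq 0$; and these weights lie in $\ZZ$, because every generator of half-integral $L_{h/2}(0)$-weight carries a $v_0$-eigenvalue in $\frac{1}{2}+\ZZ$. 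Finite-dimensionality of the $\til L(0)$-weight spaces then follows from $C_2$-cofiniteness via Lemma \ref{lem:Miy_spanning_set} (cf. Remark \ref{rem:span_set_general}), applied with a $v_0$-stable graded complement $T$ of $C_2(\cW_k(\mathfrak{g},f))$ taken inside the span of products of the strong generators, so that every element of $T$ has non-negative $\til L(0)$-weight. Establishing this grading claim — in particular that the $\til L(0)$-weights are not merely bounded below but are non-negative \emph{integers} — is the technical heart of the argument, and the only place the eigenvalue hypothesis on $\mathrm{ad}(v)$ is used.

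Finally I would apply Theorem \ref{thm:orbifold_rationality} with $A=\cW_k(\mathfrak{g},f)$ carrying the conformal vector $\til\omega$, so that the additional hypothesis that the Zhu algebra of $A$ (with respect to its conformal vector $\til\omega$) is semisimple holds. For the $\frac{1}{2}\NN$-graded assertion, take $g=\Id_A$ and $V=A$, with the self-contragredient conformal vector of Assumption \ref{assum:rigidity} equal to $\omega_{h/2}=\til\omega+v_{-2}\vac$: self-contragredience of $\cW_k(\mathfrak{g},f)$ with respect to $\omega_{h/2}$ is the content of \eqref{eqn:W_alg_self_contra}, verified in Proposition \ref{prop:self-contra-cond}; we have $v\in V$ and $v_0\til\omega=0$; and $\cW_k(\mathfrak{g},f)$ has positive energy for $\omega_{h/2}$, so Theorem \ref{thm:orbifold_rationality} yields that $\cW_k(\mathfrak{g},\frac{1}{2}h,f)$ is strongly rational as a $\frac{1}{2}\NN$-graded vertex operator algebra. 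For the $\cW_k^0$ assertion, take instead $g=e^{2\pi i L_{h/2}(0)}$, the order-two automorphism of $(\cW_k(\mathfrak{g},f),\til\omega)$, which fixes $\til\omega$ because $\omega_{h/2}$ and $v$ both have integral $L_{h/2}(0)$-weight; then $A^g=\cW_k^0(\mathfrak{g},\frac{1}{2}h,f)$, which is $\NN$-graded, self-contragredient for $\omega_{h/2}=\til\omega+v_{-2}\vac$ by Proposition \ref{prop:self-contra-cond}, and positive-energy for $\omega_{h/2}$, with $v\in A^g$ and $v_0\til\omega=0$; so Assumption \ref{assum:rigidity} holds and Theorem \ref{thm:orbifold_rationality} gives that $\cW_k^0(\mathfrak{g},\frac{1}{2}h,f)$ is strongly rational.
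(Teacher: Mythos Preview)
Your argument for $\cW_k^0(\mathfrak{g},\frac{1}{2}h,f)$ is essentially the paper's proof: build the modified conformal vector $\omega_{h/2}-J^{\{v\}}_{-2}\vac$ so that $A=\cW_k(\mathfrak{g},f)$ becomes $\NN$-graded, transport semisimplicity of the Zhu algebra via Proposition~\ref{prop:Zhu_algebra_iso}, and apply Theorem~\ref{thm:orbifold_rationality} with $g$ the parity automorphism and with $\omega_{h/2}$ as the self-contragredient conformal vector of Assumption~\ref{assum:rigidity}. Two small points: the element of the $W$-algebra is $J^{\{v\}}$, not $v$ itself, and the relation $J^{\{v\}}_0 J^{\{x\}}=J^{\{[v,x]\}}$ as well as the primariness $L_{h/2}(1)J^{\{v\}}=0$ come from the structure theory of $\cW^k(\mathfrak{g},f)$ (the paper cites \cite{KRW,KW2} and \cite{CL-sln-trialities}); your weight-$0$ argument alone only gives $L_{h/2}(1)J^{\{v\}}\in\CC\vac$.

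The gap is in your deduction of the $\frac{1}{2}\NN$-graded assertion. Taking $g=\Id_A$ and $V=A$ with the self-contragredient conformal vector of Assumption~\ref{assum:rigidity} equal to $\omega_{h/2}$ does \emph{not} satisfy that assumption: it requires $V$ to be $\NN$-graded with respect to that conformal vector, and $\cW_k(\mathfrak{g},f)$ is only $\frac{1}{2}\NN$-graded for the Dynkin conformal vector when $f$ is not even. This is not a formality --- the $\ZZ$-grading of $V$ for $\til\omega$ is used throughout Subsection~\ref{sec:contragredient} (for instance to get $\theta_V=\Id_V$), on which the rigidity proof of Theorem~\ref{thm:rigidity} rests, and Theorem~\ref{thm:factorizability} likewise assumes strong finiteness. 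You also cannot switch to the new $\NN$-grading conformal vector for this role, since self-contragredience is only established for $\omega_{h/2}$. The paper does not attempt this route: it first proves $\cW_k^0(\mathfrak{g},\frac{1}{2}h,f)$ is strongly rational and then derives strong rationality of $\cW_k(\mathfrak{g},\frac{1}{2}h,f)$ as a $\frac{1}{2}\NN$-graded vertex operator algebra from \cite[Corollary~5.5]{McR2}, which handles the passage from a strongly rational $\NN$-graded fixed-point subalgebra to its $\frac{1}{2}\NN$-graded extension.
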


\begin{proof}
 To apply Theorem \ref{thm:orbifold_rationality}, we need to check that Assumption \ref{assum:rigidity} holds. In the notation of that assumption, we will take $A=\cW_k(\mathfrak{g},f)$, $V=\cW_k^0(\mathfrak{g},\frac{1}{2}h,f)$, $\til{\omega}=\omega_{h/2}$, and we will use the semisimple element $v\in\mathfrak{g}^f_0$ to construct a conformal vector $\omega\in\cW_k^0(\mathfrak{g},\frac{1}{2}h,f)$ such that $\cW_k(\mathfrak{g},f)$ is $\NN$-graded with respect to $\omega$ and such that $A(\cW_k(\mathfrak{g},f),\omega)$ is semisimple.
 
 To obtain $\omega$, we recall structural results for $\cW_k(\mathfrak{g},f)$ from \cite[Theorem 4.1]{KW2}: $\cW_k(\mathfrak{g},f)$ is strongly generated by vectors $J^{\lbrace x\rbrace}$ labeled by $x\in\mathfrak{g}^f_{-j}$, $j\geq 0$, of conformal weight $j+1$ with respect to $\omega_{h/2}$.
That is, $\cW_k(\mathfrak{g},f)$ is spanned by vectors of the form
 \begin{equation*}
   J^{\lbrace x_1\rbrace}_{-n_1}\cdots J^{\lbrace x_m\rbrace}_{-n_m}\vac
 \end{equation*}
for $m\geq 0$ and $n_i>0$, $1\leq i\leq m$. It is observed in \cite[Proposition 3.1(1)]{CL-sln-trialities} that the proof of \cite[Theorem 4.1]{KW2} shows one may take $J^{\lbrace x\rbrace}$ such that
\begin{equation}\label{eqn:ad_v_cond}
 J^{\lbrace v\rbrace}_0 J^{\lbrace x\rbrace} =J^{\lbrace [v,x]\rbrace}
\end{equation}
for $v\in\mathfrak{g}_0^f$ and $x\in\mathfrak{g}_{-j}^f$, $j\geq 0$.

Now given $v$ as assumed in the theorem, we set $\omega=\omega_{h/2}-J^{\lbrace v\rbrace}_{-2}\vac$. The conditions \eqref{eqn:new_conf_cond} hold for $J^{\lbrace v\rbrace}$ by \cite[Theorem 2.4]{KRW} (see also \cite[Theorem 2.1(b), (c)]{KW2}) and Proposition \ref{prop:self-contra-cond}, so $\omega$ is a conformal vector in $\cW_k^0(\mathfrak{g},\frac{1}{2}h,f)$. Moreover, the condition on the eigenvalues of $\mathrm{ad}(v)$ together with \eqref{eqn:ad_v_cond} imply that $\cW_k(\mathfrak{g},f)$ is strongly generated by vectors of non-negative integral conformal weight with respect to $\omega$. Thus $\cW_k(\mathfrak{g},f)$ is $\NN$-graded with respect to $\omega$, and its $\omega$-conformal weight spaces are finite dimensional by Lemma \ref{lem:Miy_spanning_set} and Remark \ref{rem:span_set_general} since $\cW_k(\mathfrak{g},f)$ is $C_2$-cofinite. Finally, $A(\cW_k(\mathfrak{g}, f),\omega)$ is semisimple by Proposition \ref{prop:Zhu_algebra_iso}.

It now follows from Theorem \ref{thm:orbifold_rationality} that $\cW_k^0(\mathfrak{g},\frac{1}{2}h,f)$ is strongly rational. The assertion that $\cW_k(\mathfrak{g},f)$, with the Dynkin conformal vector, is strongly rational as a $\frac{1}{2}\NN$-graded vertex operator algebra follows from \cite[Corollary 5.5]{McR2}.
\end{proof}

\begin{rem}
 The reason we state Theorem \ref{thm:W_alg_rat} for non-exceptional $W$-algebras is that certain non-admissible-level minimal $W$-algebras are known to be $C_2$-cofinite \cite{ArM1, ArM2}. Rationality also holds in at least some of these cases \cite{Ka}, so it is reasonable to expect rationality for these $C_2$-cofinite minimal $W$-algebras in general.
 \end{rem}

We next show that Theorem \ref{thm:W_alg_rat} applies to all exceptional $W$-algebras for which $\mathfrak{g}$ admits a good even grading for the nilpotent element $f$:
\begin{cor}\label{cor:good_even_grad}
 Let $\mathfrak{g}$ be a simple Lie algebra, $k=-h^\vee+\frac{p}{q}$ an admissible level for $\mathfrak{g}$, and $f\in\mathbb{O}_q$ a nilpotent element that admits a good even grading. Then $\cW_k(\mathfrak{g},f)$, with the Dynkin conformal vector, is strongly rational.
\end{cor}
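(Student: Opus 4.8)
The plan is to derive this from Theorem \ref{thm:W_alg_rat}. Both analytic hypotheses of that theorem are already available here: $\cW_k(\mathfrak{g},f)$ is $C_2$-cofinite because $f\in\mathbb{O}_q$ \cite{Ar-C2}, and the Zhu algebra $A(\cW_k(\mathfrak{g},f),\omega_{h/2})$ is semisimple by \cite[Theorem 4.3]{AvE}, since the Dynkin element $\tfrac{1}{2}h$ induces a good grading of $\mathfrak{g}$ relative to $f$. So everything reduces to producing a semisimple $v\in\mathfrak{g}^f_0$ (degree $0$ for the Dynkin grading) whose adjoint action on $\mathfrak{g}^f_{-j}$ has all eigenvalues in $-j-1+\NN$ for every $j\geq0$.

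To construct $v$, fix an $\mathfrak{sl}_2$-triple $\{e,h,f\}$ and invoke the classification of good gradings of Elashvili--Kac \cite{EK}: up to conjugating by the centralizer of $f$ in the adjoint group, every good grading of $\mathfrak{g}$ relative to $f$ comes from a semisimple element of the form $x_0=\tfrac{1}{2}h+z$ with $z\in\mathfrak{g}^e\cap\mathfrak{g}^f$; such a conjugation preserves evenness, and (since all $\mathfrak{sl}_2$-triples through $f$ are conjugate under the centralizer of $f$) it carries $\cW_k(\mathfrak{g},f)$ with its Dynkin conformal vector to an isomorphic vertex operator algebra with its Dynkin conformal vector. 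So I may assume the given good even grading is $\mathfrak{g}=\bigoplus_{i\in\ZZ}\mathfrak{g}_i(x_0)$ with $x_0=\tfrac{1}{2}h+z$ and $z\in\mathfrak{g}^e\cap\mathfrak{g}^f$ semisimple. Since $z$ commutes with $e$ and $f$ it commutes with $h=[e,f]$, so $v:=-z$ is a semisimple element of $\mathfrak{g}^f_0$. For $x\in\mathfrak{g}^f_{-j}$ that is an $\mathrm{ad}(z)$-eigenvector of eigenvalue $\mu$, the $x_0$-degree of $x$ is $d:=\mu-j$, so the $\mathrm{ad}(v)$-eigenvalue on $x$ is $-\mu=-(d+j)$. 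Evenness forces $d\in\ZZ$, and the good-grading condition forces $\mathrm{ad}(f)$ to be injective on $\mathfrak{g}_i(x_0)$ for $i\geq1$, whence $\mathfrak{g}^f\subseteq\bigoplus_{i\leq0}\mathfrak{g}_i(x_0)$ and so $d\leq0$. Therefore $-(d+j)+(j+1)=1-d\in\NN$, i.e.\ the eigenvalue $-(d+j)$ lies in $-j-1+\NN$, which is exactly the hypothesis of Theorem \ref{thm:W_alg_rat}. (When $f$ is itself even one just takes $z=0$, $v=0$, recovering the case already settled after Theorem \ref{thm:rationality}.)

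With $v$ in hand, Theorem \ref{thm:W_alg_rat} yields that $\cW_k^0(\mathfrak{g},\tfrac{1}{2}h,f)$ is strongly rational and that $\cW_k(\mathfrak{g},\tfrac{1}{2}h,f)$ is strongly rational as a (possibly $\tfrac{1}{2}\NN$-graded) vertex operator algebra, which is the assertion. I expect the only genuinely nontrivial point, and the main obstacle, to be the reduction to a good even grading in canonical form $x_0=\tfrac{1}{2}h+z$ with $z\in\mathfrak{g}^e\cap\mathfrak{g}^f$: this is the Elashvili--Kac input, together with the routine but necessary check that the conjugating automorphism preserves evenness and respects the Dynkin conformal vector. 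Everything else is the short eigenvalue bookkeeping of the previous paragraph plus the already-cited $C_2$-cofiniteness \cite{Ar-C2} and Zhu-semisimplicity \cite{AvE} results.
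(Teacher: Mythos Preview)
Your proof is correct and follows essentially the same route as the paper: set $v=\tfrac{1}{2}h-x_0$ (your $-z$), check it lies in $\mathfrak{g}^f_0$, and read off from $\mathfrak{g}^f\subseteq\bigoplus_{i\leq 0}\mathfrak{g}_i(x_0)$ that the $\mathrm{ad}(v)$-eigenvalues on $\mathfrak{g}^f_{-j}$ sit in $-j+\NN\subseteq -j-1+\NN$, then apply Theorem~\ref{thm:W_alg_rat}.

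The one place you make extra work for yourself is the reduction step. You conjugate the good even grading into the canonical form $x_0=\tfrac{1}{2}h+z$ with $z\in\mathfrak{g}^e\cap\mathfrak{g}^f$, and then worry about whether this conjugation respects evenness and the Dynkin conformal vector. The paper sidesteps this entirely: \cite[Theorem~1.1]{EK} says directly that for a fixed $\mathfrak{sl}_2$-triple $\mathfrak{s}=\{e,h,f\}$ and \emph{any} semisimple $x_0$ defining a good grading for $f$, the difference $\tfrac{1}{2}h-x_0$ already lies in the center of $\mathfrak{g}^{\mathfrak{s}}$. So no conjugation is needed, and what you flagged as ``the only genuinely nontrivial point'' evaporates. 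Otherwise the arguments are identical.
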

\begin{proof}
Let $\mathfrak{s}=\lbrace e, h, f\rbrace$ be an $\mathfrak{sl}_2$-triple containing $f$ and let $\mathfrak{g}=\bigoplus_{j\in\frac{1}{2}\ZZ} \mathfrak{g}_j$ be the Dynkin grading of $\mathfrak{g}$. Also let $x_0\in\mathfrak{g}$ be a semisimple element such that the $\mathrm{ad}(x_0)$-eigenvalue grading $\mathfrak{g}=\bigoplus_{j\in\ZZ} \til{\mathfrak{g}}_j$ is a good even grading for $f$. By the definition of good grading, 
\begin{equation}\label{eqn:good_grad_prop}
\mathfrak{g}^f\subseteq\bigoplus_{j\leq 0} \til{\mathfrak{g}}_j.
\end{equation}
 By \cite[Theorem 1.1]{EK}, $v=\frac{1}{2}h-x_0$ is in the center of the $\mathfrak{sl}_2$-centralizer $\mathfrak{g}^{\mathfrak{s}}$, which means that $[h,x_0]=0$ and thus $v$, as the difference of commuting semisimple elements, is semisimple. This also means that $v\in\mathfrak{g}_0^f$. Moreover, if $x\in\mathfrak{g}_{-j}^f$, $j\geq 0$, is an $\mathrm{ad}(v)$-eigenvector, then $x$ is also an $\mathrm{ad}(x_0)$-eigenvector with eigenvalue $-\til{j}\in-\NN$ by \eqref{eqn:good_grad_prop}. Thus
 \begin{equation*}
  [v,x] =\left[\frac{1}{2}h-x_0,x\right]=(-j+\til{j})x,
 \end{equation*}
showing that the $\mathrm{ad}(v)$-eigenvalues on $\mathfrak{g}_{-j}^f$ are contained in $-j-1+\ZZ_+$. Thus we may take $v$ as the semisimple element in Theorem \ref{thm:W_alg_rat}.
\end{proof}

To prove rationality for all exceptional $W$-algebras, it remains to check the assumption of Theorem \ref{thm:W_alg_rat} for all odd nilpotent orbits $\mathbb{O}_q$ that do not admit good even gradings. Such nilpotent orbits can be determined from the tables and results in \cite{Ar-C2} and \cite{EK}: besides a number of cases for low values of $q$ in classical types $B$, $C$, and $D$, there are fifteen cases in exceptional types. We check all these odd nilpotent orbits in Appendix \ref{app:nilpotent_check}, using explicit nilpotent orbit representatives from \cite{dG} for exceptional types; thus we conclude the following consequence of \cite[Theorem 5.9.1(ii)]{Ar-C2}, \cite[Theorem 4.3]{AvE}, and Theorem \ref{thm:W_alg_rat}:
\begin{thm}\label{thm:ex_W_alg_rat}
 Let $\mathfrak{g}$ be a simple Lie algebra, $k=-h^\vee+\frac{p}{q}$ an admissible level for $\mathfrak{g}$, and $f\in\mathbb{O}_q$ a nilpotent element. Then with the Dynkin conformal vector, both $\cW_k(\mathfrak{g},f)$ and its $\ZZ$-graded subalgebra are strongly rational.
\end{thm}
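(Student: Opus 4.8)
The plan is to combine the three ingredients already assembled in the excerpt: the $C_2$-cofiniteness result of Arakawa for exceptional $W$-algebras, the semisimplicity of the Zhu algebra from Arakawa--van Ekeren, and the rationality criterion of Theorem~\ref{thm:W_alg_rat}, which replaces the evenness hypothesis by a purely Lie-algebraic condition on $\mathfrak{g}^f$. Concretely, fix a simple Lie algebra $\mathfrak{g}$, an admissible level $k=-h^\vee+\frac{p}{q}$, and $f\in\mathbb{O}_q$. By \cite[Theorem 5.9.1(ii)]{Ar-C2}, $\cW_k(\mathfrak{g},f)$ is $C_2$-cofinite, and by \cite[Theorem 4.3]{AvE}, its Zhu algebra $A(\cW_k(\mathfrak{g},f),\omega_{h/2})$ is semisimple. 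Proposition~\ref{prop:self-contra-cond} already gives that $\cW_k^0(\mathfrak{g},\tfrac12 h,f)$ is positive-energy and self-contragredient. So by Theorem~\ref{thm:W_alg_rat}, everything reduces to exhibiting a semisimple element $v\in\mathfrak{g}_0^f$ whose $\mathrm{ad}(v)$-eigenvalues on $\mathfrak{g}_{-j}^f$ lie in $-j-1+\NN$ for all $j\ge 0$.

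The verification of this last condition splits naturally into two cases. First, whenever the nilpotent orbit $\mathbb{O}_q$ admits a good \emph{even} grading, Corollary~\ref{cor:good_even_grad} already produces such a $v$, namely $v=\tfrac12 h-x_0$ where $x_0$ induces the good even grading; the key inputs there are that $v$ is central in the $\mathfrak{sl}_2$-centralizer by \cite[Theorem 1.1]{EK} (hence semisimple and in $\mathfrak{g}_0^f$) and that $\mathfrak{g}^f\subseteq\bigoplus_{j\le 0}\til{\mathfrak{g}}_j$ by the good grading property. In particular, this disposes of all principal, distinguished, and even nilpotent elements, as well as all odd nilpotent orbits for which \cite{EK} lists a good even grading. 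So the remaining task is the finite list of \emph{odd} nilpotent orbits $\mathbb{O}_q$ admitting no good even grading; these can be read off from the tables in \cite{Ar-C2} together with \cite{EK}, and amount to a handful of small-$q$ cases in classical types $B$, $C$, $D$ plus fifteen cases in exceptional types. For each, one must explicitly choose an $\mathfrak{sl}_2$-triple $\{e,h,f\}$, compute the centralizer $\mathfrak{g}^f$ and its Dynkin grading, and write down a candidate semisimple $v\in\mathfrak{g}_0^f$, then check the eigenvalue bound. For the exceptional-type cases one uses explicit nilpotent orbit representatives from \cite{dG}; this computation is carried out in Appendix~\ref{app:nilpotent_check}.

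Once the eigenvalue condition is confirmed in every case, Theorem~\ref{thm:W_alg_rat} immediately yields that $\cW_k^0(\mathfrak{g},\tfrac12 h,f)$ is strongly rational and that $\cW_k(\mathfrak{g},\tfrac12 h,f)$ is strongly rational as a $\tfrac12\NN$-graded vertex operator algebra, which is precisely the assertion of Theorem~\ref{thm:ex_W_alg_rat}. The main obstacle is not conceptual but computational: it lies entirely in the case-by-case Lie-algebraic checking of Appendix~\ref{app:nilpotent_check}, where for each of the finitely many odd orbits without a good even grading one must produce the right semisimple element $v$ in $\mathfrak{g}_0^f$ and verify that $\mathrm{ad}(v)$ acts on $\mathfrak{g}_{-j}^f$ with eigenvalues in $-j-1+\NN$. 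The natural guiding principle throughout is that $v$ should measure the discrepancy between the Dynkin grading and some auxiliary (possibly non-even, but still good or at least well-controlled) integer grading, so that $\mathrm{ad}(v)$ shifts the Dynkin weight $-j$ on $\mathfrak{g}^f$ upward by the auxiliary weight; keeping this bookkeeping straight in the exceptional types is where care is required.
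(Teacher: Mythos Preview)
Your proposal is correct and follows essentially the same route as the paper: reduce to the Lie-algebraic eigenvalue condition of Theorem~\ref{thm:W_alg_rat} via \cite[Theorem 5.9.1(ii)]{Ar-C2} and \cite[Theorem 4.3]{AvE}, dispose of the good-even-grading cases by Corollary~\ref{cor:good_even_grad}, and handle the remaining finitely many odd orbits (a few low-$q$ classical cases in types $B$, $C$, $D$ plus fifteen exceptional cases) by the explicit case-by-case checks of Appendix~\ref{app:nilpotent_check}. There is no meaningful difference in strategy.
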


 \begin{rem}\label{rem:W-alg-rat-progress}
 Theorem \ref{thm:ex_W_alg_rat} has already been proved in many, but far from all, cases:
 \begin{itemize}
  \item For the trivial nilpotent element $f=0$, corresponding to denominator $q=1$, the exceptional $W$-algebras are the simple affine vertex operator algebras $L_k(\mathfrak{g})$, proved rational by Frenkel and Zhu \cite{FZ}.
  
  \item For $\mathfrak{g}=\mathfrak{sl}_2$ and $f\neq 0$, the exceptional $W$-algebras are the simple Virasoro vertex operator algebras at central charges $1-\frac{6(p-q)^2}{pq}$, proved rational by Wang \cite{Wan}.
  
  \item Arakawa \cite{Ar-BP} and Creutzig-Linshaw \cite{CL-coset} showed that exceptional $W$-algebras for $\mathfrak{g}=\mathfrak{sl}_3,\mathfrak{sl}_4$ and $f$ a subregular nilpotent element are rational.
  
  \item Arakawa \cite{Ar-rat} showed that for any $\mathfrak{g}$, exceptional $W$-algebras associated to a principal nilpotent element are rational.
  
  \item Arakawa-van Ekeren \cite{AvE} proved rationality for exceptional $\NN$-graded $W$-algebras subject to a technical condition, which they checked for all relevant nilpotent elements in type $A$ and for subregular nilpotent elements in types $D$ and $E$.
  
  \item Fasquel \cite{Fa} proved rationality of the exceptional subregular $W$-algebras for $\mathfrak{g}=\mathfrak{sp}_4$.
  
  \item In \cite{CL-sln-trialities}, Creutzig and Linshaw gave another rationality proof for subregular $W$-algebras in type $A$.
  
  \item In \cite{CL-osym-trialities}, Creutzig and Linshaw proved rationality for many subregular $W$-algebras in type $B$ and for all exceptional minimal $W$-algebras in type $C$.
 \end{itemize}
\end{rem}

\begin{rem}
 Since exceptional $W$-algebras $\cW_k(\mathfrak{g},f)$ are strongly rational by Theorem \ref{thm:ex_W_alg_rat}, tensor products of irreducible $\cW_k(\mathfrak{g},f)$-modules can be computed in terms of the $S$-transformation of characters of $\cW_k(\mathfrak{g},f)$-modules using the Verlinde formula. When $\cW_k(\mathfrak{g},f)$ is $\NN$-graded with the Dynkin conformal vector, this is the ordinary Verlinde formula of \cite{Ve, MS, Hu-Verlinde}. When $\cW_k(\mathfrak{g},f)$ is $\frac{1}{2}\NN$-graded with the Dynkin conformal vector, fusion rules for both untwisted and Ramond twisted $\cW_k(\mathfrak{g},f)$-modules are of interest, and Verlinde formulae for computing these are given in \cite[Section 1.5.2]{CKM1}.
\end{rem}

\subsection{Rationality for \texorpdfstring{$C_2$}{C2}-cofinite cosets}\label{subsec:cosets}

In this subsection, we will reduce the ``coset rationality problem'' to the problem of $C_2$-cofiniteness for the coset. That is, given a vertex operator algebra inclusion $U\otimes V\hookrightarrow A$ where $A$, $U$ are strongly rational and $U$, $V$ are mutual commutant subalgebras in $A$, we will show that $V$ is also strongly rational provided it is $C_2$-cofinite. In more detail, we work in the following setting:
\begin{itemize}
 \item $(A,Y_A,\vac,\omega)$ is a strongly rational vertex operator algebra.
 \item $(U, Y_U=Y_A\vert_{U\otimes U},\vac,\omega_U)$ is a vertex subalgebra of $A$ which is a strongly rational vertex operator algebra for some conformal vector $\omega_U\in U\cap A_{(2)}$ such that $L(1)\omega_U=0$.
\end{itemize}
In this setting, \cite[Theorem 5.1]{FZ} shows that the \textit{coset} or \textit{commutant} of $U$ in $A$,
\begin{equation*}
 V = C_A(U) :=\lbrace v\in A\,\,\vert\,\,[Y_A(u,x_1), Y_A(v,x_2)]=0\,\,\text{for all}\,\,u\in U\rbrace,
\end{equation*}
is a vertex operator algebra with vertex operator $Y_V=Y_A\vert_{V\otimes V}$, vacuum $\vac$, and conformal vector $\omega_V=\omega-\omega_U$. We will make two further assumptions on $U$ and $V$:
\begin{itemize}
 \item The vertex subalgebras $U$ and $V$ form a dual pair in $A$, that is, both $V=C_A(U)$ and $U=C_V(A)$. Equivalently, $U$ is its own double commutant:
 $U=C_A(C_A(U))$.
 
 \item The vertex operator algebra $V$ is $C_2$-cofinite.
\end{itemize}
As discussed in \cite[Section 2.3]{McR-coset}, the $L_V(0)$-eigenvalue grading of $V$ is compatible with the $L(0)$-eigenvalue grading of $A$, that is, $V_{(n)}=V\cap A_{(n)}$ for any $n\in\ZZ$. Thus $V$ is a positive-energy vertex operator algebra. Moreover, $V$ is simple (see for example \cite[Theorem 2.5(4)]{McR-coset}), so to show that $V$ is self-contragredient, \cite[Corollary 3.2]{Li} implies it is enough to show $L_V(1)V_{(1)}=0$. In fact, because $A$ is self-contragredient and positive-energy, we have $L(1)A_{(1)}=0$, and then for any $v\in V_{(1)}\subseteq A_{(1)}$, we have
\begin{equation*}
 L_V(1)v =L(1)v-L_U(1)v_{-1}\vac=-v_{-1}L_U(1)\vac =0,
\end{equation*}
as required.

We need the following basic result that is proved, for example, in \cite[Appendix A]{McR-coset}:
\begin{thm}\label{thm:gen_coset}
 In the setting of this subsection,
 \begin{enumerate}
  \item There is an injective vertex operator algebra homomorphism $\psi: U\otimes V\rightarrow A$, given by $\psi(u\otimes v)=u_{-1} v$, which gives $A$ the structure of a $U\otimes V$-module.
  
  \item There is a $U\otimes V$-module isomorphism 
  \begin{equation*}
   F: \bigoplus_{i\in I} U_i\otimes V_i\longrightarrow A
  \end{equation*}
where the $U_i$ are the distinct irreducible $U$-modules occurring in $A$, with $U=U_0$ for some $0\in I$, and the $V_i$ are grading-restricted $V$-modules given by $V_i=\hom_U(U_i,A)$. 

\item The $V$-module $V_0=\hom_U(U,A)$ is isomorphic to $V$, and $\dim\hom_V(V,V_i)=\delta_{i,0}$ for all $i\in I$.

\item For any grading-restricted generalized $U\otimes V$-module $X$ (for example, $X$ could be an $A$-module), there is a $U\otimes V$-module isomorphism
\begin{equation*}
 F_X: \bigoplus_{j\in J} M_j\otimes W_j\longrightarrow X
\end{equation*}
where the $M_j$ are the distinct irreducible $U$-modules occurring in $X$ and the $W_j$ are grading-restricted $V$-modules given by $W_j=\hom_U(M_j,X)$.
 \end{enumerate}
\end{thm}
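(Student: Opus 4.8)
The plan is to decompose $A$ as a module over the dual pair $U\otimes V$ using the strong rationality of $U$, then transport the resulting structure to general $U\otimes V$-modules. First I would use that $U$ is strongly rational, so its module category is a semisimple modular tensor category; in particular $A$, viewed as a $U$-module via $Y_A\vert_{U\otimes A}$, decomposes as a direct sum of irreducible $U$-modules with finite multiplicity spaces (finiteness of multiplicities comes from the compatibility $V_{(n)}=V\cap A_{(n)}$ together with finite-dimensionality of conformal weight spaces). This gives the decomposition $A\cong\bigoplus_{i\in I} U_i\otimes\hom_U(U_i,A)$ as a $U$-module, and I would define $V_i=\hom_U(U_i,A)$. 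The content of part (1) is that $\psi(u\otimes v)=u_{-1}v$ is a vertex algebra homomorphism; this is a standard skew-symmetry and iterate computation using that $U$ and $V$ are mutually commuting — one checks the Jacobi identity for $\psi$ separately on the $U$ and $V$ tensor factors using commutativity of $Y_A(u,x_1)$ with $Y_A(v,x_2)$ for $u\in U$, $v\in V$ — and injectivity follows since $\psi(u\otimes\vac)=u$ and $\psi(\vac\otimes v)=v$ generate $U$ and $V$ respectively and the product map is determined by the state-field correspondence. The conformal vector condition $\omega=\omega_U+\omega_V$ then makes $\psi$ respect the gradings.

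Next, for part (2), the $U$-module decomposition of $A$ must be upgraded to a $U\otimes V$-module decomposition: each multiplicity space $V_i=\hom_U(U_i,A)$ carries a natural $V$-action because $Y_A(v,x)$ for $v\in V$ commutes with the $U$-action and hence acts on $\hom_U(U_i,A)$; the isomorphism $F:\bigoplus_i U_i\otimes V_i\to A$ sending $f\otimes u_i$ (for $f\in\hom_U(U_i,A)$, $u_i\in U_i$) to $f(u_i)$ is then checked to intertwine the $U\otimes V$-actions. One must verify each $V_i$ is grading-restricted: its conformal weight grading is induced from that of $A$ shifted by the lowest weight of $U_i$, and finite-dimensionality of its weight spaces follows from finite-dimensionality of those of $A$ together with finiteness of the set of $U_i$ occurring below any given weight (a consequence of $U$ having finitely many irreducibles and each being $\NN$-gradable). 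For part (3): $\hom_U(U,A)=C_A(U)=V$ as a $V$-module by the very definition of the commutant, using that $U=U_0$ is the vacuum $U$-module so a $U$-module map $U\to A$ is determined by the image of $\vac$, which must be $U$-invariant, i.e.\ lie in $C_A(U)=V$. The multiplicity statement $\dim\hom_V(V,V_i)=\delta_{i,0}$ is the dual-pair condition $U=C_V(A)$ read off through $F$: a nonzero $V$-module map $V\to V_i$ would, via $F$, produce an element of $A$ commuting with $V$, hence lying in $C_A(V)=C_A(C_A(U))=U$; matching $U$-isotypic components forces $i=0$, and then the space of such maps is one-dimensional since $V$ is simple.

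Finally, part (4) is the same argument applied verbatim to an arbitrary grading-restricted generalized $U\otimes V$-module $X$ in place of $A$: restrict $X$ to a $U$-module, use semisimplicity of the $U$-module category to write $X\cong\bigoplus_{j\in J} M_j\otimes W_j$ as a $U$-module with $W_j=\hom_U(M_j,X)$, observe that the commuting $V$-action on $X$ descends to each $W_j$, and check that the resulting $W_j$ are grading-restricted $V$-modules (their weight grading is inherited from $X$, and finite-dimensionality of weight spaces follows as before; note $X$ need not be semisimple as a $U\otimes V$-module, but it is automatically semisimple as a $U$-module). The main obstacle I anticipate is the bookkeeping in part (2)–(4) around grading restriction for the multiplicity spaces $V_i$ and $W_j$: one needs that only finitely many irreducible $U$-modules $M_j$ appear below any fixed conformal weight and that the weight spaces of $\hom_U(M_j,X)$ are finite dimensional. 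This is where the hypotheses that $U$ is $C_2$-cofinite (finitely many irreducibles, each $\NN$-gradable with finite-dimensional graded pieces) and that $X$ has finite-dimensional weight spaces are essential, and it must be assembled carefully; everything else reduces to routine, if lengthy, verifications of the Jacobi identity and module-map compatibilities. The remaining references point to \cite[Appendix A]{McR-coset} for the detailed computations, which the present proof sketch follows.
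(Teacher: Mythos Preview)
The paper does not prove this theorem; it is quoted as a basic result from \cite[Appendix A]{McR-coset}, and your sketch correctly outlines the standard argument given there. Two small points worth tightening: injectivity of $\psi$ in part (1) follows most cleanly from simplicity of $U\otimes V$ (as a tensor product of simple vertex operator algebras) rather than from your generation argument, and in part (3) the object produced from a nonzero $V$-module map $g:V\to V_i=\hom_U(U_i,A)$ is not a single element of $A$ but a $U$-module map $g(\vac):U_i\to A$ whose image consists of $V$-vacuum-like vectors and hence lies in $C_A(V)=U$, forcing $U_i\cong U$ by Schur.
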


We will use $\cC_U$, $\cC_V$,  and $\cC$ to the denote the categories of (grading-restricted generalized) $U$-, $V$-, and $U\otimes V$-modules, respectively. By assumption, $\cC_U$ is a semisimple modular tensor category, while $\cC_V$ is a braided tensor category. Our next goal is to use Corollary \ref{cor:rigidity} to show that $\cC_V$ is rigid. Thus we need to show that $S(\mathrm{ch}_V)$ is a linear combination of characters. We will in fact prove a little more in the next theorem using the multivariable trace functions of \cite{KM}; this result may be compared to the no-pseudo-trace argument in Lemma 5.16 and Proposition 5.17 of \cite{CM} for cyclic orbifold vertex operator algebras. Recall from Theorem \ref{thm:gen_coset} that the $V_i$ are $V$-submodules of $A$ and $V\cong V_0$:
\begin{thm}\label{thm:KM_app}
 In the setting of this subsection, each $S(\mathrm{ch}_{V_i})$ for $i\in I$ is a linear combination of characters of $V$-modules.
\end{thm}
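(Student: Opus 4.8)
The plan is to leverage the strong rationality of $A$ and $U$ together with the decomposition of $A$ as a $U\otimes V$-module from Theorem \ref{thm:gen_coset} to express characters of the coset modules $V_i$ as components of a known $SL(2,\ZZ)$-representation. The key tool will be the multivariable genus-one trace functions of Krauel-Miyamoto \cite{KM}, which keep track of both the $A$-grading and the action of the Cartan-type elements of $U$; these transform nicely under the modular group precisely because $A$ and $U$ are rational.

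First I would set up the multivariable trace function. Since $U$ is strongly rational, its weight-one space $U_{(1)}$ contains a Cartan subalgebra $\mathfrak{h}_U$ of the (reductive) Lie algebra on $U_{(1)}$, and the associated zero-modes $o(h)$ for $h\in\mathfrak{h}_U$ act semisimply and commute with $L_A(0)$ and with the $V$-action on $A$. For each irreducible $A$-module $X$ (there are finitely many, and $A$ itself is one) and each $v\in V$, consider the function
\begin{equation*}
 T_X(v,h;\tau) = \tr_X Y_X(\cU(q_z)v,q_z)\, e^{2\pi i\, o(h)}\, q_\tau^{L(0)-c/24},
\end{equation*}
which converges and is independent of $z$ by the same arguments (from \cite{Zh, Hu-mod_inv, KM}) that give convergence of ordinary characters. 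The results of \cite{KM}, which apply because both $A$ and $U$ are rational and $C_2$-cofinite, show that as $X$ ranges over irreducible $A$-modules these functions span a finite-dimensional representation of $SL(2,\ZZ)$; in particular $S(T_A)(v,h;\tau)$ is a finite linear combination of the $T_X(v,h;\tau)$. Next I would use the branching decomposition $A\cong\bigoplus_{i\in I} U_i\otimes V_i$ as $U\otimes V$-modules, and similarly the decomposition of each irreducible $A$-module $X$ as $\bigoplus_j M_j\otimes W_j$ with $M_j$ irreducible $U$-modules and $W_j$ grading-restricted $V$-modules, to factor the trace:
\begin{equation*}
 T_X(v,h;\tau) = \sum_j \chi^{M_j}(h;\tau)\,\mathrm{ch}_{W_j}(v;\tau),
\end{equation*}
where $\chi^{M_j}(h;\tau) = \tr_{M_j} e^{2\pi i\, o(h)} q_\tau^{L_U(0)-c_U/24}$ is the (one-variable-in-$h$) character of the irreducible $U$-module $M_j$, and $\mathrm{ch}_{W_j}(v;\tau)$ involves only the $L_V(0)$-grading and the $V$-action. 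Since $V_{(n)}=V\cap A_{(n)}$, the power of $q_\tau$ splits correctly between the $U$- and $V$-factors, so this factorization is legitimate.

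Now I would apply the $S$-transformation. Because $U$ is a semisimple modular tensor category, the span of the $\chi^{M}(h;\tau)$ over irreducible $U$-modules $M$ is itself closed under the $U$-side $S$-transformation (this is Zhu's theorem for $U$, refined to include the $h$-variable as in \cite{KM, Miyamoto}), with an invertible $S$-matrix $(s^U_{M,N})$. Applying $S$ to both sides of $T_A(v,h;\tau)=\sum_i \chi^{U_i}(h;\tau)\,\mathrm{ch}_{V_i}(v;\tau)$ and using that $S(T_A)$ is a linear combination $\sum_X \lambda_X T_X$ of the functions $T_X$, I would expand each $T_X$ via its branching decomposition and compare coefficients of the linearly independent functions $\chi^{N}(h;\tau)$ (the characters of distinct irreducible $U$-modules are linearly independent as functions of $(h,\tau)$). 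This yields, for each irreducible $U$-module $N$, an identity
\begin{equation*}
 \sum_i s^U_{U_i,N}\,\mathrm{ch}_{V_i}(v;\tau) = \sum_X \lambda_X \sum_{j:\,M_j\cong N} \mathrm{ch}_{W_j}(v;\tau),
\end{equation*}
exhibiting each left-hand side as a linear combination of characters of $V$-modules (the various $W_j$). Finally, since $(s^U_{U_i,N})$ — a submatrix of the invertible $U$-side $S$-matrix, indexed by the $i\in I$ with $U_i$ appearing in $A$ — has full rank over $\CC$ when we allow $N$ to range over all irreducible $U$-modules, I would invert to solve for each individual $\mathrm{ch}_{V_i}$, but the cleanest route is actually to recognize that $S(\mathrm{ch}_{V_i})$ is what we get by applying $S$ directly: tracking the $S$-transformation through the factorization shows $S(\mathrm{ch}_{V_i})(v;\tau) = \sum_j (\text{combination of }s^U)\,\mathrm{ch}_{W_j}(v;\tau)$, a linear combination of characters of $V$-modules. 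The main obstacle will be the careful bookkeeping of the two separate $S$-transformations (the $U$-variable part acting on $h$ and the modular part acting on $\tau$) so that the $z$- and $h$-specializations are consistent and so that the character of $W_j$ that emerges genuinely has the form $\mathrm{ch}_{W_j}(v;\tau) = \tr_{W_j} Y_{W_j}(\cU(1)v,1)q_\tau^{L_V(0)-c_V/24}$ with no residual $h$-dependence or stray normalization — this is where one must invoke the precise statements of \cite{KM} about the modular behavior of the multivariable trace functions rather than just Zhu's original theorem.
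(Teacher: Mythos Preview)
Your overall architecture matches the paper's: invoke Krauel--Miyamoto modular invariance for $A$, factor both sides through the $U\otimes V$-module decomposition, separate using linear independence of irreducible $U$-characters, and finally invert the $U$-side $S$-matrix. But there is a genuine gap in how you implement the separation step.

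You work with a \emph{single} modular parameter $\tau$, inserting only an exponential $e^{2\pi i\,o(h)}$ of a Cartan element to distinguish the $U$-side. Your factorization then reads
\[
 T_X(v,h;\tau)=\sum_j \chi^{M_j}(h;\tau)\,\mathrm{ch}_{W_j}(v;\tau),
\]
and you propose to ``compare coefficients of the linearly independent functions $\chi^{N}(h;\tau)$.'' The problem is that the ``coefficients'' $\mathrm{ch}_{W_j}(v;\tau)$ and $S(\mathrm{ch}_{V_i})(v;\tau)$ themselves depend on the \emph{same} variable $\tau$ as the $\chi^{N}$. Linear independence of the $\chi^{N}(h;\tau)$ over $\CC$ does \emph{not} allow you to extract coefficients that are functions of $\tau$: you would need the $\chi^{N}(h;\cdot)$ to be linearly independent over the ring of holomorphic functions of $\tau$, i.e.\ linearly independent as functions of $h$ for generic fixed $\tau$. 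When $U_{(1)}=0$ (e.g.\ $U$ a rational Virasoro or principal $W$-algebra) there is no $h$-variable at all, so for fixed $\tau$ the $\chi^{N}$ are just scalars and the comparison is impossible whenever $U$ has more than one irreducible module.

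This is exactly why the paper uses \emph{two} modular parameters $\tau_U,\tau_V$, together with the full insertion $u\otimes v$ rather than $e^{2\pi i\,o(h)}\cdot v$. The multivariable trace functions of \cite{KM} are multivariable in the $\tau$'s, and \cite[Theorem~1]{KM} gives
\[
\mathrm{ch}_A\bigg(\!\Big(\!-\tfrac{1}{\tau_U}\Big)^{L_U(0)}u\otimes\Big(\!-\tfrac{1}{\tau_V}\Big)^{L_V(0)}v;\,-\tfrac{1}{\tau_U},-\tfrac{1}{\tau_V}\bigg)
=\sum_X s_{A,X}\,\mathrm{ch}_X(u\otimes v;\tau_U,\tau_V).
\]
After factoring both sides, the $U$-characters depend only on $(u,\tau_U)$ and the $V$-side only on $(v,\tau_V)$; for each fixed $(v,\tau_V)$ you then get a genuine $\CC$-linear relation among the $\mathrm{ch}_M(u;\tau_U)$, and Zhu's linear independence of irreducible characters lets you match coefficients. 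The rest of your outline (inverting the $U$-side $S$-matrix) then goes through as you describe.
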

\begin{proof}
 For $u\in U$, $v\in V$, $\tau_U,\tau_V\in\mathbb{H}$, and an $A$-module $X$, consider the two-variable trace function 
 \begin{equation*}
\mathrm{ch}_X(u\otimes v; \tau_U,\tau_V) =  \tr_X Y_X(\cU(1)\psi(u\otimes v),1)q_{\tau_U}^{L_U(0)-c_U/24} q_{\tau_V}^{L_V(0)-c_V/24}.
 \end{equation*}
From the definition \eqref{eqn:U1_def} of $\cU(1)$,
\begin{equation*}
 \cU(1)\psi(u\otimes v) =\psi(\cU(1)u\otimes \cU(1)v).
\end{equation*}
Then using Theorem \ref{thm:gen_coset}(4) and properties of traces, we get
\begin{equation*}
 \mathrm{ch}_X(u\otimes v; \tau_U,\tau_V) =\sum_{j\in J} \mathrm{ch}_{M_j}(u;\tau_U)\mathrm{ch}_{W_j}(v;\tau_V).
\end{equation*}
Since $U$ is strongly rational and $V$ is positive-energy $C_2$-cofinite, the two-variable trace function converges absolutely to a holomorphic function on $\mathbb{H}\times\mathbb{H}$ for all $u\in U$, $v\in V$, and simple $A$-modules $X$. Then the hypotheses of \cite[Theorem 1]{KM} are satisfied, and we conclude
\begin{equation}\label{eqn:KM_thm_result}
 \mathrm{ch}_A\bigg(\left(-\frac{1}{\tau_U}\right)^{L_U(0)}u\otimes \left(-\frac{1}{\tau_V}\right)^{L_V(0)}v;-\frac{1}{\tau_U},-\frac{1}{\tau_V}\bigg) =\sum_X s_{A,X}\,\mathrm{ch}_{X}(u\otimes v; \tau_U,\tau_V)
\end{equation}
for all $u\in U$, $v\in V$, where $s_{A,X}$ here are $S$-matrix entries for $A$.

Now, the right side of \eqref{eqn:KM_thm_result} is a linear combination of products of characters of (irreducible) $U$-modules and characters of $V$-modules. The left side is
\begin{align*}
 \mathrm{ch}_A & \bigg(\left(-\frac{1}{\tau_U}\right)^{L_U(0)}u\otimes \left(-\frac{1}{\tau_V}\right)^{L_V(0)}v;-\frac{1}{\tau_U},-\frac{1}{\tau_V}\bigg)\nonumber\\
 &= \sum_{i\in I} \mathrm{ch}_{U_i}\bigg(\left(-\frac{1}{\tau_U}\right)^{L_U(0)} u;-\frac{1}{\tau_U}\bigg)\,\mathrm{ch}_{V_i}\bigg(\left(-\frac{1}{\tau_V}\right)^{L_V(0)} v; -\frac{1}{\tau_V}\bigg)\nonumber\\
 & =\sum_{i\in I}\sum_M s_{U_i,M}\,\mathrm{ch}_M(u;\tau_U)\,\mathrm{ch}_{V_i}\bigg(\left(-\frac{1}{\tau_V}\right)^{L_V(0)} v;-\frac{1}{\tau_V}\bigg)\nonumber\\
 & =\sum_M \mathrm{ch}_{M}(u,\tau_U)\sum_{i\in I} s_{U_i,M}\,\mathrm{ch}_{V_i}\bigg(\left(-\frac{1}{\tau_V}\right)^{L_V(0)} v;-\frac{1}{\tau_V}\bigg).
\end{align*}
Since the characters of distinct irreducible $U$-modules are linearly independent, we can identify the coefficients of $\mathrm{ch}_M(u,\tau_U)$ from each side of \eqref{eqn:KM_thm_result} for each irreducible $U$-module $M$. The result is that for each $M$,
\begin{equation*}
 \sum_{i\in I} s_{U_i,M}\, \mathrm{ch}_{V_i}\bigg(\left(-\frac{1}{\tau_V}\right)^{L_V(0)} v;-\frac{1}{\tau_V}\bigg)
\end{equation*}
is a linear combination of characters of $V$-modules. That is, each component of the vector $[s_{N, M}]^T\mathbf{x}$ is a linear combination of characters of $V$-modules, where $\mathbf{x}$ is the vector with a component for each isomorphism class of irreducible $U$-modules, with $N$-component $S(\mathrm{ch}_{V_i})(v,\tau_V)$ if $N\cong U_i$ for some $i\in I$ and $0$ otherwise. So because the $S$-matrix for $U$ is invertible, each $S(\mathrm{ch}_{V_i})$ is a linear combination of characters of $V$-modules.
\end{proof}

Now to prove that $V$ is strongly rational, it is enough to show that the category $\cC_V$ of grading-restricted generalized $V$-modules is semisimple, since we have already shown or assumed that $V$ is positive-energy, self-contragredient, and $C_2$-cofinite. In the next theorem, we will deduce semisimplicity of $\cC_V$ from rationality for $U\otimes V$, which we will prove by checking the criteria of \cite[Theorem 1.2(2)]{McR2}:

\begin{thm}\label{thm:coset_rat}
 Let $A$ be a strongly rational vertex operator algebra and let $U\subseteq A$ be a vertex subalgebra which is a strongly rational vertex operator algebra for some conformal vector $\omega_U\in U\cap A_{(2)}$ such that $L(1)\omega_U=0$. Assume also that:
 \begin{itemize}
  \item The subalgebra $U$ and its coset $V=C_A(U)$ form a dual pair, that is, $U=C_A(V)$.
  \item The coset $V$ is $C_2$-cofinite.
 \end{itemize}
Then the coset $V$ is also strongly rational.
\end{thm}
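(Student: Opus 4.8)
The plan is to deduce strong rationality of $V$ from strong rationality of $U \otimes V$, and to obtain the latter by verifying the hypotheses of \cite[Theorem 1.2(2)]{McR2}, which reduces rationality of an extension to checking that the extension is $C_2$-cofinite, that its base vertex operator algebra has a rigid module category, and that the base is projective as a module over itself. First I would note that $U\otimes V$ is a simple, positive-energy, self-contragredient vertex operator algebra: simplicity and self-contragredience follow since $U$ and $V$ are each simple and self-contragredient (the latter for $V$ was established just above via $L_V(1)V_{(1)}=0$), and positive-energy-ness is inherited from $A$ through the grading compatibility $V_{(n)}=V\cap A_{(n)}$. Next, $U\otimes V$ is $C_2$-cofinite because $U$ is $C_2$-cofinite (being strongly rational) and $V$ is $C_2$-cofinite by hypothesis, and $C_2$-cofiniteness is preserved under tensor products. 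So $U\otimes V$ is strongly finite.

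The key step is rigidity of the module category of $U\otimes V$. Since $\cC_U$ is a semisimple modular tensor category (as $U$ is strongly rational) and $\cC_V$ is a braided tensor category, the Deligne product $\cC_U\boxtimes\cC_V$ is the module category of $U\otimes V$, and it is rigid if and only if $\cC_V$ is rigid. To show $\cC_V$ is rigid I would invoke Corollary \ref{cor:rigidity}: it suffices to show $S(\mathrm{ch}_V)$ is a linear combination of characters of $V$-modules. But this is exactly the $i=0$ case of Theorem \ref{thm:KM_app} (using $V\cong V_0$), which was proved above using the multivariable trace functions of \cite{KM}, the rationality of $U$ and $A$, and the invertibility of the $S$-matrix for $U$. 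Hence $\cC_V$ is rigid, so $\cC_V$ is a finite braided ribbon category, and therefore $\cC_U\boxtimes\cC_V$, the module category of $U\otimes V$, is rigid as well.

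The remaining input for \cite[Theorem 1.2(2)]{McR2} is that $U\otimes V$ is projective as a module over itself. Here I would use the induction functor from $\cC$ (the $U\otimes V$-module category) to the $A$-module category: by Theorem \ref{thm:gen_coset}, $A$ decomposes as a $U\otimes V$-module as $\bigoplus_{i\in I} U_i\otimes V_i$ with $U\otimes V = U_0\otimes V_0$ a direct summand. Since $A$ is rational, $A$ is projective as an $A$-module, and the standard adjunction argument (as in \cite{CM, McR2}) shows that $A$, and hence its direct summand $U\otimes V$, is projective as a $U\otimes V$-module. With strong finiteness, rigidity of the module category, and projectivity of the unit object all in hand, \cite[Theorem 1.2(2)]{McR2} yields that $U\otimes V$ is rational, equivalently that the module category of $U\otimes V$ is semisimple. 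Restricting along the inclusion $V\hookrightarrow U\otimes V$ (sending $v\mapsto \vac\otimes v$), any grading-restricted generalized $V$-module $W$ gives rise to $U\otimes W$, a $U\otimes V$-module, which is therefore semisimple; since $W$ is a direct summand (as $U=U_0$ occurs in $U$), $W$ is semisimple, so $\cC_V$ is semisimple. Combined with the already-established facts that $V$ is simple, positive-energy, self-contragredient, and $C_2$-cofinite, this shows $V$ is strongly rational.

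I expect the main subtlety to be the bookkeeping around the relation between $\cC_V$, $\cC_U$, and $\cC$ — specifically confirming that rigidity of $\cC_V$ is equivalent to rigidity of the Deligne product and that the semisimplicity of $U\otimes V$-modules descends correctly to $V$-modules via the direct-summand argument. The genuinely nontrivial content, however, is entirely imported: rigidity comes from Corollary \ref{cor:rigidity} fed by Theorem \ref{thm:KM_app}, and projectivity plus the final semisimplicity conclusion come from \cite[Theorem 1.2(2)]{McR2} together with the induction argument of \cite{CM, McR2}. Everything else is routine verification of the strong-finiteness hypotheses.
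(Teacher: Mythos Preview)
Your overall strategy matches the paper's: verify the hypotheses of \cite[Theorem 1.2(2)]{McR2} to get strong rationality of $U\otimes V$, then descend to $V$. The rigidity step is right and agrees with the paper (rigidity of $\cC_V$ via Corollary~\ref{cor:rigidity} and Theorem~\ref{thm:KM_app}, then rigidity of $\cC$ via the Deligne product description from \cite{CKM2}).

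However, there is a genuine gap. You misstate the hypotheses of \cite[Theorem 1.2(2)]{McR2}: besides $C_2$-cofiniteness and rigidity, what is actually required is a retraction $\varepsilon:A\to U\otimes V$ of the inclusion (easy from Theorem~\ref{thm:gen_coset}) and, crucially, that the categorical dimension $\dim_\cC A$ is \emph{nonzero}. You replace this with ``$U\otimes V$ is projective,'' which you justify by a ``standard adjunction argument.'' That argument does not go through as stated: the induction functor $A\boxtimes_{U\otimes V}(-)$ lands in $\rep A$ (not-necessarily-local $A$-modules in $\cC$), and there is no reason for a general induced module to be an ordinary grading-restricted $A$-module. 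Rationality of $A$ as a vertex operator algebra only tells you that $\rep^0 A$ (the local modules) is semisimple, so you cannot split the induced surjection without further input. Equivalently, for restriction to preserve projectives you need it to be a left adjoint to an exact functor, which requires $A$ to be a special Frobenius algebra in $\cC$---and that is exactly what $\dim_\cC A\neq 0$ buys you.

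The paper treats $\dim_\cC A\neq 0$ as a genuine step: it invokes \cite[Theorems 5.10(2) and 5.12]{CKM2} to show that $\mathcal{U}_A$ and $\mathcal{V}_A$ are braid-reversed equivalent fusion subcategories, from which $\dim_\cC A$ is a positive real number. You need to supply this (or an equivalent justification that induction behaves well); otherwise the appeal to \cite{McR2} is unjustified. Your final descent from $\cC_{U\otimes V}$ to $\cC_V$ is also stated imprecisely (``$W$ is a direct summand'' of what?); the paper does this cleanly by tensoring a surjection $W_1\to W_2$ in $\cC_V$ with $U$, splitting in $\cC$, and projecting back via $\vac'\otimes\Id$.
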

\begin{proof}
By \cite[Theorem 1.2(2)]{McR2}, $U\otimes V$ will be strongly rational if the following hold:
\begin{itemize}
 \item There is a $U\otimes V$-module homomorphism $\varepsilon: A\rightarrow U\otimes V$ such that $\varepsilon\circ\psi=\Id_{U\otimes V}$.
 \item The vertex operator algebra $U\otimes V$ is $C_2$-cofinite.
 \item The tensor category $\cC$ of grading-restricted generalized $U\otimes V$-modules is rigid.
 \item The categorical dimension $\dim_\cC A$ is non-zero.
\end{itemize}
We will check these conditions and then use rationality of $U\otimes V$ to prove that $V$ is also rational.

First, Theorem \ref{thm:gen_coset}(2) and (3) imply that $A\cong\bigoplus_{i\in I} U_i\otimes V_i$ as a $U\otimes V$-module, with $U=U_0$ for some $0\in I$ and $V\cong V_0$. So we can obtain $\varepsilon$ by projecting onto $U_0\otimes V_0$ with respect to this $U\otimes V$-module direct sum decomposition. Second, since $U\otimes V$ is the tensor product of $C_2$-cofinite vertex operator algebras, it is $C_2$-cofinite (see the proof of \cite[Theorem 5.3]{ABD}).
 
 Third, to prove that the category $\cC$ of grading-restricted generalized $U\otimes V$-modules is rigid, we first note that because $U\otimes V$ is positive-energy and $C_2$-cofinite, $\cC$ is a braided tensor category by \cite{Hu-C2}, with tensor product bifunctor $\tens_{U\otimes V}$. Also, since $U$ is strongly rational, Theorem \ref{thm:gen_coset}(4) implies that every module in $\cC$ is isomorphic to a direct sum of tensor products of $U$- and $V$-modules. Then Theorem 5.2 and Remark 5.3 of \cite{CKM2} show that there are natural isomorphisms
 \begin{equation*}
  (M_1\otimes W_1)\tens_{U\otimes V} (M_2\otimes W_2) \xrightarrow{\cong} (M_1\tens_U M_2)\otimes(W_1\tens_V W_2)
 \end{equation*}
in $\cC$, where $M_1$ and $M_2$ are $U$-modules, $W_1$ and $W_2$ are $V$-modules, and $\tens_U$ and $\tens_V$ are the tensor products in the categories $\mathcal{C}_U$ and $\mathcal{C}_V$, respectively. Moreover, under this identification, all structure isomorphisms in $\cC$ are (vector space) tensor products of structure isomorphisms in $\mathcal{C}_U$ and $\mathcal{C}_V$. Thus because $\mathcal{C}_U$ and $\mathcal{C}_V$ are both rigid ($\mathcal{C}_U$ because $U$ is strongly rational, $\mathcal{C}_V$ by Corollary \ref{cor:rigidity} and Theorem \ref{thm:KM_app}), $\cC$ is also rigid, with duals and (co)evaluations given by vector space tensor products of duals and (co)evaluations in $\mathcal{C}_U$ and $\mathcal{C}_V$. (In fact, $\cC$ is braided tensor equivalent to the Deligne product $\mathcal{C}_U\tens\mathcal{C}_V$ by \cite[Theorem 5.5]{CKM2}.)

Finally, we verify that $\dim_\cC A\neq 0$ using \cite[Theorems 5.10(2) and 5.12]{CKM2}. The assumptions of \cite[Theorem 5.10(2)]{CKM2} are satisfied in our setting due to the strong rationality of $U$ and $A$, the $C_2$-cofiniteness of $V$, Corollary \ref{cor:rigidity} and Theorem \ref{thm:KM_app} which establish rigidity for $\cC_V$, and Theorem \ref{thm:gen_coset} giving the decomposition of $A$ as a $U\otimes V$-module. We conclude that the full subcategories $\mathcal{U}_A\subseteq\mathcal{C}_U$ and $\mathcal{V}_A\subseteq\mathcal{C}_V$, whose objects are (isomorphic to) direct sums of the $U_i$, respectively the $V_i$, are ribbon subcategories and are braid-reversed tensor equivalent to each other. In particular, $\mathcal{U}_A$ and $\mathcal{V}_A$ are fusion categories and $A$ is an extension of $U\otimes V$ in the Deligne product category $\mathcal{U}_A\tens\mathcal{V}_A$, so that \cite[Theorem 5.12]{CKM2} implies that the categorical dimension of $A$ is a (non-zero) positive real number.

Now $U\otimes V$ is strongly rational by \cite[Theorem 1.2(2)]{McR2}. To show the same for $V$, we already know or have assumed that $V$ is positive-energy, self-contragredient, and $C_2$-cofinite, so by Lemma 3.6 and Proposition 3.7 of \cite{CM} (or \cite[Proposition 4.16]{McR}), it only remains to show that $\cC_V$ is semisimple. Thus suppose $f: W_1\rightarrow W_2$ is a surjection in $\mathcal{C}_V$; we need to show that there exists $s: W_2\rightarrow W_1$ such that $f\circ s=\Id_{W_2}$. Since tensoring in the category of $\CC$-vector spaces is exact,
\begin{equation*}
 \Id_U\otimes f: U\otimes W_1\rightarrow U\otimes W_2
\end{equation*}
is a surjection in $\cC$. Since $U\otimes V$ is rational,  there is a $U\otimes V$-module homomorphism
\begin{equation*}
 \sigma: U\otimes W_2\rightarrow U\otimes W_1
\end{equation*}
such that $(\Id_U\otimes f)\circ\sigma =\Id_{U\otimes W_2}$. We choose $\vac'\in U'$ such that $\langle\vac',\vac\rangle=1$ and then define the $V$-module homomorphism $s: W_2\rightarrow W_1$ to be the composition
\begin{equation*}
 W_2\xrightarrow{\cong} \vac\otimes W_2\hookrightarrow U\otimes W_2\xrightarrow{\sigma} U\otimes W_1\xrightarrow{\vac'\otimes\Id_{W_1}} W_1.
\end{equation*}
So for any $w_2\in W_2$, we get
\begin{align*}
 (f\circ s)(w_2) & = \left[f\circ(\vac'\otimes\Id_{W_1})\circ\sigma\right](\vac\otimes w_2)\nonumber\\ &=\left[(\vac'\otimes\Id_{W_2})\circ(\Id_U\otimes f)\circ\sigma\right](\vac\otimes w_2)\nonumber\\ &=(\vac'\otimes\Id_{W_2})(\vac\otimes w_2) = w_2,
\end{align*}
as required.
\end{proof}

Theorem \ref{thm:coset_rat} yields a third rationality proof for certain $ADE$-type principal $W$-algebras (besides \cite{Ar-rat} and Theorem \ref{thm:ex_W_alg_rat}) via the Arakawa-Creutzig-Linshaw coset realization \cite{ACL}:
\begin{exam}
 Take $A=L_k(\mathfrak{g})\otimes L_1(\mathfrak{g})$ where $\mathfrak{g}$ is a simply-laced simple Lie algebra and $k\in\NN$, and take $U$ to be the diagonal subalgebra $L_{k+1}(\mathfrak{g})$. By \cite[Main Theorem 1]{ACL}, the coset $V$ is isomorphic to the simple principal $W$-algebra $\cW_{\ell}(\mathfrak{g})$ at level $\ell=-h^\vee+\frac{k+h^\vee}{k+h^\vee+1}$ ($h^\vee$ is the dual Coxeter number of $\mathfrak{g}$); moreover, $L_{k+1}(\mathfrak{g})$ and $\cW_\ell(\mathfrak{g})$ form a dual pair in $L_k(\mathfrak{g})\otimes L_1(\mathfrak{g})$. These $W$-algebras are $C_2$-cofinite \cite{Ar-C2}, so Theorem \ref{thm:coset_rat} implies that $\cW_\ell(\mathfrak{g})$ for such $\ell$ are strongly rational. In the case $\mathfrak{g}=\mathfrak{sl}_2$, the coset $\cW_\ell(\mathfrak{g})$ is the simple rational (and unitary) Virasoro vertex operator algebra at central charge $1-\frac{6}{(k+2)(k+3)}$ \cite{GKO1, GKO2}.
\end{exam}

\appendix

\section{The assumption in Theorem \ref{thm:W_alg_rat} for odd nilpotent orbits}\label{app:nilpotent_check}

In this appendix, we check the assumption of Theorem \ref{thm:W_alg_rat} for all nilpotent orbits $\mathbb{O}_q$ in \cite[Tables 2--10]{Ar-C2}. By Corollary \ref{cor:good_even_grad}, it is enough to consider nilpotent elements which do not admit good even gradings, and it is easy to determine which nilpotent elements these are from the tables and results in, for example, \cite{BC2, EK}.

\subsection{Classical types}

In classical types, we need to check odd nilpotent orbits in types $B$, $C$, and $D$ only, since every nilpotent element in type $A$ admits a good even grading (see \cite[Proposition 4.2]{EK}). For our discussion of nilpotent elements in classical types here, we will use \cite[Sections 1 and 3]{Ja} as a reference.

Let $\mathfrak{g}=\mathfrak{so}_n$ or $\mathfrak{sp}_n$ (with $n$ even in the symplectic case). We can identify any nilpotent element $f\in\mathfrak{g}$ with an $n\times n$ matrix acting on the defining representation $\CC^n$ of $\mathfrak{g}$, and we can identify conjugacy classes of nilpotent elements by their Jordan canonical forms. Thus nilpotent orbits in $\mathfrak{g}$ are identified with certain partitions of $n$: for $\mathfrak{g}=\mathfrak{so}_n$, even parts of these partitions must have even multiplicity, while for $\mathfrak{sp}_n$, odd parts must have even multiplicity (see for example \cite[Theorem 1.6]{Ja}). We will express the partition of a nilpotent element $f\in\mathfrak{g}$ as follows:
\begin{equation*}
 n=p_1+p_2+\ldots+p_{2r}+q_1+q_2+\ldots+q_s,
\end{equation*}
where $p_{2i-1}=p_{2i}$ for $1\leq i\leq r$; for $\mathfrak{g}=\mathfrak{so}_n$, we assume each $p_i$ is even and each $q_j$ is odd, while for $\mathfrak{sp}_n$ each $p_i$ is odd and each $q_j$ is even.

Now $\CC^n$ has a basis with respect to which the nilpotent element $f\in\mathfrak{g}$ has the matrix
\begin{equation*}
 f=\left[\begin{array}{cccccc}
        J_{p_1} & \cdots & 0 & 0 & \cdots & 0\\
        \vdots & \ddots & \vdots & \vdots & \ddots & \vdots\\
        0 & \cdots & J_{p_{2r}} & 0 & \cdots & 0\\
        0 & \cdots & 0 & J_{q_1} & \cdots & 0\\
        \vdots & \ddots & \vdots & \vdots & \ddots & \vdots\\
        0 & \cdots & 0 & 0 & \cdots & J_{q_s}\\
         \end{array}
\right],
\end{equation*}
where $J_m$ for $m\in\ZZ_+$ denotes the \textit{lower}-triangular $m\times m$ indecomposable Jordan block. By Theorems 1 and 2 in \cite[Subsection 1.11]{Ja}, we may take this basis of $\CC^n$ such that the Gram matrix of the bilinear form preserved by $\mathfrak{g}$ is the following:
\begin{equation*}
 S=\left[\begin{array}{cccccc}
        \til{S}_{1} & \cdots & 0 & 0 & \cdots & 0\\
        \vdots & \ddots & \vdots & \vdots & \ddots & \vdots\\
        0 & \cdots & \til{S}_{r} & 0 & \cdots & 0\\
        0 & \cdots & 0 & S_{q_1} & \cdots & 0\\
        \vdots & \ddots & \vdots & \vdots & \ddots & \vdots\\
        0 & \cdots & 0 & 0 & \cdots & S_{q_s}\\
         \end{array}
\right]
\end{equation*}
where for $m\in\ZZ_+$, $S_m$ is the $m\times m$ matrix
\begin{equation*}
 S_m=\left[\begin{array}{rrrrrr}
        0 & \cdots & 0 & 0 & 0 & 1\\
        0 & \cdots & 0 & 0 & -1 & 0\\
        0 & \cdots & 0 & 1 & 0 & 0\\
        0 & \cdots & -1 & 0 & 0 & 0\\
        \vdots & \reflectbox{$\ddots$} & \vdots & \vdots & \vdots & \vdots\\
        \pm 1 & \cdots & 0 & 0 & 0 & 0\\
         \end{array}
\right],
\end{equation*}
and for $1\leq i\leq r$,
\begin{equation*}
 \til{S}_i=\left[\begin{array}{cc}
                  0 & S_{p_i}\\
                  -S_{p_i} & 0\\
                 \end{array}
\right].
\end{equation*}
We write matrices $X\in\mathfrak{gl}_n$ in block form,
\begin{equation*}
 X=\left[\begin{array}{cc}
          A & B\\
          C & D\\
         \end{array}
\right],
\end{equation*}
where $A$ is $(p_1+\ldots+p_{2r})\times(p_1+\ldots+p_{2r})$ and $D$ is $(q_1+\ldots+q_s)\times(q_1+\ldots+q_s)$. Then the block diagonal form of the Gram matrix $S$ implies that $X\in\mathfrak{g}$, that is, $SX=-X^T S$, if and only if all three of
\begin{equation*}
 \left[\begin{array}{cc}
          A & 0\\
          0 & 0\\
         \end{array}
\right],\left[\begin{array}{cc}
          0 & 0\\
          0 & D\\
         \end{array}
\right],\left[\begin{array}{cc}
          0 & B\\
          C & 0\\
         \end{array}
\right]
\end{equation*}
are matrices in $\mathfrak{g}$.

Now as discussed in \cite[Sections 3.3 and 3.4]{Ja}, the Dynkin grading of $\mathfrak{g}$ can be obtained by restriction from the grading of $\mathfrak{gl}_n$ induced by a suitable grading of $\CC^n$ associated to $f$. From the definition of this grading of $\CC^n$ (given in \cite[Section 3.3]{Ja}), it is clear that diagonal matrices in $\mathfrak{g}$ are contained in $\mathfrak{g}_0$, and that 
\begin{equation}\label{eqn:X_decomp}
X= \left[\begin{array}{cc}
          A & 0\\
          0 & D\\
         \end{array}
\right]+\left[\begin{array}{cc}
          0 & B\\
          C & 0\\
         \end{array}
         \right]
\end{equation}
gives the decomposition of $X\in\mathfrak{g}$ as the sum of a matrix in $\bigoplus_{j\in\ZZ} \mathfrak{g}_j$ and a matrix in $\bigoplus_{j\in\frac{1}{2}+\ZZ}\mathfrak{g}_j$, respectively. In particular, the nilpotent element $f$ is even if and only if either $r=0$ or $s=0$, that is, all parts in the partition of $f$ have the same parity.

Now we want to show that if $f$ is an odd nilpotent element, then $f$ satisfies the conditions of Theorem \ref{thm:W_alg_rat}. Thus we may assume $r,s\geq 1$, so that in particular $r\geq 1$. We then define $v\in\mathfrak{gl}_n$ in terms of its blocks: $B=C=D=0$, while
\begin{equation*}
 A=\frac{1}{2}\left[\begin{array}{ccccc}
                     I_{p_1} & 0 & \cdots & 0 & 0\\
                     0 & -I_{p_2} & \cdots & 0 & 0\\
                     \vdots & \vdots & \ddots & \vdots & \vdots\\
                     0 & 0 & \cdots & I_{p_{2r-1}} & 0\\
                     0 & 0 & \cdots & 0 & -I_{p_{2r}}\\
                    \end{array}
\right],
\end{equation*}
where $I_m$ for $m\in\ZZ_+$ is the $m\times m$ identity matrix. (Recall that $p_{2i-1}=p_{2i}$ for $1\leq i\leq r$.) It is clear that $[f,v]=0$, and $v\in\mathfrak{g}$ as well since for any $m\in\ZZ_+$,
\begin{equation*}
 \left[\begin{array}{cc}
        0 & S_m\\
        -S_m & 0\\
       \end{array}
\right]\left[\begin{array}{cc}
        I_m & 0\\
        0 & -I_m\\
       \end{array}
\right] =\left[\begin{array}{cc}
                0 & -S_m\\
                -S_m & 0\\ 
               \end{array}
\right]=-\left[\begin{array}{cc}
                I_m & 0\\
                0 & -I_m\\
               \end{array}
\right]^T\left[\begin{array}{cc}
                0 & S_m\\
                -S_m & 0\\
               \end{array}
\right].
\end{equation*}
Since $v$ is diagonal, $v\in\mathfrak{g}_0^f$. Also, $\mathrm{ad}(v)$ is semisimple on $\mathfrak{gl}_n$: block matrices of the forms
\begin{equation*}
 \left[\begin{array}{cc}
          A & 0\\
          0 & 0\\
         \end{array}
\right], \left[\begin{array}{cc}
          0 & B\\
          0 & 0\\
         \end{array}
\right], \left[\begin{array}{cc}
          0 & 0\\
          C &0 \\
         \end{array}
\right], \left[\begin{array}{cc}
          0 & 0\\
          0 & D\\
         \end{array}
\right]
\end{equation*}
are sums of $\mathrm{ad}(v)$-eigenvectors with eigenvalues contained in $\lbrace 1,0,-1\rbrace$, $\left\lbrace\frac{1}{2},-\frac{1}{2}\right\rbrace$, $\left\lbrace\frac{1}{2},-\frac{1}{2}\right\rbrace$, and $\lbrace 0\rbrace$, respectively. Thus $\mathrm{ad}(v)$ is diagonalizable on $\mathfrak{g}$ and on $\mathfrak{g}^f$, and the decomposition \eqref{eqn:X_decomp} shows that the $\mathrm{ad}(v)$-eigenvalues on $\mathfrak{g}_{-j}^f$, for $j\geq 0$, are contained in $-j-1+\NN$. Thus $v\in\mathfrak{g}_0^f$ satisfies the conditions in Theorem \ref{thm:W_alg_rat}.

\subsection{Exceptional types}

In exceptional types, there are fifteen nilpotent orbits to check: two in type $G_2$, three for $F_4$, two for $E_6$, two for $E_7$, and six for $E_8$. For each case, we use $f$ to denote the nilpotent element in $\mathfrak{g}$ under consideration, and we embed $f$ into an $\mathfrak{sl}_2$-triple $\lbrace e,h,f\rbrace$. Let $\mathfrak{h}$ be a Cartan subalgebra of $\mathfrak{g}$ containing $h$, and let $\Delta_+$ be a choice of positive roots for $\mathfrak{g}$ such that $\alpha(h)\geq 0$ for $\alpha\in\Delta_+$. For any $\alpha\in\Delta_+$, let $e_\alpha$ and $f_\alpha$ span the root spaces of $\mathfrak{g}$ associated to $\alpha$ and $-\alpha$, respectively. Then the Dynkin grading $\mathfrak{g}=\bigoplus_{j\in\frac{1}{2}\ZZ} \mathfrak{g}_j$ is given by
\begin{equation*}
 \mathfrak{g}_0=\mathfrak{h}\oplus\bigoplus_{\alpha\in\Delta_+,\,\alpha(h)=0} (\CC e_\alpha\oplus\CC f_\alpha)
\end{equation*}
and
\begin{equation*}
 \mathfrak{g}_j=\bigoplus_{\alpha\in\Delta_+,\,\alpha(h)=2j}\CC e_\alpha,\qquad\mathfrak{g}_{-j}=\bigoplus_{\alpha\in\Delta_+,\,\alpha(h)=2j}\CC f_\alpha
\end{equation*}
for $j>0$. Let $\lbrace\alpha_i\rbrace_{i=1}^{\mathrm{rank}(\mathfrak{g})}$ be the simple roots of $\mathfrak{g}$.

We will identify elements $v\in\mathfrak{h}$ with weighted Dynkin diagrams that indicate how the simple roots of $\mathfrak{g}$ act on $v$. For example, in type $E_6$, the notation
\begin{equation*}
 v\longleftrightarrow\begin{array}{rrrrr}
                      & & c_2 & & \\
                      c_1 & c_3 & c_4 & c_5 & c_6
                     \end{array}
,
\end{equation*}
where $c_i\in\CC$, means that $v$ is defined by $\alpha_i(v)=c_i$ for $1\leq i\leq 6$. We will also use weighted Dynkin diagrams to denote the root $\alpha$ in the subscripts of $e_\alpha$ and $f_\alpha$ for $\alpha\in\Delta_+$. For example, for the highest root $\theta=\alpha_1+2\alpha_2+2\alpha_3+3\alpha_4+2\alpha_5+\alpha_6$ in type $E_6$, we denote
\begin{equation*}
 f_\theta=f_{\substack{2\\ 12321}}.
\end{equation*}
In types $G_2$ and $F_4$, our convention will be to orient weighted Dynkin diagrams so that the arrows (which are suppressed in the notation) point towards the right.

We now discuss how to find $v\in\mathfrak{h}_0^f$ satisfying the conditions of Theorem \ref{thm:W_alg_rat}. First, we may take $f$ to be a sum of negative root vectors, $f=\sum_{k=1}^K f_{\beta_k}$ where $\beta_k\in\Delta_+$ satisfy $\beta_k(h)=2$; the tables in \cite[Appendix A]{dG} provide explicit such nilpotent orbit representatives. Given the expansion of each $\beta_k$ as a linear combination of simple roots, we can then determine $\mathfrak{h}_0^f$ by solving the system of linear equations $\lbrace\beta_k(v)=0\rbrace_{k=1}^K$ for $\alpha_i(v)$, $1\leq i\leq\mathrm{rank}(\mathfrak{g})$. It is usually easy to identify a solution $v$ whose eigenvalues on $\mathfrak{g}^f$ are likely to satisfy the conditions of Theorem \ref{thm:W_alg_rat}. We then determine the eigenvalues of $\mathrm{ad}(v)$ for this solution $v$ on $\mathfrak{g}_0\oplus\mathfrak{g}_{-1/2}$. These are easy to find from the weighted Dynkin diagram of $v$, provided we know the weighted Dynkin diagrams of all $\alpha\in\Delta_+$ such that $\alpha(h)=0,1$. Such $\alpha$ in turn are easy to find using the weighted Dynkin diagram of $h$ and the lists in \cite[Appendix B]{dG} of weighted Dynkin diagrams for all positive roots in exceptional types.

If the eigenvalues of $\mathrm{ad}(v)$ on $\mathfrak{g}_0$ and $\mathfrak{g}_{-1/2}$ are contained in $\lbrace 1,0,-1\rbrace$ and $\left\lbrace \frac{3}{2},\frac{1}{2},-\frac{1}{2},-\frac{3}{2}\right\rbrace$, respectively, then we are done because  $\mathrm{ad}(f)$ induces $\mathfrak{g}_0^f$-module isomorphisms $\mathfrak{g}_0\cong\bigoplus_{j\in\NN} \mathfrak{g}_{-j}^f$ and $\mathfrak{g}_{-1/2}\cong\bigoplus_{j\in\frac{1}{2}+\NN}\mathfrak{g}_{-j}^f$ (see \cite[Theorem 1.4]{EK}). In some cases, we encounter $\mathrm{ad}(v)$-eigenvalues $\pm 2$ on $\mathfrak{g}_0$ or $\pm\frac{5}{2}$ on $\mathfrak{g}_{-1/2}$. Then we check that $\mathrm{ad}(f)$ acts injectively on these $\mathrm{ad}(v)$-eigenspaces in $\mathfrak{g}_0\oplus\mathfrak{g}_{-1/2}$, which shows that the corresponding eigenspaces in $\mathfrak{g}^f$ are contained in $\bigoplus_{j\geq 1} \mathfrak{g}_{-j}^f$. 

Now, for the fifteen nilpotent orbits in exceptional types which appear in the tables of \cite{Ar-C2} and do not admit good even gradings, we list a choice of nilpotent orbit representative $f$ coming from the tables of \cite[Appendix A]{dG}, the weighted Dynkin diagram for $h$ in the $\mathfrak{sl}_2$-triple associated to $f$, and a choice of $v\in\mathfrak{h}_0^f$ that satisfies the conditions of Theorem \ref{thm:W_alg_rat}. For a more detailed analysis of each case, see Appendix B.2 in the expanded version of this paper at \texttt{arXiv:2108.01898v2}.

\textbf{Type $G_2$.} For the nilpotent orbit $A_1$ (the minimal nilpotent orbit, containing long root vectors), which is $\mathbb{O}_3$:
\begin{equation*}
f=f_{23},\qquad h\longleftrightarrow\begin{array}{rr}
                      1 & 0\\
                     \end{array}
,\qquad  v\longleftrightarrow\begin{array}{rr}
                      -\frac{3}{2} & 1\\
                     \end{array}
.
\end{equation*}

\noindent For $\til{A}_1$ (containing short root vectors), which is $\mathbb{O}_2$:
\begin{equation*}
  f=f_{12},\qquad h \longleftrightarrow\begin{array}{rr}
                       0 & 1\\
                      \end{array},
\qquad v\longleftrightarrow \begin{array}{rr}
                       1 & -\frac{1}{2} \\
                      \end{array}.
\end{equation*}

\textbf{Type $F_4$.} For the nilpotent orbit $A_1$ (the minimal nilpotent orbit), which is $\mathbb{O}_2$:
\begin{equation*}
 f=f_{2342},\qquad h\longleftrightarrow\begin{array}{rrrr}
                      1 & 0 & 0 & 0\\
                     \end{array}
,\qquad v\longleftrightarrow\begin{array}{rrrr}
                                     -\frac{3}{2} & 1 & 0 & 0\\
                                    \end{array}.
\end{equation*}

\noindent For $\til{A}_2+A_1$, which is $\mathbb{O}_3$:
\begin{equation*}
f =f_{0111}+f_{1121}+f_{1220},\qquad h\longleftrightarrow \begin{array}{rrrr}
                       0 & 1 & 0 & 1\\
                      \end{array}
,\qquad v\longleftrightarrow \begin{array}{rrrr}
                       -1 & -\frac{1}{2} & 1 & -\frac{1}{2}
                      \end{array}.
\end{equation*}

\noindent For $A_2+\til{A}_1$, which is $\mathbb{O}_4$:
\begin{equation*}
 f=f_{1121}+f_{0122}+f_{1220}, \qquad h\longleftrightarrow \begin{array}{rrrr}
                       0 & 0 & 1 & 0\\
                      \end{array}
,\qquad v\longleftrightarrow\begin{array}{rrrr}
                                         1 & 1 & -\frac{3}{2} & 1\\
                                        \end{array}.
\end{equation*}

\textbf{Type $E_6$.} For the nilpotent orbit $3A_1$, which is $\mathbb{O}_2$:
\begin{equation*}
f=f_{\substack{1\\ 01210\\}}+f_{\substack{1\\12211\\}}+f_{\substack{1\\11221\\}},\qquad h\longleftrightarrow\begin{array}{rrrrr}
                  & & 0 & & \\
                  0 & 0 & 1 & 0 & 0\\
                 \end{array}
,\qquad v\longleftrightarrow \begin{array}{ccccc}
                                          && 1 &&\\
                                          0 & 0 & -\frac{1}{2} & 0 & 0\\
                                         \end{array}.
\end{equation*}

\noindent For $2A_2+A_1$, which is $\mathbb{O}_3$:
\begin{align*}
& f=f_{\substack{1\\01210\\}}+f_{\substack{0\\11100\\}}+f_{\substack{1\\00111\\}}+f_{\substack{0\\01111\\}}+f_{\substack{1\\11110\\}},\nonumber\\ & h\longleftrightarrow\begin{array}{ccccc}
                      && 0 &&\\
                      1 & 0 & 1 & 0 & 1\\
                     \end{array}
,\qquad
v\longleftrightarrow\begin{array}{ccccc}
                      && 1 &&\\
                      -\frac{1}{2} & 1 & -\frac{1}{2} & -1 & \frac{1}{2}
                     \end{array}.
\end{align*}
\noindent Note that for $E_6$, the nilpotent orbit $A_4+A_1$, which is $\mathbb{O}_5$, is also odd. But $A_4+A_1$ admits a good even grading (see Table $E_6$ in \cite[Section 7]{EK}).

\textbf{Type $E_7$.} For the nilpotent orbit $4A_1$, which is $\mathbb{O}_2$:
\begin{align*}
&f=f_{\substack{1\hphantom{0}\\012211\\}}+f_{\substack{1\hphantom{0}\\122111\\}}+f_{\substack{1\hphantom{0}\\112221\\}}+f_{\substack{2\hphantom{0}\\123210\\}}\nonumber\\
&h\longleftrightarrow\begin{array}{cccccc}
                      && 1 &&&\\
                      0 & 0 & 0 & 0 & 0 & 1\\
                     \end{array}
,\qquad  v\longleftrightarrow\begin{array}{cccccc}
                      && -\frac{1}{2} &&&\\
                      0 & 1 & -1 & 1 & 0 & -\frac{1}{2}\\
                     \end{array}
\end{align*}

\noindent For $2A_2+A_1$, which is $\mathbb{O}_3$:
\begin{align*}
&f=f_{\substack{0\hphantom{0}\\111111\\}}+f_{\substack{1\hphantom{0}\\012110\\}}+f_{\substack{1\hphantom{0}\\011111\\}}+f_{\substack{1\hphantom{0}\\122100\\}}+f_{\substack{1\hphantom{0}\\112210\\}},\nonumber\\
& h\longleftrightarrow\begin{array}{cccccc}
                      && 0 &&&\\
                      0 & 1 & 0 & 0 & 1 & 0\\
                     \end{array}
,\qquad v\longleftrightarrow\begin{array}{cccccc}
                      && 1 &&&\\
                      1 & -\frac{1}{2} & 0 & -1 & \frac{1}{2} & 0\\
                     \end{array}.
\end{align*} 

\textbf{Type $E_8$.} For the nilpotent orbit $4A_1$, which is $\mathbb{O}_2$:
\begin{align*}
&f =f_{\substack{2\hphantom{0}\hphantom{0}\\1233221\\}}+f_{\substack{2\hphantom{0}\hphantom{0}\\2343210\\}}+f_{\substack{2\hphantom{0}\hphantom{0}\\1343211\\}}+f_{\substack{2\hphantom{0}\hphantom{0}\\1243321\\}},\nonumber\\
& h\longleftrightarrow\begin{array}{ccccccc}
                      && 1 &&&&\\
                      0 & 0 & 0 & 0 & 0 & 0 & 0\\
                     \end{array}
,\qquad v\longleftrightarrow\begin{array}{ccccccc}
                      && -\frac{1}{2} &&&&\\
                      0 & 1 & -1 & 0 & 1 & 0 & 0\\
                     \end{array}.
\end{align*}

\noindent For $2A_2+2A_1$, which is $\mathbb{O}_3$:
\begin{align*}
&f=f_{\substack{2\hphantom{0}\hphantom{0}\\1232100\\}}+f_{\substack{1\hphantom{0}\hphantom{0}\\1232110\\}}+f_{\substack{1\hphantom{0}\hphantom{0}\\1222210\\}}+f_{\substack{1\hphantom{0}\hphantom{0}\\1222111\\}}+f_{\substack{1\hphantom{0}\hphantom{0}\\1122211\\}}+f_{\substack{1\hphantom{0}\hphantom{0}\\0122221\\}},\nonumber\\
& h\longleftrightarrow\begin{array}{ccccccc}
                      && 0 &&&&\\
                      0 & 0 & 0 & 1 & 0 & 0 & 0\\
                     \end{array}
,\qquad v\longleftrightarrow\begin{array}{ccccccc}
                      && 1 &&&&\\
                      1 & 0 & 0 & -\frac{3}{2} & 0 & 1 & 0\\
                     \end{array}.
\end{align*}

\noindent For $2A_3$, which is $\mathbb{O}_4$:
\begin{align*}
&f=f_{\substack{1\hphantom{0}\hphantom{0}\\1221000\\}}+f_{\substack{1\hphantom{0}\hphantom{0}\\1121100\\}}+f_{\substack{1\hphantom{0}\hphantom{0}\\1111110\\}}+f_{\substack{0\hphantom{0}\hphantom{0}\\1111111\\}}+f_{\substack{1\hphantom{0}\hphantom{0}\\0122110\\}}+f_{\substack{1\hphantom{0}\hphantom{0}\\0122211\\}},\nonumber\\
&h\longleftrightarrow\begin{array}{ccccccc}
                      && 0 &&&&\\
                      1 & 0 & 0 & 1 & 0 & 0 & 0\\
                     \end{array}
,\qquad v\longleftrightarrow\begin{array}{ccccccc}
                      && 0 &&&&\\
                      -\frac{1}{2} & 0 & 1 & -\frac{3}{2} & 0 & 1 & 0\\
                     \end{array}.
\end{align*} 

\noindent For $A_4+A_3$, which is $\mathbb{O}_5$:
\begin{align*}
&f=f_{\substack{1\hphantom{0}\hphantom{0}\\0011111\\}}+f_{\substack{0\hphantom{0}\hphantom{0}\\1111110\\}}+f_{\substack{1\hphantom{0}\hphantom{0}\\0111110\\}}+f_{\substack{0\hphantom{0}\hphantom{0}\\0111111\\}}+f_{\substack{1\hphantom{0}\hphantom{0}\\1221000\\}}+f_{\substack{1\hphantom{0}\hphantom{0}\\1121100}}+f_{\substack{1\hphantom{0}\hphantom{0}\\0122100\\}},\nonumber\\
& h\longleftrightarrow\begin{array}{ccccccc}
                      && 0 &&&&\\
                      0 & 0 & 1 & 0 & 0 & 1 & 0\\
                     \end{array},\qquad v\longleftrightarrow\begin{array}{ccccccc}
                      && 1 &&&&\\
                      1 & 1 & -\frac{5}{2} & 1 & 1 & -\frac{3}{2} & 1\\
                     \end{array}.
\end{align*}

\noindent For $A_6+A_1$, which is $\mathbb{O}_7$:
\begin{align*}
& f=f_{\substack{1\hphantom{0}\hphantom{0}\\1110000\\}}+f_{\substack{0\hphantom{0}\hphantom{0}\\1111000\\}}+f_{\substack{1\hphantom{0}\hphantom{0}\\0121000\\}}+f_{\substack{1\hphantom{0}\hphantom{0}\\0111100\\}}+f_{\substack{1\hphantom{0}\hphantom{0}\\0011110\\}}+f_{\substack{0\hphantom{0}\hphantom{0}\\0111110}}+f_{\substack{0\hphantom{0}\hphantom{0}\\0011111\\}},\nonumber\\
& h\longleftrightarrow\begin{array}{ccccccc}
                      && 0 &&&&\\
                      1 & 0 & 1 & 0 & 1 & 0 & 0\\
                     \end{array},\qquad v\longleftrightarrow\begin{array}{ccccccc}
                      && 1 &&&&\\
                      -\frac{1}{2} & 1 & -\frac{3}{2} & 1 & -\frac{3}{2} & 1 & 1\\
                     \end{array}.
\end{align*}

\noindent For $A_7$, which is $\mathbb{O}_8$:
\begin{align*}
&f=f_{\substack{0\hphantom{0}\hphantom{0}\\0000110\\}}+f_{\substack{1\hphantom{0}\hphantom{0}\\1110000\\}}+f_{\substack{0\hphantom{0}\hphantom{0}\\1111000\\}}+f_{\substack{1\hphantom{0}\hphantom{0}\\0011100\\}}+f_{\substack{0\hphantom{0}\hphantom{0}\\0111100\\}}+f_{\substack{0\hphantom{0}\hphantom{0}\\0001111}}+f_{\substack{1\hphantom{0}\hphantom{0}\\0121000\\}},\nonumber\\
& h\longleftrightarrow\begin{array}{ccccccc}
                      && 0 &&&&\\
                      1 & 0 & 1 & 0 & 1 & 1 & 0\\
                     \end{array},\qquad v \longleftrightarrow\begin{array}{ccccccc}
                     && 1 &&&&\\
                     -\frac{1}{2} & 1 & -\frac{3}{2} & 1 & -\frac{1}{2} & \frac{1}{2} & -1\\
                    \end{array}.
\end{align*}

\end{document}